\documentclass[12pt,a4paper]{amsart}

\usepackage{amsmath, amsthm, amssymb, paralist, xspace, graphicx, url, amscd, euscript, mathrsfs, epic, eepic,color, verbatim}

\usepackage[shortlabels]{enumitem}

\usepackage{leftidx}

\usepackage[margin=1.17in]{geometry}

\usepackage[normalem]{ulem}

\usepackage[colorlinks=true]{hyperref}

\usepackage[all]{xy}



\newcommand{\oW}{\overline{W}} 
\newcommand{\tW}{\widetilde{W}} 

\newcommand{\tcW}{\widetilde{\mathcal{W}}} 

\newcommand{\tomega}{\widetilde{\omega}} 

\newcommand{\crit}{\operatorname{Crit}} 

\DeclareMathOperator{\tred}{tred} 

\newcommand{\cI}{\mathcal{I}_{\mu}} 
\newcommand{\ocI}{\overline{\mathcal{I}}_{\mu}} 
\newcommand{\fev}{\mathfrak{ev}} 
\DeclareMathOperator{\ev}{ev} 



\newcommand{\BG}{\mathbf{BG}}
\newcommand{\BC}{\mathbf{BC}^*_\omega}
\newcommand{\LR}{\mathcal L_\omega}

\newcommand{\bL}{\mathbf{L}}
\newcommand{\bE}{\mathbf{E}}
\newcommand{\ul}{\underline}
\newcommand{\ol}{\overline}

\newcommand{\etwist}{a}
\newcommand{\ttwist}{\tilde{r}}

\newcommand{\ddata}{\vec{\varsigma}}


\newcommand{\cH}{\mathcal{H}} 




\newcommand{\oM}{\overline M} 

\newcommand{\ocM}{\overline{\mathcal{M}}} 

\newcommand{\NN}{\mathbb{N}} 
\newcommand{\ZZ}{\mathbb{Z}} 
\newcommand{\CC}{\mathbb C}
\newcommand{\bk}{\mathbf{k}}

\newcommand{\fM}{\mathfrak{M}} 
\newcommand{\fH}{\mathfrak{H}} 

\newcommand{\fS}{\mathfrak{S}} 
\newcommand{\fU}{\mathfrak{U}} 

\newcommand{\fMtw}{\mathfrak{M}^{\mathrm{tw}}} 

\newcommand{\IU}{\mathring{\fU}} 
\newcommand{\IR}{\mathring{\scrR}} 

\newcommand{\tUH}{\widetilde{\UH}}
\newcommand{\tU}{\widetilde{\fU}}

\newcommand{\fY}{\mathfrak{Y}} 
\newcommand{\scrY}{\mathscr{Y}} 

\newcommand{\fC}{\mathfrak{C}} 
\newcommand{\fE}{\mathfrak{E}} 
\newcommand{\fX}{\mathfrak{X}}

\newcommand{\fP}{\mathfrak{P}}

\newcommand{\zero}{\mathbf{0}}

\newcommand{\uC}{\underline{\mathcal{C}}} 

\newcommand{\ucC}{\underline{C}} 

\newcommand{\uS}{\underline{S}} 

\newcommand{\iiota}{\mathring{\iota}} 


\newcommand{\cL}{\mathcal L}
\newcommand{\cM}{\mathcal M} 
\newcommand{\cU}{\mathcal{U}} %
\newcommand{\cP}{\mathcal P} 
\newcommand{\Sp}{\mathcal{L}_{\fX}} 


\newcommand{\cA}{\mathcal{A}} 

\newcommand{\cX}{\mathcal{X}} 


\newcommand{\cC}{\mathcal{C}} 

\newcommand{\cZ}{\mathcal{Z}}

\newcommand\scrM{\mathscr{M}} 

\newcommand{\scrR}{\mathscr{R}}

\newcommand{\SH}{\mathscr{R}}
\newcommand{\SR}{\mathscr{R}} 

\newcommand{\UH}{\mathscr{U}} 

\newcommand{\TT}{\mathbb T} 
\newcommand{\EE}{\mathbb{E}} 
\newcommand{\LL}{\mathbb L} 

\newcommand{\FF}{\mathbb{F}} 

\newcommand{\obs}{\operatorname{Obs}} 

\newcommand{\cok}{\operatorname{cok}} 


\newcommand{\bs}{\mathbf{s}} 

\newcommand{\fm}{\mathfrak{m}} 


\newcommand{\fc}{\mathfrak{c}} 

\newcommand{\fp}{\mathfrak{p}} 

\newcommand{\ft}{\mathfrak{t}} 

\newcommand{\fr}{\mathfrak{r}} 

\newcommand{\cO}{\mathcal O} 

\newcommand{\cT}{\mathcal T} 



\newcommand{\PP}{\mathbb{P}}

\newcommand{\A}{\mathbb{A}}  

\newcommand{\GG}{\mathbb{G}}
\newcommand{\Gm}{\mathbb{G}_{m}}

\newcommand{\vir}{\mathrm{vir}}
\newcommand{\red}{\mathrm{red}}
\newcommand{\reg}{\mathrm{reg}}
\newcommand{\cpt}{\mathrm{cpt}}

\newcommand{\bw}{\mathbf w}

\DeclareMathOperator{\Aut}{Aut}

\DeclareMathOperator{\spec}{Spec}

\DeclareMathOperator{\diff}{d}

\DeclareMathOperator{\rk}{rk}
\DeclareMathOperator{\age}{age}
\DeclareMathOperator{\vb}{Vb}

\newcommand{\poleq}{\preccurlyeq} 

\newtheorem{proposition}{Proposition}[section]
\newtheorem{corollary}[proposition]{Corollary}
\newtheorem{lemma}[proposition]{Lemma}

\newtheorem{theorem}[proposition]{Theorem}
\theoremstyle{definition}
\newtheorem{assumption}[proposition]{Assumption}
\newtheorem{definition}[proposition]{Definition}

\newtheorem{example}[proposition]{Example}

\theoremstyle{remark}
\newtheorem{remark}[proposition]{Remark}

\numberwithin{equation}{section}

\begin{document}

\title{The logarithmic gauged linear sigma model}
\author{Qile Chen \and Felix Janda \and Yongbin Ruan}

\address[Q. Chen]{Department of Mathematics\\
Boston College\\
Chestnut Hill, MA 02467\\
U.S.A.}
\email{qile.chen@bc.edu}

\address[F. Janda]{Department of Mathematics\\
University of Notre Dame\\
Notre Dame, IN 46656\\
U.S.A.}
\email{fjanda@nd.edu}

\address[Y. Ruan]{Institute for Advanced Study in Mathematics\\
Zhejiang University\\
Hangzhou, China}
\email{ruanyb@zju.edu.cn}

\date{\today}

\begin{abstract}
  We introduce the notion of log $R$-maps, and develop a proper moduli
  stack of stable log $R$-maps in the case of a hybrid gauged linear
  sigma model.
  Two virtual cycles (canonical and reduced) are constructed for these
  moduli stacks.
  The main results are two comparison theorems relating the reduced
  virtual cycle to the cosection localized virtual cycle, as well as
  the reduced virtual cycle to the canonical virtual cycle.
  This sets the foundation of a new technique for computing higher
  genus Gromov--Witten invariants of complete intersections.
\end{abstract}

\keywords{stable log R-maps, gauged linear sigma model, reduced virtual cycles}

\subjclass[2010]{14N35, 14D23}

\maketitle

\setcounter{tocdepth}{1}
\tableofcontents

\section{Introduction}

\subsection{The gauged linear sigma model}
In 1993, Witten gave a physical derivation of the Landau--Ginzburg
(LG)/Calabi--Yau (CY) correspondence by constructing a family of
theories, known as the \emph{gauged linear sigma model} or GLSM
\cite{Wi93}.
A mathematical realization of the LG model, called the
Fan--Jarvis--Ruan--Witten (FJRW) theory has been established in
\cite{FJR13} via topological and analytical methods.
On the algebraic side, an approach using the cosection localization
\cite{KiLi13} to construct the GLSM virtual cycle was discovered in
\cite{ChLi12, CLL15} in the narrow case, and in general in
\cite{CKL18, KL18}.
Along the cosection approach, some hybrid models were studied in
\cite{Cl17}, and a general algebraic theory of GLSM for
(\emph{compact-type sectors} of) GIT targets was put on a firm
mathematical footing by Fan, Jarvis and the third author \cite{FJR18}.
A further algebraic approach for broad sectors using matrix
factorizations has been developed in \cite{PoVa16, CFGKS18P}, while an
analytic approach has been developed in \cite{FJR16}, \cite{TX18}.

As discovered in \cite{ChLi12} and further developed in \cite{KiOh18P,
  ChLi18P, CJW19P}, GLSM can be viewed as a deep generalization of the
hyperplane property of Gromov--Witten (GW) theory for arbitrary
genus.
However, comparing to GW theory, a major difference as well as a main
difficulty of GLSM is the appearance of an extra torus action on the
target, called the \emph{R-charge}, which makes the moduli stacks in
consideration for defining the GLSM virtual cycles non-proper in
general.
This makes the powerful tool of virtual localization \cite{GrPa99}
difficult to apply.

This is the second paper of our project aiming at a logarithmic GLSM
theory that solves the non-properness issue and provides a
localization formula by combining the cosection approach with the
logarithmic maps of Abramovich--Chen--Gross--Siebert \cite{AbCh14,
  Ch14, GrSi13}.
This leads to a very powerful technique for computing higher genus
GW/FJRW-invariants of complete intersections in GIT quotients.
Applications include computing higher genus invariants of quintic
$3$-folds \cite{GJR17P, GJR18P}\footnote{A
  different approach to higher genus Gromov--Witten invariants of
  quintic threefolds has been developed by Chang--Guo--Li--Li--Liu
  \cite{CGL18Pb, CGL18Pa, CGLL18P, CLLL15P, CLLL16P}.}, and the cycle
of holomorphic differentials \cite[Conjecture~A.1]{PPZ16P} by
establishing a localization formula of $r$-spin cycles conjectured by
the second author \cite{CJRSZ19P}.
This conjectural localization formula was the original motivation of
this project.

In our first paper \cite{CJRS18P}, we developed a \emph{principalization} of the boundary of the moduli of log maps, which provides a natural framework for extending cosections to the boundary of the logarithmic compactification. The simple but important $r$-spin case has been studied in \cite{CJRS18P} via the log compactification for maximal explicitness.

The goal of the current paper is to further establish a log GLSM
theory in the hybrid model case which allows possibly \emph{non-GIT
  quotient} targets.
As our theory naturally carries two different perfect obstruction
theories, we further prove explicit relations among these virtual
cycles.
This will provide a solid foundation for our forthcoming paper
\cite{CJR20P2} where various virtual cycles involved in log GLSM will
be further decomposed using torus localizations and the developments
in \cite{ACGS20P,ACGS17}.

In the case of GIT quotient targets, another aspect of GLSM moduli
spaces is that they depend on a stability parameter and exhibit a
rich wall-crossing phenomenum.
To include general targets, the current paper concerns only the $\infty$-stability that
is closely related to stable maps. We leave the study of other
stability conditions in the case of GIT quotient targets to a future research.

\subsection{$R$-maps}

The following fundamental notion of $R$-maps is the result of our
effort to generalize pre-stable maps to the setting of GLSM with
possibly non-GIT quotient targets.
While the definition makes essential use of stacks, it is what makes
various constructions in this paper transparent.

\begin{definition}\label{def:R-map}
  Let $\fP \to \BC$ be a proper, DM-type morphism of log stacks where
  $\BC := \BG_m$ is the stack parameterizing line bundles with the
  trivial log structure.
  A \emph{logarithmic $R$-map} (or, for short, log $R$-map) over a log
  scheme $S$ with target $\fP \to \BC$ is a commutative diagram:
\[
\xymatrix{
 && \fP \ar[d] \\
\cC \ar[rru]^{f} \ar[rr]_{\omega^{\log}_{\cC/S}}&& \BC
}
\]
where $\cC \to S$ is a log curve (see \S\ref{sss:log-curves}) and the
bottom arrow is induced by the log cotangent bundle
$\omega^{\log}_{\cC/S}$.
The notation $\BC$ is reserved for parameterizing the line bundle
$\omega^{\log}_{\cC/S}$ of the source curve.
\emph{Pull-backs} of log $R$-maps are defined as usual via pull-backs
of log curves.

For simplicity, we will call $f\colon \cC \to \fP$ a log $R$-map without specifying arrows to $\BC$. Such $f$ is called an \emph{$R$-map} if it factors through the open substack $\fP^{\circ}\subset \fP$ with the trivial log structure.

A pre-stable map $\ul{f}\colon \ul{\cC} \to \ul{\fP}$ over $\ul{S}$ with
compatible arrows to $\BC$ is called an \emph{underlying $R$-map}.
Here $\ul{\fP}$ is the underlying stack obtained by removing the log
structure of $\fP$.
\end{definition}

\begin{remark}
  Our notion of $R$-maps originates from the $R$-charge in physics.
  In the physical formulation of GLSM \cite{Wi93}, there is a target
  space $X$ which is a K\"ahler manifold (usually a GIT quotient of a
  vector space) and a superpotential $W \colon X \to \CC$.
  To define the A-topological twist, one needs to choose a
  $\CC^*$-action on $X$, called the R-charge, such that $W$ has $R$-charge
  (weight) $2$.
  The weights of the R-charge on the coordinates of $X$ are used to
  twist the map or the fields of the theory \cite{Wi93,FJR18,GS09}.
  As pointed out by one of the referees, the setting of this article
  is more general than GLSM in physics in the sense that $X$ does not
  necessarily have an explicit coordinate description.
  For this purpose, we formulate the more abstract notion of $R$-maps
  as above.
  Our notion of $R$-maps agrees with those of \cite{Wi93,FJR18,GS09}
  when $X$ is a GIT quotient of a vector space.
  The moduli space of $R$-maps should give a mathematical
  description of the spaces on which the general A-twist localizes in
  physics.
\end{remark}

\begin{example}[quintic threefolds]
  Consider the target $\fP^\circ = [\vb_{\PP^4}(\cO(-5))/\CC^*_{\omega}]$,
  where $\CC^*_{\omega} \cong \Gm$ acts on the line bundle $\vb_{\PP^4}(\cO(-5))$ by
  scaling the fibers with weight one.
  The map $\fP^{\circ} \to \BC$ is the canonical map from the quotient
  description of $\fP^{\circ}$.
  In this case, an $R$-map $f\colon \cC \to \fP^{\circ}$ is equivalent to the
  data of a map $g\colon \cC \to \PP^4$ together with a section (or
  ``$p$-field'') $p \in H^0(\omega_C^{\log} \otimes f^* \cO(-5))$.
  Therefore, if $\cC$ is unmarked, we recover the moduli space of
  stable maps with $p$-fields \cite{ChLi12}, which is the GLSM moduli
  space \cite{FJR18} for a quintic hypersurface in $\PP^4$. The construction of this paper will provide a compactification of $\fP^{\circ}$ relative to $\BC$, and a compactification of the moduli of $p$-fields.
  We refer the reader to Section~\ref{sec:examples} for more examples in a general situation.
\end{example}

Just like in Gromov--Witten theory, various assumptions on $\fP$ are
needed to build a proper moduli space as well as a virtual cycle.
While a theory of stable log $R$-maps for general $\fP$ seems to
require much further development using the full machinery of
logarithmic maps, we choose to restrict ourselves to the so called
\emph{hybrid targets} which already cover a large class of interesting
examples including both FJRW theory and complete intersections in
Gromov--Witten theory.
We leave the general case to a future research.

\subsection{$R$-maps to hybrid targets}
\label{ss:target-data}

\subsubsection{The input}\label{sss:input}
A hybrid target is determined by the following data:
\begin{enumerate}
 \item A proper Deligne--Mumford stack $\cX$ with a projective coarse moduli scheme $X$.
 \item A vector bundle $\bE$ over $\cX$ of the form
\begin{equation*}
  \bE = \bigoplus_{i \in \ZZ_{> 0}} \bE_i
\end{equation*}
where $\bE_i$ is a vector bundle with the positive grading $i$. Write $d := \gcd\big(i \ | \ \bE_i \neq 0 \big)$.
 \item A line bundle $\bL$ over $\cX$.
 \item A positive integer $r$.
\end{enumerate}

For later use, fix an ample line bundle $H$ over $X$, and denote by $\cH$ its pull-back over $\cX$.

\subsubsection{The $r$-spin structure} The R-charge leads to the universal  $r$-spin structure as follows.
Consider the  cartesian diagram
\begin{equation}\label{diag:spin-target}
\xymatrix{
\fX \ar[rrr]^\Sp \ar[d] &&& \BG_m \ar[d]^{\nu_r} \\
\BC\times\cX \ar[rrr]^{\LR\boxtimes\bL^{\vee}} &&& \BG_m
}
\end{equation}
where $\LR$ is the universal line bundle over $\BC$,
$\nu_r$ is the $r$th power map, the bottom arrow is defined by $\LR\boxtimes\bL^{\vee}$, and the top arrow is
defined by the universal $r$-th root of
$\LR\boxtimes\bL^{\vee}$, denoted by $\Sp$.

\subsubsection{The targets}
Fix a \emph{twisting choice} $\etwist \in \frac{1}{d}\cdot\ZZ_{>0}$, and
set $\ttwist = \etwist \cdot r$.
We form the weighted projective stack bundle over $\fX$:
\begin{equation}\label{equ:universal-proj}
\ul{\fP} := \ul{\PP}^{\bw}\left(\bigoplus_{i > 0}(\bE^{\vee}_{i,\fX}\otimes\cL_{\fX}^{\otimes i})\oplus \cO_{\fX} \right),
\end{equation}
where $\bw$ is the collection of the weights of the $\GG_m$-action such that the weight on the $i$-th factor is the positive integer
$\etwist\cdot i$, while the weight of the last factor $\cO$ is 1.
Here for any vector bundle $V = \oplus_{i=1}^{r} V_i$ with a $\GG_m$-action of weight $\bw$, we use the notation
\begin{equation}\label{equ:def-proj-bundle}
\PP^\bw(V) = \left[\Big(\vb(V)\setminus \zero_V \Big) \Big/ \GG_m \right],
\end{equation}
where $\vb(V)$ is the total space of $V$, and $\zero_V$ is the zero section of $V$. Intuitively, $\ul\fP$ compactifies the GLSM given by
\[
\fP^{\circ} := \vb\left(\bigoplus_{i > 0}(\bE^{\vee}_{i,\fX}\otimes\cL_{\fX}^{\otimes i})\right).
\]

The boundary $\infty_{\fP} = \ul{\fP} \setminus \fP^{\circ}$ is the Cartier divisor
defined by the vanishing of the coordinate corresponding to
$\cO_{\fX}$.
We make $\ul\fP$ into a log stack $\fP$ by equipping it with the log structure corresponding to the Cartier divisor $\infty_\fP$.  Denote by $\zero_\fP$ the zero section of the vector bundle $\fP^{\circ}$.
We arrive at the following commutative diagram
\begin{equation}\label{equ:hyb-target}
\xymatrix{
\fP \ar[r]^{\fp} \ar[rd]_{\ft}& \fX \ar[r]^{\zeta} \ar[d] & \BC \\
&\cX&
}
\end{equation}
where $\zeta$ is the composition $\fX \to \BC \times \cX \to \BC$ with the left arrow the projection to $\BC$. By construction, $\zeta\circ\fp$ is proper of DM-type.

The general notion of log R-maps formulated using $\fP$ can be
described more concretely in terms maps with log fields, see
\S~\ref{ss:log-fields}.

\subsubsection{The stability}\label{sss:stability}

A log $R$-map $f\colon \cC \to \fP$ over $S$ is \emph{stable} if $f$ is
representable, and if for a sufficiently small $\delta_0 \in (0,1)$ there exists $k_0 > 1$ such that for any pair $(k,\delta)$ satisfying $k > k_0$ and $\delta_0 > \delta > 0$, the following holds
\begin{equation}\label{equ:hyb-stability}
(\omega_{\cC/S}^{\log})^{1 + \delta} \otimes (\ft \circ f)^* \cH^{\otimes k} \otimes f^* \cO(\ttwist\infty_{\fP}) > 0.
\end{equation}
The notation $>$ in \eqref{equ:hyb-stability} means that the left hand side has strictly positive degree when restricting to each irreducible component of the source curve.

\begin{remark}
  It has been shown that the stack of pre-stable log maps are proper
  over the stack of usual pre-stable maps \cite{AbCh14, Ch14, GrSi13}.
  Even given this, establishing a proper moduli stack remains a rather
  difficult and technical step in developing our theory.
  An evidence is that the moduli of underlying $R$-maps fails to be
  universally closed \cite[Section~4.4.6]{CJRS18P} in even most basic
  cases.
  The log structure of $\fP$ plays an important role in the properness
  as evidenced by the subtle stability \eqref{equ:hyb-stability} which
  was found after many failed attempts.
\end{remark}

\begin{remark}
  In case of rank one $\bE$, the stability \eqref{equ:hyb-stability}
  is equivalent to a similar formulation as in
  \cite[Definition~4.9]{CJRS18P} using $\zero_{\fP}$, see
  Remark~\ref{rem:r-spin-stability}.
  However, the latter does not generalize to the higher rank case,
  especially when $\bE$ is non-splitting.
  Consequently, we have to look for a stability condition of very
  different form, and a very different strategy for the proof of
  properness compared to the intuitive proof in \cite{CJRS18P}, see
  Section~\ref{sec:properties} for details. 
\end{remark}

\subsubsection{The moduli stack}
Denote by $\SR_{g, \ddata}(\fP,\beta)$ the category of stable log $R$-maps fibered over the category of log schemes with fixed discrete data $(g, \ddata, \beta)$ such that
\begin{enumerate}
 \item $g$ is the genus of the source curve.
 \item The composition of the log $R$-map with $\ft$ has curve class $\beta \in H_2(\cX)$.
 \item $\ddata = \{(\gamma_i, c_i)\}_{i=1}^n$ is a collection of pairs
   such that $c_i$ is the contact order of the $i$-th marking with
   $\infty_{\fP}$, and $\gamma_i$ is a component of the inertia stack
   fixing the local monodromy at the $i$-th (orbifold) marking
   (Definition~\ref{def:hyb-sector}).
\end{enumerate}

The first main result of the current article is the compactification:
\begin{theorem}[Theorem~\ref{thm:representability}]
  \label{thm:intro-representability}
  The category $\SR_{g, \ddata}(\fP,\beta)$ is
  represented by a proper logarithmic Deligne--Mumford stack.
\end{theorem}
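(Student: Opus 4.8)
The plan is to establish representability by a Deligne--Mumford stack and properness separately, following the standard template for moduli of stable (log) maps but with the extra care demanded by the $R$-charge and the weighted projective bundle structure of $\fP$.

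First I would address \emph{representability} and \emph{boundedness}. Since $\fP \to \BC \times \cX$ is a weighted projective stack bundle, hence proper and of DM-type, the category of log $R$-maps with fixed discrete data maps to the stack of prestable log curves, and by forgetting the log structure one lands in the relative Hom-stack over the Artin stack $\BC \times \cX$ of prestable maps. One knows by Abramovich--Chen--Gross--Siebert \cite{AbCh14, Ch14, GrSi13} that the stack of prestable log maps is algebraic and proper over the stack of underlying prestable maps; combined with algebraicity of the relative Hom-stack, $\SR_{g,\ddata}(\fP,\beta)$ is algebraic. Representability of $f$ forces finite automorphisms of the source together with the map, and combined with the stability inequality \eqref{equ:hyb-stability} — which, for suitable $(k,\delta)$, exhibits a relatively ample line bundle on every irreducible component pulled back along the universal map — one gets that automorphism groups are finite and unramified, so the stack is Deligne--Mumford. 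Boundedness of the curve class $\beta$ on $\cX$ together with the bound $\ttwist$ on the contact with $\infty_\fP$ and fixed genus bounds the degree of the stabilizing line bundle, hence the moduli problem is of finite type. I would isolate the verification that \eqref{equ:hyb-stability} is an open condition and cuts out a finite-type substack as a lemma, since it is the technical heart of DM-ness.

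Next, \emph{separatedness and properness}, which I expect to be the main obstacle. Properness is checked by the valuative criterion over a DVR $R$ with fraction field $K$: given a stable log $R$-map over $\spec K$, one must produce, after a finite base change, a unique extension over $\spec R$. The subtlety, flagged in the remark after the stability definition, is that the space of \emph{underlying} $R$-maps is not universally closed — the limit of the ``$p$-fields'' (the coordinates valued in $\bE_i^\vee \otimes \cL^{\otimes i}$) may blow up, which is exactly why $\fP$ was compactified by $\infty_\fP$ and why the log structure is essential. The strategy I would follow: first extend the underlying stable map $\cC_K \to \cX$ using properness of $\cX$ and the usual stable-maps machinery, obtaining a prestable family over $R$ after semistable reduction; then analyze the rational section of the weighted projective bundle $\ul\fP$, measuring the orders of vanishing/poles of the $p$-field coordinates against the $\infty_\fP$-coordinate. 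The log structure on $\fP$ together with the log structure on the central fiber of the curve governs which extension is the stable one, and uniqueness comes from the minimality (basicness) of log structures in the sense of \cite{AbCh14, Ch14, GrSi13}: among all log enhancements, there is a canonical one. Here the stability inequality \eqref{equ:hyb-stability} does double duty — it rules out the contracted/unstable components that would otherwise destroy separatedness, and its specific shape (involving $(\omega^{\log})^{1+\delta}$, $\cH^{\otimes k}$, and $\cO(\ttwist \infty_\fP)$ simultaneously) is what makes the limit unique. I would reference Section~\ref{sec:properties} of the paper for the full argument, presenting here only the reduction to: (i) extend the map to $\cX$, (ii) extend the line bundle and its power of $\omega^{\log}$, (iii) extend the section into the weighted projective bundle using the valuative criterion for $\ul\fP \to \fX$ together with semistable modifications of the curve, (iv) equip the result with its minimal log structure and check it is stable.

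Finally I would assemble the pieces: algebraicity and finite type plus finiteness of automorphisms give a DM stack of finite type; the valuative criterion gives it is proper; and the log structure constructed along the way, which is the minimal one, makes it a \emph{logarithmic} DM stack, completing the proof. The one step I would single out as genuinely hard is step (iii) — controlling the limit of the $p$-fields inside the weighted projective bundle — because the weights $\etwist \cdot i$ are non-uniform and $\bE$ may be non-split, so one cannot argue coordinate-by-coordinate as in the rank-one $r$-spin case of \cite{CJRS18P}; a more intrinsic argument using the ample bundle $\cO(\ttwist\infty_\fP)$ and the stability inequality is required.
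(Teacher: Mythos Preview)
Your overall architecture matches the paper's: algebraicity via Hom-stacks plus the relative properness of log maps over underlying maps, DM-ness from the stability condition, boundedness by degree estimates, and properness by the valuative criterion. Two points, however, deserve correction.

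First, your account of separatedness is misplaced. You write that ``uniqueness comes from the minimality (basicness) of log structures.'' The paper uses the relative properness of log maps over underlying maps in the \emph{opposite} direction: it reduces the entire valuative criterion to the underlying level (Section~\ref{ss:valuative-set-up}), so that what must be proved is existence and uniqueness of an \emph{underlying} stable $R$-map extending $\ul f_\eta$. Separatedness is then a nontrivial degree comparison at the underlying level: given two extensions $f_1, f_2$, one dominates both coarse curves by a common $C_3$, merges the two maps on an open cover (Lemma~\ref{lem:cover}), and shows via the inequality of Lemma~\ref{lem:degree-comparison} --- comparing $\deg(\omega^{\log}\otimes f^*\cO(\ttwist\infty_\fP))$ on the pieces --- that the coarse curves must agree. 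Log minimality plays no role beyond the initial reduction.

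Second, your step~(iii) --- ``extend the section into the weighted projective bundle using the valuative criterion for $\ul\fP \to \fX$ together with semistable modifications'' --- is where the paper's real work lies, and the method is more specific than you indicate. A single stable-map limit into $\fP_{\cC'}$ will in general produce vertical rational tails on which $\omega^{\log}$ is wrong, and these cannot be handled directly. The paper's device is to take stable-map limits \emph{twice}: first extend $g$ to $\cX$ and the spin structure, and take a stable-map limit $f_0$ into $\fP$; then, using the resulting curve as base, re-extend the spin structure and take a second stable-map limit $f_1$. Proposition~\ref{prop:pre-stable-reduction} shows that after this second pass no vertical rational tails survive, by comparing the two targets $\cP_0, \cP_1$ through the birational map induced by $\cL_0 \to \cL_1$ and tracking hyperplanes of a local splitting of $\bE$. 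Only then does a separate stabilization step contract the remaining unstable bridges and tails. A one-shot limit along the lines you sketch would get stuck on exactly the rational tails the double limit is designed to eliminate.
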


\begin{remark}
  Different choices of data in Section \ref{sss:input} may lead to the
  same $\fP$, hence the same $\SR_{g, \ddata}(\fP,\beta)$.
  The ambiguity in our set-up is analogous to the non-unique choice
  of R-charge of general GLSM \cite[Section 3.2.3]{FJR18}.
\end{remark}

\subsection{Virtual cycles}
Another goal of this paper is to construct various virtual cycles of (log) $R$-maps. For this purpose, we now impose the condition that $\cX$ is smooth.

\subsubsection{The canonical virtual cycles}
Olsson's logarithmic cotangent complex \cite{LogCot} provides a \emph{canonical perfect obstruction theory} for $\SR_{g, \ddata}(\fP,\beta)$, see Section \ref{ss:canonical-obs}. If $c_i = 0$ for all $i$, we refer it as a \emph{holomorphic theory}. Otherwise, we call it a \emph{meromorphic theory}.  For our purposes, we are in particularly interested in the holomorphic theory and the closed substack
\[
\SR^{\cpt}_{g, \ddata}(\fP,\beta) \subset \SR_{g, \ddata}(\fP,\beta)
\]
where log $R$-maps factor through $\zero_{\fP}$ along all marked
points.
We call $\SR^{\cpt}_{g, \ddata}(\fP,\beta)$ the \emph{stack of log
  R-maps with compact-type evaluations}.
In this case, $\ddata$ is simply a collection of connected components
of the inertia stack $\ocI\zero_{\fP_{\bk}} $ of $\zero_{\fP_{\bk}} := \zero_{\fP}\times_{\BC}\spec \bk$ as all contact orders are zero.

The canonical perfect obstruction theory of $\SR_{g, \ddata}(\fP,\beta)$ induces a canonical perfect obstruction theory of $\SR^{\cpt}_{g, \ddata}(\fP,\beta)$, see \eqref{equ:obs-compact-evaluation}, hence defines the \emph{canonical virtual cycle}  $[\SR^{\cpt}_{g, \ddata}(\fP,\beta)]^{\vir}$.

\subsubsection{Superpotentials and cosection localized virtual cycles}
A \emph{superpotential} is a morphism of stacks $W \colon \fP^{\circ} \to \LR$ over $\BC$. Its {\em critical locus} $\crit(W)$ is the closed substack of $\fP^{\circ}$ where $\diff W: T_{\fP/\BC} \to W^*T_{\LR/\BC}$ degenerates. We will consider the case that $\crit(W)$ is proper over $\BC$.

This $W$ induces a canonical Kiem--Li cosection of the canonical obstruction of the open sub-stack $\SR^{\cpt}_{g, \ddata}(\fP^{\circ},\beta) \subset \SR_{g, \ddata}(\fP,\beta)$.  This leads to a \emph{cosection localized virtual cycle} $[\SR^{\cpt}_{g, \ddata}(\fP^{\circ},\beta)]_{\sigma}$ which represents $[\SR^{\cpt}_{g, \ddata}(\fP^{\circ},\beta)]^{\vir}$, and is supported on the proper substack
\begin{equation}\label{equ:red-cycle-support}
\IR_W \subset \SR^{\cpt}_{g, \ddata}(\fP^{\circ},\beta)
\end{equation} parameterizing $R$-maps to the critical locus of $W$, see Section \ref{sss:cosection-localized-class}.

The virtual cycle $[\SR^{\cpt}_{g, \ddata}(\fP^{\circ},\beta)]_{\sigma}$ is the \emph{GLSM virtual cycle} that we next recover as a virtual cycle over a {\em proper} moduli stack.

\subsubsection{The reduced virtual cycles}

In general, the canonical cosection over
$\SR^{\cpt}_{g, \ddata}(\fP^{\circ},\beta)$ does not have a nice
extension to $\SR^{\cpt}_{g, \ddata}(\fP,\beta)$.
The key to this is a proper morphism constructed in \cite{CJRS18P},
called a \emph{modular principalization}:
\[
F\colon \UH^{\cpt}_{g, \ddata}(\fP,\beta) \to \SR^{\cpt}_{g, \ddata}(\fP,\beta)
\]
where $\UH^{\cpt}_{g, \ddata}(\fP,\beta)$ is the moduli of stable log $R$-maps with {\em uniform maximal degeneracy}. Note that $F$ restricts to the identity on the common open substack $\SR^{\cpt}_{g, \ddata}(\fP^{\circ},\beta)$ of both its source and target.

The canonical perfect obstruction theory of $\SR^{\cpt}_{g, \ddata}(\fP,\beta)$ pulls back to a canonical perfect obstruction theory of $\UH^{\cpt}_{g, \ddata}(\fP,\beta)$, hence the canonical virtual cycle $[\UH^{\cpt}_{g, \ddata}(\fP,\beta)]^{\vir}$. Though $F$ does not change the virtual cycles in that $F_*[\UH^{\cpt}_{g, \ddata}(\fP^{},\beta)]^{\vir} = [\SR^{\cpt}_{g, \ddata}(\fP^{},\beta)]^{\vir}$, the cosection over $\SR^{\cpt}_{g, \ddata}(\fP^{\circ},\beta)$ extends to the boundary
\begin{equation}\label{equ:boundary}
\Delta_{\UH} := \UH^{\cpt}_{g, \ddata}(\fP,\beta) \setminus \SR^{\cpt}_{g, \ddata}(\fP^{\circ},\beta)
\end{equation}
with explicit poles \eqref{equ:canonical-cosection}. Then a general
machinery developed in Section~\ref{sec:POT-reduction}, produces a
\emph{reduced perfect obstruction theory} of
$\UH^{\cpt}_{g, \ddata}(\fP,\beta)$, hence the \emph{reduced virtual
  cycle} $[\UH^{\cpt}_{g, \ddata}(\fP,\beta)]^{\red}$, see
Section~\ref{sss:reduced-theory}.

\begin{remark}
The two virtual cycles $[\UH^{\cpt}_{g, \ddata}(\fP,\beta)]^{\vir}$ and $[\UH^{\cpt}_{g, \ddata}(\fP,\beta)]^{\red}$ have the same virtual dimension.
\end{remark}

\subsection{Comparing virtual cycles}

\subsubsection{Reduced versus cosection localized cycle}

We first show that log GLSM recovers GLSM:
\begin{theorem}[First comparison theorem \ref{thm:reduced=local}]
  Let $\iota\colon \IR_W \to \UH_{g, \ddata}(\fP, \beta)$ be the inclusion \eqref{equ:red-cycle-support}. Then we have
$$\iota_*[\SR^{\cpt}_{g, \ddata}(\fP^{\circ},\beta)]_{\sigma}  = [\UH^{\cpt}_{g, \ddata}(\fP, \beta)]^{\red}.$$
\end{theorem}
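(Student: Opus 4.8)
The plan is to realize the reduced virtual cycle of $\UH^{\cpt}_{g,\ddata}(\fP,\beta)$ as a cosection localized virtual cycle for its canonical perfect obstruction theory, and then to compare this with $[\SR^{\cpt}_{g,\ddata}(\fP^{\circ},\beta)]_{\sigma}$ by restricting to the open locus on which $F$ is an isomorphism. Recall that the reduced perfect obstruction theory is built from the canonical one together with the extended cosection $\bar\sigma$ of the canonical obstruction sheaf $\obs$, valued in a line bundle $L = \cO_{\UH}(D)$ for an effective divisor $D$ supported on the boundary $\Delta_{\UH}$; the essential input from the modular principalization of \cite{CJRS18P} is that $\bar\sigma$ is \emph{surjective} onto $L$ at every point of $\Delta_{\UH}$. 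Hence the degeneracy locus of $\bar\sigma$ is contained in the open substack $\SR^{\cpt}_{g,\ddata}(\fP^{\circ},\beta)$, where $D$ is trivial and $\bar\sigma$ specializes to the cosection $\sigma$ induced by $W$, and it is exactly the proper substack $\IR_W$ of $R$-maps to $\crit(W)$.

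The first step is to verify, using the general machinery of Section~\ref{sec:POT-reduction}, that $[\UH^{\cpt}_{g,\ddata}(\fP,\beta)]^{\red}$ coincides with the cosection localized virtual cycle of the canonical perfect obstruction theory with respect to $\bar\sigma$. The reduced theory is obtained from the canonical one by a modification governed by $\bar\sigma$ and $L$ which preserves the rank, hence the virtual dimension; running the associated deformation-to-the-normal-cone construction identifies the reduced virtual cycle with the image of the intrinsic normal cone under the cosection localized Gysin map of Kiem--Li \cite{KiLi13}, in the version adapted to the $L$-valued cosection $\bar\sigma$. In particular $[\UH^{\cpt}_{g,\ddata}(\fP,\beta)]^{\red}$ is canonically a class in $A_*(\IR_W)$, supported on $\IR_W$, the locus where $\bar\sigma$ fails to be surjective.

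The second step uses the locality of cosection localized classes: such a class depends only on the restriction of the obstruction theory and the cosection to an arbitrary open neighborhood of the degeneracy locus. Since $\IR_W \subset \SR^{\cpt}_{g,\ddata}(\fP^{\circ},\beta)$, on this open substack $F$ restricts to the identity, $\Delta_{\UH}$ is empty and $L$ is trivial, $\bar\sigma$ agrees with $\sigma$, and the canonical perfect obstruction theory of $\UH^{\cpt}_{g,\ddata}(\fP,\beta)$ agrees with the canonical perfect obstruction theory of $\SR^{\cpt}_{g,\ddata}(\fP,\beta)$ restricted to $\SR^{\cpt}_{g,\ddata}(\fP^{\circ},\beta)$. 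Therefore the cosection localized class of the first step equals, as a class in $A_*(\IR_W)$, the Kiem--Li cosection localized class $[\SR^{\cpt}_{g,\ddata}(\fP^{\circ},\beta)]_{\sigma}$. Pushing forward along $\iota\colon\IR_W\to\UH^{\cpt}_{g,\ddata}(\fP,\beta)$ and combining with the first step yields the asserted equality.

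I expect the main obstacle to be the first step: the structural identification of the reduced virtual cycle with the cosection localized virtual cycle of the canonical theory. This requires developing cosection localization for a cosection valued in the possibly nontrivial line bundle $L = \cO_{\UH}(D)$, and matching the cone appearing in the reduction of the obstruction theory in Section~\ref{sec:POT-reduction} with the one underlying the localized Gysin map. It is crucial here that the surjectivity of $\bar\sigma$ along $\Delta_{\UH}$ produced by the principalization forces the relevant cone to lie in the kernel bundle stack of $\bar\sigma$ away from $\IR_W$, so that the reduced cycle is genuinely supported on $\IR_W$ with no boundary contribution. Once this is in place, the remaining steps are formal consequences of $F$ being the identity over $\SR^{\cpt}_{g,\ddata}(\fP^{\circ},\beta)$ and of the compatibility of the canonical obstruction theories along $F$.
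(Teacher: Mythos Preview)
Your second step---the locality of cosection localized classes on the open locus $\SR^{\cpt}_{g,\ddata}(\fP^{\circ},\beta)$---is correct and matches the paper's equation \eqref{equ:local=local}. The gap is in your first step, and it is not merely technical.

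You propose to identify $[\UH^\cpt]^{\red}$ with a cosection localized class for the \emph{canonical} obstruction theory using the $L$-valued cosection $\bar\sigma$ with $L=\cO_{\UH^\cpt}(D)$. Kiem--Li localization, however, is formulated for $\cO$-valued cosections, and Section~\ref{sec:POT-reduction} does not provide an $L$-valued variant. What Section~\ref{sec:POT-reduction} does provide is Theorem~\ref{thm:generali-localized-cycle}: after passing to the \emph{reduced} perfect obstruction theory, one obtains an $\cO$-valued reduced cosection $\sigma^{\red}$, and then $\iota_*[\scrM]_{\sigma^{\red}}=[\scrM]^{\red}$. But this theorem has a hypothesis you do not address: the base $\fM$ must be smooth, because the passage from the relative to the absolute reduced theory in Section~\ref{ss:general-absolut-theory} uses $H^1(\TT_{\fM})=0$.

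The paper's actual argument is therefore organized around this smoothness obstruction. The base $\fU^{\cpt}$ is not smooth in general (only its interior $\IU^{\cpt}$ is), so the paper first takes a birational projective resolution $\fr\colon\tU\to\fU$ from \cite[Lemma~5.25]{CJRS18P}, pulls everything back to $\tUH^\cpt\to\tU^{\cpt}$, applies Theorem~\ref{thm:generali-localized-cycle} over the now-smooth $\tU^{\cpt}$ to get $\iota_*[\tUH^\cpt]_{\sigma_{\tU^{\cpt}}}=[\tUH^\cpt]^{\red}$, and then pushes forward along $\fr$ using Costello--Manolache virtual push-forward to recover $[\UH^\cpt]^{\red}$. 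Your locality argument then finishes the proof, since $\fr$ is the identity over the interior.

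In short: the missing idea is not the development of $L$-valued cosection localization, but rather the observation that passing to the reduced theory already converts the cosection to an $\cO$-valued one, at the cost of requiring a smooth base---and that cost is paid by the resolution $\tU\to\fU$.
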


In Section~\ref{sec:examples}, we study a few examples explicitly. By the
first comparison theorem, the reduced virtual cycle of the compact moduli space of stable log $R$-maps recovers FJRW-theory and Clader's hybrid model when they are constructed using cosection localized virtual cycles \cite{CLL15, Cl17}.

Our machinery also applies to the Gromov--Witten theory of a complete
intersection, or more generally the zero locus $\cZ$ of a non-degenerate section $s$ of
a vector bundle $\bE$, Section \ref{ss:examples-GW}.
Examples include the quintic threefolds in $\PP^4$, and Weierstrass
elliptic fibrations, which are hypersurfaces in a $\PP^2$-bundle over
a not necessarily toric base $B$.
In this case, we may chose $r=1$ and 
$\ul{\fP} = \PP(\bE^\vee \otimes \LR \oplus \cO)$.
Combining with the results in \cite{ChLi12, KiOh18P, ChLi18P}, and
more generally in \cite{CJW19P, Pi20P}, we have 

\begin{corollary}[Proposition \ref{prop:glsm-gw}]
  Notations as above, we have
  \begin{equation*}
    p_*[\UH^{\cpt}_{g, \ddata}(\fP, \beta)]^{\red}
    = (-1)^{\rk(\bE)(1 - g) + \int_\beta c_1(\bE) - \sum_{j = 1}^n \age_j(\bE)} \iota_*[\scrM_{g, \ddata}(\cZ, \beta)]^\vir
  \end{equation*}
  where
  $p\colon \UH^{\cpt}_{g, \ddata}(\fP, \beta) \to \scrM_{g,
    \ddata}(\cX, \beta)$ sends a log $R$-map to the underlying stable
  map to $\cX$, $\scrM_{g, \ddata}(\cZ, \beta)$ is the moduli of
  stable maps to $\cZ$, and
  $\iota\colon \scrM_{g, \ddata}(\cZ, \beta) \to \scrM_{g,
    \ddata}(\cX, \beta)$ is the inclusion.
\end{corollary}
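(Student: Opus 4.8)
The plan is to deduce the corollary by combining the first comparison theorem with the known comparison between the cosection-localized GLSM virtual cycle and the Gromov--Witten virtual cycle of the complete intersection, exactly as in the $p$-fields literature. I would set up the diagram
\[
\xymatrix{
\SR^{\cpt}_{g,\ddata}(\fP^{\circ},\beta) \ar[r] \ar[d] & \UH^{\cpt}_{g,\ddata}(\fP,\beta) \ar[d]^{p} \\
\scrM_{g,\ddata}(\cZ,\beta) \ar[r]^{\iota} & \scrM_{g,\ddata}(\cX,\beta)
}
\]
where the left vertical arrow records the underlying stable map to $\cX$ together with the $p$-field, and $\IR_W \subset \SR^{\cpt}_{g,\ddata}(\fP^{\circ},\beta)$ — with $W$ the superpotential induced by the section $s$ of $\bE$ — is precisely the locus where the $p$-field vanishes and the underlying map factors through $\cZ$. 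Here $\fP^{\circ} = \vb(\bE^\vee_\cX \otimes \LR)$ and $\crit(W) = \zero_{\fP^{\circ}}|_{\cZ}$, so $\IR_W$ is identified, as a stack, with $\scrM_{g,\ddata}(\cZ,\beta)$ by forgetting the (zero) $p$-field.

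First I would invoke Theorem~\ref{thm:reduced=local} to rewrite $[\UH^{\cpt}_{g,\ddata}(\fP,\beta)]^{\red}$ as $\iota_*[\SR^{\cpt}_{g,\ddata}(\fP^{\circ},\beta)]_\sigma$, pushed forward along $\IR_W \hookrightarrow \UH$. Then, pushing everything further down to $\scrM_{g,\ddata}(\cX,\beta)$ via $p$, the statement reduces to the assertion that the cosection-localized GLSM virtual cycle, viewed on $\IR_W \cong \scrM_{g,\ddata}(\cZ,\beta)$ and pushed into $\scrM_{g,\ddata}(\cX,\beta)$, equals $(-1)^{\rk(\bE)(1-g)+\int_\beta c_1(\bE)-\sum_j \age_j(\bE)}\,\iota_*[\scrM_{g,\ddata}(\cZ,\beta)]^\vir$. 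This is the content of the cited works \cite{ChLi12, KiOh18P, ChLi18P} in the projective/GIT case and \cite{CJW19P, Pi20P} in the general orbifold vector bundle case; the orbifold sign involving the ages is exactly the correction appearing there. Concretely one compares the two perfect obstruction theories on $\IR_W$: the canonical one restricted from $\UH$ (equivalently from $\SR^{\cpt}(\fP^{\circ},\beta)$) differs from the GW obstruction theory of $\cZ$ by the cone complex whose cohomology is $H^\bullet(\cC, f^*\bE^\vee \otimes \omega^{\log}_{\cC})$, and Serre duality on the log curve, together with the cosection-localization formula of Kiem--Li and Chang--Kiem--Li, yields the Euler-class comparison with the stated sign $(-1)^{\chi}$, $\chi = \rk(\bE)(1-g) + \int_\beta c_1(\bE) - \sum_j \age_j(\bE)$ being the relevant Euler characteristic computed by orbifold Riemann--Roch.

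The main obstacle I anticipate is bookkeeping rather than a conceptual gap: one must check that under our log compactification the restriction of the reduced/canonical obstruction theory to $\IR_W$ genuinely agrees with the obstruction theory used in the $p$-field comparison theorems — i.e.\ that passing through $\UH$ and the modular principalization $F$ (which is the identity over $\SR^{\cpt}(\fP^{\circ},\beta)$, hence over $\IR_W$) does not alter anything — and that the discrete data $\ddata$, which on $\IR_W$ is a choice of components of $\ocI\zero_{\fP_\bk}$, matches the orbifold sectors of $\scrM_{g,\ddata}(\cZ,\beta)$ and $\scrM_{g,\ddata}(\cX,\beta)$ compatibly with $\iota$; the age terms in the sign come precisely from this matching. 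A secondary point is verifying the non-degeneracy hypothesis on $s$ guarantees $\crit(W)$ is proper over $\BC$ (so the first comparison theorem applies) and that $\crit(W)$ is cut out with the expected scheme structure $\scrM_{g,\ddata}(\cZ,\beta)$, which follows from non-degeneracy of $s$ as in \cite{ChLi12, CJW19P}. Once these identifications are in place, the corollary is immediate from Theorem~\ref{thm:reduced=local} and the cited GLSM/GW comparison.
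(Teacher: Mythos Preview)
Your proposal is correct and follows essentially the same route as the paper: invoke the first comparison theorem to replace $[\UH^{\cpt}]^{\red}$ by the cosection-localized class supported on $\IR_W$, then appeal to the $p$-fields comparison of \cite{CJW19P} (and its predecessors) for the sign-twisted identification with $[\scrM_{g,\ddata}(\cZ,\beta)]^{\vir}$. The paper carries out precisely the bookkeeping you flag as the main obstacle, verifying via the exact sequence \eqref{equ:modified-tangent} and a morphism of distinguished triangles that the perfect obstruction theory \eqref{equ:obs-compact-evaluation} on $\IR^\cpt$ (relative to $\IU^{\cpt}$) is compatible with the one in \cite[(18)]{CJW19P} (relative to $\IU$), and that the cosection \eqref{equ:canonical-cosection} restricts to the cosection \cite[(25)]{CJW19P}.
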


  Therefore Gromov--Witten invariants of $\cZ$ (involving only cohomology classes
  from the ambient $\cX$) can be computed in terms of (log) GLSM
  invariants defined using $(\cX, W)$.

\subsubsection{Canonical versus reduced virtual cycles}
The canonical perfect obstruction and the canonical cosection of $\UH^{\cpt}_{g, \ddata}(\fP, \beta)$ together defines a reduced perfect obstruction theory of $\Delta_{\UH}$, hence the reduced virtual cycle $[\Delta_{\UH}]^{\red}$, see Section \ref{ss:comparison-2}. The following relates the reduced virtual cycle with the canonical virtual cycle by a third virtual cycle:

\begin{theorem}[Second comparison theorem \ref{thm:comparison-2}]
  \begin{equation*}
    [\UH^{\cpt}_{g,\ddata}(\fP, \beta)]^\vir
    = [\UH^{\cpt}_{g,\ddata}(\fP, \beta)]^\red + \ttwist [\Delta_{\UH}]^\red.
  \end{equation*}
\end{theorem}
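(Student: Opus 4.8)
The plan is to compare the two obstruction theories of $\UH:=\UH^{\cpt}_{g,\ddata}(\fP,\beta)$ directly, localize the discrepancy onto the boundary $\Delta_{\UH}$, and identify it with $\ttwist\,[\Delta_{\UH}]^\red$ via a normal‑cone computation. Recall the ingredients. The canonical perfect obstruction theory $\phi\colon\EE_{\UH}\to\LL_{\UH}$ has obstruction sheaf $\obs:=h^1(\EE_{\UH}^\vee)$; over the open substack $\SR^{\cpt}_{g,\ddata}(\fP^{\circ},\beta)=\UH\setminus\Delta_{\UH}$ the canonical cosection is an honest cosection $\obs\to\cO$ with degeneracy locus the proper substack $\IR_W$, while on all of $\UH$ it is, by \eqref{equ:canonical-cosection}, a surjection $\sigma\colon\obs\twoheadrightarrow\cO_{\UH}(\ttwist\,\Delta_{\UH})$ with a pole of order $\ttwist$ along the boundary. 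The reduced perfect obstruction theory $\EE_{\UH}^\red$ of Section~\ref{sec:POT-reduction} realizes the pole‑free subsheaf $\obs^\red:=\sigma^{-1}(\cO_{\UH})\subseteq\obs$, which agrees with $\obs$ over $\UH\setminus\Delta_{\UH}$; this is why $[\UH]^\red$ and $[\UH]^\vir$ have the same virtual dimension, and by the first comparison theorem $[\UH]^\red=\iota_*[\SR^{\cpt}_{g,\ddata}(\fP^{\circ},\beta)]_{\sigma}$ is supported on $\IR_W$, which is disjoint from $\Delta_{\UH}$.

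The canonical and reduced obstruction theories fit into a distinguished triangle $\EE_{\UH}\to\EE_{\UH}^\red\to\mathfrak{K}\xrightarrow{+1}$ over $\LL_{\UH}$, where $\mathfrak{K}$ is a perfect complex supported set‑theoretically on $\Delta_{\UH}$ with $h^1(\mathfrak{K}^\vee)\cong\obs/\obs^\red$, a torsion sheaf on $\Delta_{\UH}$ of generic length $\ttwist$. Passing to a local presentation with a global resolution $\EE_{\UH}=[E_0\to E_1]$ — here the pole is precisely what makes the induced $\tilde\sigma\colon E_1\to\cO_{\UH}(\ttwist\,\Delta_{\UH})$ surjective — I embed the intrinsic normal cone $\fC\hookrightarrow\fE_1$ into the obstruction bundle stack, so that $[\UH]^\vir=0^!_{\fE_1}[\fC]$. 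The Kiem--Li lemma that a cosection annihilates the intrinsic normal cone, applied over $\UH\setminus\Delta_{\UH}$ to the honest cosection there, shows that away from $\IR_W$ the cone $\fC$ lands in the pole‑free part of $\fE_1$. Running the Kiem--Li localized Gysin construction adapted to the pole then splits $[\UH]^\vir$ into a piece supported on $\IR_W$ and a piece supported on $\Delta_{\UH}$; on the $\IR_W$‑piece, where $\obs^\red=\obs$, the first comparison theorem identifies it with $[\UH]^\red$, and the theorem is reduced to showing that the $\Delta_{\UH}$‑supported remainder equals $\ttwist\,[\Delta_{\UH}]^\red$.

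For that remainder I would restrict everything by deformation to the normal cone of the Cartier divisor $\Delta_{\UH}\hookrightarrow\UH$. This presents the $\Delta_{\UH}$‑supported cone, together with the excess line bundle $N_{\Delta_{\UH}/\UH}=\cO_{\UH}(\Delta_{\UH})|_{\Delta_{\UH}}$, in terms of the intrinsic normal cone of $\Delta_{\UH}$ and of the restriction of $(\EE_{\UH},\sigma)$ to the boundary — hence in terms of the reduced perfect obstruction theory of $\Delta_{\UH}$ constructed in Section~\ref{ss:comparison-2}. The arithmetic of the pole now enters: $\sigma$ is regular and surjective only after the twist by $\cO_{\UH}(\ttwist\,\Delta_{\UH})=\cO_{\UH}(\Delta_{\UH})^{\otimes\ttwist}$, so the defining section of $\cO_{\UH}(\Delta_{\UH})$ occurs to the $\ttwist$‑th power, and the excess computation returns $\ttwist$ copies of the class produced by the reduced POT of $\Delta_{\UH}$, i.e. $\ttwist\,[\Delta_{\UH}]^\red$. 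In the write‑up, this decomposition and the excess computation are most cleanly packaged as a general comparison statement attached to the POT‑reduction machinery of Section~\ref{sec:POT-reduction}, Theorem~\ref{thm:comparison-2} being its instance for the triple $(\EE_{\UH},\sigma,\Delta_{\UH})$.

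I expect the main obstacle to be exactly this last identification of the $\Delta_{\UH}$‑contribution. Two things must be controlled at once. First, one must verify that the obstruction theory the machinery attaches to the boundary is the one of Section~\ref{ss:comparison-2}: this is a compatibility of the canonical POT of $\UH$ along the embedding $\Delta_{\UH}\hookrightarrow\UH$, resting on the conormal sequence of that embedding interacting correctly with Olsson's logarithmic cotangent complex and with the cosection. Second, and more delicately, one must show that the multiplicity coming out of the normal‑cone computation is the global constant $\ttwist=\etwist\,r$ recorded in \eqref{equ:canonical-cosection}, rather than some pointwise order of degeneration of $\diff W$ along $\infty_{\fP}$ that could a priori jump along $\Delta_{\UH}$. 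Establishing that the pole order of the canonical cosection is everywhere exactly $\ttwist$ — which comes down to the explicit local structure of $\fP$ near $\infty_{\fP}$ and of the vanishing of $\diff W$ there — is the crux of the argument.
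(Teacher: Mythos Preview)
Your outline is in the right direction and close in spirit to the paper's, but the execution differs in a way that dissolves exactly the obstacle you flag at the end. The paper does not try to run Kiem--Li simultaneously near $\IR_W$ and near $\Delta_\UH$; instead (in the general Theorem~\ref{thm:boundary-cycle}) it first \emph{removes} the degeneracy locus $\scrM(\sigma^\red)$, on which $[\scrM]^\red$ is supported, so that on the remaining open set $[\scrM]^\red=0$ and one only has to show $[\scrM]^\vir=i_*[\Delta_\scrM]^\red$. (There is also a preliminary passage to a resolution $\tU\to\fU$ to make the base smooth before invoking absolute Kiem--Li; you do not mention this, and it is needed.)

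The key technical device you are missing is a \emph{third} perfect obstruction theory, the ``totally reduced'' $\EE^{\tred}_{\scrM/\fM}$ of \eqref{equ:t-red-POT}, obtained by coning off not the two-term complex $\FF=[\cO\to\cO(\Delta)]$ but the line bundle $\cO(\Delta)[-1]$ directly. Its crucial property is that $\EE^{\tred}|_{\Delta_\scrM}$ \emph{equals} the reduced boundary theory $\EE^{\red}_{\Delta_\scrM/\Delta}$ on the nose, so $i^![\scrM]^{\tred}=[\Delta_\scrM]^\red$ by definition of virtual pullback. On the other hand $\fE^{\tred}$ is the kernel cone stack of the surjective $\sigma\colon\fE_\scrM\to\cO(\Delta)$, so unwinding \cite[Definition~3.2]{KiLi13} gives $[\scrM]_\sigma=i^![\scrM]^{\tred}$, and $i_*[\scrM]_\sigma=[\scrM]^\vir$ finishes. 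Your deformation-to-the-normal-cone and excess computation are a more roundabout path to this same endpoint; the totally reduced theory packages it in one line.

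Finally, your main worry is a non-issue: the factor $\ttwist$ is not a pointwise order of degeneration of $\diff W$ that could jump, but is hard-wired into the \emph{target line bundle} of the cosection. By construction \eqref{equ:canonical-cosection} lands in $\bL_{\max}^{-\ttwist}=\cO(\ttwist\Delta_{\fU^\cpt})$, the exponent coming from the twist in $\tomega$ \eqref{equ:twisted-omega} and ultimately from Lemma~\ref{lem:pole-of-potential}. In Section~\ref{sec:POT-reduction} one simply takes the Cartier divisor to be $\Delta=\ttwist\Delta_{\fU^\cpt}$, and the multiplicity then enters only through the elementary identity $[\ttwist\Delta_\scrM]^\red=\ttwist[\Delta_\scrM]^\red$. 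No local analysis of $W$ near $\infty_\fP$ is required.
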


By Lemma \ref{lem:pole-of-potential}, $\ttwist$ is the order of poles
of $W$ along $\infty_{\fP}$.
In particular, it is a positive integer.

The fact that the difference between the reduced and canonical virtual
cycles is again virtual allows us to further decompose
$[\Delta_{\UH}]^{\red}$ in \cite{CJR20P2} in terms of canonical and
reduced virtual cycles of punctured and meromorphic theories using
\cite{ACGS17, ACGS20P}.
This is an important ingredient in the proof of the structural
properties of Gromov--Witten invariants of quintics in \cite{GJR18P}.

\subsubsection{Change of twists}
Let $\etwist_1, \etwist_2$ be two twisting choices leading to two targets $\fP_1$ and $\fP_2$ respectively.  Assume that $\frac{\etwist_1}{\etwist_2} \in \ZZ$. Then there is a morphism $\fP_1 \to \fP_2$ by taking $\frac{\etwist_1}{\etwist_2}$-th root stack along $\infty_{\fP_2}$. 

\begin{theorem}[Change of twist theorem \ref{thm:red-ind-twists}]
There is a canonical morphism
$$
\nu_{\etwist_1/\etwist_2} \colon \UH^{\cpt}_{g,\ddata}(\fP_1,\beta) \to \UH^{\cpt}_{g,\ddata}(\fP_2,\beta)
$$
induced by $\fP_1 \to \fP_2$. Pushing forward virtual cycles along $\nu_{\etwist_1/\etwist_2}$, we have
\begin{enumerate}
\item $\nu_{{\etwist_1/\etwist_2},*}[\UH^{\cpt}_{g,\ddata}(\fP_1,\beta)]^{\vir} = [\UH^{\cpt}_{g,\ddata}(\fP_2,\beta)]^{\vir}$,
\item $\nu_{{\etwist_1/\etwist_2},*}[\UH^{\cpt}_{g,\ddata}(\fP_1,\beta)]^{\red} = [\UH^{\cpt}_{g,\ddata}(\fP_2,\beta)]^{\red}$,
\item $\nu_{{\etwist_1/\etwist_2},*}[\Delta_{\UH,1}]^{\red} = \frac{\etwist_2}{\etwist_1} \cdot [\Delta_{\UH,2}]^{\red}.$
\end{enumerate}
where $\Delta_{\UH,i} \subset \UH^{\cpt}_{g,\ddata}(\fP_i,\beta)$ is the boundary \eqref{equ:boundary} for $i=1,2$.
\end{theorem}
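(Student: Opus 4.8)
The plan is to produce the target morphism $\fP_1\to\fP_2$, identify it as a root stack, show it induces the asserted $\nu := \nu_{\etwist_1/\etwist_2}$ on moduli, and then deduce the three cycle identities, using the Second Comparison Theorem~\ref{thm:comparison-2} to cut down the work. Throughout set $m := \etwist_1/\etwist_2\in\ZZ_{>0}$ and write $\UH_j := \UH^{\cpt}_{g,\ddata}(\fP_j,\beta)$.

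Writing $y$ for the coordinate of the $\cO_{\fX}$-summand in \eqref{equ:universal-proj}, so that the $\GG_m$-weight on the $i$-th summand of $\fP_j$ is $\etwist_j\cdot i$ and on $\cO_{\fX}$ is $1$, the assignment $[\,\ldots,x_i,\ldots,y\,]\mapsto[\,\ldots,x_i,\ldots,y^m\,]$, read with respect to the weightings $\bw_1$ and $\bw_2$ respectively, is a well-defined morphism $\fP_1\to\fP_2$ over $\fX$, hence over $\cX$ and over $\BC$. It restricts to the identity on $\fP^{\circ}$ (in particular $\zero_{\fP_1}=\zero_{\fP_2}$) and pulls back the Cartier divisor $\infty_{\fP_2}$ to $m\cdot\infty_{\fP_1}$; thus it is representable, proper and log étale, and exhibits $\fP_1$ as the $m$-th root stack of $\fP_2$ along $\infty_{\fP_2}$. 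Since $\ttwist_j=\etwist_j\cdot r$, for every log $R$-map $f_1$ to $\fP_1$ with composite $f_2$ to $\fP_2$ we get $f_1^*\cO(\ttwist_1\infty_{\fP_1})=f_2^*\cO(\ttwist_2\infty_{\fP_2})$. Define $\nu$ on objects by $f_1\mapsto f_2$, keeping the source log curve unchanged. Then $f_2$ is automatically a log $R$-map ($\fP_1\to\fP_2\to\BC$ equals $\fP_1\to\BC$, so the bottom arrow stays $\omega^{\log}_{\cC/S}$); the stability inequality \eqref{equ:hyb-stability} for $f_2$ is literally that for $f_1$ (since $\ft_1\circ f_1=\ft_2\circ f_2$ and $f_1^*\cO(\ttwist_1\infty_{\fP_1})=f_2^*\cO(\ttwist_2\infty_{\fP_2})$); compact-type evaluations, contact orders and inertia data are unchanged (as $\fP_1\to\fP_2$ is the identity on $\fP^{\circ}\supset\zero_{\fP}$ and the markings avoid $\infty_{\fP}$); and uniform maximal degeneracy together with basicness of the source log structure are preserved (as $\fP_1\to\fP_2$ only rescales the one-dimensional tropicalization of the target by $m$, so $f_2$ has the $m$-rescaled tropical type of $f_1$, with unchanged combinatorics). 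This gives $\nu\colon\UH_1\to\UH_2$, which is: proper, by properness of $\UH_1$ and $\UH_2$ (Theorem~\ref{thm:representability} together with properness of $F$); representable; log étale; an isomorphism over the common open $\SR^{\cpt}_{g,\ddata}(\fP^{\circ},\beta)$, which is dense since its complement $\Delta_{\UH,j}$ is a Cartier divisor by the modular principalization of \cite{CJRS18P}; and such that $\nu^*\cO(\Delta_{\UH,2})=\cO(\Delta_{\UH,1})^{\otimes m}$, the defining section of $\Delta_{\UH,2}$ pulling back to the $m$-th power of that of $\Delta_{\UH,1}$ up to units. Equivalently, $\nu$ is étale-locally on the target the $m$-th root stack of $\UH_2$ along $\Delta_{\UH,2}$.

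Granting this, the cycle identities follow. For (1): since $\fP_1\to\fP_2$ is log étale, $L^{\log}_{\fP_1/\fP_2}=0$, so $T^{\log}_{\fP_1/\BC}$ is pulled back from $T^{\log}_{\fP_2/\BC}$; as $\nu$ also leaves the source curve, hence the map to the stack of log curves, unchanged, the canonical perfect obstruction theory of Section~\ref{ss:canonical-obs} on $\UH_1$ is the $\nu$-pullback of that on $\UH_2$, yielding a virtual pullback with $\nu^![\UH_2]^{\vir}=[\UH_1]^{\vir}$; since $\nu$ is a degree-one proper morphism ($\nu_*\cO=\cO$ by the root-stack description, and $\nu$ an isomorphism over a dense open), the projection formula for virtual pullbacks gives (1). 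For (3): $\nu$ restricts to $\nu|_{\Delta_{\UH,1}}\colon\Delta_{\UH,1}\to\Delta_{\UH,2}$, and the identity $\nu^*\cO(\Delta_{\UH,2})=\cO(\Delta_{\UH,1})^{\otimes m}$ identifies this with the restriction of an $m$-th root stack to its branch divisor, which is étale of stacky degree $\tfrac{1}{m}=\tfrac{\etwist_2}{\etwist_1}$; moreover the canonical cosections of Section~\ref{ss:comparison-2} match under $\nu$, the pole of order $\ttwist_2$ along $\Delta_{\UH,2}$ pulling back to a pole of order $\ttwist_1=m\ttwist_2$ along $\Delta_{\UH,1}$, so the reduced obstruction theories of the two boundaries are $\nu$-compatible; hence $\nu_*[\Delta_{\UH,1}]^{\red}=\tfrac{\etwist_2}{\etwist_1}[\Delta_{\UH,2}]^{\red}$, which is (3). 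Finally, (2) is formal: apply $\nu_*$ to the $\fP_1$-instance of Theorem~\ref{thm:comparison-2}, $[\UH_1]^{\vir}=[\UH_1]^{\red}+\ttwist_1[\Delta_{\UH,1}]^{\red}$, and use (1), (3) and $\ttwist_1=m\ttwist_2$ to get $[\UH_2]^{\vir}=\nu_*[\UH_1]^{\red}+\ttwist_2[\Delta_{\UH,2}]^{\red}$; comparing with the $\fP_2$-instance of Theorem~\ref{thm:comparison-2} and cancelling the boundary term yields $\nu_*[\UH_1]^{\red}=[\UH_2]^{\red}$.

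The formal manipulations just given are routine; the real work is the construction paragraph, and especially the precise identification of $\nu$, and of its boundary restriction, with the expected root-stack and $\mu_m$-gerbe-type morphisms. This amounts to understanding how the modular principalization of \cite{CJRS18P} and the uniform-maximal-degeneracy structure transform under $\etwist\mapsto m\etwist$: one must check that the ``degeneracy'' parameter controlling $\cO(\Delta_{\UH})$ rescales by exactly $m$, so that both the Chow degree $\tfrac{1}{m}$ of $\nu|_{\Delta_{\UH,1}}$ and the matching of cosection pole orders come out as stated. Verifying that basicness and uniform maximal degeneracy survive the (non-strict) target root stack is the remaining technical point, but this reduces to the harmless fact that rescaling a one-dimensional cone complex does not alter tropical combinatorics.
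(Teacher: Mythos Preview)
Your construction of $\nu$ has a genuine gap: you send $f_1\colon\cC\to\fP_1$ to the composite $\cC\to\fP_1\to\fP_2$ with the \emph{same} source curve, but this composite need not be representable, hence need not be stable. The map $\fP_1\to\fP_2$ is an $m$-th root stack along $\infty_{\fP_2}$ and is therefore not representable over $\infty_{\fP_2}$; a node of $\cC$ mapping to $\infty_{\fP_1}$ can carry orbifold structure adapted to $\fP_1$ that is too large for $\fP_2$ after composition (compact type constrains only the markings, not the nodes). The paper addresses exactly this in Proposition~\ref{prop:change-twists}: one must factor through a stable map from a \emph{new} source curve, related to the old one by partial de-rooting at nodes, and $\nu$ is defined via this factorization (Corollary~\ref{cor:changing-twists}). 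Once the source curve is allowed to change, your identification of $\nu$ with the $m$-th root stack of $\UH_2$ along $\Delta_{\UH,2}$, and consequently the ``stacky degree $\tfrac{1}{m}$ on the boundary'' step in your argument for (3), no longer follows from the target picture alone.

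Your logical organization also differs from the paper's, and it is worth recording. The paper proves (2) first and almost for free, by invoking Theorem~\ref{thm:reduced=local}: both reduced cycles are pushforwards of the \emph{same} cosection-localized class supported on $\IR_W$ inside the common interior, so (2) holds once $\nu$ is the identity there. For (1) the paper does not attempt a direct virtual-pullback argument but instead (via Proposition~\ref{prop:can-ind-twists}) adapts the Abramovich--Wise framework for birational invariance of log Gromov--Witten theory \cite{AbWi18}, passing through an auxiliary stack $\fM'_{g,\vec{c}'}(\cA_1\to\cA_2)$ precisely to accommodate the change of source curve. Then (3) is deduced from (1), (2), and Theorem~\ref{thm:comparison-2}. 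Your ordering---prove (1) and (3) directly, then deduce (2) from the second comparison---is a reasonable alternative in principle, but its direct arguments for (1) and (3) rest on structural claims about $\nu$ that are not available once the curve-change issue is taken seriously.
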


\begin{remark}
  The flexibility of twisting choices allows different targets with
  isomorphic infinity hyperplanes.
  The above push-forwards together with the decomposition formulas in
  \cite{CJR20P2} will provide relations among invariants of different
  targets.
  For example, they can be used to prove the
  Landau--Ginzburg/Calabi--Yau correspondence for quintic threefolds
  \cite{GJR19P}, as well as to prove a formula \cite[Conjecture~A.1]{PPZ16P}
  for the class of the locus of holomorphic differentials with
  specified zeros \cite{CJRSZ19P}.
\end{remark}

\subsection{Plan of Paper}

The paper is organized as follows.
In Section~\ref{sec:rmap}, we introduce stable $R$-maps and collect
the basic properties of their moduli spaces.
The canonical and reduced virtual cycles are constructed and the comparison theorems are
proven in Section~\ref{sec:tale}.
In Section~\ref{sec:examples}, we work out several examples
explicitly.
Theorem~\ref{thm:intro-representability} is proven in
Section~\ref{sec:properties}.
Section~\ref{sec:POT-reduction} discusses reducing virtual cycles
along the boundary in more generality, and is used extensively in
Section~\ref{sec:tale}.

\subsection{Acknowledgments}
The first author would like to thank Dan Abramovich, Mark Gross and
Bernd Siebert for the collaborations on foundations of stable log maps which
influenced the development of this project.
Last two authors wish to thank Shuai Guo for the collaborations which
inspired the current work.
The authors would like to thank Adrien Sauvaget, Rachel Webb and Dimitri Zvonkine
for discussions related to the current work.
The authors thank Huai-Liang Chang, Young-Hoon Kiem, Jun Li and
Wei-Ping Li for their inspiring works on cosection localization needed in our construction.
Part of this research was carried out during a visit of the Institute
for Advanced Studies in Mathematics at Zhejiang University. Three of us would like to thank the
Institute for the support.
The first author was partially supported by NSF grant DMS-1700682 and DMS-2001089.
The second author was partially supported by an AMS Simons Travel
Grant and NSF grants DMS-1901748 and DMS-1638352.
The last author was partially supported by Institute for Advanced Study in Mathematics of Zhejiang University,
NSF grant DMS-1405245 and
NSF FRG grant DMS-1159265 .

\subsection{Notations}
In this paper, we work over an algebraically closed field of characteristic zero, denoted by $\bk$. All log structures are assumed to be \emph{fine and saturated} \cite{Ka88} unless otherwise specified.  A list of notations is provided below:

\begin{description}[labelwidth=2cm, align=right]
\item[$\vb(V)$] the total space of a vector bundle $V$

\item[$\PP^\bw(V)$] the weighted projective bundle stack with weights $\bw$

\item[$\cX$] a proper Deligne-Mumford stack with a projective coarse moduli
\item[$\cX \to X$] the coarse moduli morphism
\item[$\BC$] the universal stack of $\CC^*_{\omega}$-torsors
\item[$r$] a positive integer

\item[$\Sp \to \fX$] universal $r$-spin bundle

\item[$\fP \to \BC$] the target of log $R$-maps
\item[$\uC \to \uS$] a family of underlying curves over $\uS$
\item[$\uC \to \ucC$] the coarse moduli morphism of underlying curves

\item[$\cC \to S$] a family of log curves over $S$
\item[$\cC \to C$] the coarse moduli morphism of log curves

\item[$f\colon \cC \to \fP$] a log $R$-map

\item[$\beta$] a curve class in $\cX$
\item[$n$] the number of markings
\item[$\ddata$] collection of discrete data at all markings

\item[$\SH_{g,\ddata}(\fP^{\circ},\beta)$] the moduli stack of stable R-maps

\item[$\SH_{g,\ddata}(\fP,\beta)$] the moduli stack of stable log $R$-maps

\item[$\UH_{g,\ddata}(\fP,\beta)$] the moduli stack of stable log $R$-maps with uniform maximal degeneracy

\item[$\SH^{\cpt}_{g,\ddata}(\fP^{\circ},\beta)$] the moduli stack of stable R-maps with compact type evaluations

\item[$\SH^{\cpt}_{g,\ddata}(\fP,\beta)$] the moduli stack of stable log $R$-maps with compact type evaluations

\item[$\UH^{\cpt}_{g,\ddata}(\fP,\beta)$] the moduli stack of stable log $R$-maps with compact type evaluations and uniform maximal degeneracy

\item[$W\colon \fP^\circ \to \LR$] the superpotential

\end{description}

\section{Logarithmic $R$-maps}
\label{sec:rmap}

\subsection{Twisted curves and pre-stable maps}
We first collect some basic notions needed in our construction.

\subsubsection{Twisted curves}
Recall from \cite{AbVi02} that a \emph{twisted $n$-pointed curve} over a scheme $\ul{S}$ consists of the following data
\[
(\ul{\cC} \to \ul{C} \to \ul{S}, \{p_i\}_{i=1}^n)
\]
where
\begin{enumerate}
 \item $\ul{\cC}$ is a Deligne--Mumford stack proper over $\ul{S}$, and \'etale locally is a nodal curve over $\ul{S}$.
 \item $p_i \subset \ul{\cC}$ are disjoint closed substacks in the smooth locus of $\ul{\cC} \to \ul{S}$.
 \item $p_i \to \ul{S}$ are \'etale gerbes banded by the multiplicative group $\mu_{r_i}$ for some positive integer $r_i$.
 \item the morphism $\ul{\cC} \to \ul{C}$ is the coarse moduli morphism.
 \item Each node of $\ul{\cC} \to \ul{S}$ is balanced.
 \item $\ul{\cC} \to \ul{C}$ is an isomorphism over $\ul{\cC}_{gen}$, where $\ul{\cC}_{gen}$ is the complement of the markings and the stacky critical locus of $\ul{\cC} \to \ul{S}$.
\end{enumerate}

The balancing condition means that formally locally near a node, the
geometric fiber is isomorphic to the stack quotient
\[
[\spec \big(\bk[x,y]/(xy)\big) \big/ \mu_k]
\]
where $\mu_k$ is some cyclic group with the action
$\zeta(x,y) = (\zeta\cdot x, \zeta^{-1}\cdot y)$.
Given a twisted curve as above, by \cite[4.11]{AbVi02} the coarse
space $\ul{C} \to \ul{S}$ is a family of $n$-pointed usual pre-stable
curves over $\ul{S}$ with the markings determined by the images of
$\{p_i\}$.
The \emph{genus} of the twisted curve $\ul{\cC}$ is defined as the
genus of the corresponding coarse pre-stable curve $\ul{C}$.

When there is no danger of confusion, we will simply write
$\ul{\cC} \to \ul{S}$, and the terminology twisted curves and pre-stable curves are interchangable.

\subsubsection{Logarithmic curves}
\label{sss:log-curves}
An \emph{$n$-pointed log curve} over a fine and saturated log scheme $S$ in the sense of \cite{Ol07} consists of
\[
(\pi\colon \cC \to S, \{p_i\}_{i=1}^n)
\]
such that
\begin{enumerate}
 \item The underlying data $(\ul{\cC} \to \ul{C} \to \ul{S}, \{p_i\}_{i=1}^n)$ is a twisted $n$-pointed curve over $\ul{S}$.
 \item $\pi$ is a proper, logarithmically smooth, and integral morphism of fine and saturated logarithmic stacks.
 \item If $\ul{U} \subset \ul{\cC}$ is the non-singular locus of $\ul{\pi}$, then $\ocM_{\cC}|_{\ul{U}} \cong \pi^*\ocM_{S}\oplus\bigoplus_{i=1}^{n}\NN_{p_i}$ where $\NN_{p_i}$ is the constant sheaf over $p_i$ with fiber $\NN$.
\end{enumerate}

For simplicity, we may refer to $\pi\colon \cC \to S$ as a log curve
when there is no danger of confusion.
The \emph{pull-back} of a log curve $\pi\colon \cC \to S$ along an
arbitrary morphism of fine and saturated log schemes $T \to S$ is the
log curve $\pi_T\colon \cC_T:= \cC\times_S T \to T$ with the fiber
product taken in the category of fine and saturated log stacks.

Given a log curve $\cC \to S$, we associate the \emph{log cotangent bundle} $\omega^{\log}_{\cC/S} := \omega_{\uC/\uS}(\sum_i p_i)$ where $\omega_{\uC/\uS}$ is the relative dualizing line bundle of the
underlying family $\uC \to \uS$.

\subsection{Logarithmic $R$-maps as logarithmic fields}\label{ss:log-fields}

In this subsection, we reformulate the notion of a log $R$-map in
terms of the more concrete notion of spin-maps with fields.
This will be useful for relating to previous constructions in GLSM (see Section \ref{sec:examples}), and for some of the proofs in
Section~\ref{sec:properties}.
\begin{definition}\label{def:spin}
  Let $\ul{g}\colon \ul{\cC} \to \ul{\cX}$ be a pre-stable map
  over $\ul{S}$.
  An \emph{$r$-spin structure} of $\ul{g}$ is a line bundle $\cL$ over
  $\ucC$ together with an isomorphism
  \[\cL^{\otimes r} \cong \omega^{\log}_{\ul{\cC}/\ul{S}}\otimes g^*\bL^{\vee}.\]
  The pair $(\ul{g}, \cL)$ is called an {\em $r$-spin map}.
\end{definition}

Given a log map $g\colon \cC \to \cX$ over $S$ and an $r$-spin
structure $\cL$ of the underlying map $\ul{g}$, we introduce a
weighted projective stack bundle over $\uC$:
\begin{equation}\label{equ:proj-bundle}
  \ul{\cP}_{\cC} := \PP^\bw\left(\bigoplus_{i > 0} (g^*(\bE_i^\vee) \otimes \cL^{\otimes i}) \oplus \cO\right)
\end{equation}
where $\bw$ indicates the weights of the $\GG_m$-action as in
\eqref{equ:universal-proj}.
The Cartier divisor $\infty_{\cP} \subset \ul{\cP}_{\cC}$ defined by the vanishing of the last coordinate, 
is called the \emph{infinity
  hyperplane}.
Let $\cM_{\infty_{\cP}}$ be the log structure on $\ul{\cP}_{\cC}$
associated to the Cartier divisor $\infty_{\cP}$, see \cite{Ka88}. Form the log stack
\[
\cP_{\cC} := (\ul{\cP}_{\cC}, \cM_{\cP_\cC} := \cM_{\cC}|_{\ul{\cP}_{\cC}}\oplus_{\cO^*}\cM_{\infty_{\cP}}),
\]
with the natural projection $\cP_{\cC} \to \cC$.

\begin{definition}
  \label{def:log-field}
  A \emph{log field} over an $r$-spin map $(g, \cL)$ is a section
  $\rho\colon \cC \to \cP_{\cC}$ of $\cP_{\cC} \to \cC$.
  The triple $(g, \cL, \rho)$ over $S$ is called an \emph{$r$-spin map
    with a log field}.

  The \emph{pull-back} of an $r$-spin map with a log field is
  defined as the pull-back of log maps.
\end{definition}

We now show that the two notions --- log $R$-maps and pre-stable maps
with log fields --- are equivalent.

\begin{proposition}\label{prop:map-field-equiv}
Fix a log map $g\colon \cC \to \cX$ over $S$, and consider the following diagram of solid arrows
\[
\xymatrix{
\cC \ar@{-->}[rrd]  \ar@/^2ex/@{-->}[rrrd] \ar@/^4ex/[rrrrd]^{g} \ar@/_6ex/[rrrdd]_{\omega^{\log}_{\cC/S}}  &&& \\
&& \fP  \ar[r] & \fX \ar[d]^{\zeta}  \ar[r] & \cX  \\
&& & \BC  &
}
\]
We have the following equivalences:
\begin{enumerate}
\item The data of an $r$-spin map $(g, \cL)$ is equivalent to
   a morphism $\cC \to \fX$ making the above diagram
  commutative.
\item The data of a log field $\rho$ over a given $r$-spin map
  $(g, \cL)$ is equivalent to giving a log $R$-map
  $f\colon \cC \to \fP$ making the above diagram commutative.
\end{enumerate}
\end{proposition}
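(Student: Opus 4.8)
The plan is to unwind both the source-side and target-side descriptions into explicit torsor and line-bundle data and match them. I would begin with (1). A morphism $\cC \to \fX$ lying over $g \colon \cC \to \cX$ and over $\omega^{\log}_{\cC/S} \colon \cC \to \BC$ amounts, by the cartesian square \eqref{diag:spin-target} defining $\fX$, to a line bundle on $\cC$ together with an isomorphism of its $r$-th power with the pullback of $\LR \boxtimes \bL^\vee$ along $(\omega^{\log}_{\cC/S}, g)$. Since $\LR$ is the universal line bundle on $\BC$, its pullback along $\omega^{\log}_{\cC/S}$ is $\omega^{\log}_{\cC/S}$ itself, and the pullback of $\bL^\vee$ along $g$ is $g^*\bL^\vee$; so the data is exactly a line bundle $\cL$ on $\cC$ (equivalently, since it is a line bundle, on $\ucC$) with $\cL^{\otimes r} \cong \omega^{\log}_{\cC/S} \otimes g^*\bL^\vee$, i.e. an $r$-spin structure on $\ul g$ in the sense of Definition~\ref{def:spin}. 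Here one must note that the twisted-curve hypotheses guarantee line bundles on $\ul\cC$ descend appropriately and that $\omega^{\log}$ agrees with the line bundle obtained from $\BC$; this is the content of the earlier set-up. The equivalence is clearly functorial in $S$, compatible with pullback, which handles the last sentence of the proposition.

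For (2), I would use the concrete presentation \eqref{equ:proj-bundle} of $\ul\cP_\cC$ against the definition \eqref{equ:universal-proj} of $\ul\fP$. Given an $r$-spin map $(g,\cL)$, hence a map $h\colon \cC \to \fX$ from part (1), the fiber product $\fP \times_\fX \cC$ is by construction precisely the weighted projective bundle $\PP^\bw\bigl(\bigoplus_{i>0}(g^*\bE_i^\vee \otimes \cL^{\otimes i}) \oplus \cO\bigr)$ over $\cC$, because pulling back $\bE_{i,\fX}^\vee \otimes \cL_\fX^{\otimes i}$ along $h$ gives $g^*\bE_i^\vee \otimes \cL^{\otimes i}$ by definition of $\cL$ as the universal root. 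On the level of log structures, the log structure on $\fP$ is the Cartier-divisor log structure of $\infty_\fP$ amalgamated with the one pulled back from $\BC$; pulling this back along $h$ and along the log curve structure yields exactly $\cM_{\cP_\cC} = \cM_\cC|_{\ul\cP_\cC} \oplus_{\cO^*} \cM_{\infty_\cP}$, since $\infty_\fP$ pulls back to $\infty_\cP$ and the $\BC$-part pulls back to $\cM_\cC$ (again using $\LR \mapsto \omega^{\log}_{\cC/S}$). Thus $\fP \times_\fX \cC \cong \cP_\cC$ as log stacks over $\cC$. A log $R$-map $f \colon \cC \to \fP$ fitting in the diagram is, by the universal property of the fiber product, the same as a section of $\cP_\cC \to \cC$ — that is, a log field $\rho$. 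Again functoriality and compatibility with pullback are immediate from the fiber-product description.

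The one step that requires genuine care, rather than bookkeeping, is the compatibility of log structures in (2): one must verify that forming the fiber product $\fP \times_\fX \cC$ in the category of fine and saturated log stacks does not introduce extra saturation, and that the amalgamated sum $\cM_\cC|_{\ul\cP_\cC} \oplus_{\cO^*} \cM_{\infty_\cP}$ is already fs. This is where I expect the main obstacle: the Cartier divisor $\infty_\cP$ meets the nodal and marked locus of $\cC$ in general, so the two log structures being amalgamated are not transverse everywhere, and one should check locally (in charts $\NN \to \ocM_S$, $\NN \to \ocM_\cC$ at markings, $\NN^2 \to \ocM_\cC$ at nodes, together with the $\NN$ from $\infty_\cP$) that the direct sum over $\cO^*$ is saturated and that this matches the pullback of the log structure on $\fP$, which was itself defined as such an amalgamated sum over $\fX$. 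Once this local chart computation is done, the rest of the argument is the formal fiber-product manipulation sketched above.
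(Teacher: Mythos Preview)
Your proposal is correct and matches the paper's approach: the paper simply sets up the tower of cartesian squares (with $\cP_\cC \cong \fP \times_\fX \cC$ via the spin map $\cC \to \fX$ from part (1)) and invokes the universal property, exactly as you do. Your anticipated obstacle regarding saturation is not actually an issue: since $\fX$ carries the trivial log structure, the fs log fiber product has characteristic sheaf equal to the direct sum $\ocM_\fP|_{\ul\cP_\cC} \oplus \ocM_\cC|_{\ul\cP_\cC}$, and a direct sum of fs monoids is already fs, so no saturation step is required and the amalgamated log structure agrees on the nose with the definition of $\cP_\cC$.
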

\begin{proof}
  The first equivalence follows from Definition \ref{def:spin} and \eqref{diag:spin-target}.
  
  Note that $(g, \cL)$ induces a commutative diagram of solid arrows with all squares cartesian: 
    \begin{equation}\label{diag:map-field}
    \xymatrix{
      \cP_\cC \ar[d] \ar[r] &\fP_{\cC} \ar[r] \ar[d] & \fP \ar[d] \\
      \cC \ar[r] \ar[rd]_{=} \ar@/^1pc/@{-->}[u]^{\rho}& \fX_{\cC} \ar[r] \ar[d] & \fX \ar[d] \\
      & \cC \ar[r]^{\omega_{\cC/S}^{\log}} &  \BC.
    }
  \end{equation}
  Thus (2) follows from the universal property of cartesian squares.
\end{proof}

\begin{definition}\label{def:field-stability}
  An $r$-spin map with a log field is \emph{stable} if the
  corresponding $R$-map is stable.
\end{definition}

Let $f\colon \cC \to \fP$ be a logarithmic $R$-map over $S$, and
$\rho\colon\cC \to \cP_\cC$ be the corresponding logarithmic field.
Using \eqref{diag:map-field}, we immediately obtain
\[
  f^* \cO(\ttwist\infty_{\fP}) = \rho^* \cO(\ttwist\infty_{\cP_{\cC}}),
\]
hence the following equivalent description of the stability condition:
\begin{corollary}\label{cor:field-stability}
  A pre-stable map with log field $(g, \cL, \rho)$ over $S$ is
  stable iff the corresponding $R$-map $f$ is representable, and  if for a sufficiently small $\delta_0 \in (0,1)$ there exists $k_0 > 1$ such that for any pair $(k,\delta)$ satisfying $k > k_0$ and $\delta_0 > \delta > 0$, the following holds
  \begin{equation}\label{equ:field-stability}
    (\omega_{\cC/S}^{\log})^{1 + \delta} \otimes g^* \cH^{\otimes k} \otimes \rho^* \cO(\ttwist\infty) > 0.
  \end{equation}
\end{corollary}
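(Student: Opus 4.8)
The plan is to reduce Corollary~\ref{cor:field-stability} to the stability condition~\eqref{equ:hyb-stability} from \S\ref{sss:stability} via the equivalence of Proposition~\ref{prop:map-field-equiv}. Given a pre-stable map with log field $(g,\cL,\rho)$ over $S$, let $f\colon \cC \to \fP$ be the corresponding log $R$-map, which exists and is unique up to canonical isomorphism by Proposition~\ref{prop:map-field-equiv}(2). By Definition~\ref{def:field-stability}, $(g,\cL,\rho)$ is stable precisely when $f$ is stable, so it suffices to rewrite the line bundle appearing in~\eqref{equ:hyb-stability} in terms of the data $(g,\cL,\rho)$ and check the positivity condition matches~\eqref{equ:field-stability} term by term.

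The three factors to match are the following. First, $(\omega^{\log}_{\cC/S})^{1+\delta}$ is literally the same in both~\eqref{equ:hyb-stability} and~\eqref{equ:field-stability}, so no argument is needed there. Second, I must identify $(\ft\circ f)^*\cH^{\otimes k}$ with $g^*\cH^{\otimes k}$: by~\eqref{equ:hyb-target}, $\ft = \fp$ composed with the structure map $\fX \to \cX$ (which recovers $\cX$ from the $r$-spin target $\fX$ via~\eqref{diag:spin-target}), and under the equivalence of Proposition~\ref{prop:map-field-equiv}(1) the composite $\cC \xrightarrow{f} \fP \xrightarrow{\fp} \fX \to \cX$ is exactly the underlying map $g$ (since the morphism $\cC\to\fX$ lifting $g$ is required to make the diagram in Proposition~\ref{prop:map-field-equiv} commute, and its composition with $\fX\to\cX$ is $g$ by construction). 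Here $\cH$ is the pull-back of the fixed ample $H$ on $X$ to $\cX$, consistent with the notation in \S\ref{sss:input}. Third, and most importantly, I need $f^*\cO(\ttwist\infty_{\fP}) = \rho^*\cO(\ttwist\infty_{\cP_{\cC}})$; but this is precisely the displayed identity obtained just before the statement of Corollary~\ref{cor:field-stability} from the cartesian diagram~\eqref{diag:map-field}, namely that the left square in~\eqref{diag:map-field} identifies $\infty_{\cP_\cC}$ as the pull-back of $\infty_{\fP}$ along $\fP_\cC\to\fP$, and $\rho$ is the section inducing $f$, so $f^*\cO(\infty_\fP) = \rho^*\big(\cO(\infty_\fP)|_{\fP_\cC}\big) = \rho^*\cO(\infty_{\cP_\cC})$, and one takes the $\ttwist$-th power.

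Putting these three identifications together, the bundle in~\eqref{equ:hyb-stability} is isomorphic to the bundle in~\eqref{equ:field-stability}, and the phrase ``$>0$'' means in both cases that the restriction to every irreducible component of the source curve has strictly positive degree. Finally I note that the ``representable'' clause and the quantifier structure over $(\delta_0, k_0, k, \delta)$ are copied verbatim from \S\ref{sss:stability}, so there is nothing further to verify: $f$ being representable is part of the definition of stability of $f$, and the $(k,\delta)$-condition on $f$ translates directly into the $(k,\delta)$-condition on $(g,\cL,\rho)$. The only point requiring a little care---and the one I would write out most carefully---is the compatibility of the two ``$\infty$'' divisors, i.e.\ checking that the left square of~\eqref{diag:map-field} is cartesian in a way that respects the log structures, so that $\cO(\infty_{\cP_\cC})$ genuinely pulls back from $\cO(\infty_\fP)$; but this is already recorded in the construction of $\cP_\cC$ in \S\ref{ss:log-fields} and in the paragraph preceding the corollary, so the proof is essentially a matter of assembling these observations. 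There is no serious obstacle; the statement is a direct corollary as its name suggests.
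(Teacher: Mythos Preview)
Your proposal is correct and follows essentially the same approach as the paper: the paper simply records the identity $f^*\cO(\ttwist\infty_{\fP}) = \rho^*\cO(\ttwist\infty_{\cP_{\cC}})$ (obtained from the cartesian diagram~\eqref{diag:map-field}) immediately before stating the corollary, and treats the rest as evident. Your write-up is more explicit about matching $(\ft\circ f)^*\cH$ with $g^*\cH$ and about the quantifier structure, but the logical content is identical.
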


\begin{remark}\label{rem:r-spin-stability}
  The condition \eqref{equ:field-stability} is compatible
  with the stability of log $r$-spin fields in
  \cite[Definition~4.9]{CJRS18P}.
  Let $\cX = \spec \bk$ and $\rho\colon \cC \to \cP_\cC$ be a log $r$-spin
  field over $S$ as in \cite{CJRS18P}.
  The stability of $\rho$ is equivalent to
  \begin{align*}
    0 & < \omega^{\log}_{\cC/S}\otimes \rho^*\cO(k\cdot \zero_{\cP}) \\
   &= \omega^{\log}_{\cC/S}\otimes \cL^{\otimes k}\otimes\rho^*\cO(k\cdot \infty_{\cP}) \\
   &= \left((\omega^{\log}_{\cC/S})^{1+\frac{r}{k}}\otimes\rho^*\cO(r\cdot \infty_{\cP})\right)^{\otimes \frac{k}{r}}
  \end{align*}
for $k \gg 0$. Now replacing $\frac{r}{k}$ by $\delta$ in
\[
(\omega^{\log}_{\cC/S})^{1+\frac{r}{k}}\otimes\rho^*\cO(r\cdot \infty_{\cP}) > 0,
\]
we recover \eqref{equ:field-stability} as desired.
\end{remark}

\subsection{The structure of the infinity divisor}

For later use, we would like to study the structure of $\infty_{\fP}$.

Let $\bw$ and $\bw'$ be two weights as in \eqref{equ:proj-bundle} such that $\bw'$ corresponds to $\etwist = \frac{1}{d}$. Consider $\bw_{\infty}$ (resp.\ $\bw'_{\infty}$) obtained by removing the weight of the $\cO$ factor from $\bw$ (resp.\ $\bw'$).
Since $\gcd(\bw'_{\infty}) = 1$, we observe that
\[
\infty_{\fP'} = \PP^{\bw'_{\infty}}\big(\bigoplus_i\bE^{\vee}_{i,\fX}\otimes \Sp^{\otimes i}\big) \cong \PP^{\bw'_{\infty}}\big(\bigoplus_i\bE^{\vee}_{i,\fX}\big).
\]
In particular, there is a cartesian diagram
\begin{equation}\label{diag:infinity-pullback}
\xymatrix{
\infty_{\fP'} \ar[rr] \ar[d] && \infty_{\cX} := \PP^{\bw'_{\infty}}\big(\bigoplus_i\bE^{\vee}_{i}\big) \ar[d] \\
\fX \ar[rr] && \cX.
}
\end{equation}
To fix the notation, denote by $\cO_{\infty_{\cX}}(1)$ the tautological line bundle over $\infty_{\cX}$ associated to the upper right corner, and by $\cO_{\infty_{\fP'}}(1)$ the pull-back of $\cO_{\infty_{\cX}}(1)$ via the top horizontal arrow. Let $\cO_{\fP'}(1)$ be the tautological line bundle associated to the expression of $\fP'$ as in \eqref{equ:universal-proj}.

Let $\ell = \gcd(\bw_{\infty})$. Observe that
$
\ul{\fP} \to \ul{\fP}'
$
is an $\ell$-th root stack along $\infty_{\fP'}$. Thus, $\infty_{\fP}$ parameterizes $\ell$-th roots of the normal bundle $N_{\infty_{\fP'}/\fP'}$ over $\infty_{\fP'}$. In particular, the morphism
\[
\infty_{\fP} \to \infty_{\fP'}
\]
is a $\mu_{\ell}$-gerbe.

As shown below, the small number ``$\delta$'' in the stability condition \eqref{equ:hyb-stability} plays an important role in stabilizing components in $\infty_{\fP}$.

\begin{proposition}\label{prop:curve-in-infinity}
Consider an underlying $R$-map
\[
\xymatrix{
 && \infty_{\fP} \ar[d]^{\zeta\circ\fp} \\
\uC \ar[rru]^{\ul{f}} \ar[rr]_{\omega^{\log}_{\cC/S}}&& \BC
}
\]
over a geometric point. Consider the following commutative diagram
\begin{equation}\label{equ:map-to-rigid-infinity}
\xymatrix{
\uC \ar[r] \ar@/^3ex/[rr]^{\ul{f}'} \ar@/_3ex/[rrr]_{\ul{f}_{\cX}} & \infty_{\fP} \ar[r] & \infty_{\fP'} \ar[r] & \infty_{\cX}
}
\end{equation}
Then we have
\begin{equation}\label{equ:curve-in-infinity}
  \ul{f}^* \cO_{\fP}(\ttwist \infty_{\fP})
  = (\omega^{\log}_{\uC})^{\vee}\otimes \ul{f}^*\bL\otimes (\ul{f}_\cX)^*\cO_{\infty_{\cX}}(\frac{r}{d}).
\end{equation}
Furthermore, we have
\begin{equation}\label{equ:infinity-stability}
(\omega_{\uC}^{\log})^{1 + \delta} \otimes (\ft \circ f)^* \cH^{\otimes k} \otimes f^* \cO(\ttwist\infty_{\fP}) > 0
\end{equation}
if and only if the coarse of $\ul{f}_{\cX}$ is stable in the usual sense.
\end{proposition}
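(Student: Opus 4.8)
The statement has two parts — the line-bundle identity \eqref{equ:curve-in-infinity} and the stability equivalence \eqref{equ:infinity-stability} — and the plan is to prove the first by bookkeeping with root stacks and weighted projective bundles, then deduce the second from it by comparing with Kontsevich stability of the coarsification of $\ul{f}_{\cX}$.

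\emph{Part 1: the formula \eqref{equ:curve-in-infinity}.} Recall from the discussion preceding the proposition that $\ul\fP\to\ul\fP'$ is the $\ell$-th root stack along $\infty_{\fP'}$ with $\ell=\etwist d$, so that $\ttwist=\etwist r = \tfrac{r}{d}\,\ell$. I would proceed in four steps. (i) The homogeneous coordinate corresponding to the $\cO_{\fX}$-summand is a section of $\cO_{\fP'}(1)$ vanishing exactly (with multiplicity one) on $\infty_{\fP'}$, whence $\cO_{\fP'}(\infty_{\fP'})\cong\cO_{\fP'}(1)$. (ii) Writing $\bE^{\vee}_{i,\fX}\otimes\Sp^{\otimes i}=\bE^{\vee}_{i,\fX}\otimes(\Sp^{\otimes d})^{\otimes i/d}$ and applying the canonical isomorphism $\PP^{\bw}\!\big(\bigoplus_j V_j\otimes M^{\otimes w_j}\big)\cong\PP^{\bw}\!\big(\bigoplus_j V_j\big)$ for a line bundle $M$ (under which $\cO(1)$ gets twisted by $M^{\vee}$), restriction to $\infty_{\fP'}$ gives $\cO_{\fP'}(1)|_{\infty_{\fP'}}\cong \cO_{\infty_{\fP'}}(1)\otimes\Sp^{\otimes(-d)}$, with $\Sp$ pulled back via $\infty_{\fP'}\to\fX$. (iii) The defining property of the $\ell$-th root stack gives $\cO_{\fP}(\ell\,\infty_{\fP})\cong\pi^*\cO_{\fP'}(\infty_{\fP'})$ for $\pi\colon\fP\to\fP'$; since $\ul f$ factors through $\infty_{\fP}$ and $\pi\circ\ul f$ is $\ul f'$ followed by $\infty_{\fP'}\hookrightarrow\fP'$, pulling back along $\ul f$ and using (i), (ii) yields $\big(\ul f^*\cO_{\fP}(\infty_{\fP})\big)^{\otimes\ell}\cong (\ul f_{\cX})^*\cO_{\infty_{\cX}}(1)\otimes\ul h^*\Sp^{\otimes(-d)}$, where $\ul h\colon\uC\to\fX$ is $\ul f$ composed with $\infty_{\fP}\to\fX$. (iv) Raising to the $\ttwist$-th power (so the exponents get multiplied by $\ttwist/\ell=r/d$) and invoking the two defining data: the composite $\uC\xrightarrow{\ul h}\fX\xrightarrow{\zeta}\BC$ is the classifying morphism of $\omega^{\log}_{\uC}$, hence $\ul h^*\zeta^*\LR\cong\omega^{\log}_{\uC}$; and $\Sp^{\otimes r}\cong\zeta^*\LR\otimes(\text{pullback of }\bL^{\vee})$ by \eqref{diag:spin-target}, hence $\ul h^*\Sp^{\otimes(-r)}\cong(\omega^{\log}_{\uC})^{\vee}\otimes\ul f^*\bL$. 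Combining these gives \eqref{equ:curve-in-infinity}.

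\emph{Part 2: the stability equivalence.} Substituting \eqref{equ:curve-in-infinity} into the left-hand side of \eqref{equ:infinity-stability} and cancelling $\omega^{\log}_{\uC}$ against $(\omega^{\log}_{\uC})^{\vee}$, the line bundle in question becomes $(\omega^{\log}_{\uC})^{\otimes\delta}\otimes(\ul f_{\cX})^*A_k$ with $A_k:=\cO_{\infty_{\cX}}(\tfrac{r}{d})\otimes p^*\!\big(\bL\otimes\cH^{\otimes k}\big)$, $p\colon\infty_{\cX}\to\cX$. The bundle $A_k$ is ample relative to $\cX$, and since $\cH$ is the pullback of an ample bundle on the projective coarse space $X$, for $k\gg 0$ it has strictly positive degree, bounded below uniformly, on every curve in $\infty_{\cX}$ whose coarse image is positive-dimensional, and degree $0$ on curves with $0$-dimensional coarse image. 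Now argue component by component: write $\uC=\bigcup_j Z_j$ with coarsifications $\ol Z_j\subset\ucC$. If the coarsification of $\ul f_{\cX}$ contracts $\ol Z_j$, then $\deg_{Z_j}(\ul f_{\cX})^*A_k=0$, so the degree of the left side of \eqref{equ:infinity-stability} on $Z_j$ equals $\delta\cdot\deg_{Z_j}\omega^{\log}_{\uC}$; since $\omega^{\log}_{\uC}$ is the coarse pullback of the log dualizing bundle of $\ucC$, $\deg_{Z_j}\omega^{\log}_{\uC}=2g(\ol Z_j)-2+\#\{\text{special points on }\ol Z_j\}$, so this is positive iff $\ol Z_j$ is a stable pointed curve. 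If instead the coarsification of $\ul f_{\cX}$ does not contract $\ol Z_j$, then $\deg_{Z_j}(\ul f_{\cX})^*A_k$ is at least a positive constant independent of the map, while $\deg_{Z_j}\omega^{\log}_{\uC}\geq -2$, so the degree on $Z_j$ is positive for all sufficiently small $\delta$. Hence, for a uniform choice of $\delta_0$ and $k_0$, condition \eqref{equ:infinity-stability} holds iff every component contracted by the coarsification of $\ul f_{\cX}$ is a stable pointed curve, i.e.\ iff the coarsification of $\ul f_{\cX}$ is stable in the usual sense.

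The main obstacle is the bookkeeping in Part 1: correctly matching the three incarnations of ``$\cO(1)$'' (on $\fP'$, on $\infty_{\fP'}$, and on $\infty_{\cX}$) and, above all, pinning down the sign of the twist by $\Sp$ in step (ii) — the ``$\PP(V\otimes M)$'' twisting formula — since a sign error there propagates into \eqref{equ:curve-in-infinity}. In Part 2 the only delicate points are stacky: that ``contracted by the coarsification'' is detected exactly by vanishing of $\deg(\ul f_{\cX})^*A_k$ (using that line bundles on $0$-dimensional Deligne--Mumford stacks are torsion), and the degree identity $\deg_{Z_j}\omega^{\log}_{\uC}=2g(\ol Z_j)-2+\#\{\text{special points}\}$ for twisted curves; once these are in hand the equivalence with usual stability is immediate.
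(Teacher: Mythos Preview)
Your proposal is correct and follows essentially the same route as the paper. Part 1 is identical in substance: the paper also uses $\cO_{\fP'}(\infty_{\fP'})\cong\cO_{\fP'}(1)$, the twist formula $\cO_{\fP'}(1)|_{\infty_{\fP'}}\cong\cO_{\infty_{\fP'}}(1)\otimes\Sp^{-d}$, the root-stack relation $\cO_{\fP}(\ell\,\infty_{\fP})=\pi^*\cO_{\fP'}(\infty_{\fP'})$, and then Definition~\ref{def:spin} to finish. In Part 2 the only difference is organizational: the paper first splits on whether $\ft\circ f$ contracts $\cZ$ in $\cX$ (handled by $\cH^{\otimes k}$) and then, in the contracted case, on whether $\ul{f}_{\cX}$ contracts $\cZ$ in $\infty_{\cX}$; you split directly on $\ul{f}_{\cX}$, which is slightly cleaner since $\ft\circ f$ factors through $\ul{f}_{\cX}$ anyway. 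Both arguments reduce to the same dichotomy and yield the same conclusion.
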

\begin{proof}
Recall $\bw$ corresponds to the choice $\etwist = \frac{\ell}{d}$. We have
\[
\ul{f}^* \cO_{\fP}(\ttwist \infty_{\fP}) = (\ul{f}')^*\cO_{\fP'}(\frac{r}{d}\cdot \infty_{\fP'}).
\]
Since $\cO_{\fP'}(\infty_{\fP'}) \cong \cO_{\fP'}(1)$, we calculate
\[
(\ul{f}')^*\cO_{\fP'}(\infty_{\fP'})|_{\infty_{\fP'}} \cong (\ul{f}')^*\cO_{\infty_{\fP'}}(1)\otimes\cL^{\otimes -d} = (\ul{f}_{\cX})^*\cO_{\infty_{\cX}}(1)\otimes \cL^{\otimes -d}.
\]
Equation \eqref{equ:curve-in-infinity} is proved by combining the above calculation and Definition \ref{def:spin}.

Now using \eqref{equ:curve-in-infinity}, we obtain
  \begin{multline}
    \label{eq:positive-in-infinity}
    (\omega_{\ul{\cC}}^{\log})^{1 + \delta} \otimes (\ft \circ f)^* \cH^{\otimes k} \otimes\ul{f}^* \cO(\ttwist\infty_{\fP}) \\
    \cong (\omega_{\ul{\cC}}^{\log})^\delta \otimes (\ft \circ f)^* \cH^{\otimes k} \otimes (\ft \circ f)^*\bL\otimes (f_\cX)^*\cO_{\infty_{\cX}}(\frac{r}{d}),
  \end{multline}
  Let $\ul{\cZ} \subset \ul{\cC}$ be an irreducible component.
  Note the \eqref{equ:infinity-stability} holds over $\ul{\cZ}$ for
  $k \gg 0$ unless $\ft \circ f$ contracts $\cZ$ to a point.
  Suppose we are in the latter situation, hence both
  $(\ft \circ f)^* \cH^{\otimes k}$ and $(\ft \circ f)^*\bL$ have
  degree zero over $\ul{\cZ}$.
  Since $\ul{f}_{\cX}^*\cO_{\infty_\cX}(1)|_{\ul{\cZ}}$ has
  non-negative degree and $1 \gg\delta > 0$,
  \eqref{equ:infinity-stability} holds if and only if either
  $\ul{f}_{\cX}(\ul{\cZ})$ is not a point, or
  $\omega^{\log}_{\ul{\cC}}|_{\cZ}$ is positive.
  This proves the second statement.
\end{proof}

\begin{corollary}\label{cor:rat-bridge-stability}
  Let $\ul{f}\colon \ul{\cC} \to \ul{\fP}$ be
  an underlying R-map.
  Then a rational bridge $\cZ \subset \ul{\cC}$ fails to satisfy the
  stability condition \eqref{equ:hyb-stability} if and only if
  $\deg \ul{f}^*\cO_{\fP}(\infty_{\fP})|_{\cZ} = 0$ and
  $\deg (\ft \circ f)^*\cH|_{\cZ} = 0$.
\end{corollary}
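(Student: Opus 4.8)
The plan is to evaluate the stability line bundle of \eqref{equ:hyb-stability} on a rational bridge $\cZ\subset\ul{\cC}$ and reduce the claim to the signs of two intersection numbers, the only genuine subtlety being a sign that must be controlled using the structure of $\infty_{\fP}$.

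First I would use that, for a rational bridge, $\omega^{\log}_{\cC/S}|_{\cZ}$ has degree $0$, so that the restriction of the left-hand side of \eqref{equ:hyb-stability} to $\cZ$ has degree
\[
k\cdot A + \ttwist\cdot B,\qquad A:=\deg(\ft\circ f)^{*}\cH|_{\cZ},\quad B:=\deg\ul{f}^{*}\cO_{\fP}(\infty_{\fP})|_{\cZ},
\]
with no dependence on $\delta$. Since $\cH$ is pulled back from an ample line bundle on $X$, we have $A\ge 0$, with $A=0$ exactly when $\ft\circ f$ contracts $\cZ$. By definition $\cZ$ fails \eqref{equ:hyb-stability} precisely when $kA+\ttwist B$ is not positive for $k\gg 0$; as $A\ge 0$, $\ttwist>0$, and $kA+\ttwist B$ is nondecreasing in $k$, this happens if and only if $A=0$ and $B\le 0$. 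The ``if'' direction of the corollary is then immediate, since $A=B=0$ gives degree $0$ on $\cZ$.

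For the ``only if'' direction it remains to upgrade $B\le 0$ to $B=0$ under the standing hypothesis $A=0$, i.e.\ to show $B\ge 0$ whenever $\ft\circ f$ contracts $\cZ$. If $\ul{f}(\cZ)\not\subset\infty_{\fP}$ this is clear, because $\infty_{\fP}$ is an effective Cartier divisor, so $\ul{f}^{*}\infty_{\fP}$ is an effective divisor on $\cZ$. If instead $\ul{f}(\cZ)\subset\infty_{\fP}$, I would regard $\ul{f}|_{\cZ}$ as an underlying $R$-map into $\infty_{\fP}$ (viewing the two special points of $\cZ$ as markings, so that $\omega^{\log}$ of $\cZ$ again has degree $0$) and apply \eqref{equ:curve-in-infinity}: since $\ft\circ f$ contracts $\cZ$, the factors $(\omega^{\log}_{\cZ})^{\vee}$ and $\ul{f}^{*}\bL$ have degree $0$ on $\cZ$, whence $\ttwist B=\tfrac{r}{d}\deg(\ul{f}_{\cX})^{*}\cO_{\infty_{\cX}}(1)|_{\cZ}$. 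As $\cZ$ is contracted in $\cX$, the map $\ul{f}_{\cX}$ sends $\cZ$ into a single fibre $\PP^{\bw'_{\infty}}$ of $\infty_{\cX}\to\cX$, on which $\cO_{\infty_{\cX}}(1)$ is ample; hence that degree is $\ge 0$ and $B\ge 0$. Combining with the previous paragraph, $A=B=0$, as desired. (Equivalently, in this last case one may invoke directly the equivalence in Proposition~\ref{prop:curve-in-infinity}: on a contracted rational bridge, \eqref{equ:infinity-stability} holds iff the coarse of $\ul{f}_{\cX}$ is stable, i.e.\ iff $\ul{f}_{\cX}|_{\cZ}$ is non-constant, i.e.\ iff $B>0$.)

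I expect the only real obstacle to be exactly this last case, $\ul{f}(\cZ)\subset\infty_{\fP}$ with $\cZ$ contracted in $\cX$: there $\ul{f}^{*}\cO_{\fP}(\infty_{\fP})$ is (an $\ell$-th root of) the pullback of the normal bundle $N_{\infty_{\fP'}/\fP'}$ and carries no a priori sign, so one genuinely needs the identification of $\infty_{\fP}$ as a gerbe over the weighted projective bundle $\infty_{\fP'}$ together with the diagram \eqref{diag:infinity-pullback}, the identity \eqref{equ:curve-in-infinity}, and the ampleness of the tautological bundle on fibres. Everything else is elementary degree bookkeeping on a genus-$0$, two-pointed component.
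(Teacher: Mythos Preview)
Your proposal is correct and follows essentially the same approach as the paper's proof: you use that $\omega^{\log}_{\cC/S}$ is trivial on a rational bridge to reduce the stability condition to the sign of $kA+\ttwist B$, deduce $A=0$ and $B\le 0$ from failure, and then establish $B\ge 0$ by the same two-case split (the case $f(\cZ)\subset\infty_{\fP}$ handled via \eqref{equ:curve-in-infinity}). Your write-up is simply more explicit than the paper's terse citation of \eqref{equ:curve-in-infinity} in that last case.
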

\begin{proof}
  Suppose $\cZ$ is unstable.
  Then \eqref{equ:hyb-stability} implies that
  $\deg (\ft \circ f)^*\cH^{\otimes k}|_{\cZ} = 0$ for any $k \gg 0$,
  hence $\deg (\ft \circ f)^*\cH|_{\cZ} = 0$.
  Since $\omega^{\log}_{\cC/S}|_{\cZ} = \cO_{\cZ}$, we have
  $\deg \ul{f}^*\cO_{\fP}(\infty_{\fP})|_{\cZ} \leq 0$.
  It is clear that
  $\deg \ul{f}^*\cO_{\fP}(\infty_{\fP})|_{\cZ} \geq 0$ if
  $f(\cZ) \not\subset \infty_{\fP}$.
  If $f(\cZ) \subset \infty_{\fP}$, then \eqref{equ:curve-in-infinity}
  implies $\deg \ul{f}^*\cO_{\fP}(\infty_{\fP})|_{\cZ} \geq 0$.
  Thus we have $\deg \ul{f}^*\cO_{\fP}(\infty_{\fP})|_{\cZ} = 0$.

  The other direction follows immediately from
  \eqref{equ:hyb-stability}.
\end{proof}

\subsection{The combinatorial structures}

The \emph{minimality} or \emph{basicness} of stable log maps, which
plays a crucial role in constructing the moduli of stable log maps,
was introduced in \cite{AbCh14, Ch14, GrSi13}.
Based on their construction, a modification called \emph{minimality
  with uniform maximal degeneracy} has been introduced in
\cite{CJRS18P} for the purpose of constructing reduced virtual cycles.
We recall these constructions for later reference.

\subsubsection{Degeneracies and contact orders}\label{ss:combinatorics}
We fix a log $R$-map $f\colon \cC \to \fP$ over $S$. Consider the induced morphism of characteristic sheaves:
\begin{equation}\label{equ:combinatorics}
f^{\flat} \colon f^*\ocM_{\fP} \to \ocM_{\cC}.
\end{equation}
Note that characteristic sheaves are constructible. We recall the following terminologies.\\

\noindent{(1)\em Degeneracies of irreducible components.}
An irreducible component $\cZ\subset \cC$ is called \emph{degenerate} if $(f^*\ocM_{\fP})_{\eta_\cZ}\cong \NN$ where $\eta_\cZ \in \cZ$ is the generic point, and \emph{non-degenerate} otherwise.
Equivalently $\cZ$ is degenerate iff $f(\cZ) \subset \infty_{\fP}$.
For a degenerate $\cZ$, write
\[
e_\cZ =\ \bar{f}^{\flat}(1)_{\eta_\cZ} \in \ocM_{\cC, \eta_\cZ} = \ocM_{S}
\]
and call it the \emph{degeneracy} of $\cZ$.  If $\cZ$ is non-degenerate, set $e_\cZ = 0$.

An irreducible component $\cZ$ is called \emph{a maximally degenerate component}, if $e_{\cZ'} \poleq e_{\cZ}$ for any irreducible component $\cZ'$. Here for $e_1, e_2 \in \ocM_{S}$, we define $e_1 \poleq e_2$ iff $(e_2 - e_2) \in \ocM_S$.

\noindent{(2)\em The structure at markings.}
Let $p \in \cZ$ be a marked point. Consider
\[
(f^*\ocM_{\fP})_{p} \stackrel{\bar{f}^{\flat}}{\longrightarrow} \ocM_{\cC,p} \cong \ocM_S\oplus \NN \longrightarrow  \NN
\]
where the arrow on the right is the projection.
If $(f^*\ocM_{\fP})_{p} \cong \NN$ or equivalently $f(p) \in \infty_{\fP}$, we denote by
$c_p \in \ZZ_{\geq 0}$ the image of $1 \in \NN$ via the above
composition, and $c_p = 0$ otherwise.
We call $c_p$ the \emph{contact order} at the marking $p$. Contact orders are a generalization of tangency multiplicities in the log setting.\\

\noindent{(3)\em The structure at nodes.}
Define the \emph{natural partial order} $\poleq$ on the set of irreducible components of $\cC$ such that $\cZ_1 \poleq \cZ_2$ iff $(e_{\cZ_2} - e_{\cZ_1}) \in \ocM_S$.

Let $q \in \cC$ be a node joining two irreducible components $\cZ_1$ and $\cZ_2$ with $\cZ_1 \poleq \cZ_2$.
Then \'etale locally at $q$, \eqref{equ:combinatorics} is of the form
\[
(\bar{f}^{\flat})_q \colon   (f^*\ocM_{\fP})_q \to \ocM_{\cC,q} \cong \ocM_{S}\oplus_{\NN}\NN^2,
\]
where the two generators $\sigma_1$ and $\sigma_2$ of $\NN^2$
correspond to the coordinates of $\cZ_1$ and $\cZ_2$ at $q$
respectively, and the arrow $\NN := \langle\ell_q\rangle \to \NN^2$ is
the diagonal $\ell_q \mapsto \sigma_1 + \sigma_2$.
If $(f^*\ocM_{\fP})_q \cong \NN$ or equivalently
$f(q) \in \infty_{\fP}$, we have
\[
(\bar{f}^{\flat})_q (1) = e_{\cZ_1} + c_q \cdot \sigma_1,
\]
where the non-negative integer $c_q$ is called the \emph{contact order} at $q$. In this case, we have a relation between the two degeneracies
\begin{equation}\label{equ:edge-relation}
e_{\cZ_1} + c_q \cdot \ell_q = e_{\cZ_2}.
\end{equation}
If $(f^*\ocM_{\fP})_q$ is trivial, then we set the contact order $c_q = 0$. Note that in this case $e_{\cZ_1} = e_{\cZ_2} = 0$, and \eqref{equ:edge-relation} still holds.

\subsubsection{Minimality}

We recall the construction of minimal monoids in \cite{Ch14, AbCh14,
  GrSi13}.
The \emph{log combinatorial type} of the $R$-map $f$ consists of:
\begin{equation}\label{equ:combinatorial-type}
G = \big(\ul{G},  V(G) = V^{n}(G) \cup V^{d}(G), \poleq, (c_i)_{i\in L(G)}, (c_l)_{l\in E(G)} \big)
\end{equation}
where
\begin{enumerate}[(i)]
 \item $\ul{G}$ is the dual intersection graph of the underlying curve $\ul{\cC}$.

 \item $V^{n}(G) \cup V^{d}(G)$ is a partition of $V(G)$ where $V^{d}(G)$ consists of vertices of with non-zero degeneracies.

 \item $\poleq$ is the natural partial order on the set $V(G)$.

 \item We associate to a leg $i\in L(G)$ the contact order $c_i \in \NN$ of the corresponding marking $p_i$.

 \item We associate to an edge $l\in E(G)$ the contact order $c_l \in \NN$ of the corresponding node.
\end{enumerate}

We introduce a variable $\ell_l$ for each edge $l \in E(G)$, and a variable $e_v$ for each vertex $v \in V(G)$. Denote by $h_l$ the relation
$ e_{v'} = e_v + c_l\cdot\ell_l
$
for each edge $l$ with the two ends $v \poleq v'$ and contact order $c_l$. Denote by $h_v$ the following relation
$
e_v = 0
$
for each $v \in V^{n}(G)$. Consider the following abelian group
\[
\mathcal{G} = \left(\big(\bigoplus_{v \in V(G)} \ZZ e_v\big) \oplus \big( \bigoplus_{l \in E(G)} \ZZ \rho_l \big) \right) \big/ \langle h_v, h_l \ | \ v\in V^{d}(G), \ l \in E(G) \rangle
\]
Let $\mathcal{G}^{t} \subset \mathcal{G}$ the torsion subgroup. Consider the following composition
\[
 \big( \bigoplus_{v \in V(G)} \NN e_v \big) \oplus \big( \bigoplus_{l \in E(G)} \NN \rho_l\big) \to \mathcal{G} \to \mathcal{G}/\mathcal{G}^{t}
\]
Let $\oM(G)$ be the smallest submonoid that is saturated in $\mathcal{G}/\mathcal{G}^{t}$, and contains the image of the above composition. We call $\oM(G)$ the \emph{minimal or basic monoid} associated to $G$.

Recall from \cite[Proposition 3.4.2]{Ch14}, or \cite[Proposition 2.5]{CJRS18P} that there is a canonical map of monoids
\begin{equation}\label{equ:minimal}
 \oM(G) \to \ocM_S
\end{equation}
induced by sending $e_v$ to the degeneracy of the component associated to $v$, and sending $\ell_l$ to the element $\ell_{q}$ as in \eqref{equ:edge-relation} associated to $l$. In particular, the monoid $\oM(G)$ is fine, saturated, and sharp.

\begin{definition}\label{def:minimal}
A log $R$-map is \emph{minimal} or \emph{basic} if over each geometric fiber, the natural morphism \eqref{equ:minimal} is an isomorphism.
\end{definition}

\subsubsection{Logarithmic $R$-map with uniform maximal degeneracy}
\label{sss:UMD}

\begin{definition}
A log $R$-map is said to have \emph{uniform maximal degeneracy} if there exists a maximal degenerate component over each geometric fiber, see Section \ref{ss:combinatorics} (1).
\end{definition}

Let $f \colon \cC \to \fP$ be a log $R$-map over a geometric log point $S$, and $G$ be its log combinatorial type. Assume that $f$ has uniform maximal degeneracy, and denote by $V_{\max} \subset V(G)$ the collection of vertices with the maximal degeneracy. We call $(G, V_{\max})$ the \emph{log combinatorial type with uniform maximal degeneracy}, and form the corresponding minimal monoid below.

Consider the torsion-free abelian group
\[
\big( \oM(G)^{gp}\big/ \sim \big)^{tf}
\]
where $\sim$ is given by the relations $(e_{v_1} - e_{v_2}) = 0$ for any $v_1, v_2 \in V_{\max}$. By abuse of notation, we may use $e_v$ for the image of the degeneracy of the vertex $v$ in $\big( \oM(G)^{gp}\big/ \sim \big)^{tf}$. Thus, for any $v \in V_{\max}$ their degeneracies in $\big( \oM(G)^{gp}\big/ \sim \big)^{tf}$ are identical, denoted by $e_{\max}$. Let $\oM(G,V_{\max})$ be the saturated submonoid in $\big( \oM(G)^{gp}\big/ \sim \big)^{tf}$ generated by
\begin{enumerate}
\item the image of $\oM(G) \to \big( \oM(G)^{gp}\big/ \sim \big)^{tf}$, and
\item the elements $(e_{\max} - e_v)$ for any $v \in V(G)$.
\end{enumerate}
By \cite[Proposition 3.7]{CJRS18P}, there is a natural morphism of monoids $\oM(G) \to \oM(G,V_{\max})$ which fits in a commutative diagram
\[
\xymatrix{
\oM(G) \ar[r] \ar[rd]_{\phi} & \oM(G,V_{\max}) \ar[d]^{\phi_{\max}}\\
& \ocM_S
}
\]
We call $\oM(G,V_{\max})$ the \emph{minimal monoid with uniform maximal degeneracy} associated to $(G, V_{\max})$, or simply the \emph{minimal monoid} associated to $(G,V_{\max})$.

\begin{definition}\label{def:umd-minimal}
A log $R$-map is \emph{minimal with uniform maximal degeneracy} if over each geometric fiber the morphism $\phi_{\max}$ is an isomorphism.
\end{definition}

Note that in general a log $R$-map minimal with
uniform maximal degeneracy does not need to be minimal in the sense of
Definition \ref{def:minimal}.

\subsubsection{The universal logarithmic target}\label{sss:universal-log-target}
Consider the log stack $\cA$ with the underlying stack $[\A^1/\GG_m]$ and log structure induced by its toric boundary. It parameterizes Deligne--Faltings log structures of rank one \cite[A.2]{Ch14}. Thus there is a canonical strict morphism of log stacks
\begin{equation}\label{equ:universal-log-target}
\fP \to \cA.
\end{equation}
Let $\infty_{\cA} \subset \cA$ be the strict closed substack, then $\infty_{\fP} = \infty_{\cA}\times_{\cA}\fP$.

Given any log R-map $f \colon \cC \to \fP$, we obtain a log map
$f' \colon \cC \to \cA$ via composing with
\eqref{equ:universal-log-target}.
Then $f'$ and $f$ share the same log combinatorial type (with uniform
maximal degeneracy) since
\[
(f')^*\cM_{\cA} \cong f^*\cM_{\fP} \ \ \ \mbox{and} \ \ \ f^{\flat} = (f')^{\flat}.
\]
This point of view will be used later in our construction. 

\subsection{The evaluation morphism  of the underlying structure}\label{ss:underlying-evaluation}
Denote by $\fP_\bk := \fP\times_{\BC}\spec \bk$
where the arrow on the right is the universal $\GG_m$-torsor.
Let $\cI\fP_\bk$ be the cyclotomic inertia stack of $\fP_{\bk}$ \cite[Definition 3.1.5]{AGV08}.
Then $\cI\infty_{\fP_\bk} = \cI\fP_\bk\times_{\cA}\infty_{\cA}$ is the
cyclotomic inertia stack of $\infty_{\fP_\bk}$ equipped with the
pull-back log structure from $\cA$.

\begin{lemma}\label{lem:underlying-evaluation}
Let $f\colon \cC \to \fP$ be a log $R$-map over $S$, and $p \subset \cC$ be a marking. Then the restriction $f|_{p}$ factors through $\fP_\bk \to \fP$. Furthermore, $f$ is representable along $p$ if the induced morphism $p \to \fP_\bk$ is representable.
\end{lemma}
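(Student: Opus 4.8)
The plan is to reduce the first claim to the triviality of $\omega^{\log}_{\cC/S}|_p$ and the second to formal properties of representable morphisms. For the first claim: by the definition of a log $R$-map, the composite $\cC \xrightarrow{f} \fP \to \BC$ is the morphism classifying the line bundle $\omega^{\log}_{\cC/S}$, so its restriction $p \to \BC$ classifies $\omega^{\log}_{\cC/S}|_p$. Since $\spec\bk \to \BC$ is the universal $\GG_m$-torsor, a morphism to $\BC$ factors through $\spec\bk$ precisely when the corresponding line bundle is trivial; I would verify this for $\omega^{\log}_{\cC/S}|_p$ by the standard local computation. Étale-locally on $\ul\cC$ near $p$, the curve is $[\spec\bk[t]/\mu_r]$ with $\mu_r$ acting on $t$ with weight one and $p = \{t = 0\}$; there $\omega^{\log}_{\cC/S}$ is freely generated by $d\log t$, which is $\mu_r$-invariant, because the twist by $\sum_i p_i$ in $\omega^{\log}_{\cC/S} = \omega_{\ul\cC/\ul S}(\sum_i p_i)$ cancels the weight of $dt$. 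Hence $d\log t|_{t=0}$ is a global trivialization of $\omega^{\log}_{\cC/S}|_p$, i.e.\ the canonical residue isomorphism $\omega^{\log}_{\cC/S}|_{p} \cong \cO_{p}$. Granting this, the composites $p \to \cC \to \BC$ and $p \to \spec\bk \to \BC$ agree, and the universal property of $\fP_\bk = \fP \times_\BC \spec\bk$ applied to $f|_p$ together with $p \to \spec\bk$ produces the desired morphism $p \to \fP_\bk$.

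For the second claim, I would first note that $\fP_\bk \to \fP$ is the base change of $\spec\bk \to \BC$, which is affine (it is a $\GG_m$-bundle), hence $\fP_\bk \to \fP$ is representable. Next, $p \hookrightarrow \cC$ is a closed immersion, hence a monomorphism, so for each geometric point $x$ of $p$ it induces an isomorphism $\Aut_p(x) \xrightarrow{\sim} \Aut_{\cC}(x)$; since representability of a morphism of Deligne--Mumford stacks is detected by injectivity of the maps on automorphism groups of geometric points, $f$ being representable along $p$ is equivalent to $f|_p\colon p \to \fP$ being representable. Now if $p \to \fP_\bk$ is representable, then $f|_p$ is the composite $p \to \fP_\bk \to \fP$ of two representable morphisms and is therefore representable, which is the assertion. (Conversely, $f|_p$ representable forces $p \to \fP_\bk$ representable, since a composite of group homomorphisms can be injective only if the first factor is; so the stated implication is in fact an equivalence.)

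I do not anticipate a genuine obstacle. The only piece that is not purely formal is the local identification of $\omega^{\log}_{\cC/S}|_p$ with the trivial bundle, and this is routine given the description of log curves in \S\ref{sss:log-curves}; the point that must be handled with care there is the equivariance, namely that the $+\sum_i p_i$ twist is precisely what makes the generator $d\log t$ invariant under the band $\mu_{r_i}$ at the $i$-th marking. Everything else is the universal property of a fibre product together with elementary permanence properties of representable morphisms.
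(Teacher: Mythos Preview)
Your proposal is correct and takes essentially the same approach as the paper: the key input is the residue isomorphism $\omega^{\log}_{\cC/S}|_p \cong \cO_p$, which forces the composite $p \to \BC$ to factor through $\spec\bk$, and the rest is the universal property of the fibre product. The paper's own proof is a one-liner that states only this key fact and leaves the representability claim implicit; your version simply fills in the local computation for the residue isomorphism and spells out the representability argument, which is a welcome expansion but not a different route.
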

\begin{proof}
Since $\omega^{\log}_{\cC/S}|_{p} \cong \cO_{p}$, the composition $\ul{p} \to \ul{\cC} \to \BC$ factors through $\spec \bk \to \BC$. This proves the statement.
\end{proof}

Consider the universal gerbes $\cI\fP_\bk \to \ocI\fP_\bk$ and
$\cI\infty_{\fP_\bk} \to \ocI\infty_{\fP_\bk}$ in $\fP_\bk$ and
$\infty_{\fP_\bk}$ \cite[Section 3]{AGV08}.
Let $f\colon \cC \to \fP$ be a log $R$-map over $S$ with constant
contact order $c_i$ along its $i$-th marking $p_i \subset \cC$.
Write $\ocI^i = \ocI\fP_\bk$ if $c_i = 0$, and
$\ocI^i = \ocI\infty_{\fP_\bk}$, otherwise.
By the above lemma, the restriction $f|_{p_i}$ induces the
\emph{$i$-th evaluation morphism of the underlying structures}
\begin{equation}\label{equ:underlying-evaluation}
\ev_i\colon \ul{S} \to \ocI^{i}
\end{equation}
such that $p_i \to \ul{S}$ is given by the pull-back of the universal gerbe over $\ocI^{i}$. Thus, connected components of $\ocI\fP_\bk \cup \ocI\infty_{\fP_\bk}$ provide discrete data for log $R$-maps. Note that $\ocI\fP_\bk \cup \ocI\infty_{\fP_\bk}$ is smooth provided that $\fP \to \BC$, hence $\fP_\bk$ is smooth.

\begin{definition}\label{def:hyb-sector}
  A \emph{log sector} $\gamma$ is a connected component of
  either $\ocI\fP_\bk$ or $\ocI\infty_{\fP_\bk}$.
  It is \emph{narrow} if gerbes parameterized by $\gamma$ all avoids $\infty_{\fP_\bk}$. 
  
  A {\em sector of compact type} is a connected component of $\ocI\zero_{\fP_\bk}$. In particular all narrow sectors are of compact type.
\end{definition}

Due to the fiberwise $\CC^*_\omega$-action of $\fP \to \fX$, it is easy to
see that a sector is narrow iff it parameterizes gerbes in
$\zero_{\fP_\bk}$.
Thus, the above definition is compatible with
\cite[Definition~4.1.3]{FJR18}.
Furthermore, since $\zero_{\fP_\bk}$ and $\infty_{\fP_\bk}$ are
disjoint, the compact-type condition forces the contact order to be
trivial.

\subsection{The stack of logarithmic $R$-maps}\label{ss:hybrid-stack}

The discrete data of a log $R$-map $f\colon\cC \to \fP$
consists of the genus $g$, and the curve class $\beta \in H_2(\cX)$ of
$\ft \circ f$.
Furthermore, each marking has {\em discrete data} given by its contact
order $c$ and the log sector $\gamma$.
Let $\ddata = \{(\gamma_i, c_i)\}_{i=1}^n$ be the collection of discrete data at all markings where $n$ is the number of markings.

Denote by $\SH_{g, \ddata}(\fP, \beta)$ the stack of stable $R$-maps
over the category of logarithmic schemes with discrete data $g$,
$\beta$, $\ddata$.
Let $\UH_{g, \ddata}(\fP, \beta)$
be the category of objects with uniform maximal
degeneracy.
There is a tautological morphism \cite[Theorem~3.14]{CJRS18P}
\begin{equation}\label{equ:forget-uniform-degeneracy}
\UH_{g, \ddata}(\fP, \beta) \to \SH_{g, \ddata}(\fP, \beta).
\end{equation}
which is representable, proper, log \'etale, and surjective. Furthermore, \eqref{equ:forget-uniform-degeneracy} restricts to the identity over the open substack parameterizing log $R$-maps with images in $\fP^{\circ}$.

\begin{theorem}\label{thm:representability}
The categories $\SH_{g, \ddata}(\fP, \beta)$ and $\UH_{g, \ddata}(\fP, \beta)$ are represented by proper log Deligne--Mumford stacks.
\end{theorem}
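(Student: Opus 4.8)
The plan is to establish representability and properness of $\SH_{g,\ddata}(\fP,\beta)$ first, and then deduce the same for $\UH_{g,\ddata}(\fP,\beta)$ via the tautological morphism \eqref{equ:forget-uniform-degeneracy}. For the first stack, I would proceed in the standard three-step pattern: (1) represent the stack of \emph{underlying} $R$-maps (pre-stable maps $\ul f\colon\ul\cC\to\ul\fP$ with compatible arrow to $\BC$, plus stability) as an algebraic stack; (2) add the log structure and show that the resulting stack of minimal stable log $R$-maps is algebraic, using Olsson's theory; (3) check the valuative criterion and boundedness to get properness. For step (1), since $\ul\fP\to\BC$ is proper of DM-type and $\zeta\circ\fp$ is proper of DM-type, the stack of pre-stable maps to $\ul\fP$ over the Artin stack of pre-stable curves (with the $\omega^{\log}$-rigidification pinning down the $\BC$-factor) is algebraic and locally of finite type by the usual arguments; the stability condition \eqref{equ:hyb-stability} cuts out an open substack, and by Proposition~\ref{prop:map-field-equiv} one may equivalently work with $r$-spin maps with log fields, which is often more transparent for the degree estimates. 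For step (2), the key input is the universal strict morphism $\fP\to\cA$ of \S\ref{sss:universal-log-target}: a log $R$-map is the same as an underlying $R$-map together with a log map $\cC\to\cA$ lifting the underlying data, so the stack of log $R$-maps is obtained by base change from Olsson's stack of log maps to $\cA$ (equivalently the stack of Deligne--Faltings log structures of rank one on log curves), which is algebraic; minimality in the sense of Definition~\ref{def:minimal} then produces an algebraic stack of minimal objects by the combinatorial analysis of \S\ref{ss:combinatorics} and the standard argument of \cite{Ch14,AbCh14,GrSi13}.

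For properness, I would verify the valuative criterion over a DVR $R$ with fraction field $K$: given a stable log $R$-map over $\spec K$, extend it (after a finite base change) to a stable log $R$-map over $\spec R$. The underlying curve extends as a twisted pre-stable curve by stable reduction; the map to $\ul\fP$ extends after semistable modification of the central fiber because $\ul\fP\to\BC$ is proper, so a rational map from the generic fiber spreads out after blow-ups; then one contracts components violating the stability \eqref{equ:hyb-stability} using Corollary~\ref{cor:rat-bridge-stability} to identify exactly which rational bridges/tails are unstable. The delicate point — and I expect this to be \textbf{the main obstacle} — is precisely the behavior at the infinity divisor $\infty_\fP$: when the limit map hits $\infty_\fP$, the naive limit of underlying $R$-maps need \emph{not} be stable (indeed the moduli of underlying $R$-maps is not universally closed, cf.\ \cite[\S4.4.6]{CJRS18P}), and one must use the log structure to choose the correct limit. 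Here the subtle weighting in \eqref{equ:hyb-stability} — the exponent $1+\delta$ on $\omega^{\log}_{\cC/S}$ together with $f^*\cO(\ttwist\infty_\fP)$ — is what forces a unique log-theoretic limit: it controls the degree of the $p$-field part (i.e.\ $\rho^*\cO(\ttwist\infty)$ in Corollary~\ref{cor:field-stability}) on components mapping into $\infty_\fP$, and combined with Proposition~\ref{prop:curve-in-infinity}, which translates positivity on such components into usual stability of the induced map to $\infty_\cX$, it pins down both the log structure on the central fiber (via the minimal monoid) and the combinatorial type. Uniqueness of the limit then follows from uniqueness of minimal log structures, and boundedness (needed for finite type, hence for properness together with separatedness and universal closedness) follows by bounding the degree of $f^*\cH$ and the number of components, again using \eqref{equ:hyb-stability} to rule out infinitely many unstable configurations; separatedness follows from the DM-type hypothesis on $\fP\to\BC$ plus the rigidity of log structures.

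Finally, for $\UH_{g,\ddata}(\fP,\beta)$, I would invoke the tautological morphism \eqref{equ:forget-uniform-degeneracy}, which by \cite[Theorem~3.14]{CJRS18P} is representable, proper, log \'etale and surjective; since $\SH_{g,\ddata}(\fP,\beta)$ is a proper log DM stack and properness and the DM property are stable under composition with a representable proper morphism, $\UH_{g,\ddata}(\fP,\beta)$ is also represented by a proper log DM stack. (The DM property in both cases ultimately comes from $\fP\to\BC$ being of DM-type together with the automorphisms of a stable $R$-map being finite, which is forced by representability of $f$ and the positivity in \eqref{equ:hyb-stability} killing infinitesimal automorphisms of the source curve in the fibers of $\ft\circ f$.) The bulk of the work, and what I would expect the actual paper's Section~\ref{sec:properties} to be devoted to, is the careful valuative-criterion argument at $\infty_\fP$ sketched above — in particular showing that the log stability condition selects a \emph{unique} stable limit where the underlying theory would produce either none or a non-separated family.
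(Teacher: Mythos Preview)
Your overall plan --- reduce $\UH$ to $\SH$ via \eqref{equ:forget-uniform-degeneracy}, establish algebraicity via Hom-stacks together with the log-map theory of \cite{Ch14,AbCh14,GrSi13} through the universal target $\fP\to\cA$, then verify DM-type, boundedness, and the valuative criterion --- matches the paper's structure in Section~\ref{sec:properties}. The algebraicity, finite-automorphism, and boundedness sketches are essentially what the paper does.

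Your account of the valuative criterion, however, contains a misconception that would become a gap in execution. You write that ``one must use the log structure to choose the correct limit'' and that uniqueness ``follows from uniqueness of minimal log structures''. The paper's first move (\S\ref{ss:valuative-set-up}) is the opposite: since \eqref{equ:hyb-stability} is a constraint on the \emph{underlying} map only, relative properness of log maps over underlying stable maps \cite{Ch14,AbCh14,GrSi13} reduces everything to constructing and proving uniqueness of an \emph{underlying} stable $R$-map limit; the log structure then extends automatically via minimality. Separatedness is accordingly proved by a direct merging argument on coarse curves of two putative underlying limits (\S5.5), not by appeal to minimal monoids.

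The substantive difficulty --- which your ``spread out after blow-ups, then contract'' sketch does not address --- is that modifying the source curve changes $\omega^{\log}_{\cC/S}$, hence the arrow to $\BC$ and the fibered target $\fP\times_{\BC}\cC$; a naive stable-map limit is therefore not an $R$-map on the new components. The paper's device (\S5.6) is to take the stable-map limit \emph{twice}: the first pass may produce vertical rational tails, but a second limit taken with respect to the new $\omega^{\log}$, compared to the first via a birational map of fibered targets induced by a morphism of spin bundles \eqref{equ:spin-iterate}, is shown to have none (Proposition~\ref{prop:pre-stable-reduction}). Only then does stabilization by contracting unstable bridges and tails (\S5.7) yield the stable limit. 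Without this two-pass construction your existence argument would not close.
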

\begin{proof}
Since \eqref{equ:forget-uniform-degeneracy} is representable and proper, it suffices to verify the statement for
$\SH_{g, \ddata}(\fP, \beta)$, which will be done in Section \ref{sec:properties}. A key to the representability  is the fact discovered in \cite{AbCh14, Ch14,GrSi13} that the underlying stack $\ul{\SH_{g, \ddata}(\fP, \beta)}$ is the stack of minimal objects in Definition \ref{def:minimal}, and $\ul{\UH_{g, \ddata}(\fP, \beta)}$ is the stack of minimal objects in Definition \ref{def:umd-minimal}, see also \cite[Theorem 2.11]{CJRS18P}.
\end{proof}

\subsection{Change of twists}\label{ss:change-twists}

This section studies R-maps under the change of twists in preparation
of the proof of the Change of twist theorem
(Theorem~\ref{thm:red-ind-twists}).
The reader may skip this section on first reading, and return when
studying the proof of Theorem~\ref{thm:red-ind-twists}.

Consider two twisting choices
$\etwist_1, \etwist_2 \in \frac{1}{d}\cdot \ZZ$ such that
$\frac{\etwist_1}{\etwist_2} \in \ZZ$.
Let $\fP_1$ and $\fP_2$ be the hybrid targets corresponding to the
choices of $\etwist_1$ and $\etwist_2$ respectively as in
\eqref{equ:universal-proj}.
Then there is a cartesian diagram of log stacks
\begin{equation}\label{diag:targets-twists}
\xymatrix{
\fP_1 \ar[rr] \ar[d] &&  \fP_2 \ar[d] \\
\cA_1 \ar[rr]^{\nu} && \cA_2
}
\end{equation}
where $\cA_{1}$ and $\cA_2$ are two copies of $\cA$, the vertical arrows are given by \eqref{equ:universal-log-target}, and $\nu$ is the morphism induced by $\NN \to \NN, 1 \mapsto \frac{\etwist_1}{\etwist_2}$ on the level of characteristic monoids.  Note that the top is the $\frac{\etwist_1}{\etwist_2}$-th root stack along $\infty_{\fP_2}$ in $\fP_2$, and is compatible with arrows to $\BC$.

\begin{proposition}\label{prop:change-twists}
Let $f'\colon \cC' \to \fP_1$ be a stable log $R$-map over $S$. Then the composition $\cC' \to \fP_1 \to \fP_2$ factors through a stable log $R$-map $f\colon \cC \to \fP_2$ over $S$ such that
\begin{enumerate}
 \item The morphism $\cC' \to \cC$ induces an isomorphism of their coarse curves, denoted by $C$.
 \item The underlying coarse morphisms of $\cC' \to C\times_{\BC}\fP_1$ and $\cC \to C\times_{\BC}\fP_2$ are isomorphic.
 \item If $f'$ has uniform maximal degeneracy, so does $f$.
\end{enumerate}
Furthermore, this factorization is unique up to a unique isomorphism.
\end{proposition}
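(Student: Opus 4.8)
The plan is to construct the log $R$-map $f\colon \cC \to \fP_2$ by a two-step procedure: first produce the underlying map by stabilization of $\ul{\cC}' \to \ul{\fP}_1 \to \ul{\fP}_2$, then upgrade to a log map using the cartesian diagram \eqref{diag:targets-twists}. For step one, note that the composition $\ul{\cC}' \to \ul{\fP}_2$ need not be representable or stable: the root-stack morphism $\ul{\fP}_1 \to \ul{\fP}_2$ may ``un-stackify'' some orbifold structure over $\infty_{\fP_2}$, and rational bridges in $\ul{\cC}'$ that were stable for the $\fP_1$-stability may become unstable after reducing the twist. So I would first pass to the coarsening $\ul{\cC}' \to \ul{\cC}'_{\mathrm{rep}}$ that makes the map to $\ul{\fP}_2$ representable (adjusting gerbe structure at markings and nodes to the minimal one compatible with $\ul{f}_{\cX}$ and the new $r$-spin data), and then contract the finitely many now-unstable rational bridges via the usual stabilization of pre-stable maps, obtaining $\ul{\cC}$ with underlying stable map $\ul{f}\colon \ul{\cC} \to \ul{\fP}_2$. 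Properties (1) and (2) are then essentially built into this construction, since coarsening and bridge-contraction do not alter the coarse curve $C$ nor the coarse map to the projective bundle; here I would invoke Corollary~\ref{cor:rat-bridge-stability} to identify precisely which components get contracted and Proposition~\ref{prop:curve-in-infinity} to control the behavior over $\infty$.

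For step two, I would equip $\ul{\cC}$ with a log structure by pulling back along $\ul{\cC} \to \ul{\fP}_2 \to \cA_2$ (using \eqref{equ:universal-log-target} for $\fP_2$) and the log structure of $S$, and then check that the tautological map $\cC \to \fP_2$ is a log $R$-map, i.e.\ that the diagram with $\BC$ commutes. Commutativity with $\BC$ is automatic because $\ul{f}^* \cO_{\fP_2}(-\infty)$ agrees with the relevant twist of $\omega^{\log}$ exactly as for $f'$ (the root-stack morphism is compatible with arrows to $\BC$, as remarked after \eqref{diag:targets-twists}), and after bridge-contraction $\omega^{\log}_{\cC/S}$ is the stabilization of $\omega^{\log}_{\cC'/S}$. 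To promote the map to the log level one uses that $\cA_2$ carries the universal Deligne--Faltings rank-one log structure: a log lift of $\ul{\cC} \to \cA_2$ to $\cC \to \fP_2$ amounts to a log structure on $\cC$ together with the data $f^\flat$, which I obtain by pushing forward the log structure of $\cC'$ along $\cC' \to \cC$ (the contracted bridges are non-degenerate by Corollary~\ref{cor:rat-bridge-stability}, so no log data is lost there) and precomposing $(f')^\flat$ with the monoid map $\NN \to \NN$, $1 \mapsto \etwist_2/\etwist_1 \cdot(\,\cdot\,)$ dual to $\nu$. Stability of $f$ then follows from Corollary~\ref{cor:field-stability} together with the change-of-twist computation: $f^*\cO(\ttwist_2 \infty_{\fP_2})$ pulls back under $\cC' \to \cC$ to $(f')^*\cO(\ttwist_1 \infty_{\fP_1})$, so \eqref{equ:hyb-stability} for $f$ on the uncontracted components is inherited from that of $f'$, while the contracted rational bridges are exactly those forced to be unstable.

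For property (3), I would argue that the log combinatorial type with uniform maximal degeneracy is preserved: the degeneracy function on vertices is read off from $f^\flat$, which differs from $(f')^\flat$ only by the monomial rescaling by $\etwist_1/\etwist_2$ and by deletion of the non-degenerate contracted bridge-vertices; rescaling by a positive integer is order-preserving on the natural partial order $\poleq$, and deleting non-degenerate vertices does not change which vertices are maximally degenerate. Hence a maximal degenerate component of $\cC'$ maps to a maximal degenerate component of $\cC$. Finally, uniqueness up to unique isomorphism is the standard argument: any two solutions have the same coarse curve and the same coarse map by (1)--(2), hence isomorphic underlying stable maps by uniqueness of stabilization of pre-stable maps, and then the log enhancement is determined by the universal property of the strict morphism $\fP_2 \to \cA_2$ (equivalently, minimality of the pulled-back log structure), so the isomorphism is unique.

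The main obstacle I anticipate is step one's bookkeeping at the boundary: after reducing the twist from $\etwist_1$ to $\etwist_2$, components mapping into $\infty_{\fP_2}$ can acquire or lose stacky structure in a way coupled to the $\mu_\ell$-gerbe structure of $\infty_{\fP}$ over $\infty_{\fP'}$ described before Proposition~\ref{prop:curve-in-infinity}, and one must verify that the coarsening $\ul{\cC}' \to \ul{\cC}$ and the resulting $r$-spin structure $\cL$ on $\ul{\cC}$ are mutually compatible and that no rational bridge with $f$ mapping into $\infty$ is wrongly contracted or retained. Controlling this requires combining \eqref{equ:curve-in-infinity} with a careful analysis of which bridges satisfy $\deg \ul{f}^*\cO_{\fP_2}(\infty_{\fP_2})|_{\cZ}=0$, and checking this matches the corresponding condition for $f'$ on $\fP_1$; this is where the hypothesis $\etwist_1/\etwist_2 \in \ZZ$ is essential, as it guarantees the root-stack morphism $\ul{\fP}_1 \to \ul{\fP}_2$ exists and pulls back $\cO(\ttwist_2\infty_{\fP_2})$ to $\cO(\ttwist_1\infty_{\fP_1})$.
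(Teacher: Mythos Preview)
Your proposal has a genuine structural gap: the bridge-contraction step you introduce is both unnecessary and incompatible with property~(1). The key observation you miss is that $\fP_1 \to \fP_2$ is a root stack, hence \emph{quasi-finite}, so the induced $\fP_{1,C} \to \fP_{2,C}$ is also quasi-finite. Applying \cite[Proposition~9.1.1]{AbVi02} to the composed twisted map $\cC' \to \fP_{2,C}$ therefore produces a factorization $\cC' \to \cC \to \fP_{2,C}$ in which $\cC' \to \cC$ is purely a partial coarsening at nodes and markings --- no components are contracted, and the coarse curves agree automatically. Your worry that ``rational bridges \dots\ may become unstable after reducing the twist'' is unfounded: since the root-stack morphism pulls back $\cO(\ttwist_2\infty_{\fP_2})$ to $\cO(\ttwist_1\infty_{\fP_1})$ and leaves $\omega^{\log}$ and $\cH$ unchanged, the stability inequality \eqref{equ:hyb-stability} for the composition $\cC' \to \fP_2$ is \emph{literally} the same inequality as for $f'$. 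So no new unstable bridges appear, and if you contracted anything, property~(1) would fail.

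Your log-structure construction is also muddled. Pulling back along $\ul{\cC} \to \cA_2$ gives the divisorial log structure of $f^{-1}(\infty_{\fP_2})$, which is not the log curve structure; and ``pushing forward the log structure of $\cC'$'' is not a well-defined operation here. The paper instead takes the canonical log curve $\cC^\sharp \to S^\sharp$ associated to the underlying twisted curve $\ul\cC$, sets $\cC := S \times_{S^\sharp} \cC^\sharp$, and then shows directly that the auxiliary log map $h'\colon \cC' \to \cA_1$ descends to $h\colon \cC \to \cA_2$. This descent is checked on characteristic sheaves: one must verify that $(\bar h')^\flat(\tfrac{a_1}{a_2}\cdot\bar\delta_1)$ lands in the submonoid $\ocM_{\cC}|_{\cC'} \subset \ocM_{\cC'}$, which follows because at each marking and node $\cC' \to \cC$ is a $\rho$-th root with $\rho \mid \tfrac{a_1}{a_2}$ (by representability of $\cC' \to \cC\times_{\cA_2}\cA_1$), forcing $\tfrac{a_1}{a_2}\cdot\ocM_{\cC'} \subset \ocM_{\cC}|_{\cC'}$. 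The cartesian square \eqref{diag:targets-twists} then promotes $h$ to the log $R$-map $f$. Your argument for~(3) via rescaling degeneracies is essentially correct and matches the paper's; but it only works cleanly once you know no vertices are deleted, which again relies on there being no contraction.
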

\begin{proof}
  Consider the stable log map
  $\cC' \to \fP_{1,C} := \fP_1\times_{\BC}C$ induced by $f'$.
  By \cite[Proposition~9.1.1]{AbVi02}, the underlying map of the
  composition $\cC' \to \fP_{1,C} \to \fP_{2,C} := \fP_2\times_{\BC}C$
  factors through a stable map $\ul{\cC} \to \ul{\fP_{2,C}}$ which
  yields an induced underlying $R$-map $\ul{f}\colon \ul{\cC} \to \fP_{2}$.

  We first construct the log curve $\cC \to S$. Let $\cC^{\sharp} \to S^{\sharp}$ be the canonical log structure associated to the underlying curve. Since $\fP_{1,C} \to \fP_{2,C}$ is quasi-finite, the morphism $\ul{\cC'} \to \ul{\cC}$ induces an isomorphism of coarse curves.  Thus we obtain a log morphism $\cC' \to \cC^{\sharp}$ over $S \to S^{\sharp}$. This yields the log curve $\cC := S\times_{S^{\sharp}}\cC^{\sharp} \to S$.

  Next we show that $f'$ descends to a log map $f\colon \cC \to \fP_2$.
  Since the underlying structure $\ul{f}$ has already being
  constructed, by \eqref{diag:targets-twists} it suffices to show that
  the morphism $h'\colon \cC' \to \cA_1$ induced by $f'$ descends to
  $h\colon \cC \to \cA_2$ with $\ul{h}$ induced by $\ul{f}$.
  Since $\cA$ is an Artin cone, it suffices to check on the level of
  characteristic sheaves over the log \'etale cover $\cC' \to \cC$,
  i.e.\ we need to construct the dashed arrow making the following
  commutative
  \[
    \xymatrix{
      \ocM_{\cA_2}|_{\cC'} \ar@{-->}[r]^{\bar{h}^{\flat}} \ar@{^{(}->}[d] & \ocM_{\cC}|_{\cC'} \ar@{^{(}->}[d] \\
      \ocM_{\cA_1} \ar[r]^{(\bar{h}')^{\flat}} & \ocM_{\cC'}.
    }
  \]
  Thus, it suffices to consider the case where $\ul{S}$ is a geometric point.

  Note that both vertical arrows are injective, hence we may view the monoids on the top as the submonoids of the bottom ones.
  Let $\delta_1$ and $\delta_2$ be a local generator of $\cM_{\cA_1}$ and $\cM_{\cA_2}|_{\cC'}$ respectively. Denote by $\bar{\delta}_1 \in \ocM_{\cA_1}$ and $\bar{\delta}_1 \in \ocM_{\cA_2}|_{\cC'}$ the corresponding elements. Since $\bar{\delta}_2 \mapsto \frac{\etwist_1}{\etwist_2}\cdot \bar{\delta}_1$, it suffices to show that $m := (\bar{h}')^{\flat}(\frac{\etwist_1}{\etwist_2}\cdot \bar{\delta}_1) \in  \ocM_{\cC}|_{\cC'}$.

  Indeed, the morphism
  $\ul{\cC'} \to \ul{\cA_1}\times_{\ul{\cA_2}}\ul{\cC}$ lifting the
  identity of $\ul{\cC}$ is representable.
  Hence along any marking, the morphism $\ul{\cC'} \to \ul{\cC}$ is a
  $\rho$-th root stack with $\rho | \frac{\etwist_1}{\etwist_2}$.
  And along each node, the morphism $\ul{\cC'} \to \ul{\cC}$ is a
  $\rho$-th root stack with $\rho | \frac{\etwist_1}{\etwist_2}$ on
  each component of the node.
  By the definition of log curves, we have
  $\frac{\etwist_1}{\etwist_2}\cdot \ocM_{\cC'} \subset
  \ocM_{\cC}|_{\cC'}$.
  This proves $m \in \ocM_{\cC}|_{\cC'}$ as needed for constructing
  $h$ hence $f$.

Finally, consider any component $Z \subset \cC$ and the unique component $Z' \subset \cC'$ dominating $Z$. Then we have $e_{Z} = \frac{\etwist_1}{\etwist_2}\cdot e_{Z'}$ where $e_Z, e_Z' \in \ocM_S$ are the degeneracies of $Z$ and $Z'$ respectively. Therefore (3) holds, since if $Z'$ is maximally degenerate, so is $Z$.
\end{proof}

Consider log R-maps $f'$ and $f$ as in Proposition
\ref{prop:change-twists}.
Let $\ddata' = \{(\gamma'_i, c'_i)\}_{i=1}^n$ (resp.\
$\ddata = \{(\gamma_i, c_i)\}_{i=1}^n$) be the discrete data of $f'$
(resp.\ $f$) along markings.
Observe that $(\gamma_i, c_i)$ is uniquely determined by
$(\gamma'_i, c'_i)$ as follows.
First, since $\fP_{1,\bk} \to \fP_{2,\bk}$ is the $\frac{a_1}{a_2}$-th
root stack along $\infty_{\fP_{2,\bk}}$, the sector $\gamma_i$ is uniquely
determined by $\gamma'_i$ \cite[Section~1.1.10]{AbFa16}.
Then the morphism $\cC' \to \cC$ is an $\varrho_i$-th root stack along
the $i$-th marking for some $\varrho_i | \frac{a_1}{a_2}$ uniquely
determined by the natural morphism $\gamma'_i \to \gamma_i$
\cite[Lemma~1.1.11]{AbFa16}.
The contact orders $c_i$ and $c_i'$ are then related by
\begin{equation}\label{equ:changing-contact-order}
c_i = \frac{a_1}{a_2}\cdot \frac{c'_i}{\varrho_i}.
\end{equation}

\begin{corollary}\label{cor:changing-twists}
There are canonical morphisms
\[
\SH_{g,\ddata'}(\fP_1, \beta) \to \SH_{g,\ddata}(\fP_2,\beta) \ \ \ \mbox{and} \ \ \ \UH_{g,\ddata'}(\fP_1, \beta) \to \UH_{g,\ddata}(\fP_2,\beta).
\]
For convenience, we denote both morphisms by $\nu_{\etwist_1/\etwist_2}$ when there is no danger of confusion.
\end{corollary}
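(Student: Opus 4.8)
The plan is to obtain both morphisms by applying Proposition~\ref{prop:change-twists} over the category of log schemes, so the real content is to match up discrete data and to promote the pointwise assignment $f'\mapsto f$ to a morphism of stacks using the uniqueness clause of that proposition. For a log scheme $S$, an $S$-point of $\SH_{g,\ddata'}(\fP_1,\beta)$ is a stable log $R$-map $f'\colon\cC'\to\fP_1$ over $S$ with the prescribed discrete data, and Proposition~\ref{prop:change-twists} produces a stable log $R$-map $f\colon\cC\to\fP_2$ over the same $S$, unique up to unique isomorphism, through which $\cC'\to\fP_1\to\fP_2$ factors. First I would check that $f$ carries the discrete data $\ddata$: by part~(1) the coarse curves of $\cC'$ and $\cC$ are canonically identified with a common curve $C$, so $f$ has genus $g$; since $\fP_1\to\fP_2$ is a morphism over $\fX$, hence over $\cX$, part~(2) shows that the coarse map underlying $\ft\circ f$ agrees, under this identification, with the one underlying $\ft\circ f'$, so its curve class is still $\beta$; and the sector and contact order of $f$ at the $i$-th marking are exactly the $(\gamma_i,c_i)$ determined from $(\gamma'_i,c'_i)$ by the root-stack analysis recorded around \eqref{equ:changing-contact-order}. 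Hence $f$ is an $S$-point of $\SH_{g,\ddata}(\fP_2,\beta)$.

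Next I would verify that $f'\mapsto f$ is compatible with pull-back along an arbitrary morphism $T\to S$ of log schemes; this is the only step that needs care, and it is where uniqueness does the work. Pulling the factorization $\cC'\to\cC\to\fP_2$ and the commutativities of \eqref{diag:targets-twists} back along $T\to S$ exhibits the pull-back $f_T\colon\cC_T\to\fP_2$ as a factorization of the composition $\cC'_T\to\fP_1\to\fP_2$ through a stable log $R$-map over $T$; applying the uniqueness part of Proposition~\ref{prop:change-twists} to $f'_T$ then supplies a canonical isomorphism between $f_T$ and the $R$-map assigned to $f'_T$. The same uniqueness forces these isomorphisms to satisfy the cocycle condition for composites $U\to T\to S$, so the assignment defines a morphism of the corresponding stacks over the category of log schemes, namely $\nu_{\etwist_1/\etwist_2}\colon\SH_{g,\ddata'}(\fP_1,\beta)\to\SH_{g,\ddata}(\fP_2,\beta)$. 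The version with uniform maximal degeneracy is handled identically: by part~(3) of Proposition~\ref{prop:change-twists}, if $f'$ has uniform maximal degeneracy then so does $f$, so the same construction restricts to $\nu_{\etwist_1/\etwist_2}\colon\UH_{g,\ddata'}(\fP_1,\beta)\to\UH_{g,\ddata}(\fP_2,\beta)$.

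I expect the base-change compatibility of the second paragraph to be the main obstacle, and it becomes routine only because Proposition~\ref{prop:change-twists} is stated with a uniqueness clause. Without appealing to uniqueness one would instead have to check by hand that each ingredient of its proof---the Abramovich--Vistoli stabilization of the underlying map, the passage to the canonical log structure on the underlying curve, and the descent of the $\cA$-valued log map from $\cA_1$ to $\cA_2$---commutes with arbitrary base change; this is true but more laborious, so I would simply invoke uniqueness.
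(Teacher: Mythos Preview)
Your proposal is correct and follows exactly the approach the paper intends: the corollary is stated without proof, as it is immediate from Proposition~\ref{prop:change-twists} together with the paragraph preceding the corollary that explains how $(\gamma_i,c_i)$ is determined by $(\gamma'_i,c'_i)$ via \eqref{equ:changing-contact-order}. You have simply spelled out the functoriality argument that the paper leaves implicit, and your use of the uniqueness clause for base-change compatibility is the expected way to do this.
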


\section{A tale of two virtual cycles}
\label{sec:tale}

This section forms the heart of this paper.
We first introduce the canonical perfect obstruction theory and
virtual cycle in Section~\ref{ss:canonical-obs}, and prove a change of
twist theorem in this setting in
Section~\ref{ss:can-theory-ind-twist}.
We then introduce the compact type locus, its canonical virtual cycle
(Section~\ref{ss:compact-type}) and the superpotentials
(Section~\ref{ss:superpotential}), in preparation for defining the
canonical cosection in Section~\ref{ss:can-cosection}.
This allows us to construct the reduced theory in
Section~\ref{ss:reduced}.
We then prove the comparison theorems in
Sections~\ref{ss:comparison-1}--\ref{ss:red-theory-ind-twist}.

The first time reader may skip Sections~\ref{ss:can-theory-ind-twist}
and \ref{ss:red-theory-ind-twist} related to the change of twists.
In addition, under the further simplification that the set $\Sigma$ of markings is
empty, the reader may skip Sections~\ref{ss:compact-type},
\ref{ss:modify-target} and \ref{sss:superpotential-modified-target}.
In this situation, the sup- or subscripts, ``$\cpt$'', ``$\reg$'' and
``$-$'' may be dropped.

\subsection{The canonical theory}\label{ss:canonical-obs}

For the purposes of perfect obstruction theory and virtual fundamental
classes, we impose in this section:

\begin{assumption}\label{assu:smooth-target}
$\cX$ is smooth.
\end{assumption}
The assumption implies that $\fP \to \BC$ is log smooth with the smooth underlying morphism.

To simplify notations, we introduce
\[
\UH := \UH_{g, \ddata}(\fP, \beta), \ \ \ \  \ \ \SH := \SH_{g, \ddata}(\fP, \beta),
\]
for stacks of log R-maps as in Section \ref{ss:hybrid-stack}.
We also introduce
\[
\fU:= \fU_{g,\vec{c}}(\cA), \ \ \ \  \ \ \fM := \fM_{g,\vec{c}}(\cA)
\]
where $\fM_{g,\vec{c}}(\cA)$ (resp.\ $\fU_{g,\vec{c}}(\cA)$) is the stack parameterizing log
maps (resp.\ with uniform maximal degeneracy) to $\cA$ of genus $g$
and with contact orders $\vec{c}$ induced by $\ddata$.
These stacks fit in a cartesian diagram
\[
\xymatrix{
\UH \ar[rr]^{F} \ar[d] && \SH \ar[d] \\
\fU \ar[rr] && \fM
}
\]
where the vertical arrows are canonical strict morphisms by Section \ref{sss:universal-log-target}, the bottom is given by \cite[Theorem 3.14]{CJRS18P}, and the top is \eqref{equ:forget-uniform-degeneracy}.

Let $\bullet$ be one of the stacks $\UH, \SH, \fU$ or $\fM$, and $\pi_{\bullet}\colon \cC_{\bullet} \to \bullet$ be the universal curve. Denote by $\cP_{\bullet} := \cC_{\bullet}\times_{\BC}\fP$ where $\cC_{\bullet} \to \BC$ is induced by $\omega^{\log}_{\cC_{\bullet}/\bullet}$. Let $f_{\bullet}\colon \cC_{\bullet} \to \cP_{\bullet}$ be the  section induced by the universal log $R$-map for $\bullet = \UH$ or $\SH$.

Consider the commutative diagram
\[
\xymatrix{
\cC_{\SH} \ar@/^1pc/[rrd]^{=} \ar[rd]^{f_{\SH}} \ar@/_1pc/[ddr]_{f}&&& \\
& \cP_{\SH} \ar[r] \ar[d] & \cC_{\SH} \ar[r]^{\pi_{\SH}} \ar[d] & \SH \ar[d] \\
& \cP_{\fM} \ar[r] & \cC_{\fM} \ar[r]_{\pi_{\fM}} & \fM
}
\]
where the three vertical arrows are strict, and the two squares are
Cartesian.
We use $\LL$ to denote the log cotangent complexes in the sense of
Olsson \cite{LogCot}.
The lower and upper triangle yield
\[
\LL_{f_{\SH}} \to f^*_{\SH}\LL_{\cP_{\SH}/\cP_{\fM}}[1] \cong \pi^*_{\SH}\LL_{\SH/\fM}[1] \qquad \mbox{and} \qquad \LL_{f_{\SH}} \cong f^*_{\SH}\LL_{\cP_{\SH}/\cC_{\SH}}[1],
\]
respectively. 
Hence we obtain
\[f^*_{\SH}\LL_{\cP_{\SH}/\cC_{\SH}} \to \pi^*_{\SH}\LL_{\SH/\fM}.\]
Tensoring both sides by the dualizing complex
$\omega_{\pi_{\SH}}^\bullet = \omega_{\cC_{\SH}/\SH}[1]$ and applying
$\pi_{\SH,*}$, we obtain
\[
\pi_{\SH,*}\big(f^*_{\SH}\LL_{\cP_{\SH}/\cC_{\SH}}\otimes\omega_{\pi_{\SH}}^\bullet\big) \to \pi_{\SH,*}\pi^{!}_{\SH} \LL_{\SH/\fM} \to \LL_{\SH/\fM}
\]
where the last arrow follows from the fact that $\pi_{\SH,*}$ is left
adjoint to $\pi^{!}(-) := \omega_{\pi_{\SH}}^\bullet\otimes\pi^*(-)$.
Further observe that
$\LL_{\cP_{\SH}/\cC_{\SH}} = \Omega_{\fP/\BC}|_{\cP_\SH}$ is the log
cotangent bundle. Hence, we obtain
\begin{equation}\label{equ:log-POS-R}
\varphi^{\vee}_{\SH/\fM}\colon \EE^{\vee}_{\SH/\fM} := \pi_{\SH,*}\big(f^*_{\SH}\Omega_{\fP/\BC}\otimes\omega_{\pi_{\SH}}^\bullet\big) \to \LL_{\SH/\fM}.
\end{equation}
The same proof as in \cite[Proposition 5.1]{CJRS18P} shows that
$\varphi^{\vee}_{\SH/\fM}$ is a perfect obstruction theory of
$\SH \to \fM$ in the sense of \cite{BeFa97}.
Recall that $\fM$ is log smooth and equi-dimentional
\cite[Proposition~2.13]{CJRS18P}.
Denote by $[\SH]^{\vir}$ the virtual cycle given by the virtual
pull-back of the fundamental class $[\fM]$ using
$\varphi^{\vee}_{\SH/\fM}$.

Pulling back $\varphi^{\vee}_{\SH/\fM}$ along $\UH \to \SH$, we obtain a perfect obstruction theory of $\UH \to \fU$:
\begin{equation}\label{equ:log-POS}
\varphi^{\vee}_{\UH/\fU}\colon \EE^{\vee}_{\UH/\fU} := \pi_{\UH,*}\big(f^*_{\UH}\Omega_{\fP/\BC}\otimes\omega_{\pi_{\UH}}^\bullet\big) \to \LL_{\UH/\fU}.
\end{equation}
A standard calculation shows that $\EE_{\UH/\fU} = \pi_{\UH,*}\big(f^*_{\UH}T_{\fP/\BC}\big)$.

Let $[\UH]^{\vir}$ be the corresponding virtual cycle.
Since $\fU \to \fM$ is proper and birational by
\cite[Theorem~3.17]{CJRS18P}, by the virtual push-forward of
\cite{Co06, Ma12} the two virtual cycles are related by
\begin{equation}\label{equ:push-log-virtual-cycle}
F_*[\UH]^{\vir} = [\SH]^{\vir}.
\end{equation}

\subsection{Independence of twists I: the case of the canonical theory}\label{ss:can-theory-ind-twist}

In this section, using the results from
Section~\ref{ss:change-twists}, we study the behavior of the canonical
virtual cycle under the change of twists.

\begin{proposition}\label{prop:can-ind-twists}
  Given the situation as in Corollary \ref{cor:changing-twists}, we
  have the following push-forward properties of virtual cycles
\begin{enumerate}
 \item $\nu_{\etwist_1/\etwist_2,*}[\SH_{g,\ddata'}(\fP_1, \beta)]^\vir = [\SH_{g,\ddata}(\fP_2, \beta)]^\vir$,
 \item $\nu_{\etwist_1/\etwist_2,*}[\UH_{g,\ddata'}(\fP_1, \beta)]^\vir = [\UH_{g,\ddata}(\fP_2, \beta)]^\vir$.
\end{enumerate}
\end{proposition}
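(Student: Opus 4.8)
The plan is to deduce (1) from (2) and to prove (2) by a virtual pushforward over the auxiliary stacks of log maps to $\cA$. Throughout abbreviate $\nu := \nu_{\etwist_1/\etwist_2}$, $\UH_1 := \UH_{g,\ddata'}(\fP_1,\beta)$, $\UH_2 := \UH_{g,\ddata}(\fP_2,\beta)$, and similarly $\SH_i$; write $\fU_i := \fU_{g,\vec{c}_i}(\cA_i)$ for the associated stacks of log maps to the two copies $\cA_1,\cA_2$ of $\cA$ appearing in \eqref{diag:targets-twists}, and let $\pi_i\colon\cC_i\to\UH_i$ and $f_i$ denote the universal curve and log $R$-map. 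The reduction of (1) to (2) is formal: $\nu$ commutes with the forgetful morphisms $F_i$ of \eqref{equ:forget-uniform-degeneracy}, so granting (2) and applying \eqref{equ:push-log-virtual-cycle} to $\fP_1$ and $\fP_2$,
\[
\nu_*[\SH_1]^{\vir}=\nu_*F_{1,*}[\UH_1]^{\vir}=F_{2,*}\nu_*[\UH_1]^{\vir}=F_{2,*}[\UH_2]^{\vir}=[\SH_2]^{\vir};
\]
the reverse implication is not available because $F_2$ is only birational, so $F_{2,*}$ need not be injective.

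For (2) I would first upgrade Corollary~\ref{cor:changing-twists} to a \emph{cartesian} square
\[
\xymatrix{
\UH_1\ar[r]^{\nu}\ar[d] & \UH_2\ar[d] \\
\fU_1\ar[r]^{b} & \fU_2,
}
\]
in which the vertical arrows are the canonical strict morphisms of Section~\ref{ss:canonical-obs} and $b$ is induced by $\nu\colon\cA_1\to\cA_2$ together with the source-curve coarsening of Proposition~\ref{prop:change-twists}. Cartesianness is a repackaging of the identification of discrete data in Section~\ref{ss:change-twists}: by \eqref{diag:targets-twists} a stable log $R$-map to $\fP_1$ is the same datum as a stable log $R$-map to $\fP_2$ together with a compatible lift of its source to $\cA_1$, and such a lift is precisely the fine and saturated base change of the source along $\nu$. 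The structural inputs I would isolate are that $\nu\colon\cA_1\to\cA_2$ is the $\frac{\etwist_1}{\etwist_2}$-th root stack along $\infty_{\cA_2}$ — hence log étale, proper, and an isomorphism away from $\infty_{\cA_2}$, so of degree one — whence $b$ is also log étale, proper, and of degree one, so $b_*[\fU_1]=[\fU_2]$; and that the universal curve $\cC_1$ is the fine and saturated base change along $b$ of $\cC_2$, i.e.\ a root-stack modification of $\cC_2$ along markings and nodes with indices $\varrho_i\mid\frac{\etwist_1}{\etwist_2}$ as in \eqref{equ:changing-contact-order}.

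I would then verify that the canonical obstruction theories match across this square. Since $\cA_1\to\cA_2$, hence $\fP_1\to\fP_2$, is log étale, $\Omega_{\fP_1/\BC}\cong(\fP_1\to\fP_2)^*\Omega_{\fP_2/\BC}$; writing $t\colon\cC_1\to\cC_2\times_{\UH_2}\UH_1$ for the modification above and using $Rt_*\cO=\cO$ in characteristic zero (valid for balanced twisted markings and nodes), the projection formula together with cohomology and base change gives
\[
\EE^{\vee}_{\UH_1/\fU_1}=\pi_{1,*}\big(f_1^*\Omega_{\fP_1/\BC}\otimes\omega_{\pi_1}^{\bullet}\big)\cong\nu^*\,\pi_{2,*}\big(f_2^*\Omega_{\fP_2/\BC}\otimes\omega_{\pi_2}^{\bullet}\big)=\nu^*\EE^{\vee}_{\UH_2/\fU_2},
\]
compatibly with the obstruction-theory maps $\varphi^{\vee}$ of \eqref{equ:log-POS} — the relative log cotangent complexes of the two vertical arrows matching under $\nu^*$ since the square is cartesian and both $\varphi^{\vee}$ are built from $\Omega_{\fP/\BC}$ and the universal curve. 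With the cartesian square and this compatibility, functoriality of the virtual pullback (the virtual pushforward of Costello~\cite{Co06} and Manolache~\cite{Ma12}) yields
\[
\nu_*[\UH_1]^{\vir}=\nu_*\varphi^{!}_{\UH_1/\fU_1}[\fU_1]=\varphi^{!}_{\UH_2/\fU_2}b_*[\fU_1]=\varphi^{!}_{\UH_2/\fU_2}[\fU_2]=[\UH_2]^{\vir},
\]
which is (2); here $\varphi^{!}_{\UH_i/\fU_i}$ denotes the virtual pullback attached to $\varphi^{\vee}_{\UH_i/\fU_i}$.

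The main obstacle I anticipate is this middle step — establishing cartesianness of the square over $b$ and the compatibility of the obstruction theories. It demands careful bookkeeping of how the source log curve deforms under the change of twist (root stacks along markings and nodes, indices $\varrho_i\mid\frac{\etwist_1}{\etwist_2}$ and the contact-order relation \eqref{equ:changing-contact-order}), of the behavior of $\Omega_{\fP/\BC}$ and of the universal section under these modifications, and of the matching of the discrete data $\ddata'$ of the base-changed objects to that prescribed on the $\fP_1$ side. Once the diagram is correctly in place the remaining steps are formal, and it is precisely the degree-one nature of $\nu$ (equivalently $b$) that prevents any multiplicity from entering the push-forward.
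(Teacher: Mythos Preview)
Your reduction of (1) to (2) is fine (the paper actually goes the other way, proving (1) first and noting (2) is identical), but the cartesian square you claim has a genuine gap. The morphism $b\colon\fU_1\to\fU_2$ does not come for free: Proposition~\ref{prop:change-twists} constructs the coarsening $\cC'\to\cC$ only for \emph{stable} log $R$-maps, and its proof uses the Deligne--Mumford target $\fP_{2,C}$ (via \cite[Proposition~9.1.1]{AbVi02}) to pin down the root order at each node. For a bare log map $\cC'\to\cA_1$ there is no such mechanism --- the node root orders are not part of the discrete data and are not canonically determined by composing with $\cA_1\to\cA_2$. If you instead coarsen only at markings (where $\varrho_i$ is fixed), $b$ becomes well-defined, but then the square is no longer cartesian: the source curve of the lifted stable log $R$-map to $\fP_1$ is rooted at nodes as well (again by Proposition~\ref{prop:change-twists}), so it does not agree with the curve you produce on the $\fU_1$-side.

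The paper's fix, following \cite{AbWi18}, is to introduce an intermediate stack $\fM'$ parameterizing the entire diagram $\cC'\to\cC$ together with the compatible maps to both $\cA_i$. The genuine cartesian square is $\SH_1\cong\SH_2\times_{\fM_{g,\vec{c}}(\cA_2)}\fM'$, and one has a strict \'etale birational forgetful map $F_1\colon\fM'\to\fM_{g,\vec{c}'}(\cA_1)$ along which the two relative obstruction theories of $\SH_1$ agree, together with a proper birational $F_2\colon\fM'\to\fM_{g,\vec{c}}(\cA_2)$. Your identification of obstruction theories via log-\'etaleness of $\fP_1\to\fP_2$ is the right idea and is exactly what the paper uses, but it must be placed over $\fM'$ rather than over $\fU_1$; Costello's push-forward then finishes as you describe.
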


\begin{proof}
  We will only consider (1). Statement (2) can be proved identically
  by considering only log maps with uniform maximal degeneracy, thanks
  to Proposition \ref{prop:change-twists} (3).

Since $\fP_1 \to \fP_2$ is a log \'etale birational modification, (1) follows from a similar proof as in \cite{AbWi18} but in a simpler situation except that we need to take into account orbifold structures. In what follows, we will only specify the differences, and refer to \cite{AbWi18} for complete details.

First, consider the stack $\fM' := \fM_{g,\vec{c}'}'(\cA_1 \to \cA_2)$, the analogue of the one in \cite[Proposition 1.6.2]{AbWi18}, parameterizing commutative diagrams
\begin{equation}\label{eq:middle-stack}
\xymatrix{
\cC' \ar[r] \ar[d] & \cA_1 \ar[d] \\
\cC \ar[r] & \cA_2
}
\end{equation}
where $\cC' \to \cC$ is a morphism of log curves over $S$ inducing an
isomorphism of underlying coarse curves, the top and bottom are log maps with discrete data along markings given by $\vec{c}' = \{(r'_i, c'_i)\}$ (see Lemma \ref{lem:lift-root}) and $\vec{c} = \{(r_i, c_i)\}$ respectively, and the induced morphism
$\cC' \to \cC\times_{\cA_2}\cA_1$ is representable, hence is stable.

We first show that $\fM'$ is algebraic.
Indeed, let $\fM_{g,\vec{c}}(\cA)$ be the stack of genus $g$ log maps
to $\cA$ with discrete data $\vec{c}$ along markings.
Let $\fM_1$ be the stack parameterizing sequences
$\cC' \to \cC \to \cA_2$ where $\cC' \to \cC$ is a morphism of genus
$g$, $n$-marked log curves over $S$ with isomorphic underlying coarse
curves, and $\varrho_i$-th root stack along the $i$-th marking for each
$i$ \eqref{equ:changing-contact-order}.
$\fM_1$ is algebraic as the morphism $\fM_1 \to \fM_{g,\vec{c}}(\cA_2)$ defined by
\begin{equation}\label{eq:construct-mid-stack}
[\cC' \to \cC \to \cA_2] \mapsto [\cC \to \cA_2]
\end{equation}
is algebraic and $\fM_{g,\vec{c}}(\cA_2) = \fM_{g,\vec{c}}(\cA)$ is
algebraic.
Now $\fM'$ is given by the open substack of
$\fM_1\times_{\fM_{g,\vec{c}''}(\cA_2)}\fM_{g,\vec{c}'}(\cA_1)$ where
the representability of $\cC' \to \cC\times_{\cA_2}\cA_1$ holds.
Here
$\fM_{g,\vec{c}'}(\cA_1) = \fM_{g, \vec{c}'}(\cA) \to
\fM_{g,\vec{c}''}(\cA_2)$ is given by composing log maps to $\cA_1$
with $\cA_1 \to \cA_2$ hence
$\vec{c}'' = \{(r'_i, \frac{a_1}{a_2}\cdot c'_i)\}$, and
$\fM_1 \to \fM_{g,\vec{c}''}(\cA_2)$ is given by
$[\cC' \to \cC \to \cA_2] \mapsto [\cC' \to \cA_2]$.

Next, by Proposition~\ref{prop:change-twists} and
\eqref{diag:targets-twists}, we obtain the commutative diagram
\[
\xymatrix{
\SH_{g,\ddata'}(\fP_1, \beta) \ar[rr] \ar[d]^{G'_1} \ar@/_2pc/[dd]_{G_1}&& \SH_{g,\ddata}(\fP_2, \beta) \ar[d]^{G_2} \\
\fM' \ar[rr]_{F_2} \ar[d]^{F_1} && \fM_{g,\vec{c}}(\cA_2) \\
\fM_{g,\vec{c}'}(\cA_1) &&
}
\]
where we define a morphism $F_1 \colon \eqref{eq:middle-stack} \mapsto [\cC' \to \cA_1]$ and a proper morphism $F_2 \colon \eqref{eq:middle-stack} \mapsto [\cC \to \cA_2]$, and the square is cartesian. 

Since the horizontal arrows in \eqref{diag:targets-twists} are logarithmic modifications in the sense of \cite{AbWi18}, the same proof as in \cite[Lemma 4.1, Section 5.1]{AbWi18} shows that $\fM' \to \fM_{g,\vec{c}'}(\cA_1)$ is strict and \'etale. Using the identical method as in \cite[Section 6.2]{AbWi18}, one construct a perfect obstruction theory of $G'_1$ which is identical to the one of $G_1$ as in \eqref{equ:log-POS-R}. 

Furthermore Lemma \ref{lem:lift-root} and the same proof as in \cite[Proposition 5.2.1]{AbWi18} imply that $\fM' \to \fM_{g,\vec{c}'}(\cA_1)$ and $\fM' \to \fM_{g,\vec{c}}(\cA_2)$ are both birational. Finally, following the same lines of proof as in  \cite[Section 6]{AbWi18} and using Costello's virtual push-forward \cite[Theorem 5.0.1]{Co06}, we obtain (1).
\end{proof}

Since log maps to $\cA$ are unobstructed \cite[Proposition 1.6.1]{AbWi18} (see also \cite[Proposition 2.13]{CJRS18P}), the discrete data along markings can be determined by studying the following non-degenerate situation.

\begin{lemma}\label{lem:lift-root}
  Let $f \colon \cC \to \cA_2$ be a log map with discrete data
  $(r_i, c_i)$ at the $i$-th marking.
  Assume that no component of $\cC$ has image entirely contained in
  $\infty_{\cA_2}$.
  Let $\cC' \to \cC$ be obtained by taking the $\varrho_i$-th root
  along the $i$-th marking for each $i$. Then
\begin{enumerate}
\item $f \colon \cC \to \cA_2$ lifts to $f' \colon \cC' \to \cA_1$ if and only if $\frac{\etwist_1}{\etwist_2} | c_i \cdot \varrho_i$. In this case, the lift is unique up to a unique isomorphism.
\item Furthermore, the induced $\cC' \to \cC\times_{\cA_2}\cA_1$ by
  $f'$ is representable if and only if
  $\varrho_i = \frac{a_1/a_2}{\gcd(c_i,a_1/a_2)}$.
  In this case, let $(r'_i, c'_i)$ be the discrete data of $f_i$ at
  the $i$-th marking.
  Then for each $i$ we have
\[
r'_i = \varrho_i \cdot r_i \ \ \ \mbox{and} \ \ \ c'_i = \frac{c_i}{\gcd(c_i,\etwist_1/\etwist_2)}.
\]
\end{enumerate}
\end{lemma}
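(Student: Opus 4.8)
The plan is to reduce to an \'etale-local analysis near the markings and to compute directly with root stacks. Set $N := \etwist_1/\etwist_2 \in \ZZ_{>0}$, so that by \eqref{diag:targets-twists} the morphism $\nu\colon\cA_1 \to \cA_2$ is the $N$-th root stack along $\infty_{\cA_2}$ and is an isomorphism over $\cA_2 \setminus \infty_{\cA_2}$. Since all the assertions concern only discrete data, I may assume $S$ is a geometric log point. As no component of $\cC$ lies in $\infty_{\cA_2}$, every component is non-degenerate, so by the combinatorial relations of \S\ref{ss:combinatorics} all degeneracies and all nodal contact orders vanish; in particular $f$ sends $\cC \setminus \{p_i\}$ into $\cA_2 \setminus \infty_{\cA_2}$. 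Moreover, generizing $\bar{f}^{\flat}(\delta_2)$ from a marking $p_i$ to the generic point of its component (where it is $0$) shows $\bar{f}^{\flat}(\delta_2) = c_i\sigma_i$ in $\ocM_{\cC, p_i} \cong \ocM_S \oplus \NN\sigma_i$, and that $f^*\cO(\infty_{\cA_2})$ is, locally near $p_i$, the line bundle $\cO_\cC(c_i p_i)$ with its tautological section.

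For part (1): near $p_i$ the $\varrho_i$-th root stack $\cC' \to \cC$ has $\sigma_i = \varrho_i\sigma'_i$ and orbifold order $r'_i = \varrho_i r_i$ by \cite[\S4]{AbVi02}, so $\cO_\cC(c_i p_i)$ pulls back to $\cO_{\cC'}(c_i\varrho_i p'_i)$. Since $\cA$ is an Artin cone, a lift $f'\colon \cC' \to \cA_1$ of $f\circ(\cC'\to\cC)$ amounts to a morphism $(\bar{f}')^{\flat}\colon (f')^*\ocM_{\cA_1} = \NN\delta_1 \to \ocM_{\cC'}$ whose composite with $\bar{\nu}^{\flat}\colon \delta_2 \mapsto N\delta_1$ is the pullback of $\bar{f}^{\flat}$, plus a compatible underlying morphism; by the previous paragraph the only constraint is near the markings, where it reads $N\cdot(\bar{f}')^{\flat}(\delta_1) = c_i\varrho_i\sigma'_i$, solvable (uniquely) by $(\bar{f}')^{\flat}(\delta_1) = \tfrac{c_i\varrho_i}{N}\sigma'_i$ exactly when $N \mid c_i\varrho_i$. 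When this holds, the underlying lift is the canonical one away from the $p_i$ (where $\nu$ is an isomorphism) and is given near $p_i$ by the $N$-th root $\cO_{\cC'}(\tfrac{c_i\varrho_i}{N}p'_i)$ of $\cO_{\cC'}(c_i\varrho_i p'_i)$ with its tautological section; these glue, and since $\nu$ is log \'etale this underlying lift promotes uniquely to a log map $f'$. The tautological section is a non-zero-divisor, hence rigidifies the data, so any automorphism of $f'$ is trivial and two lifts differ by a unique isomorphism. This proves (1), with the stated condition $\tfrac{\etwist_1}{\etwist_2} \mid c_i\varrho_i$.

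For part (2), I would compute $\cC \times_{\cA_2}\cA_1$ locally near $p_i$. Since $\nu$ is the $N$-th root stack along $\infty_{\cA_2}$ and $f$ has contact order $c_i$, this fibre product is, near $p_i$, the $N$-th root stack of $\cC$ along the \emph{non-reduced} divisor $c_i p_i$; writing $g := \gcd(c_i, N)$, this is a $\mu_g$-gerbe over the $\tfrac{N}{g}$-th root stack of $\cC$ along $p_i$, whose isotropy at the stacky point therefore has order $N r_i$. The morphism $\cC' \to \cC\times_{\cA_2}\cA_1$ determined by $f'$ and $\cC' \to \cC$ has source isotropy $\mu_{\varrho_i r_i}$ at $p'_i$; inspecting the charts, it is injective on isotropy groups — equivalently $\cC'\to\cC\times_{\cA_2}\cA_1$ is representable — if and only if $\varrho_i$ equals the minimal value $\tfrac{N}{g}$ compatible with (1), a strictly larger multiple forcing the morphism to factor through a nontrivial $\mu$-gerbe. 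Granting $\varrho_i = \tfrac{N}{g}$, we get $r'_i = \varrho_i r_i = \tfrac{Nr_i}{g}$, and from $(\bar{f}')^{\flat}(\delta_1) = \tfrac{c_i\varrho_i}{N}\sigma'_i = \tfrac{c_i}{g}\sigma'_i$ the contact order is $c'_i = \tfrac{c_i}{g} = \tfrac{c_i}{\gcd(c_i,\,\etwist_1/\etwist_2)}$, as claimed.

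The main obstacle is this representability analysis in part (2): correctly identifying $\cC\times_{\cA_2}\cA_1$ near each marking as a $\mu_{\gcd(c_i,N)}$-gerbe over a root stack and determining precisely when the canonical morphism from the twisted curve $\cC'$ — which carries honest cyclic isotropy rather than a gerbe — is representable requires careful bookkeeping of the stacky and gerby structures of balanced twisted curves and of root stacks along non-reduced divisors.
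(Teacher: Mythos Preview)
Your proof is correct and reaches the same conclusions as the paper, but by a different and more explicit route. The paper's proof is two lines: it observes that a lift $f'$ is the same as a morphism $\cC' \to \cC\times_{\cA_2}\cA_1$ over the identity of $\cC$, and then cites \cite[Lemma~1.3.1]{AbFa16} and \cite[Theorem~3.3.6]{Ca07} for both parts. Your argument instead unpacks these references by hand: your identification of $\cC\times_{\cA_2}\cA_1$ near $p_i$ as a $\mu_{\gcd(c_i,N)}$-gerbe over the $\tfrac{N}{\gcd(c_i,N)}$-th root stack is exactly Cadman's theorem, and your lifting criterion in part~(1) is the content of the Abramovich--Fantechi lemma. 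The payoff of your approach is that the formulas for $r'_i$ and $c'_i$ fall out transparently from the characteristic-monoid computation $(\bar f')^{\flat}(\delta_1)=\tfrac{c_i\varrho_i}{N}\sigma'_i$, whereas the paper leaves this implicit in the citations.

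One small point of phrasing: in part~(2), saying that a strictly larger $\varrho_i$ ``forces the morphism to factor through a nontrivial $\mu$-gerbe'' is not quite the right diagnosis for non-representability. The target $\cC\times_{\cA_2}\cA_1$ already has a gerby component regardless of $\varrho_i$; what fails when $\varrho_i = k\cdot\tfrac{N}{g}$ with $k>1$ is that the induced map on isotropy groups at $p'_i$ is the $k$-th-power map on a cyclic factor and hence has nontrivial kernel. Your chart inspection would show exactly this, so the conclusion stands; only the explanatory sentence should be adjusted.
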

\begin{proof}
  Finding a lift $f'$ amounts to finding
  $\cC' \to \cC\times_{\cA_2}\cA_1$ that lifts the identity
  $\cC \to \cC$.
  Thus, both (1) and (2) follow from \cite[Lemma~1.3.1]{AbFa16} and
  \cite[Theorem~3.3.6]{Ca07}.
\end{proof}

\subsection{The compact type locus and its canonical virtual cycle}\label{ss:compact-type}
We next introduce the closed substack over which the reduced theory will be constructed.

\subsubsection{The logarithmic evaluation stacks}\label{sss:log-ev-stack}
Let $\fY = \fM$ (resp.\ $\fU$), and $\scrY = \SH$ (resp.\ $\UH$) with the strict
canonical morphism $\scrY \to \fY$.
The \emph{$i$-th evaluation stack} $\fY^{\ev}_i$ associates to any
$\fY$-log scheme $S$ the category of commutative diagrams:
\begin{equation}\label{diag:log-evaluation}
\xymatrix{
p_i \ar[rr] \ar[d] && \fP_{\bk} \ar[d] \\
\cC \ar[rr]^{\omega^{\log}_{\cC/S}}  &&\cA 
}
\end{equation}
where $\cC \to \cA$ is the log map over $S$ given by $S \to \fY$,
$p_i \subset \cC$ is the $i$-th marking with the pull-back log
structure from $\cC$, and the top horizontal arrow is representable. 

There is a canonical strict morphism $ \fY^{\ev}_i \to \fY $
forgetting the top arrow in \eqref{diag:log-evaluation}.  
By Lemma~\ref{lem:underlying-evaluation}, the morphism $\scrY \to \fY$
factors through the \emph{$i$-th evaluation morphism}
\[
\fev_i \colon \scrY \to \fY^{\ev}_i.
\]
For the reduced theory, we introduce substacks
\[
\fY^{\cpt}_i \subset \fY^{\mathring{\ev}}_i \subset \fY^{\ev}_i,
\]
where $\fY^{\mathring{\ev}}_i$ parameterizes diagrams \eqref{diag:log-evaluation} whose images at the $i$-th markings avoid $\infty_{\cA}$ (or equivalently avoids $\infty_{\fP}$) and $\fY^{\cpt}_i$ parameterizes diagrams \eqref{diag:log-evaluation} whose images at the $i$-th markings are contained in $\zero_{\fP}$. Recall that $(\gamma_i, c_i)$ are the sector and contact order at the $i$-th marking, see Section \ref{ss:hybrid-stack}.  

\begin{proposition}\label{prop:compact-evaluation}
Both $\fY^{\cpt}_i$ and  $\fY^{\mathring{\ev}}_i$ are log algebraic stacks. Furthermore, we have
\begin{enumerate}
 \item If $c_i > 0$, then $\fY^{\cpt}_i = \fY^{\mathring{\ev}}_i = \emptyset$.
 \item The strict morphisms $\fY^{\cpt}_{i} \to \fY$ and $\fY^{\mathring{\ev}}_{i} \to \fY$ are smooth.
\end{enumerate}
\end{proposition}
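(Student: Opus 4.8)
The plan is to strip the logarithmic structure, reduce to a statement about representable maps out of the universal marking (a $0$-dimensional tame gerbe), and then obtain algebraicity from representability of $\Hom$-stacks and smoothness from the vanishing of obstructions over such a source.

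\emph{Step 1 (reduction via strictness).} By \S\ref{sss:universal-log-target} the morphism $\fP \to \cA$ is strict; since $\BC$ and $\spec\bk$ carry trivial log structures, $\fP_\bk \to \cA$ is strict, so $\fP^\circ_\bk = \fP_\bk\times_\cA\cA^\circ$ and $\zero_{\fP_\bk}\subset\fP^\circ_\bk$ carry trivial log structures and lie over $\cA^\circ = \spec\bk$. Hence in a diagram \eqref{diag:log-evaluation} defining a point of $\fY^{\mathring{\ev}}_i$ or $\fY^{\cpt}_i$, the top arrow (which factors through $\fP^\circ_\bk$ by definition) is determined by its underlying morphism, and its existence forces the bottom composition $p_i\to\cC\to\cA$ to factor through $\cA^\circ$ — i.e.\ forces $S\to\fY$ to land in the open substack $\fY^\circ_i\subset\fY$ over which the $i$-th marking avoids $\infty_\cA$ (open, because $p_{i,\fY}\to\fY$ is proper and $\infty_\cA$ is closed). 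Over $\fY^\circ_i$ commutativity of \eqref{diag:log-evaluation} is vacuous, and I would identify $\fY^{\mathring{\ev}}_i$ (resp.\ $\fY^{\cpt}_i$), as a strict stack over $\fY^\circ_i$, with the stack of representable morphisms $\ul p_{i,\fY}\to\ul{\fP}^\circ_\bk$ (resp.\ $\ul p_{i,\fY}\to\ul{\zero}_{\fP_\bk}\cong\ul{\fX}_\bk$). It then suffices to treat the underlying stacks, the log structures being pulled back from the log algebraic stack $\fY$.

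\emph{Step 2 (part (1)).} If $c_i>0$ then by the definition of contact order at a marking (\S\ref{ss:combinatorics}(2)) we have $(f^*\ocM_\fP)_{p_i}\cong\NN$, i.e.\ the image of $p_i$ under $\cC\to\cA$ meets $\infty_\cA$; commutativity then forces the top arrow into $\infty_{\fP_\bk}=\fP_\bk\times_\cA\infty_\cA$, which is disjoint from $\fP^\circ_\bk\supset\zero_{\fP_\bk}$. Hence $\fY^{\mathring{\ev}}_i=\fY^{\cpt}_i=\emptyset$. For part (2) assume now $c_i=0$ (otherwise both stacks are empty and the assertions are trivial). Assumption~\ref{assu:smooth-target} gives that $\fP\to\BC$ is log smooth with smooth underlying morphism, so $\ul{\fP}^\circ_\bk$ and $\ul{\zero}_{\fP_\bk}$ (the latter isomorphic to $\ul{\fX}_\bk$, which is smooth over $\bk$ as a $\mu_r$-gerbe over the smooth stack $\cX$) are smooth over $\bk$. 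The universal marking $\ul p_{i,\fY}\to\ul{\fY}^\circ_i$ is a proper flat family of finite presentation whose geometric fibres are tame (as $\operatorname{char}\bk=0$) Deligne--Mumford stacks of dimension $0$ (\'etale $\mu_{r_i}$-gerbes); therefore the relative $\Hom$-stack $\underline{\Hom}_{\ul{\fY}^\circ_i}(\ul p_{i,\fY},\ \ul{\fP}^\circ_\bk\times_\bk\ul{\fY}^\circ_i)$ is algebraic and locally of finite presentation, and $\fY^{\mathring{\ev}}_i$ is its open substack where the morphism is representable and lands in $\ul{\fP}^\circ_\bk$; similarly for $\fY^{\cpt}_i$ using $\ul{\zero}_{\fP_\bk}$. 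Equivalently, one may describe these stacks through the rigidified cyclotomic inertia stacks $\ocI\fP^\circ_\bk$ and $\ocI\zero_{\fP_\bk}$ of \cite{AGV08}, which are algebraic; this is also the cleanest way to confirm that $\fY^{\cpt}_i$ and $\fY^{\mathring{\ev}}_i$ are \emph{log} algebraic.

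\emph{Step 3 (smoothness, and the main obstacle).} For smoothness I would apply the infinitesimal lifting criterion to $\ul{\fY^{\mathring{\ev}}_i}\to\ul{\fY}^\circ_i$, the $\cpt$ case being identical with $\ul{\fP}^\circ_\bk$ replaced by $\ul{\zero}_{\fP_\bk}$. Given a square-zero extension $\ul T\hookrightarrow\ul T'$ of affines with ideal $J$, a map $\ul T'\to\ul{\fY}^\circ_i$, and a lift $g\colon\ul p_{i,T}\to\ul{\fP}^\circ_\bk$ over $\ul T$, the obstruction to extending $g$ over $\ul T'$ lies in $H^1(\ul p_{i,T},\,g^*T_{\ul{\fP}^\circ_\bk/\bk}\otimes\pi^*J)$ with $\pi\colon\ul p_{i,T}\to\ul T$ (using smoothness of the target). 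Since $\pi$ is a proper flat family of tame Deligne--Mumford stacks of relative dimension $0$, $R^{>0}\pi_*=0$; combined with vanishing of higher cohomology on the affine $\ul T$ this kills the obstruction, and the lift automatically remains representable, stays inside $\ul{\fP}^\circ_\bk$ (open conditions), and lies over $\cA^\circ=\spec\bk$ for free. Composing with the open immersion $\ul{\fY}^\circ_i\hookrightarrow\ul{\fY}$ and restoring the strict log structures then yields the smoothness of $\fY^{\mathring{\ev}}_i\to\fY$ and $\fY^{\cpt}_i\to\fY$. I expect the only real obstacle to be the algebraicity step: matching the universal marking gerbe with the gerbe built into the cyclotomic inertia stack and invoking the correct representability theorem for $\Hom$-stacks with stacky source and target; the smoothness is essentially forced by the $0$-dimensionality of the source.
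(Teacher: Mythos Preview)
Your proof is correct and follows the same skeleton as the paper's: pass to the open locus $\fY^\circ_i$ (the paper's $\fY_{i,0}$) where the $i$-th marking avoids $\infty_{\cA}$, strip the log structure, and identify the remaining data as a representable morphism from the marking gerbe into $\fP_\bk^\circ$ (resp.\ $\zero_{\fP_\bk}$).

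The difference is in execution. The paper bypasses both your Hom-stack argument and your deformation-theoretic smoothness argument by writing down a single product identification
\[
\fY^{\mathring{\ev}}_{i} \;=\; \fY_{i,0}\times \ocI \fP_\bk^\circ,
\qquad
\fY^{\cpt}_{i} \;=\; \fY_{i,0}\times \ocI \zero_{\fP_\bk},
\]
from which algebraicity and smoothness of the projection to $\fY$ follow at once from the corresponding properties of the rigidified cyclotomic inertia stacks (smooth over $\bk$ under Assumption~\ref{assu:smooth-target}). You in fact mention this route as an alternative in Step~2; it is precisely the shortcut the paper takes, and it also dissolves the ``matching the gerbe'' obstacle you flag, since in the $\ocI$-description the gerbe is part of the inertia datum rather than something to be reconciled with a Hom-stack. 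Your infinitesimal-lifting argument in Step~3, by contrast, is more self-contained and makes the source of smoothness (the $0$-dimensionality of $p_i$) completely explicit; either route is fine.
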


\begin{remark}
 $\fY^{\ev}_i$ is also algebraic, but we do not need this fact here.
\end{remark}

\begin{proof}
(1) follows from the definition of $\fY^{\cpt}_i$ and $\fY^{\mathring{\ev}}_i$. We now assume $c_i = 0$.

Let $\fY_{i,0} \subset \fY$ be the open dense sub-stack over which the
image of $p_i$ avoids $\infty_{\cA}$.
Let $\ocI \fP_\bk^\circ \subset \ocI \fP_\bk$ be the open substack
parameterizing gerbes avoiding $\infty_{\fP_{\bk}}$.
By \eqref{diag:log-evaluation}, it follows that
\begin{equation}\label{equ:open-type-loci}
\fY^{\mathring{\ev}}_{i} = \fY_{i,0}\times \ocI \fP_\bk^\circ
\end{equation}
hence $\fY^{\mathring{\ev}}_{i}$ is algebraic. Similarly $\fY^{\cpt}_i$ is a closed substack of $\fY^{\mathring{\ev}}_{i}$ given by 
\begin{equation}\label{equ:compact-type-loci}
\fY^{\cpt}_{i} = \fY_{i,0}\times \ocI \zero_{\fP_\bk},
\end{equation}
hence is also algebraic.
(2) follows from the smoothness of $\ocI \zero_{\fP_\bk}$ and
$\ocI \fP_\bk^\circ$.
\end{proof}

Consider the following fiber products both taken over $\fY$:
\begin{equation}\label{equ:compact-type-universal-stack}
\fY^{\cpt} := \prod_i \fY^{\cpt}_{i} \ \ \  \mbox{and} \ \ \ \fY^{\mathring{\ev}} := \prod_i \fY^{\mathring{\ev}}_i,
\end{equation}
where $i$ runs through all markings with contact order zero.
Consider fiber products
\begin{equation}\label{equ:compact-type-stack}
\scrY^\cpt := \scrY \times_{\fY^{\ev}}\fY^{\cpt} \ \ \ \mbox{and} \ \ \   \scrY^{\mathring{\ev}}:= \scrY \times_{\fY^{\ev}}\fY^{\mathring{\ev}}.
\end{equation}
Then $\scrY^{\mathring{\ev}} \subset \scrY$ (resp.\
$\scrY^\cpt \subset \scrY$) is the open (resp.\ closed) sub-stack
parameterizing stable log R-maps whose images at markings with the zero contact order avoid
$\infty_{\fP}$ (resp.\ in $\zero_{\fP}$).

\subsubsection{The canonical perfect obstruction theory of $\scrY^\cpt \to \fY^{\cpt}$}
Consider the universal map and projection over $\fY^{\mathring{\ev}}$ respectively:
\[
\fev \colon \cup_i p_i \to \fP \ \ \ \mbox{and} \ \ \ \pi_{\ev}\colon \cup_{i}p_i \to \fY^{\mathring{\ev}}.
\]
By \eqref{equ:open-type-loci}, \eqref{equ:compact-type-universal-stack} and \cite[Lemma 3.6.1]{AGV08}, we have an isomorphism of vector bundles
\[
\varphi^{\vee}_{\fY^{\mathring{\ev}}/\fY}\colon \EE^{\vee}_{\fY^{\mathring{\ev}}/\fY} := (\pi_{\ev,*} \fev^*T_{\fP/\BC})^{\vee} \stackrel{\cong}{\longrightarrow} \LL_{\fY^{\mathring{\ev}}/\fY}.
\]

The perfect obstruction theory \eqref{equ:log-POS} restricts to a relative perfect obstruction theory 
\[
\varphi^{\vee}_{\scrY^{\mathring{\ev}}/\fY}\colon \EE^{\vee}_{\scrY^{\mathring{\ev}}/\fY} := \EE^{\vee}_{\SH/\fM}|_{\scrY^{\mathring{\ev}}} \to \LL_{\scrY^{\mathring{\ev}}/\fY}.
\]
A standard construction as in  \cite[A.2]{BrLe00}  or \cite[Proposition 4.4, Lemma 4.5]{ACGS20P} yields a morphism of triangles
\begin{equation}\label{diag:compatible-obs}
\xymatrix{
\ev^*\EE^{\vee}_{\fY^{\mathring{\ev}}/\fY} \ar[r] \ar[d]_{\varphi^{\vee}_{\fY^{\mathring{\ev}}/\fY}} & \EE^{\vee}
_{\scrY^{\mathring{\ev}}/\fY} \ar[r] \ar[d]_{\varphi^{\vee}_{\scrY^{\mathring{\ev}}/\fY}} & \EE^{\vee}_{\scrY^{\mathring{\ev}}/\fY^{\mathring{\ev}}} \ar[d]_{\varphi^{\vee}_{\scrY^{\mathring{\ev}}/\fY^{\mathring{\ev}}}} \ar[r]^-{[1]} & \\
\ev^*\LL_{\fY^{\mathring{\ev}}/\fY} \ar[r] & \LL_{\scrY^{\mathring{\ev}}/\fY} \ar[r] & \LL_{\scrY^{\mathring{\ev}}/\fY^{\mathring{\ev}}} \ar[r]^-{[1]} &
}
\end{equation}
where $\varphi^{\vee}_{\scrY^{\mathring{\ev}}/\fY^{\mathring{\ev}}}$ is a perfect obstruction theory of $\scrY^{\mathring{\ev}} \to \fY^{\mathring{\ev}}$ of the form
\[
\varphi^{\vee}_{\scrY^{\mathring{\ev}}/\fY^{\mathring{\ev}}}\colon \EE^{\vee}_{\scrY^{\mathring{\ev}}/\fY^{\mathring{\ev}}} := \pi_{\scrY^{\mathring{\ev}},*} f_{\scrY^{\mathring{\ev}}}^*(T_{\fP/\BC}(- \Sigma))^{\vee} \to \LL_{\scrY^{\mathring{\ev}}/\fY^{\mathring{\ev}}}.
\]
Here $\Sigma$ is the sum or union of all markings with the zero contact order. Thus, the two perfect obstruction theories $\varphi^{\vee}_{\scrY^{\mathring{\ev}}/\fY}$ and $\varphi^{\vee}_{\scrY^{\mathring{\ev}}/\fY^{^{\mathring{\ev}}}}$ are compatible in the sense of \cite{BeFa97}.

Pulling back $\varphi^{\vee}_{\scrY^{\mathring{\ev}}/\fY^{\mathring{\ev}}}$ to $\scrY^{\cpt}$ we obtain the {\em canonical perfect obstruction theory} of $\scrY^{\cpt} \to \fY^{\cpt}$
\begin{equation}\label{equ:obs-compact-evaluation}
\varphi^{\vee}_{\scrY^{\cpt}/\fY^{\cpt}}\colon \EE^{\vee}_{\scrY^{\cpt}/\fY^{\cpt}} := \pi_{\scrY^{\cpt},*} f_{\scrY^{\cpt}}^*(T_{\fP/\BC}(- \Sigma))^{\vee} \to \LL_{\scrY^{\cpt}/\fY^{\cpt}}.
\end{equation}
Denote by $[\scrY^{\cpt}]^{\vir}$ the {\em canonical virtual cycle} of $\scrY^{\cpt}$ defined via \eqref{equ:obs-compact-evaluation}. 

\subsubsection{The compact type locus}\label{sss:compact-type-loci}
We call $\scrY^\cpt \subset \scrY$ the {\em compact type locus} if the contact orders of all markings are equal to zero. 
For the purpose of constructing the reduced virtual cycles over the compact type locus, we will impose for the rest of this section that 

\begin{assumption}\label{assu:zero-contacts}
All contact orders are equal to zero.
\end{assumption}

This assumption is needed in our construction to apply the cosection localization of Kiem-Li \cite{KiLi13}. 
In this case $\ddata$ is the same as a collection of log sectors which
are further restricted to be sectors of compact type (see
Definition~\ref{def:hyb-sector}).
Note that if all sectors are narrow, then $\scrY^\cpt = \scrY$.

\subsection{The superpotentials}\label{ss:superpotential}
Our next goal is to construct the reduced virtual cycle $[\scrY^{\cpt}]^{\red}$ for $\scrY^{\cpt} = \UH^{\cpt}$. The superpotential is a key ingredient which we discuss now.

\subsubsection{The definition}
A \emph{superpotential} is a morphism of stacks
\[
W\colon \fP^{\circ} \to \LR
\]
over $\BC$. Equivalently, $W$ is a section of the line bundle $\LR|_{\fP^{\circ}}$ over $\fP^{\circ}$.

Pulling-back $W$ along the universal torsor $\spec\bk \to \BC$, we obtain a $\CC^*_\omega$-equivariant function
\[
W_\bk \colon \fP^{\circ}_\bk \to \bk
\]
which recovers the information of $W$. 
Denote by $\crit(W_{\bk}) \subset \fP^{\circ}_{\bk}$ the {\em critical locus} of the holomorphic function $W_\bk$. It descends to a closed substack $\crit(W) \subset \fP^{\circ}$. 

\begin{definition}
  We call $\crit(W)$ the {\em critical locus} of the superpotential
  $W$. 
  We say that $W$ has \emph{proper critical locus} if $\crit(W_{\bk})$
  is proper over $\bk$, or equivalently $\crit(W)$ is proper over
  $\BC$.
\end{definition}

Let $\fX_\bk := \fX\times_{\BC}\spec\bk$ where the left arrow is
given by $\zeta$ in \eqref{equ:hyb-target} and the right one is the
universal torsor.
Since $\fP^{\circ}_\bk$ is a vector bundle over $\fX_\bk$, the
critical locus of $W_\bk$, if proper, is necessarily supported on
the fixed locus $\zero_{\fP_\bk} \subset \fP_\bk$ of the
$\CC^*_\omega$-action.

\subsubsection{The extended superpotential}
To extend $W$ to $\fP$, we first observe the following:

\begin{lemma}\label{lem:pole-of-potential}
Suppose there exists a non-zero superpotential $W$. Then the order of poles of $W_{\bk}$ along $\infty_{\fP_{\bk}}$ is the positive integer $\ttwist = a \cdot r$.
\end{lemma}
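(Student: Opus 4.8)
The plan is to compute the pole order of $W_{\bk}$ along $\infty_{\fP_{\bk}}$ directly from the quotient description \eqref{equ:universal-proj} of $\ul{\fP}$, by translating the condition that $W$ is a global section of $\LR|_{\fP^\circ}$ over $\fP^\circ$ into a statement about the weights $\bw$ of the $\GG_m$-action and the $R$-charge weight of $\LR$. Recall $\fP^\circ_\bk = \vb\big(\bigoplus_{i>0}(\bE^\vee_{i,\fX}\otimes \cL_{\fX}^{\otimes i})\big)$ is a vector bundle over $\fX_\bk$ with a $\CC^*_\omega$-action scaling the $i$-th summand with weight $\etwist\cdot i$, and the superpotential $W_\bk\colon \fP^\circ_\bk \to \bk$ is $\CC^*_\omega$-equivariant where $\bk$ carries the weight corresponding to $\LR$. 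First I would pin down the weight of $\LR$ under this $R$-charge: since $\cL_{\fX}$ restricted to the $\CC^*_\omega$-fibers is the standard weight-one representation and $\LR = \cL_{\fX}^{\otimes ?}$ via the $r$-spin relation \eqref{diag:spin-target} (so that $\cL_{\fX}^{\otimes r} \cong \LR\boxtimes\bL^\vee$), the weight of $\LR$ equals $r$ times the weight of $\cL_{\fX}$ — and the weight of $\cL_{\fX}$ relative to the chosen $\etwist$-normalized action is $\frac{1}{\etwist}$ of the weight of the coordinate on the $i=1$-type summand divided by $i$; packaging this correctly gives that $W_\bk$ is homogeneous of degree $\ttwist = \etwist\cdot r$ in the fiber coordinates of $\fP^\circ_\bk$, with coefficients in sections of an appropriate bundle on $\fX_\bk$. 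The key arithmetic input is that the weight on the $i$-th block is $\etwist\cdot i$ and the superpotential must be $\CC^*_\omega$-invariant as a section of $\LR$, which forces each monomial appearing in $W_\bk$ to have total weight equal to the weight of $\LR$.

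Next I would interpret this homogeneity on the compactification $\ul{\fP}$. In the weighted projective bundle $\ul{\fP} = \ul{\PP}^{\bw}(\bigoplus_i(\bE^\vee_{i,\fX}\otimes\cL_{\fX}^{\otimes i})\oplus \cO_\fX)$, the extra coordinate $u$ on the $\cO_\fX$ factor has weight $1$, and $\infty_{\fP}$ is the divisor $\{u=0\}$ with $\cO_{\fP}(\infty_{\fP})\cong\cO_{\fP}(1)$ (the tautological sheaf, as used already in the proof of Proposition~\ref{prop:curve-in-infinity}). A $\GG_m$-homogeneous function of degree $\ttwist$ in the $\fP^\circ$-coordinates becomes, after homogenizing by powers of $u$, a section of $\cO_{\fP}(\ttwist)\cong\cO_{\fP}(\ttwist\,\infty_{\fP})$ — equivalently, a rational function on $\fP$ with a pole of order exactly $\ttwist$ along $\infty_{\fP}$, provided it genuinely does not extend, i.e.\ provided $W_\bk$ is not divisible by a smaller power of $u$ after homogenization. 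Since $W$ is assumed non-zero and $W_\bk$ is honestly homogeneous of degree $\ttwist$ (not of any smaller degree, because the $\CC^*_\omega$-weight of $\LR$ is pinned down to be $\ttwist$ and nothing smaller), the pole order is precisely $\ttwist = \etwist\cdot r$. That $\ttwist$ is a positive integer follows from $\etwist\in\frac{1}{d}\ZZ_{>0}$ and $\ttwist = \etwist\cdot r$ together with $d\mid r$ being arranged, or more simply from the fact that the $\GG_m$-weights in $\bw$ are integers so the homogeneous degree $\ttwist$ is a non-negative integer, and it is positive since $W\neq 0$ forces at least one coordinate to appear.

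The main obstacle I anticipate is bookkeeping the two distinct $\GG_m$-actions and the twisting conventions so that the weight of $\LR$ relative to the $\etwist$-normalized fiberwise action comes out to be exactly $\ttwist=\etwist\cdot r$ and not, say, $r$ or $\etwist$ alone: the $\cL_{\fX}^{\otimes i}$ twist in the $i$-th summand of \eqref{equ:universal-proj} already builds in a factor, and one must check that the resulting $\CC^*_\omega$-equivariance of $W_\bk$ as a map to $\LR$ is consistent with declaring the $i$-th block to have weight $\etwist\cdot i$. I would carry this out by writing, locally over $\fX_\bk$, a monomial $\prod x_{i}^{m_i}$ in the fiber coordinates (where $x_i$ sits in the block of weight $\etwist\cdot i$), imposing that its total weight $\etwist\sum_i i\,m_i$ equals the weight of the target line $\LR$, and identifying the latter via the $r$-spin relation; the degree as a section of $\cO_{\fP}(1)$ after homogenizing with $u$ is then $\sum_i i\,m_i\cdot\etwist = \ttwist$, independent of the monomial, which simultaneously shows $W_\bk$ is homogeneous of the single degree $\ttwist$ and hence has pole order exactly $\ttwist$ along $\infty_{\fP_\bk}$. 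Finally, the descent from $\fP_\bk$ to $\fP$ (from $\CC^*_\omega$-equivariant data to data over $\BC$) is routine given the discussion preceding the statement.
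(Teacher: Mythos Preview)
Your approach is correct and essentially the same as the paper's brief proof: both derive the constraint $\sum_i i\,k_i = r$ on each monomial of $W$ from the fact that $W$ is a section of $\LR|_{\fP^\circ}$ over $\BC$, then read off the pole order along $\infty_{\fP}$ as the $\bw$-weight $\etwist\cdot r = \ttwist$ (and deduce integrality from $d \mid r$). Your exposition tangles the projectivizing $\GG_m$ (weights $\etwist\cdot i$, which governs the pole order) with the $R$-charge $\CC^*_\omega$ (which governs the equivariance of $W_\bk$) --- the claim that ``$\CC^*_\omega$ scales the $i$-th summand with weight $\etwist\cdot i$'' is not literally right --- but since the two actions differ only by a reparametrization this does not affect the computation.
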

\begin{proof}
The existence of non-zero $W$ implies that there is a sequence of non-negative integers $k_i$ such that
\[
r = \sum_i k_i \cdot i,
\]
where $i$ runs through the grading of non-trivial $\bE_i$. The integrality of $\ttwist$ follows from the choices of $a$, and the order of poles of $W_{\bk}$ follows from the choices of weights $\bw$ in \eqref{equ:universal-proj}. 
\end{proof}

Consider the  $\PP^1$-bundle over $\BC$
\[
\PP_\omega = \PP(\LR\oplus\cO).
\]
We further equip $\PP_\omega$ with the log structure given by its reduced
infinity divisor $\infty_{\PP_\omega} := \PP_\omega \setminus \vb(\LR)$.
The superpotential $W$ extends to a rational map of log stacks
$\oW\colon \fP \dashrightarrow \PP_\omega$ over $\BC$ with the
indeterminacy locus $\overline{(W^{-1}(\zero_{\LR}))}\cap \infty_{\fP}$ by Lemma \ref{lem:pole-of-potential}. 

Equivalently, $\oW$ can be viewed as a rational section of $\LR|_{\fP^{\circ}}$ extending $W$, and having poles along $\infty_{\fP}$ of order $\ttwist$.

\subsubsection{The twisted superpotential}
Next, we discuss how to extend the superpotential $W$ across the
boundary.
This will be shown to be the key to extending cosections to the
boundary of the log moduli stacks.
It should be noticed that the non-empty indeterminacy locus of $\oW$
is a new phenomenon compared to the $r$-spin case \cite{CJRS18P}, and
requires a somewhat different treatment as shown below.

Consider the log \'etale morphism of log stacks
\begin{equation}\label{equ:partial-exp}
\cA^e \to \cA\times\cA
\end{equation}
given by the blow-up of the origin of $\cA\times\cA$.
Denote by $\fP^e$ and $\PP^e_\omega$ the pull-back of
\eqref{equ:partial-exp} along the following respectively
\[
\fP\times\cA_{\max} \stackrel{(\cM_{\fP}, id)}{\longrightarrow} \cA\times\cA_{\max} \ \ \ \mbox{and} \ \ \ \PP_\omega\times\cA_{\max} \stackrel{(\cM_{\PP_\omega}, \nu_{\ttwist})}{\longrightarrow} \cA\times \cA
\]
Here $\cA_{\max} = \cA$,  
and $\nu_{\ttwist}$ is the degree $\ttwist$ morphism induced by $\NN \to \NN, \ 1 \mapsto \ttwist$ on the level of characteristics. Recall from Lemma \ref{lem:pole-of-potential} that $\ttwist$ is a positive integer given $W \neq 0$. 

Denote by $\infty_{\fP^e} \subset \fP^e$ and $\infty_{\PP^e} \subset \PP^e_\omega$ the proper transforms of $\infty_{\fP}\times\cA_{\max}$ and $\infty_{\PP_{\omega}}\times\cA_{\max}$ respectively. Consider
\begin{equation}\label{equ:remove-infinity}
\fP^{e,\circ} := \fP^{e}\setminus \infty_{\fP^e} \ \ \ \mbox{and} \ \ \ \PP^{e,\circ}_\omega := \PP^e_\omega \setminus \infty_{\PP^e_\omega}.
\end{equation}
We obtain a commutative diagram with rational horizontal maps
\[
\xymatrix{
\fP^{e,\circ} \ar[d] \ar@{-->}[rr]^{\oW^{e,\circ}} && \PP_\omega^{e,\circ} \ar[d] \\
\fP\times\cA_{\max} \ar@{-->}[rr]^{\oW\times id} && \PP_\omega\times\cA_{\max}
}
\]

\begin{lemma}
There is a canonical surjective log morphism
\[
\fc\colon \PP^{e,\circ}_\omega \to \vb\big(\LR\boxtimes\cO_{\cA_{\max}}(\ttwist\Delta_{\max})\big)
\]
by contracting the proper transform of $\PP_\omega\times \Delta_{\max}$ where $\Delta_{\max} \subset \cA_{\max}$ is the closed point, and the target of $\fc$ is equipped with the pull-back log structure from $\cA_{\max}$.
\end{lemma}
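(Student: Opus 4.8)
The assertion is that on $\PP^{e,\circ}_\omega$ the proper transform of $\PP_\omega \times \Delta_{\max}$ can be contracted, yielding a morphism to $\vb\big(\LR\boxtimes\cO_{\cA_{\max}}(\ttwist\Delta_{\max})\big)$. My plan is to work this out by a purely local toric computation on $\cA^e$, and then transport it along the two base-change morphisms defining $\PP^e_\omega$. Concretely, $\cA = [\A^1/\Gm]$, so $\cA\times\cA = [\A^2/\Gm^2]$ with toric boundary the two coordinate axes, and $\cA^e \to \cA\times\cA$ is the blow-up of the (stacky) origin; its fan is the subdivision of the first quadrant by the ray through $(1,1)$, giving two maximal cones $\sigma_1 = \langle e_1, e_1+e_2\rangle$ and $\sigma_2 = \langle e_2, e_1+e_2\rangle$, with dual coordinates $(x, y/x)$ and $(y, x/y)$ respectively (here $x$ corresponds to the $\PP_\omega$-factor via $\cM_{\PP_\omega}$, and $y$ to the $\cA_{\max}$-factor via $\nu_{\ttwist}$, so the second coordinate is pulled back by degree $\ttwist$ in $y$). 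Removing $\infty_{\PP^e_\omega}$ — the proper transform of $\{x=\infty\}$, i.e.\ the chart $\sigma_1$ together with its noncompact locus — leaves exactly the chart $\sigma_2$ together with the open part of $\sigma_1$ where $x$ is finite; I will identify $\PP^{e,\circ}_\omega$ with the toric stack over $\cA_{\max}$ whose fibers are $\vb(\LR)$ glued along the chart where the exceptional ray provides a coordinate. The exceptional divisor is the proper transform of $\PP_\omega\times\Delta_{\max}$ (the ray $e_1+e_2$), and contracting it means forgetting the $\sigma_2$-chart, i.e.\ projecting the fan onto the cone $\langle e_1 \rangle$; this is precisely the toric morphism whose target has $\cO(\ttwist \Delta_{\max})$ appearing as the twist because of the $\nu_{\ttwist}$ in the second factor.

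After setting up these charts, the key steps in order are: (1) write down $\PP^{e,\circ}_\omega$ explicitly via the two affine charts of $\cA^e$ pulled back along $(\cM_{\PP_\omega}, \nu_{\ttwist})$, and note that the total space $\vb(\LR)$-direction is untouched by the blow-up (only the base log directions are modified), so it suffices to produce the contraction fiberwise over $\cA_{\max}$ and over $\BC$; (2) on the chart containing $\infty_{\PP^e_\omega}$'s complement, check that the function realizing $\LR$ extends and that the exceptional divisor is cut out by the pullback of $\Delta_{\max}$ to the power $\ttwist$, using the factor $\nu_{\ttwist}$; (3) glue the chartwise maps to get $\fc$, using that on the overlap the two descriptions of $\LR\boxtimes\cO_{\cA_{\max}}(\ttwist\Delta_{\max})$ agree by the cocycle coming from the blow-up; (4) equip the target with the pullback log structure from $\cA_{\max}$ and verify $\fc$ is a log morphism — this is automatic because $\fc$ is strict over the complement of $\Delta_{\max}$ and over $\Delta_{\max}$ it is the strict morphism to $\vb(\LR\boxtimes\cO(\ttwist\Delta_{\max}))$ by construction; (5) surjectivity is clear since $\fc$ is surjective already on the $\sigma_1$-chart. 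An alternative, perhaps cleaner, route is to invoke the universal property: $\vb\big(\LR\boxtimes\cO_{\cA_{\max}}(\ttwist\Delta_{\max})\big)$ is the relative $\spec$ of a sheaf of algebras, and one checks directly that the coordinate of $\PP_\omega = \PP(\LR\oplus\cO)$ restricted to $\vb(\LR)$, twisted by the log structure data on $\fP^e$, defines a section of exactly that line bundle on $\PP^{e,\circ}_\omega$ — this avoids fan bookkeeping at the cost of tracking the twist by hand.

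The main obstacle I anticipate is bookkeeping the twist by $\ttwist$ correctly: the two factors of $\cA\times\cA$ enter asymmetrically (one via $\cM_{\PP_\omega}$, the other via $\nu_{\ttwist}\circ(\text{projection to }\cA_{\max})$), so after blowing up the origin and removing $\infty_{\PP^e_\omega}$, the line bundle that "survives" on the contracted space is not $\LR\boxtimes\cO$ but acquires the $\cO_{\cA_{\max}}(\ttwist\Delta_{\max})$ factor precisely because the exceptional ray is $e_1+e_2$ while the second coordinate was scaled by $\ttwist$. Getting the multiplicity $\ttwist$ (and not $1$, or $\ttwist$ times something else) requires care with the lattice indices of the subdivided cones, and this is where I would be most careful — essentially it is the statement that along the exceptional divisor the function $x$ (which trivializes $\LR$ away from $\infty_\fP$) has a pole of order matching the ratio of the two scalings, namely $\ttwist$, consistent with Lemma~\ref{lem:pole-of-potential}. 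The rest — algebraicity of the target, strictness of $\fc$ away from $\Delta_{\max}$, surjectivity — is formal.
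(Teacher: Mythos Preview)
Your approach---a local coordinate/toric computation---is exactly what the paper does; its entire proof is the one sentence ``This follows from a local coordinate calculation.'' So the strategy is right and there is nothing to compare.

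That said, your detailed plan contains two bookkeeping errors that would derail the execution. First, the dual coordinates are swapped: the chart for $\sigma_1 = \langle e_1, e_1+e_2\rangle$ has coordinate ring generated by $\chi^{e_2^*}$ and $\chi^{e_1^*-e_2^*}$, i.e.\ $(y, x/y)$, not $(x, y/x)$; similarly for $\sigma_2$. Second, and more seriously, you conflate the exceptional divisor (ray $e_1+e_2$) with the proper transform of $\PP_\omega\times\Delta_{\max}$. These are different: $\PP_\omega\times\Delta_{\max}$ is the divisor $\{t=0\}$, which maps to $\{v=0\}\subset\cA\times\cA$ under $\nu_{\ttwist}$, so its proper transform corresponds to the ray $e_2$, not $e_1+e_2$. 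The map $\fc$ contracts $D_{e_2}$, not the exceptional $D_{e_1+e_2}$. Concretely, in the smooth chart with coordinates $(t,r)$ and $c = t^{\ttwist} r$ (where $c$ cuts out $\infty_{\PP_\omega}$), the exceptional locus $\{t=0\}$ maps \emph{injectively} to the target via $r\mapsto 1/r$, while the proper transform---visible in the $\vb(\LR)$-chart of $\PP_\omega$ as $\{t=0\}$---is collapsed via $(\ell,0)\mapsto 0$ because the map there is $(\ell,t)\mapsto t^{\ttwist}\ell$.

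Your alternative route is the cleanest fix and I would execute that instead: the tautological section $\ell$ of $\LR$ on $\vb(\LR)$ has a simple pole along $\infty_{\PP_\omega}$; pulled back to $\PP^{e,\circ}_\omega$ (after removing $\infty_{\PP^e_\omega}$) its only pole is along the exceptional divisor, and the order there is $\ttwist$ because $c = t^{\ttwist} r$. Hence $\ell$ defines a regular section of $\LR\boxtimes\cO_{\cA_{\max}}(\ttwist\Delta_{\max})$ on $\PP^{e,\circ}_\omega$, i.e.\ a morphism $\fc$ to the total space, and one reads off directly that its fibers are points except over the image of the proper transform $D_{e_2}$. Surjectivity and the log structure check are then immediate.
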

\begin{proof}
This follows from a local coordinate calculation.
\end{proof}

\begin{proposition}\label{prop:twisted-potential}
The composition $\tW := \fc\circ\oW^{e,\circ}$ is a surjective morphism that contracts the proper transform of $\fP\times\Delta_{\max}$.
\end{proposition}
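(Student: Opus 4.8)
The plan is to verify the two assertions --- that $\tW$ is surjective and that it contracts the proper transform of $\fP\times\Delta_{\max}$ --- by reducing everything to explicit toric (monomial) computations on affine charts, which is legitimate because all the spaces and maps involved are built from $\cA = [\A^1/\Gm]$, root stacks, and blow-ups of toric strata, and the statement is local along $\infty_{\fP}$ and $\Delta_{\max}$.

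First I would set up coordinates. Away from the indeterminacy locus of $\oW$ everything is already an honest morphism and the claim is immediate, so the only issue is near $\infty_{\fP}\times\cA_{\max}$. There, following Lemma~\ref{lem:pole-of-potential}, $\oW$ is a rational section of $\LR|_{\fP^\circ}$ with a pole of order exactly $\ttwist$ along $\infty_{\fP}$; choosing a local trivialization, write $\oW$ locally as $u\cdot x^{-\ttwist}$ where $x$ is the local equation of $\infty_{\fP}$ (or rather its pullback to the relevant chart, accounting for the root-stack structure) and $u$ a local unit or regular function. On the $\cA_{\max}$-factor let $t$ be the toric coordinate, so $\Delta_{\max} = \{t=0\}$. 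The blow-up $\cA^e \to \cA\times\cA$ of the origin is covered by two charts, and pulling back along $(\cM_\fP, \mathrm{id})$ (resp.\ $(\cM_{\PP_\omega},\nu_{\ttwist})$) introduces the substitutions $x\rightsquigarrow$ (monomial in the chart coordinates) and $t^{\ttwist}\rightsquigarrow$ (monomial in the chart coordinates) respectively. Removing the proper transforms of the infinity divisors as in \eqref{equ:remove-infinity} then means inverting the appropriate coordinate in each chart. The key point is that the pole order $\ttwist$ of $\oW$ along $\infty_\fP$ is precisely matched by the degree-$\ttwist$ twist $\nu_{\ttwist}$ on the $\cA_{\max}$-factor of the target: after the blow-up and the removal of $\infty_{\fP^e}$ and $\infty_{\PP^e_\omega}$, the composite $\oW^{e,\circ}$ becomes a genuine morphism (no remaining indeterminacy), because the $x^{-\ttwist}$ pole is cancelled against the exceptional parameter, and one reads off directly that its composition with $\fc$ lands in $\vb(\LR\boxtimes\cO_{\cA_{\max}}(\ttwist\Delta_{\max}))$. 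I would record this cancellation as the central monomial identity.

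Next, surjectivity. The map $\fc$ is surjective by the preceding Lemma, so it suffices that $\oW^{e,\circ}$ is dominant onto $\PP_\omega^{e,\circ}$, together with properness to upgrade dominance to surjectivity. Dominance is clear on the open locus where $\oW$ is already defined (there $\oW\times\mathrm{id}$ surjects onto a dense open of $\PP_\omega\times\cA_{\max}$, and this lifts through the log-\'etale modification \eqref{equ:partial-exp}). For the honest surjectivity statement I would invoke properness of the relevant morphisms over $\BC$ --- $\fP\to\BC$ is proper by hypothesis and the root-stack/blow-up operations preserve properness --- so a dominant morphism of integral proper $\BC$-stacks is surjective. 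Then $\tW = \fc\circ\oW^{e,\circ}$ is a composition of surjections, hence surjective.

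Finally, the contraction statement: I claim $\tW$ factors through the contraction of the proper transform of $\fP\times\Delta_{\max}$. In the chart coordinates above, the proper transform of $\fP\times\Delta_{\max}$ is cut out by the vanishing of one chart coordinate (the one corresponding to the $\cA_{\max}$-direction after blow-up), and the monomial formula for $\tW$ obtained in the first step shows that its value along this locus depends only on data pulled back from $\fP$ modulo the $\LR$-scaling --- i.e.\ $\tW$ is constant along the fibers of the contraction $\fP^{e,\circ}\supset(\text{proper transform of }\fP\times\Delta_{\max})\to \fP$, landing in the zero section (or a single fiber) of $\vb(\LR\boxtimes\cO_{\cA_{\max}}(\ttwist\Delta_{\max}))$ over $\Delta_{\max}$. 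Comparing with the factorization of $\fc$ through the contraction of $\PP_\omega\times\Delta_{\max}$ finishes this point.

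\emph{Main obstacle.} The routine but genuinely delicate part is the bookkeeping of the log structures and root-stack (orbifold) structures when writing $\oW$, the blow-up charts, and $\fc$ in coordinates: one must keep track of the precise weights $\bw$, the factor $\ttwist = \etwist\cdot r$, and the interaction of the $\mu_\ell$-gerbe structure on $\infty_\fP$ (Section~2.3) with the exceptional divisor, to confirm that the pole order and the twist $\nu_{\ttwist}$ cancel on the nose rather than up to a divisibility discrepancy. I expect that once the correct charts are fixed this is a short monomial check, so the hard part is really choosing the charts and trivializations cleanly enough that the cancellation is transparent.
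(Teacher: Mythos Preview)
There is a genuine gap in your well-definedness argument. You claim that after the blow-up and removal of $\infty_{\fP^e}$, the map $\oW^{e,\circ}$ becomes a genuine morphism because ``the $x^{-\ttwist}$ pole is cancelled against the exceptional parameter.'' But that cancellation addresses only the \emph{pole} of $\oW$, not its \emph{indeterminacy}. In your own notation $\oW$ is locally $[u : x^{\ttwist}]$ as a map to $\PP_\omega$, and this is undefined where $u = x = 0$, i.e.\ along $\overline{W^{-1}(\zero_{\LR})} \cap \infty_{\fP}$. Blowing up along $\{x = t = 0\}$ does nothing to a base locus living in the $\{x = u = 0\}$ direction: in the chart of $\fP^{e,\circ}$ with coordinates $(x,\mu,u)$ and $t = x\mu$, the lift to the chart $(y,\mu')$ of $\PP_\omega^{e,\circ}$ reads $(y,\mu') = (x^{\ttwist}/u,\; u\mu^{\ttwist})$, which is still undefined on $\{x = 0,\, u = 0\}$ --- precisely the fibers of the exceptional divisor $E^{\circ}$ over $\overline{W^{-1}(\zero_{\LR})} \cap \infty_{\fP}$. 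This is in fact the main content of the proposition: one must show that the \emph{composition} $\tW = \fc \circ \oW^{e,\circ}$ extends to a morphism across this locus. In the same coordinates $\fc$ sends $(y,\mu') \mapsto w = \mu'$, so $\tW$ becomes $w = u\mu^{\ttwist}$, visibly regular and vanishing on the indeterminacy locus; this is exactly the check the paper carries out. Your ``main obstacle'' paragraph worries about log/root-stack bookkeeping in the pole--twist cancellation, but that part is routine; the real issue you have not confronted is the residual indeterminacy coming from the zeros of $W$.

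A secondary gap: your surjectivity argument invokes properness, but $\fP^{e,\circ}$ is not proper over $\BC \times \cA_{\max}$ --- you removed the divisor $\infty_{\fP^e}$ --- so ``dominant plus proper implies surjective'' is not directly available. The paper instead deduces surjectivity of $\oW^{e,\circ}$ (where defined) from the exact pole order in Lemma~\ref{lem:pole-of-potential} and then composes with the surjective $\fc$; the contraction of the proper transform of $\fP \times \Delta_{\max}$ likewise follows by observing it lands in the proper transform of $\PP_\omega \times \Delta_{\max}$, which $\fc$ contracts by construction.
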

\begin{proof}
  A local calculation shows that the proper transform of
  $\fP\times\Delta_{\max}$ dominates the proper transform of
  $\PP^{e,\circ}_\omega\times\Delta_{\max}$, hence is contracted by $\fc$.
  The surjectivity of $\oW^{e,\circ}$ follows from the pole order of Lemma \ref{lem:pole-of-potential} and the above construction. Hence the surjectivity in the statement follows from the surjectivity of $\fc$ in the above lemma.

  It remains to show that $\tW$ is well-defined everywhere. Let
  $E^{\circ} \subset \fP^{e,\circ}$ be the exceptional divisor of
  $\fP^{e,\circ} \to \fP\times \cA$.
  Then $E^{\circ} \cong N_{\infty_{\fP/\fP}}$ is the total space
  of the normal bundle.
  The indeterminacy locus of $\oW^{e,\circ}$ is the fiber of
  $E^{\circ} \to \infty_{\fP}$ over
  $\overline{(W^{-1}(\zero_{\LR}))}\cap \infty_{\fP}$.
  One checks that $\tW$ contracts the indeterminacy locus of
  $\oW^{e,\circ}$ to the zero section of its target.
\end{proof}

\begin{definition}\label{def:twisted-potential-non-deg}
  We call $\tW$ the \emph{twisted superpotential}.\footnote{This is
    different from the ``twisted superpotential'' used in the
    physics literature \cite[(2.27)]{Wi93}.}
  It is said to have \emph{proper critical locus} if the vanishing
  locus of the log differential $\diff \tW$, defined as a closed
  strict substack of $\fP^{e,\circ}$, is proper over
  $\BC \times \cA_{\max}$.
\end{definition}

\begin{proposition}
  $\tW$ has proper critical locus iff $W$ has proper critical locus.
\end{proposition}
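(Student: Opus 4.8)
The plan is to show that $\crit(\tW)$ is obtained from $\crit(W)$ simply by spreading it out over $\cA_{\max}$, so that properness over the two bases matches. Write $U := \cA_{\max}\setminus\Delta_{\max}$. Over $U$ the blow-up $\cA^e\to\cA\times\cA$ is an isomorphism (its center pulls back into $\infty_{\fP}\times\Delta_{\max}$), so $\fP^{e,\circ}|_U\cong\fP^{\circ}$ with the relevant log structures trivial there, and under this identification $\tW|_U$ becomes $W$: indeed $\oW^{e,\circ}|_U = W$ and $\fc|_U$ is the identity. Hence $\crit(\tW)|_U = \crit(W)$ as closed substacks, and the ``only if'' direction is immediate, since the restriction of a proper morphism is proper.

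For ``if'', assume $\crit(W)$ is proper over $\BC$. Since $\fP^{e,\circ}\to\fP\times\cA_{\max}$ is an isomorphism over $\fP^{\circ}\times\cA_{\max}$, the closed substack $\crit(W)\subset\fP^{\circ}$ gives a substack $Z_0\subset\fP^{e,\circ}$ identified with $\crit(W)\times\cA_{\max}$. Because $\crit(W)\to\BC$ is proper and $\fP^{e,\circ}\to\BC\times\cA_{\max}$ is separated, the a priori locally closed immersion $Z_0\hookrightarrow\fP^{e,\circ}$ is proper, hence a closed immersion, and $Z_0$ is proper over $\BC\times\cA_{\max}$ with $Z_0|_U = \crit(\tW)|_U$ and every component dominating $\cA_{\max}$. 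It follows that $Z_0\subseteq\crit(\tW)$, so the proposition reduces to showing $\crit(\tW)$ has no irreducible component contained in the fiber $\fP^{e,\circ}|_{\Delta_{\max}}$. This fiber has two components: the exceptional divisor $E^{\circ}\cong N_{\infty_{\fP}/\fP}$ and the proper transform $\widetilde{\fP}$ of $\fP\times\Delta_{\max}$ (with $\widetilde{\fP}\cong\fP$), meeting along the zero section $\infty_{\fP}$. First I would show $\crit(\tW)\cap\widetilde{\fP}$ equals $\crit(W)$ under $\widetilde{\fP}\cong\fP$: by Lemma~\ref{lem:pole-of-potential} the divisor $\widetilde{\fP}$ maps into the logarithmic divisor of the target of $\tW$ with ramification order $\ttwist>0$, so that logarithmic component of $\diff\tW$ is a nowhere-vanishing unit and drops out of the degeneracy locus, which is then cut out by the remaining components, i.e.\ by $\diff W$; since $\crit(W)\subset\fP^{\circ}$ avoids $\infty_{\fP}$, this sits inside $Z_0$. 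Second I would show $\crit(\tW)\cap E^{\circ}=\emptyset$: on the open part of $E^{\circ}$ where $\oW^{e,\circ}$ is a morphism, $\tW=\fc\circ\oW^{e,\circ}$ and a local computation shows $\diff\tW$ is nondegenerate (again, modulo a nowhere-vanishing $E^{\circ}$-logarithmic factor, it restricts to an isomorphism on the normal directions); on the indeterminacy locus of $\oW^{e,\circ}$, i.e.\ the fiber of $E^{\circ}\to\infty_{\fP}$ over $\overline{(W^{-1}(\zero_{\LR}))}\cap\infty_{\fP}$, although $\tW$ contracts this locus to the zero section by Proposition~\ref{prop:twisted-potential}, the $E^{\circ}$-logarithmic direction again keeps $\diff\tW$ from degenerating. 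Together these give $\crit(\tW)|_{\Delta_{\max}}\subseteq Z_0$, hence $\crit(\tW)=Z_0\cong\crit(W)\times\cA_{\max}$, which is proper over $\BC\times\cA_{\max}$.

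The hard part is the local analysis in the second paragraph: verifying that the exceptional and boundary logarithmic factors of $\diff\tW$ along $E^{\circ}$ and $\widetilde{\fP}$ over $\Delta_{\max}$ are nowhere-vanishing units, so that these two divisors contribute no spurious critical points. This is precisely where the blow-up \eqref{equ:partial-exp} and the degree-$\ttwist$ twist $\nu_{\ttwist}$ earn their keep: they are arranged so that $\oW^{e,\circ}$ becomes a genuine morphism after the contraction $\fc$, with the exceptional direction absorbing the order-$\ttwist$ pole of $\oW$ along $\infty_{\fP}$. Once this is in place, the equality $\crit(\tW)=Z_0$, and with it the proposition, follows formally.
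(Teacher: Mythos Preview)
Your ``only if'' direction and the reduction to the fiber over $\Delta_{\max}$ are fine, but the central claim $\crit(\tW)\cap\widetilde{\fP}=\crit(W)$ is false, and with it the stronger conclusion $\crit(\tW)=Z_0$. In fact all of $\widetilde{\fP}$ lies in $\crit(\tW)$. To see this, work on the open part $\fP^\circ\times\cA_{\max}\subset\fP^{e,\circ}$, where no blow-up occurs. In the near-$\Delta_{\max}$ trivialization of the target line bundle $\LR\boxtimes\cO_{\cA_{\max}}(\ttwist\Delta_{\max})$, the section $\tW$ reads $w'=t^{\ttwist}\cdot W(p)$ (here $t$ is a local equation for $\Delta_{\max}$ and $p$ denotes the fiber coordinates on $\fP^\circ$). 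Thus $w'$ and all its $p$-derivatives vanish along $\{t=0\}$. The log direction $t\partial_t$ you appeal to is tangent to the base $\cA_{\max}$ and so does not appear in the \emph{relative} log differential $\diff\tW$; there is no ``ramification of order $\ttwist$'' along $\widetilde{\fP}$ (the projection $\fP^{e,\circ}\to\cA_{\max}$ is log smooth and generically strict there), and the vanishing of $\tW$ to order $\ttwist$ along $\widetilde{\fP}$ forces the relative differential to vanish, not to become a unit. Your claim $\crit(\tW)\cap E^\circ=\emptyset$ then also fails at the locus $E^\circ\cap\widetilde{\fP}$.

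The paper sidesteps this by not attempting to compute $\crit(\tW)$ on the boundary at all. It removes both $\widetilde{\fP}$ and $\zero_{\fP^e}$---each proper over $\BC\times\cA_{\max}$---to obtain $\fP^{e,*}$, and identifies $\fP^{e,*}$ globally with the punctured vector bundle
\[
\vb\Big(\bigoplus_{i>0}\bE^\vee_{i,\fX}\otimes\Sp^{\otimes i}\boxtimes\cO_{\cA_{\max}}(\etwist i\,\Delta_{\max})\Big)\setminus\zero,
\]
on which $\tW$ is just the morphism induced fiberwise by $W$. Since $\crit(W)$ is contained in the zero section, $\tW|_{\fP^{e,*}}$ has no critical points, whence $\crit(\tW)\subset\widetilde{\fP}\cup\zero_{\fP^e}$ is proper. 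Your argument can be salvaged only by replacing the false equality $\crit(\tW)=Z_0$ with this containment; but then the ``local analysis'' you defer is precisely the vector-bundle identification the paper carries out, and the rest of your scaffolding (the construction of $Z_0$, the case split $\widetilde{\fP}$ versus $E^\circ$) becomes unnecessary.
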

\begin{proof}
Since $W$ is the fiber of $\tW$ over the open dense point of $\cA_{\max}$, one direction is clear. We next assume that $W$ has proper critical locus.

  Consider the substack $\fP^{e,*} \subset \fP^{e,\circ}$ obtained by
  removing the zero section $\zero_{\fP^{e}}$ and the proper transform of
  $\fP\times_{\cA_{\max}}\Delta_{\max}$.
  Since the proper transform of $\fP\times_{\cA_{\max}} \Delta_{\max}$ is proper over $\BC\times\cA_{\max}$, it suffices to show that the morphism
  \[
  \tW|_{\fP^{e,*}}\colon \fP^{e,*} \to \vb\big(\LR\boxtimes\cO_{\cA_{\max}}(\ttwist\Delta_{\max})\big)
  \]
  has no critical points fiberwise over $\BC\times\cA_{\max}$, as
  otherwise the critical locus would be non-proper due to the
  $\CC^*$-scaling of $\fP^{e,*}$.

On the other hand, $\fP^{e,*}$ can be expressed differently as follows
  \[
  \fP^{e,*} = \vb\big(\bigoplus_{i > 0}(\bE^{\vee}_{i,\fX}\otimes\Sp^{\otimes i}\boxtimes \cO_{\cA_{\max}}(\etwist i \Delta_{\max}))\big)\setminus \zero
  \]
  where $\zero$ is the zero section of the corresponding vector bundle.
  Note that $W$ induces a morphism over $\BC\times\cA_{\max}$:
  \[
 \vb\big(\bigoplus_{i > 0}(\bE^{\vee}_{i,\fX}\otimes\Sp^{\otimes i}\boxtimes \cO(\etwist i \Delta_{\max}))\big) \to \vb\big(\LR\boxtimes\cO(\ttwist \Delta_{\max}) \big).
  \]
  whose restriction to $\fP^{e,*}$ is precisely $\tW|_{\fP^{e,*}}$.
  Since $\crit(W) $ is contained in the zero section, $\tW|_{\fP^{e,*}}$ has no
  critical points on $\fP^{e,*}$.
\end{proof}

\subsection{The canonical cosection}\label{ss:can-cosection}
Next we construct the canonical cosection for the moduli of log
R-maps.
For this purpose, we adopt the assumptions in Section
\ref{sss:compact-type-loci} by assuming all contact orders are zero
and working with the compact type locus for the rest of this section.
Furthermore, in order for the canonical cosection to behave well along
the boundary of the moduli, it is important to work with log R-maps
with uniform maximal degeneracy, see Section \ref{sss:UMD}.
As already exhibited in the r-spin case \cite{CJRS18P}, this will be
shown to be the key to constructing the reduced theory in later
sections in the general case.

\subsubsection{Modifiying the target}\label{ss:modify-target}

We recall the short-hand notation $\fU^{\cpt}$ and $\UH^\cpt$ as in
\eqref{equ:compact-type-stack} and
\eqref{equ:compact-type-universal-stack}. Consider the universal log $R$-map and the projection over $\UH^\cpt$ respectively:
\[
f_{\UH^\cpt}\colon \cC_{\UH^\cpt} \to \fP \ \ \ \mbox{and} \ \ \ \pi\colon \cC_{\UH^\cpt} \to \UH^\cpt.
\]
Denote by
$f_{\UH^\cpt}\colon \cC_{\UH^\cpt} \to \cP_{\UH^\cpt} :=
\fP\times_{\BC}\cC_{\UH^\cpt}$ again for the corresponding section.
To obtain a cosection, we modify the target $\cP_{\UH^\cpt}$ as
follows.

Consider $\fX_{\UH^\cpt} := \cC_{\UH^\cpt}\times_{\omega^{\log},\BC,\zeta}\fX$.
Recall $\Sigma$ is the sum of all markings. We define $\cP_{\UH^\cpt,-}$ to be the log stack with the underlying stack
\begin{equation}\label{equ:modified-target}
\ul{\cP_{\UH^\cpt,-}} := \ul{\PP}^{\bw}\left(\bigoplus_{i > 0}(\bE^{\vee}_i|_{\fX_{\UH^\cpt}}\otimes\cL_{\fX}^{\otimes i}|_{\fX_{\UH^\cpt}}(-\Sigma))\oplus \cO_{\fX_{\UH^\cpt}} \right).
\end{equation}
The log structure on $\cP_{\UH^\cpt,-}$ is defined to be the direct
sum of the log structures from the curve $\cC_{\UH^\cpt}$ and the
Cartier divisor $\infty_{\cP_{\UH^\cpt,-}}$
similar to $\cP_{\UH^\cpt}$ in Section \ref{ss:canonical-obs}.

Denote by $\cP^{\circ}_{\UH^\cpt,-} = \cP_{\UH^\cpt,-} \setminus \infty_{\cP_{\UH^\cpt,-}}$. We have a morphism of vector bundles over $\fX_{\UH^\cpt}$
\[
\cP^{\circ}_{\UH^\cpt,-} \to \cP^{\circ}_{\UH^\cpt}
\]
which contracts the fiber over $\Sigma$, and is
isomorphic everywhere else.
This extends to a birational map
\[
\cP_{\UH^\cpt,-} \dashrightarrow \cP_{\UH^\cpt}
\]
whose indeterminacy locus is precisely
$\infty_{\cP_{\UH^\cpt,-}}|_{\Sigma}$. Denote by 
\[
\cP_{\UH^\cpt,\reg} = \cP_{\UH^\cpt,-} \setminus \infty_{\cP_{\UH^\cpt,-}}|_{\Sigma}.
\]

\begin{lemma}
There is a canonical factorization
\begin{equation}\label{equ:hmap-modify-target}
\xymatrix{
\cC_{\UH^\cpt} \ar[rr]^{f_{\UH^\cpt}} \ar[rd]_{f_{\UH^\cpt,-}} && \cP_{\UH^\cpt} \\
&\cP_{\UH^\cpt,\reg} \ar[ru]&
}
\end{equation}
\end{lemma}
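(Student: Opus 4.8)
The plan is to build $f_{\UH^\cpt,-}$ by gluing two pieces of $\cC_{\UH^\cpt}$: the complement of the markings $\Sigma$, where the birational modification $\cP_{\UH^\cpt,\reg}\to\cP_{\UH^\cpt}$ is an isomorphism, and an open neighbourhood of $\Sigma$, where the compact-type hypothesis lets one divide the universal field by the equation of $\Sigma$ and thereby lift it into the twisted-down target $\cP^{\circ}_{\UH^\cpt,-}$.

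First I would record that over $\cC_{\UH^\cpt}\setminus\Sigma$ the twist by $\cO(-\Sigma)$ is trivial, so the morphism $\cP_{\UH^\cpt,\reg}\to\cP_{\UH^\cpt}$ restricts to an isomorphism of log stacks there: the underlying isomorphism $\cP_{\UH^\cpt,-}|_{\cC_{\UH^\cpt}\setminus\Sigma}\cong\cP_{\UH^\cpt}|_{\cC_{\UH^\cpt}\setminus\Sigma}$ matches the two infinity divisors, and both sides carry the sum of the log structure pulled back from $\cC_{\UH^\cpt}$ and that of their infinity divisor. Composing $f_{\UH^\cpt}$ with the inverse of this isomorphism produces a canonical lift over $\cC_{\UH^\cpt}\setminus\Sigma$.

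Next I would work near $\Sigma$. By Assumption~\ref{assu:zero-contacts} the stack $\UH^\cpt$ parameterizes log $R$-maps carrying every marking into $\zero_\fP$; as $\zero_\fP$ and $\infty_\fP$ are disjoint, the closed substack $f_{\UH^\cpt}^{-1}(\infty_{\cP_{\UH^\cpt}})\subset\cC_{\UH^\cpt}$ is disjoint from $\Sigma$, so $U:=\cC_{\UH^\cpt}\setminus f_{\UH^\cpt}^{-1}(\infty_{\cP_{\UH^\cpt}})$ is an open substack containing $\Sigma$ over which $f_{\UH^\cpt}$ factors through $\cP^{\circ}_{\UH^\cpt}$. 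By Proposition~\ref{prop:map-field-equiv}, $f_{\UH^\cpt}|_U$ then corresponds to an $r$-spin structure $\cL$ on the source together with sections $\rho_i\in\Gamma(U,g^{*}\bE^{\vee}_i\otimes\cL^{\otimes i})$, one for each grading $i$. The compact-type condition says precisely that the $\rho_i$ vanish along $\Sigma$, so they factor canonically through the subsheaves obtained by twisting with $\cO(-\Sigma)$; assembling these gives a section $f_U\colon U\to\cP^{\circ}_{\UH^\cpt,-}\subseteq\cP_{\UH^\cpt,\reg}$, which lifts $f_{\UH^\cpt}|_U$ by the very definition of the modification $\cP_{\UH^\cpt,-}\to\cP_{\UH^\cpt}$, and which is automatically a morphism of log stacks because $\cP^{\circ}_{\UH^\cpt,-}\to\cC_{\UH^\cpt}$ is strict. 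On $U\setminus\Sigma$ this coincides with the lift of the first step, since both lift $f_{\UH^\cpt}$ along the isomorphism $\cP_{\UH^\cpt,\reg}\to\cP_{\UH^\cpt}$ over $\cC_{\UH^\cpt}\setminus\Sigma$; as $\{U,\ \cC_{\UH^\cpt}\setminus\Sigma\}$ is an open cover of $\cC_{\UH^\cpt}$, the two glue to the desired $f_{\UH^\cpt,-}$. It lands in $\cP_{\UH^\cpt,\reg}$ — over $\Sigma$ it takes values in $\cP^{\circ}_{\UH^\cpt,-}$, avoiding the removed locus $\infty_{\cP_{\UH^\cpt,-}}|_{\Sigma}$ — and factors $f_{\UH^\cpt}$ as required; separatedness of $\cP_{\UH^\cpt,\reg}\to\cC_{\UH^\cpt}$ makes the lift unique, and since every step is canonical and compatible with pullback, so is the factorization.

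The step I expect to be the main obstacle is the second one: matching the modification of the target (twisting each graded piece $\bE^{\vee}_i\otimes\Sp^{\otimes i}$ by $\cO(-\Sigma)$) against the compact-type evaluation condition — that is, recognizing "the universal field vanishes along $\Sigma$" as exactly the condition needed to lift into $\cP^{\circ}_{\UH^\cpt,-}$ — and then checking that the glued map is a genuine \emph{logarithmic} morphism rather than merely a morphism of underlying stacks. The twisted-curve structure at the markings should cause no further difficulty, since the argument is \'etale-local on $\cC_{\UH^\cpt}$ near $\Sigma$ and the local root-stack coordinate is compatible with the twist by $\cO(-\Sigma)$.
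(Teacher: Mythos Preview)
Your argument is correct and follows the same approach as the paper: the paper's proof is simply the two-sentence observation that $f_{\UH^\cpt,-}$ and $f_{\UH^\cpt}$ agree away from $\Sigma$, and that the compact-type constraint $f_{\UH^\cpt}(\Sigma)\subset\zero_{\cP_{\UH^\cpt}}$ handles the markings. You have unpacked exactly this into the explicit gluing construction, with the vanishing of the $\rho_i$ along $\Sigma$ being the concrete content of the compact-type condition.
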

\begin{proof}
  Note that $f_{\UH^\cpt,-}$ and $f_{\UH^\cpt}$ coincide when restricted
  away from $\Sigma$.
  The lemma follows from the constraint 
  $f_{\UH^\cpt}(\Sigma) \subset \zero_{\cP_{\UH^\cpt}}$ of the compact type locus. 
\end{proof}

The following lemma will be used to show the compatibility of perfect
obstruction theories constructed in \eqref{equ:obs-compact-evaluation}
and in \cite{CJW19P}.
\begin{lemma}
There is a canonical exact sequence 
\begin{equation}\label{equ:modified-tangent}
0 \to f^*_{\UH^\cpt}T_{\fP/\BC}(-\Sigma) \to f^*_{\UH^\cpt,-}T_{\cP_{\UH^\cpt,\reg}/\cC_{\UH^\cpt}} \to T_{\fX/\BC}|_{\Sigma} \to 0.
\end{equation}
\end{lemma}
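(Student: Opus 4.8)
The plan is to build the sequence from the structure of $\cP_{\UH^\cpt,\reg}$ as a (partial) blow-up/contraction of the weighted projective bundle $\cP_{\UH^\cpt}$ along the markings $\Sigma$, where $\Sigma$ is contained in the compact type locus $\zero_{\cP_{\UH^\cpt}}$. First I would work on the universal curve $\cC_{\UH^\cpt}$ and over the base $\fX_{\UH^\cpt}$, noting that away from $\Sigma$ the morphism $\cP_{\UH^\cpt,\reg}\to\cP_{\UH^\cpt}$ is an isomorphism of log stacks over $\cC_{\UH^\cpt}$, so the desired sequence is automatically exact there with zero cokernel; the content is entirely local near $\Sigma$. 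The key observation is that over $\Sigma$, the section $f_{\UH^\cpt}$ lands in the zero section $\zero_{\cP_{\UH^\cpt}}$, which is precisely $\fX_{\UH^\cpt}$ sitting inside $\cP_{\UH^\cpt}$, and similarly $f_{\UH^\cpt,-}$ lands in $\zero_{\cP_{\UH^\cpt,\reg}}$.

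The technical heart is a comparison of the two relative log tangent bundles $T_{\fP/\BC}$ (pulled back via $f_{\UH^\cpt}$) and $T_{\cP_{\UH^\cpt,\reg}/\cC_{\UH^\cpt}}$ (pulled back via $f_{\UH^\cpt,-}$) along the zero section. I would use the description \eqref{equ:modified-target} of $\ul{\cP_{\UH^\cpt,-}}$ as $\ul{\PP}^{\bw}$ of $\bigoplus_{i>0}(\bE_i^\vee|_{\fX_{\UH^\cpt}}\otimes\cL_\fX^{\otimes i}|_{\fX_{\UH^\cpt}}(-\Sigma))\oplus\cO$, compared with the analogous description without the twist by $\cO(-\Sigma)$. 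Along the zero section $\zero = \fX_{\UH^\cpt}$, the relative log tangent bundle of such a weighted projective bundle restricts to the fiber direction, i.e.\ to $\bigoplus_{i>0}(\bE_i|_{\fX_{\UH^\cpt}}\otimes\cL_\fX^{\otimes -i}|_{\fX_{\UH^\cpt}})$, possibly twisted by $\cO(\Sigma)$ in the modified case; the Euler sequence for the weighted projective bundle, restricted to $\zero$, gives this identification together with the trivial summand from the $\cO$ coordinate (which is accounted for by passing from $T_{\cP/\cC}$ to $T_{\cP/\BC}$, since $\cC\to\BC$ accounts for the $\cL_\fX$-scaling). Concretely I expect: near $\Sigma$ one has $f^*_{\UH^\cpt}T_{\fP/\BC}\cong \bigoplus_i \bE_i\otimes\cL^{\otimes -i}$ (as sheaves on $\cC_{\UH^\cpt}$ near $\Sigma$), $f^*_{\UH^\cpt,-}T_{\cP_{\UH^\cpt,\reg}/\cC_{\UH^\cpt}}\cong \bigoplus_i \bE_i\otimes\cL^{\otimes -i}(\Sigma)$, and the natural inclusion of the former (twisted down by $\Sigma$) into the latter has cokernel supported on $\Sigma$ equal to $\bigoplus_i\bE_i\otimes\cL^{\otimes -i}|_\Sigma$. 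Since along $\Sigma$ the curve is unramified over $\fX$ and $\cL^{\otimes -i}|_\Sigma$ twisting gets absorbed, one identifies this cokernel with $T_{\fX/\BC}|_\Sigma$ (the fiber of the vector bundle $\fP^\circ$ over $\fX$, restricted to the image of $\Sigma$ in $\fX_{\UH^\cpt}$). Gluing the local description near $\Sigma$ with the isomorphism away from $\Sigma$ yields the global exact sequence on $\cC_{\UH^\cpt}$.

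The main obstacle I anticipate is the bookkeeping of the log structures and the orbifold/weighted structure: one must check that the map $f^*_{\UH^\cpt}T_{\fP/\BC}(-\Sigma)\to f^*_{\UH^\cpt,-}T_{\cP_{\UH^\cpt,\reg}/\cC_{\UH^\cpt}}$ coming from the birational contraction $\cP_{\UH^\cpt,\reg}\dashrightarrow\cP_{\UH^\cpt}$ is genuinely a morphism of vector bundles (not merely a rational map) precisely because the contraction is defined away from $\infty_{\cP_{\UH^\cpt,-}}|_\Sigma$ and $f_{\UH^\cpt,-}(\Sigma)\subset\zero$, and that its cokernel is exactly $T_{\fX/\BC}|_\Sigma$ with no extra torsion or correction from stackiness at the markings (the orbifold twist on $\cL$ and on $\bE_i$ is common to both sides and cancels in the cokernel). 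Once the local model near a single marking is pinned down — essentially the statement that blowing down the $\cO(-\Sigma)$-twisted fiber coordinate changes the relative tangent bundle by an elementary modification along $\Sigma$ whose cokernel is the fiber of $\fP^\circ\to\fX$ — the rest is a routine globalization. I would therefore spend the bulk of the proof on that one local computation, which the paper may well dispatch with "this follows from a local coordinate calculation" as it does for the preceding lemma.
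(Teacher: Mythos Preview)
Your proposal has a genuine conceptual error in identifying the cokernel. You write that $T_{\fX/\BC}|_\Sigma$ is ``the fiber of the vector bundle $\fP^\circ$ over $\fX$'', but that is not what $T_{\fX/\BC}$ is: recall from \eqref{equ:hyb-target} that $\fX \to \BC$ factors through the \'etale map $\fX \to \BC \times \cX$, so $T_{\fX/\BC}$ is the pullback of $T_{\cX}$ and has rank $\dim\cX$, not $\rk\bE$. Relatedly, your local identifications of $f^*_{\UH^\cpt}T_{\fP/\BC}$ and $f^*_{\UH^\cpt,-}T_{\cP_{\UH^\cpt,\reg}/\cC_{\UH^\cpt}}$ omit this horizontal piece entirely, and you have the twist on the wrong factor. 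Along the zero section the \emph{vertical} tangent $T_{\fP/\fX}$ restricts to $\bigoplus_i \bE_i^\vee\otimes\cL^{\otimes i}$, while for the modified target \eqref{equ:modified-target} one gets $\bigoplus_i \bE_i^\vee\otimes\cL^{\otimes i}(-\Sigma)$; hence $T_{\fP/\fX}(-\Sigma)\cong T_{\cP_{\UH^\cpt,\reg}/\fX_{\UH^\cpt}}$ is an \emph{isomorphism}. The vertical directions match exactly after the twist and contribute nothing to the cokernel --- the opposite of what you predicted. The entire cokernel comes from the \emph{horizontal} direction, where both targets share the same $T_{\fX/\BC}$ and the inclusion $T_{\fX/\BC}(-\Sigma)\hookrightarrow T_{\fX/\BC}$ has cokernel $T_{\fX/\BC}|_\Sigma$.

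The paper's proof makes this transparent by filtering via the tower $\fP\to\fX\to\BC$: it writes down the three relative tangent sequences (for $T_{\fP/\BC}(-\Sigma)$, for $T_{\cP_{\UH^\cpt,\reg}/\cC_{\UH^\cpt}}$, and for $T_{\fP/\BC}$) as rows of a commutative diagram, observes the isomorphism in the left (vertical) column, deduces the desired middle inclusion from the right-column inclusion via an elementary submodule lemma, and reads off the cokernel from $0\to T_{\fX/\BC}(-\Sigma)\to T_{\fX/\BC}\to T_{\fX/\BC}|_\Sigma\to 0$. Your strategy of working locally near $\Sigma$ and invoking Euler sequences could be made to work, but only after correcting the role of the $\fX$-direction.
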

\begin{proof}
Consider the following commutative diagram of solid arrows
\[
\xymatrix{
0 \ar[r] & T_{\fP/\fX}|_{\cC_{\UH^\cpt}}(-\Sigma) \ar[r] \ar[d]^{\cong} & T_{\fP/\BC}|_{\cC_{\UH^\cpt}}(-\Sigma) \ar[r] \ar@{-->}[d]  & T_{\fX/\BC}|_{\cC_{\UH^\cpt}}(-\Sigma) \ar[r] \ar@{_{(}->}[d] & 0 \\
0 \ar[r] & T_{\cP_{\UH^\cpt,\reg}/\fX_{\UH^\cpt}}|_{\cC_{\UH^\cpt}} \ar[r] \ar@{_{(}->}[d] & T_{\cP_{\UH^\cpt,\reg}/\cC_{\UH^\cpt}}|_{\cC_{\UH^\cpt}} \ar[r] \ar@{_{(}->}[d]  & T_{\fX/\BC}|_{\cC_{\UH^\cpt}} \ar[r] \ar[d]^{\cong} \ar[r] & 0 \\
0 \ar[r] & T_{\fP/\fX}|_{\cC_{\UH^\cpt}} \ar[r] & T_{\fP/\BC}|_{\cC_{\UH^\cpt}} \ar[r] & T_{\fX/\BC}|_{\cC_{\UH^\cpt}} \ar[r] & 0
}
\]
where the horizontal lines are exact, the top exact sequence is the twist of the bottom one, and the lower middle vertical arrow is induced by \eqref{equ:hmap-modify-target}. Note that the sheaves in the first two columns are naturally viewed as sub-sheaves of $T_{\fP/\BC}|_{\cC_{\UH^\cpt}}$. The injection $T_{\fX/\BC}|_{\cC_{\UH^\cpt}}(-\Sigma) \hookrightarrow T_{\fX/\BC}|_{\cC_{\UH^\cpt}}$ on the upper right corner can be viewed as an inclusion of quotients by the same sub-bundle
\[
T_{\fP/\BC}|_{\cC_{\UH^\cpt}}(-\Sigma) \big/T_{\fP/\fX}|_{\cC_{\UH^\cpt}}(-\Sigma)  \subset T_{\cP_{\UH^\cpt,\reg}/\cC_{\UH^\cpt}}|_{\cC_{\UH^\cpt}}\big/T_{\fP/\fX}|_{\cC_{\UH^\cpt}}(-\Sigma),
\]
which lifts to $T_{\fP/\BC}|_{\cC_{\UH^\cpt}}(-\Sigma) \subset T_{\cP_{\UH^\cpt,\reg}/\cC_{\UH^\cpt}}|_{\cC_{\UH^\cpt}}$ by Lemma~\ref{lem:comm-alg} below. This defines the dashed arrow.

Finally, \eqref{equ:modified-tangent} follows from combinning the following exact sequence to the top two rows in the above commutative diagram
\[
0 \to T_{\fX/\BC}|_{\cC_{\UH^\cpt}}(-\Sigma) \to T_{\fX/\BC}|_{\cC_{\UH^\cpt}} \to T_{\fX/\BC}|_{\Sigma} \to 0.
\]
\end{proof}

\begin{lemma}
  \label{lem:comm-alg}
  Suppose $R$ is a commutative ring, and $A, B, C$ are submodules of
  an $R$-module $M$ satisfying $A \subset B$, $A \subset C$ and
  $B/A \subset C/A$ as submodules of $M/A$.
  Then $B \subset C$ as submodules of $M$.
\end{lemma}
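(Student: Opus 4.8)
The plan is a one-line element chase, so the only real ``step'' is to interpret the hypothesis correctly. Write $q\colon M \to M/A$ for the quotient projection. Because $A \subset B$ and $A \subset C$, the submodules $B/A$ and $C/A$ of $M/A$ are precisely $q(B)$ and $q(C)$, and moreover $q^{-1}(q(B)) = B$ and $q^{-1}(q(C)) = C$ (here one uses that $A = \ker q$ lies inside each of $B$ and $C$). Thus the hypothesis ``$B/A \subset C/A$ as submodules of $M/A$'' is literally the statement $q(B) \subset q(C)$.

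Granting this reformulation, I would argue as follows. Let $b \in B$. Then $q(b) \in q(B) \subset q(C)$, so there is some $c \in C$ with $q(b) = q(c)$, i.e.\ $b - c \in \ker q = A$. Since $A \subset C$, we get $b - c \in C$, and therefore $b = c + (b - c) \in C$. As $b$ was arbitrary, $B \subset C$, as claimed. There is no genuine obstacle here; the only point to be careful about is not to conflate ``$B/A \subset C/A$ as abstract modules'' with ``as submodules of $M/A$'' — it is the latter, compatible-with-the-inclusions-into-$M/A$ version that is needed, and it is exactly what makes the preimage identities $q^{-1}(q(B)) = B$ and $q^{-1}(q(C)) = C$ valid.
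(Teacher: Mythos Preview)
Your proof is correct; this is exactly the straightforward element chase (equivalently, the correspondence theorem for submodules containing $A$) that the statement calls for. The paper itself simply leaves the proof to the reader, so there is nothing further to compare.
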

\begin{proof}
The proof is left to the reader. 
\end{proof}

\subsubsection{The boundary of the moduli stacks}
Recall from \cite[Section 3.5]{CJRS18P} that the maximal degeneracy induces canonical morphisms to $\cA_{\max}$
\[
\UH^\cpt \to \fU^{\cpt} \to \fU \to \cA_{\max}.
\]
Consider the Cartier divisors 
\[
\Delta_{\fU} = \Delta_{\max}\times_{\cA_{\max}}\fU \subset \fU \ \  \ \mbox{and} \ \ \ \Delta_{\fU^{\cpt}} = \Delta_{\max}\times_{\cA_{\max}}\fU^{\cpt} \subset \fU^{\cpt}
\] 
and their pre-image $\Delta_{\UH^\cpt} \subset \UH^\cpt$. Hence we have the line bundle 
\[
\bL_{\max} = \cO_{\fU^{\cpt}}(-\Delta_{\fU^{\cpt}}) = \cO_{\cA_{\max}}(-\Delta_{\max})|_{\fU^{\cpt}}.
\]

\begin{definition}
We call $\Delta_{\fU}$ (resp.\ \ $\Delta_{\UH^\cpt}$ and
$\Delta_{\fU^{\cpt}} $) the \emph{boundary of maximal degeneracy} of
$\fU$ (resp.\ \ $\UH^\cpt$ and $\fU^{\cpt}$).  
We further introduce the \emph{interiors}
\begin{equation}\label{equ:interior-stack}
\IR^\cpt := \UH^\cpt\setminus (\Delta_{\UH^\cpt}) \ \ \ \mbox{and} \ \ \ \IU^{\cpt} := \fU^{\cpt}\setminus (\Delta_{\fU^{\cpt}})
\end{equation}
\end{definition}
By construction, $\IR^\cpt$ (resp.\ \ $\IU^{\cpt}$) parameterizes stable
log $R$-maps (resp.\ log maps) whose image avoids $\infty_{\fP}$
(resp.\ avoids $\infty_{\cA}$).
In this case, $\IU^{\cpt}$ is the stack of pre-stable curves since all
maps to $\cA$ factor through its unique open dense point.
In particular, $\IU^{\cpt}$ is smooth and log smooth.

\subsubsection{The twisted superpotential over the modified target}\label{sss:superpotential-modified-target}

Consider the two morphisms
\[
\cP_{\UH^\cpt,\reg} \to \cA\times\cA_{\max} \ \ \ \mbox{and} \ \ \ \cP_{\UH^\cpt} \to \cA\times\cA_{\max}
\]
where the morphisms to the first copy $\cA$ are induced by their infinity divisors. Pulling back \eqref{equ:partial-exp} along the above two morphisms, we obtain
\[
\cP^e_{\UH^\cpt,\reg} \to \cP_{\UH^\cpt,\reg}  \ \ \ \mbox{and} \ \ \ \cP^e_{\UH^\cpt} \to \cP_{\UH^\cpt}
\]
Further removing the proper transforms of
their infinity divisors from both, we obtain
$\cP^{e,\circ}_{\UH^\cpt,\reg}$ and $\cP^{e,\circ}_{\UH^\cpt}$.
Note that that
$\cP^{e,\circ}_{\UH^\cpt} \cong
\fP^{e,\circ}\times_{\BC}\cC_{\UH^\cpt}$.
Consider the short-hand
\begin{equation}\label{equ:twisted-omega}
\tomega := \omega_{\cC_{\UH^\cpt}/\UH^\cpt}\otimes \pi^*\bL_{\max}^{-\ttwist} \ \ \ \mbox{and} \ \ \ \tomega_{\log} := \omega^{\log}_{\cC_{\UH^\cpt}/\UH^\cpt}\otimes  \pi^*\bL_{\max}^{-\ttwist}|_{\UH^\cpt}
\end{equation}
with the natural inclusion $\tomega \to \tomega_{\log}$.

\begin{lemma}\label{lem:potential-restrict}
There is a commutative diagram
\[
\xymatrix{
\cP^{e,\circ}_{\UH^\cpt,\reg} \ar[rr]^{\tcW_{-}} \ar[d] && \tomega \ar[d] \\
\cP^{e,\circ}_{\UH^\cpt} \ar[rr]^{\tcW} && \tomega_{\log}
}
\]
where the two vertical arrows are the natural inclusions, $\tcW$ is the pull-back of $\tW$, and the two horizontal arrows are isomorphic away from the fibers over $\Sigma$.
\end{lemma}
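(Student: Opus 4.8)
The plan is to obtain $\tcW_{-}$ by showing that the pull-back $\tcW$ of the twisted superpotential, once restricted from $\cP^{e,\circ}_{\UH^\cpt}$ to the regularized target $\cP^{e,\circ}_{\UH^\cpt,\reg}$, vanishes along the markings $\Sigma$ and therefore factors through the subsheaf $\tomega\subset\tomega_{\log}$.

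First I would pin down the bottom arrow. Since $\cC_{\UH^\cpt}\to\BC$ classifies $\omega^{\log}_{\cC_{\UH^\cpt}/\UH^\cpt}$ while the composition $\cC_{\UH^\cpt}\to\UH^\cpt\to\fU^{\cpt}\to\cA_{\max}$ pulls $\cO_{\cA_{\max}}(\ttwist\Delta_{\max})$ back to $\pi^*\bL_{\max}^{-\ttwist}$, the identification $\cP^{e,\circ}_{\UH^\cpt}\cong\fP^{e,\circ}\times_{\BC}\cC_{\UH^\cpt}$ from Section~\ref{sss:superpotential-modified-target} turns the pull-back $\tcW$ of $\tW$ (Proposition~\ref{prop:twisted-potential}) into a morphism $\cP^{e,\circ}_{\UH^\cpt}\to\vb(\omega^{\log}_{\cC_{\UH^\cpt}/\UH^\cpt}\otimes\pi^*\bL_{\max}^{-\ttwist}) = \vb(\tomega_{\log})$, equivalently a section of the line bundle $\tomega_{\log}$ pulled back to $\cP^{e,\circ}_{\UH^\cpt}$. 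Next, by functoriality of the blow-up \eqref{equ:partial-exp}: the birational map $\cP_{\UH^\cpt,\reg}\to\cP_{\UH^\cpt}$ of Section~\ref{ss:modify-target} is a morphism over $\BC\times\cA_{\max}$, is an isomorphism away from $\Sigma$, and carries the infinity divisor of $\cP_{\UH^\cpt,\reg}$ onto $\infty_{\cP_{\UH^\cpt}}$ with multiplicity one (over $\Sigma$ the source infinity has been deleted and the map lands off $\infty_{\cP_{\UH^\cpt}}$, as shown below), so it respects the two morphisms to $\cA\times\cA_{\max}$ used to form $\cP^e_{\UH^\cpt,\reg}$ and $\cP^e_{\UH^\cpt}$ and hence induces the left vertical arrow $\cP^{e,\circ}_{\UH^\cpt,\reg}\to\cP^{e,\circ}_{\UH^\cpt}$. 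Pulling $\tcW$ back along it yields a section $s$ of $\tomega_{\log}$ on $\cP^{e,\circ}_{\UH^\cpt,\reg}$.

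The heart of the matter is that $s$ vanishes along the preimage $D$ of $\Sigma$. Restricted over $\Sigma$, the map $\cP_{\UH^\cpt,\reg}\to\cP_{\UH^\cpt}$ factors through the zero section $\zero_{\cP_{\UH^\cpt}}$: by \eqref{equ:modified-target} and the definition $\cP_{\UH^\cpt,\reg}=\cP_{\UH^\cpt,-}\setminus\infty_{\cP_{\UH^\cpt,-}}|_\Sigma$, the fiber of $\cP_{\UH^\cpt,\reg}$ over $\Sigma$ is the restriction to $\Sigma$ of the total space of $\bigoplus_{i>0}(\bE^\vee_i\otimes\cL_\fX^{\otimes i}(-\Sigma))|_{\fX_{\UH^\cpt}}$, and the contraction $\cP^{\circ}_{\UH^\cpt,-}\to\cP^{\circ}_{\UH^\cpt}$ is induced by $\cO(-\Sigma)\hookrightarrow\cO$, which is the zero map at $\Sigma$; so the whole fiber maps to $\zero_{\cP_{\UH^\cpt}}$. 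Since the blow-up \eqref{equ:partial-exp} is an isomorphism over the zero section, which is disjoint from all infinity divisors, the left vertical arrow restricted over $\Sigma$ likewise factors through $\zero_{\cP^{e,\circ}_{\UH^\cpt}}$. Finally $\tcW$ vanishes on $\zero_{\cP^{e,\circ}_{\UH^\cpt}}$: over the dense open point of $\cA_{\max}$ it equals the pull-back of the original superpotential $W$ (Proposition~\ref{prop:twisted-potential}), and $W$ has no constant term since each monomial of $W$ has a multidegree $(k_i)$ with $\sum_i k_i\, i = r$, hence $\sum_i k_i\geq 1$ (proof of Lemma~\ref{lem:pole-of-potential}); so $\tcW|_{\zero}=0$ over the dense point, and by separatedness over all of $\zero_{\cP^{e,\circ}_{\UH^\cpt}}$. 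Combining the last two facts gives $s|_D=0$.

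To conclude, $\cP^{e,\circ}_{\UH^\cpt,\reg}\to\cC_{\UH^\cpt}$ is flat and $\Sigma\subset\cC_{\UH^\cpt}$ is a Cartier divisor, so $D$ is a Cartier divisor with $\cO(D)$ the pull-back of $\cO_{\cC_{\UH^\cpt}}(\Sigma)$; a section vanishing along $D$ lies in $\tomega_{\log}(-D)$, which is the pull-back of $\tomega=\tomega_{\log}(-\Sigma)$. Thus $s$ defines the desired morphism $\tcW_-\colon\cP^{e,\circ}_{\UH^\cpt,\reg}\to\vb(\tomega)$, and the square commutes because both composites into $\vb(\tomega_{\log})$ equal $s$; away from $\Sigma$ the left vertical arrow and the inclusion $\tomega\hookrightarrow\tomega_{\log}$ are isomorphisms, so $\tcW_-$ and $\tcW$ agree there. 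I expect the main obstacle to be the bookkeeping in the second step: verifying that the contraction of Section~\ref{ss:modify-target} genuinely lands in $\zero_{\cP_{\UH^\cpt}}$ over $\Sigma$ rather than in some larger fiber, that the two blow-up constructions match under $\cP_{\UH^\cpt,\reg}\to\cP_{\UH^\cpt}$, and the routine but delicate identification of the various pull-back line bundles with $\tomega$ and $\tomega_{\log}$.
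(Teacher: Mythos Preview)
Your proof is correct and follows essentially the same route as the paper's. Both arguments reduce to the observation that over $\Sigma$ the regularized target maps into the zero section of $\cP^{e,\circ}_{\UH^\cpt}$, and that the twisted superpotential vanishes on that zero section, so the pulled-back section of $\tomega_{\log}$ vanishes along $\Sigma$ and hence factors through $\tomega\cong\tomega_{\log}(-\Sigma)$. The paper compresses this into a single line citing Proposition~\ref{prop:twisted-potential} and the contraction property from Section~\ref{ss:modify-target}, whereas you make the zero-section vanishing explicit via the weight argument from Lemma~\ref{lem:pole-of-potential}; this extra step is useful since the paper's citation of Proposition~\ref{prop:twisted-potential} alone does not literally state that $\tW$ sends $\zero_{\fP^{e,\circ}}$ to zero.
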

\begin{proof}
It suffices to construct the following commutative diagram
\[
\xymatrix{
\cP^{e,\circ}_{\UH^\cpt,\reg} \ar[rr]^{\tcW'_{-}} \ar[d] && \tomega_{\fX_{\UH^\cpt}} \ar[d] \\
\cP^{e,\circ}_{\UH^\cpt} \ar[rr]^{\tcW'} && \tomega_{\log, \fX_{\UH^\cpt}}
}
\]
where the right vertical arrow is the pull-back of
$\tomega \to \tomega_{\log}$ along
$\fX_{\UH^\cpt} \to \cC_{\UH^\cpt}$.
By Proposition~\ref{prop:twisted-potential}, the composition
\[
  \cP^{e,\circ}_{\UH^\cpt,\reg} \to \cP^{e,\circ}_{\UH^\cpt} \to \tomega_{\log, \fX_{\UH^\cpt}}
\]
contracts the fiber of $\cP^{e,\circ}_{\UH^\cpt,\reg} \to \fX_{\UH^\cpt}$
over $\Sigma$ to the zero section of $\tomega_{\log, \fX_{\UH^\cpt}}$, hence 
factors through
$\tomega_{\fX_{\UH^\cpt}} \cong
\tomega_{\log,\fX_{\UH^\cpt}}(-\Sigma)$.
\end{proof}

\subsubsection{The relative cosection}
By \cite[Lemma 3.18]{CJRS18P}, \eqref{equ:hmap-modify-target} canonically lifts to a commutative triangle
\begin{equation}\label{equ:hmap-e-target}
\xymatrix{
\cC_{\UH^\cpt} \ar[rr]^{f_{\UH^\cpt}} \ar[rd]_{f_{\UH^\cpt,-}} && \cP^{e,\circ}_{\UH^\cpt} \\
&\cP^{e,\circ}_{\UH^\cpt,\reg} \ar[ru]&
}
\end{equation}
where the corresponding arrows are denoted again by
$f_{\UH^\cpt}$ and $f_{\UH^\cpt,-}$.
Now we have
\[
f^*_{\UH^\cpt,-}\diff\tcW_- \colon  f^*_{\UH^\cpt,-}T_{\cP^{e,\circ}_{\UH^\cpt,\reg}/\cC_{\UH^\cpt}} \longrightarrow (\tcW_-\circ f_{\UH^\cpt,-})^*T_{\tomega/\cC_{\UH^\cpt}} \cong \tomega.
\]

By \eqref{equ:modified-tangent}, we have a composition
\begin{equation}\label{eq:composing-cosection}
f^*_{\UH^\cpt}T_{\fP/\BC}(-\Sigma) \longrightarrow f^*_{\UH^\cpt,-}T_{\cP^{e,\circ}_{\UH^\cpt,\reg}/\cC_{\UH^\cpt}} \to \tomega,
\end{equation}
again denoted by $f^*_{\UH^\cpt,-}\diff\tcW_-$. Pushing forward along $\pi$ and using \eqref{equ:obs-compact-evaluation}, we have
\[
\sigma^{\bullet}_{\UH^\cpt/\fU^{\cpt}} := \pi_*\big(f^*_{\UH^\cpt,-}\diff\tcW_-\big) \colon \EE_{\UH^\cpt/\fU^{\cpt}} \longrightarrow \pi_*\tomega \cong \pi_*\omega_{\cC_{\UH^\cpt}/\UH^\cpt}\otimes\bL_{\max}^{-\ttwist}|_{\UH^\cpt}.
\]
where the isomorphism follows from the projection formula and \eqref{equ:twisted-omega}.

Finally, taking the first cohomology we obtain the \emph{canonical cosection}:
\begin{equation}\label{equ:canonical-cosection}
\sigma_{\UH^\cpt/\fU^{\cpt}} \colon \obs_{\UH^\cpt/\fU^{\cpt}} := H^1(\EE_{\UH/\fU^{\cpt}}) \longrightarrow \bL_{\max}^{-\ttwist}|_{\UH^\cpt}.
\end{equation}

\subsubsection{The degeneracy locus of $\sigma_{\UH^\cpt/\fU^{\cpt}}$}

Denote by $\IR_W$ the stack of $R$-maps in $\UH^\cpt$
which factor through $\crit(W)$.
Since $\crit(W)$ is a closed sub-stack of $\fP$, $\IR_W$ is a strict closed substack of $\IR^\cpt$. The stack $\UH^{\cpt}$ plays a key role in the following crucial result.

\begin{proposition}\label{prop:cosection-degeneracy-loci}
  Suppose $W$ has proper critical locus.
  Then the degeneracy locus of $\sigma_{\UH^\cpt/\fU^{\cpt}}$ is
  supported on $\IR_W \subset \UH^\cpt$.
\end{proposition}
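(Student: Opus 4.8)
The plan is to analyze the cosection $\sigma_{\UH^\cpt/\fU^{\cpt}}$ fiberwise, tracing through its definition \eqref{equ:canonical-cosection} to identify precisely when it vanishes, and to match this with the condition of factoring through $\crit(W)$. The cosection arises as $\pi_*(f^*_{\UH^\cpt,-}\diff\tcW_-)$ on the level of first cohomology, so by cohomology-and-base-change it suffices to work over a geometric point $S = \spec \bk$ with a single stable log $R$-map $f\colon \cC \to \fP$ with uniform maximal degeneracy. The degeneracy locus is where the map on $\obs$ fails to be surjective, equivalently (since the target $\bL_{\max}^{-\ttwist}|_{\UH^\cpt}$ is a line bundle) where the composite $H^1(\EE_{\UH^\cpt/\fU^{\cpt}}) \to H^1(\cC, \pi_*\tomega)$ is zero. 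By Serre duality on the curve this dualizes to asking when the map $f^*_{\UH^\cpt,-}\diff\tcW_-$ vanishes as a map of sheaves on $\cC$ — more precisely, when the induced map $H^0(\cC, (f^*_{\UH^\cpt,-}T_{\cP_{\UH^\cpt,\reg}/\cC})^\vee \otimes \tomega \otimes \omega_{\cC}) \to \ldots$ fails, but the cleanest route is to argue that the cosection is nonzero at $[f]$ iff $\diff\tcW_-$ does not vanish identically along the image of $f_{\UH^\cpt,-}$.

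**The key steps**, in order: (1) Reduce to a geometric point as above, and unwind \eqref{eq:composing-cosection} so that the cosection at $[f]$ is governed by the section $f^*_{\UH^\cpt,-}\diff\tcW_-$ of $(f^*_{\UH^\cpt,-}T_{\cP^{e,\circ}_{\UH^\cpt,\reg}/\cC})^\vee \otimes \tomega$, viewed as a homomorphism on $\cC$; by the Kiem--Li formalism the cosection is nonzero on the fiber precisely when this homomorphism, pushed to $H^1$, is nonzero. (2) Split the analysis on each irreducible component $\cZ \subset \cC$. On non-degenerate components the map $f|_\cZ$ lands in $\fP^\circ$ (away from $\infty_\fP$), where $\tcW_-$ restricts to (a twist of) the honest superpotential $W$, and $\diff\tcW_-|_\cZ = 0$ iff $f|_\cZ$ factors through $\crit(W)$ — here one uses the identification of $\tW$ with $W$ over the open dense point of $\cA_{\max}$ from Definition~\ref{def:twisted-potential-non-deg} and the preceding proposition, together with the modified-target exact sequence \eqref{equ:modified-tangent} which only differs from $T_{\fP/\BC}$ by the term $T_{\fX/\BC}|_\Sigma$ supported at compact-type markings where $f$ factors through $\zero_\fP \subset \crit(W)$. (3) On degenerate components (those with $f(\cZ) \subset \infty_\fP$), use the last paragraph of the proof of Proposition~\ref{prop:twisted-potential}: the twisted superpotential $\tcW$ was constructed precisely so that its log differential does not vanish on $\fP^{e,*}$ (the complement of the zero section and the proper transform of $\fP \times \Delta_{\max}$), hence $\diff\tcW_-$ is automatically nonzero on such components once one checks the image meets $\fP^{e,*}$ — this is where uniform maximal degeneracy enters, ensuring the relevant component maps to the exceptional-divisor part of $\fP^{e,\circ}$ rather than being contracted by $\tW$. (4) Conclude: the cosection vanishes at $[f]$ iff $\diff\tcW_-$ vanishes on every component of $\cC$, iff every component is non-degenerate and $f$ factors through $\crit(W)$, i.e.\ $[f] \in \IR_W$; properness of $\crit(W)$ guarantees $\IR_W$ is the expected closed substack.

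**The main obstacle** I anticipate is step (3): carefully controlling the behavior on degenerate components and at nodes, where the log structure is nontrivial and the map $f$ passes through $\infty_\fP$. One must verify that the image of $f_{\UH^\cpt,-}$ on a degenerate component genuinely lands in the locus $\fP^{e,*}$ where $\diff\tW \neq 0$, rather than in the zero section $\zero_{\fP^e}$ or the contracted proper transform of $\fP\times\Delta_{\max}$ — and it is exactly here that the uniform-maximal-degeneracy hypothesis and the blow-up construction \eqref{equ:partial-exp} of $\cA^e$ are indispensable, since without them the lift \eqref{equ:hmap-e-target} to $\cP^{e,\circ}_{\UH^\cpt,\reg}$ would not be well-defined or would land in the wrong stratum. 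A secondary technical point is handling the root-stack/orbifold structure at markings and nodes when applying cohomology and base change, but this is routine given Assumption~\ref{assu:smooth-target} and the smoothness statements already established. I would also need to be slightly careful that the identification $\tcW_- \leftrightarrow W$ on non-degenerate components interacts correctly with the $(-\Sigma)$-twist in \eqref{equ:modified-target}, but Lemma~\ref{lem:potential-restrict} already packages this compatibility.
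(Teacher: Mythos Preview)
Your proposal is correct and follows essentially the same approach as the paper: reduce to a geometric point, use Serre duality to translate degeneracy of $\sigma_S$ into vanishing of the sheaf map $f^*_{-}\diff\tcW_-$, handle the interior case via $\tW|_{\IR^\cpt} = W$, and on the boundary invoke \cite[Lemma~3.18(2)]{CJRS18P} (implicit in your reference to \eqref{equ:hmap-e-target}) to land a maximally degenerate component in the exceptional locus, where properness of $\crit(\tW)$ forces $\diff\tcW_- \neq 0$. The only cosmetic difference is that the paper splits by whether $S \in \Delta_{\UH^\cpt}$ rather than component-by-component, and it states the Serre duality reduction more explicitly than your sketch does.
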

\begin{proof}
  It suffices to check the statement at each geometric point.
  Let $f\colon \cC \to \fP$ be a stable log $R$-map given by a
  geometric point $S \to \UH^\cpt$.
  Following the same line of proof as in \cite{CJW19P}, consider the
  cosection:
\[
\sigma_S:= \sigma_{\UH^\cpt/\fU^{\cpt}}|_S \colon H^1(f^*T_{\fP/\BC}(-\Sigma)) \to H^1(\tomega|_{\cC})
\]
Applying Serre duality and taking dual, we have
\[
\sigma^{\vee}_S \colon H^0(\omega_{\cC/S}\otimes\tomega^{\vee}|_{\cC}) \to H^0(\omega_{\cC/S}\otimes f^*\Omega_{\fP/\BC}(\Sigma)).
\]
Note that $\omega_{\cC/S}\otimes\tomega^{\vee}|_{\cC} =\bL_{\max}^{\ttwist}|_{\cC} \cong \cO_{\cC}$. Thus $\sigma_S$ degenerates iff
\[
id\otimes\big(f^*_{-}\diff\tcW_-\big)^{\vee} \colon \omega_{\cC/S}\otimes\tomega^{\vee}|_{\cC} \to \omega_{\cC/S}\otimes f^*\Omega_{\fP/\BC}(\Sigma)
\]
degenerates which translates to the vanishing of
\begin{equation}\label{equ:cosection-degeneracy}
\big(f^*_{-}\diff\tcW_-\big) \colon   f^*T_{\fP/\BC}(-\Sigma) \to \cO_{\cC}
\end{equation}

Note that away from markings, $\tcW_-$ is the same as $\tcW$ which is
the pull-back of $\tW$.
If $S \not\in \Delta_{\UH^{\cpt}}$, then
\eqref{equ:cosection-degeneracy} degenerates iff $f$ factors through
$\crit(W)$.
Consider a geometric point $S \in \Delta_{\UH^{\cpt}}$.
By \cite[Lemma~3.18~(2)]{CJRS18P}, $\cC$ has at least one component
$\cZ$ whose image via $f_{-}$ is contained in the exceptional locus of
$\cP^{e,\circ}_{\UH^\cpt,\reg} \to  \cP_{\UH^\cpt,\reg}$.
Because $\tcW$ has proper critical locus,
\eqref{equ:cosection-degeneracy} is non-zero along $\cZ$.
This completes the proof.
\end{proof}

\subsection{The reduced theory}\label{ss:reduced}

Next we fix a $W$ hence $\tW$ with proper critical loci,
and apply the general machinery in Section~\ref{sec:POT-reduction} to construct
the reduced theory.

\subsubsection{The twisted Hodge bundle}

Consider
\[
\tomega_{\fU^{\cpt}} := \omega_{\cC_{\fU^{\cpt}}/\fU^{\cpt}} \otimes \pi^*_{\fU^{\cpt}}\bL^{-\ttwist}_{\max}
\]
and its direct image cone
$ \fH :=\mathbf{C}(\pi_{\fU,*}\tomega_{\fU^{\cpt}}) $
as in \cite[Definition 2.1]{ChLi12}.
It is an algebraic stack over $\fU^{\cpt}$ parameterizing sections of
$\tomega_{\fU^{\cpt}}$ \cite[Proposition 2.2]{ChLi12}.
Indeed, $\fH$ is the total space of the vector bundle
\[
R^0\pi_{\fU^{\cpt},*} \tomega_{\fU^{\cpt}} \cong R^0\pi_{\fU^{\cpt},
  *}\omega_{\fU^{\cpt}}\otimes \bL^{- \ttwist}_{\max}|_{\fU^{\cpt}}
 \]
 over $\fU^{\cpt}$ by \cite[Section 5.3.5]{CJRS18P}. We further equip $\fH$ with the log structure pulled back from $\fU^{\cpt}$.

By \cite[Proposition 2.5]{ChLi12}, $\fH \to \fU^{\cpt}$ has a perfect obstruction theory
\begin{equation}\label{equ:Hodge-perfect-obs}
\varphi_{\fH/\fU^{\cpt}} \colon \TT_{\fH/\fU^{\cpt}} \to \EE_{\fH/\fU^{\cpt}} := \pi_{\fH,*}\tomega_{\fH}.
\end{equation}
By projection formula, we have
\begin{equation}\label{equ:fake-obs}
H^1(\EE_{\fH/\fU^{\cpt}}) = R^1\pi_{\fH,*} \tomega_{\fH} = R^1\pi_{\fH, *}\omega_{\fH}\otimes \bL^{-\ttwist}_{\max}|_{\fH} \cong \bL^{-\ttwist}_{\max}|_{\fH}.
\end{equation}

Let $\bs_{\fH}\colon \cC_{\fH} \to \vb(\tomega_{\fH})$ be the universal section over $\fH$. The morphism $\UH^\cpt \to \fU^{\cpt}$ factors through the tautological morphism
\[
  \UH^\cpt \to \fH
\]
such that $\bs_{\fH}|_{\UH^\cpt} = \tcW_{-} \circ f_{\UH^\cpt,-}$.

\subsubsection{Verifying assuptions in Section \ref{ss:reduction-set-up}}\label{sss:verify-reduction-assumption}

First, the sequence \eqref{equ:stacks-reduction} in consideration is
\[
  \UH^\cpt \to \fH \to \fU^{\cpt}
\]
with the perfect obstruction theories $\varphi_{\UH^\cpt/\fU^{\cpt}}$
in \eqref{equ:obs-compact-evaluation} and $\varphi_{\fH/\fU^{\cpt}}$
in \eqref{equ:Hodge-perfect-obs}.
Choose the Cartier divisor $\Delta = \ttwist \Delta_{\fU^{\cpt}}$ with the pre-images
$\ttwist\Delta_{\UH^\cpt} \subset \UH^\cpt$ and $\ttwist\Delta_{\fH} \subset \fH$.
Thus we have the two term complex
$\FF = [\cO_{\fU^{\ev}_{0}} \stackrel{\epsilon}{\to}
\bL^{-\ttwist}_{\max}]$ in degrees $[0,1]$.
The commutativity of \eqref{diag:compatible-POT} is verified in Lemma
\ref{lem:obs-commute} below, and the sujectivity of
\eqref{equ:general-cosection} along $\Delta_{\fU_0}$ follows from
Proposition \ref{prop:cosection-degeneracy-loci}.

\begin{lemma}\label{lem:obs-commute}
There is a canonical commutative diagram
\begin{equation}\label{diag:rel-obs-commute}
\xymatrix{
\TT_{\UH^\cpt/\fU^{\cpt}} \ar[rr] \ar[d]_{\varphi_{\UH^\cpt/\fU^{\cpt}}} && \TT_{\fH/\fU^{\cpt}}|_{\UH^\cpt} \ar[d]^{\varphi_{\fH/\fU^{\cpt}}|_{\UH^\cpt}} \\
\EE_{\UH^\cpt/\fU^{\cpt}} \ar[rr]^{\sigma^{\bullet}_{\fU^{\cpt}}} && \EE_{\fH/\fU^{\cpt}}|_{\UH^\cpt}
}
\end{equation}
where the two vertical arrows are the perfect obstruction theories.
\end{lemma}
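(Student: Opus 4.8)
The plan is to deduce the commutativity of \eqref{diag:rel-obs-commute} from the functoriality of (logarithmic) cotangent complexes, by exhibiting both vertical arrows as the result of applying $\pi_{\UH^\cpt,*}$ to a single commuting square of complexes on the universal curve $\cC_{\UH^\cpt}$, pulled back along the factorization \eqref{equ:hmap-e-target}. Throughout, let $a\colon \UH^\cpt \to \fH$ denote the tautological morphism over $\fU^\cpt$, characterized by $\bs_\fH|_{\UH^\cpt} = \tcW_{-}\circ f_{\UH^\cpt,-}$, so that $\cC_\fH\times_\fH\UH^\cpt \cong \cC_{\UH^\cpt}$ and $\tomega_\fH$ restricts to $\tomega$ of \eqref{equ:twisted-omega}.

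First I would fix the identification of the lower-right corner of \eqref{diag:rel-obs-commute}. Since $\pi_\fH$ is a family of twisted curves and $\tomega_\fH$ is a line bundle, derived pushforward commutes with base change, so $\EE_{\fH/\fU^\cpt}|_{\UH^\cpt} = a^*\pi_{\fH,*}\tomega_\fH \cong \pi_{\UH^\cpt,*}\tomega \cong \pi_{\UH^\cpt,*}\omega_{\cC_{\UH^\cpt}/\UH^\cpt}\otimes\bL^{-\ttwist}_{\max}|_{\UH^\cpt}$, which is exactly the target of $\sigma^\bullet_{\UH^\cpt/\fU^\cpt}$ in Section~\ref{ss:can-cosection}. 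Under this identification the bottom arrow of \eqref{diag:rel-obs-commute} is $\sigma^\bullet_{\fU^\cpt} = \sigma^\bullet_{\UH^\cpt/\fU^\cpt} = \pi_{\UH^\cpt,*}\big(f^*_{\UH^\cpt,-}\diff\tcW_-\big)$, with $f^*_{\UH^\cpt,-}\diff\tcW_-$ the composite \eqref{eq:composing-cosection}, while the top arrow is the canonical map $\diff a\colon \TT_{\UH^\cpt/\fU^\cpt}\to \TT_{\fH/\fU^\cpt}|_{\UH^\cpt}$ coming from the triangle of tangent complexes of $\UH^\cpt\to\fH\to\fU^\cpt$.

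Next I would run the chain-rule computation. By \cite[Proposition~2.5]{ChLi12}, $\varphi_{\fH/\fU^\cpt}$ is $\pi_{\fH,*}$ applied to the canonical identification of $\bs_\fH^*T_{\vb(\tomega_\fH)/\cC_\fH}$ with $\tomega_\fH$; and, via the exact sequence \eqref{equ:modified-tangent} and the lift \eqref{equ:hmap-e-target}, $\varphi_{\UH^\cpt/\fU^\cpt}$ of \eqref{equ:obs-compact-evaluation} is $\pi_{\UH^\cpt,*}$ applied to the map of $f^*_{\UH^\cpt}T_{\fP/\BC}(-\Sigma)\hookrightarrow f^*_{\UH^\cpt,-}T_{\cP^{e,\circ}_{\UH^\cpt,\reg}/\cC_{\UH^\cpt}}$ into the relevant piece of the (log) cotangent complex of the section $f_{\UH^\cpt,-}$. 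Because $a$ is defined by post-composing $f_{\UH^\cpt,-}$ with $\tcW_-$, the universal property of the direct image cone $\fH$ (\cite[Proposition~2.2]{ChLi12}) shows that, prior to pushforward, $\diff a$ is computed by post-composition with $\diff\tcW_-$, i.e.\ by \eqref{eq:composing-cosection}; and the compatibility of this post-composition with the two obstruction-theory maps $\varphi$ is exactly the functoriality of cotangent complexes applied to $\cC_{\UH^\cpt}\xrightarrow{f_{\UH^\cpt,-}}\cP^{e,\circ}_{\UH^\cpt,\reg}\xrightarrow{\tcW_-}\vb(\tomega)$ together with \eqref{equ:hmap-e-target}. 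Applying $\pi_{\UH^\cpt,*}$ to the resulting commutative square yields \eqref{diag:rel-obs-commute}; taking $H^1$ and using \eqref{equ:fake-obs} recovers the canonical cosection of \eqref{equ:canonical-cosection}.

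The step I expect to be the main obstacle is reconciling the construction of $\varphi_{\UH^\cpt/\fU^\cpt}$ in \eqref{equ:obs-compact-evaluation}---which proceeds through the logarithmic evaluation stacks of Section~\ref{sss:log-ev-stack} and the isomorphism \eqref{equ:modified-tangent} to the modified target $\cP_{\UH^\cpt,\reg}$---with the concrete ``deformations of a section of $\vb(\tomega)$'' description needed to see $\diff a$ literally as $\pi_{\UH^\cpt,*}$ of the composite \eqref{eq:composing-cosection}. Concretely, one must verify that the connecting map $\TT_{\UH^\cpt/\fU^\cpt}\to \TT_{\fH/\fU^\cpt}|_{\UH^\cpt}$ agrees in degrees $[0,1]$ with $\pi_{\UH^\cpt,*}$ of \eqref{eq:composing-cosection}, which is where the direct image cone formalism of \cite[Propositions~2.2 and~2.5]{ChLi12}, the log \'etale lift \eqref{equ:hmap-e-target}, and Proposition~\ref{prop:twisted-potential} all enter; everything else is routine bookkeeping with $\pi_{\UH^\cpt,*}$, base change, and the projection formula used in \eqref{equ:twisted-omega}.
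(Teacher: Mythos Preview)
Your approach is correct and is essentially the same idea as the paper's, but the paper executes it more cleanly in a way that dissolves precisely the obstacle you flag in your last paragraph. Rather than working with $\cP^{e,\circ}_{\UH^\cpt,\reg}$ and $\tcW_-$ over $\UH^\cpt$ and then arguing abstractly that $\diff a$ matches $\pi_{\UH^\cpt,*}$ of \eqref{eq:composing-cosection}, the paper lifts the whole picture to the base $\fU^\cpt$: it constructs the modified target $\cP^{e,\circ}_{\fU^\cpt,\reg}$ and the twisted superpotential $\tcW_{\fU^\cpt,-}\colon \cP^{e,\circ}_{\fU^\cpt,\reg}\to\tilde\omega_{\fU^\cpt}$ universally over $\cC_{\fU^\cpt}$, so that there is a literal commuting square of spaces
\[
\xymatrix{
\cC_{\UH^\cpt} \ar[r] \ar[d]_{f_{\UH^\cpt,-}} & \cC_\fH \ar[d]^{\bs_\fH} \\
\cP^{e,\circ}_{\fU^\cpt,\reg} \ar[r]^-{\tcW_{\fU^\cpt,-}} & \tilde\omega_{\fU^\cpt}
}
\]
in which every object sits over $\cC_{\fU^\cpt}$. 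Taking log tangent complexes relative to $\cC_{\fU^\cpt}$ then yields a commuting square of complexes on $\cC_{\UH^\cpt}$, and the base-change identifications $\TT_{\cC_{\UH^\cpt}/\cC_{\fU^\cpt}}\cong\pi^*\TT_{\UH^\cpt/\fU^\cpt}$ and $\TT_{\cC_\fH/\cC_{\fU^\cpt}}\cong\pi^*\TT_{\fH/\fU^\cpt}$ (the curves being pulled back from $\fU^\cpt$) put the top row in the desired form; applying $\pi_*$ and adjunction gives \eqref{diag:rel-obs-commute}. This buys you exactly what you were worried about: the identification of $\diff a$ with the pushforward of \eqref{eq:composing-cosection} is not a separate verification but falls out of functoriality of $\TT_{-/\cC_{\fU^\cpt}}$ applied to an honest commutative diagram of spaces. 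Your direct-image-cone argument would also work, but it requires unpacking \cite[Propositions~2.2, 2.5]{ChLi12} to see the same thing.
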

\begin{proof}
Similarly as in Section \ref{ss:modify-target}, we may construct the log weighted projective bundle
\[
\cP_{\fU^{\cpt}} \to \fX_{\fU^{\cpt}} := \cC_{\fU^{\cpt}}\times_{\BC} \fX
\]
and its modification $\cP^{e,\circ}_{\fU^{\cpt},\reg}$ with the pull-backs
\[
\cP_{\fU^{\cpt}}\times_{\fX_{\UH^\cpt}}\fX_{\fU^{\cpt}} \cong \cP_{\UH^\cpt} \ \ \ \mbox{and} \ \ \ \cP^{e,\circ}_{\fU^{\cpt},\reg}\times_{\fX_{\UH^\cpt}}\fX_{\fU^{\cpt}} \cong \cP^{e,\circ}_{\UH^\cpt,\reg}.
\]
We may also define the line bundle $\tilde{\omega}_{\fU^{\cpt}}$ over $\cC_{\fU^{\cpt}}$ similar to \eqref{equ:twisted-omega}.
The same proof as in Lemma \ref{lem:potential-restrict} yields a morphism
\[
\tcW_{\fU^{\cpt},-} \colon \cP^{e,\circ}_{\fU^{\cpt},\reg} \to \tilde{\omega}_{\fU^{\cpt}}
\]
which pulls back to $\tcW_{-}$ over $\UH^\cpt$. We obtain a commutative diagram
\[
\xymatrix{
 \cC_{\UH^\cpt} \ar[rr] \ar[d]_{f_{\UH^\cpt,-}} && \cC_\fH \ar[d]^{\bs_\fH} \\
 \cP^{e,\circ}_{\fU^{\cpt},\reg}   \ar[rr]^{\tcW_{\fU^{\cpt},-}} && \tilde{\omega}_{\fU^{\cpt}}
      }
\]
where by abuse of notations the two vertical arrows are labeled by the morphisms inducing them. This leads to a commutative diagram of log tangent complexes
\[
    \xymatrix{
      \pi^* \TT_{\UH^\cpt/\fU^{\cpt}} \cong \TT_{\cC_{\UH^\cpt}/\cC_{\fU^{\cpt}}} \ar[rr] \ar[d] &&       \pi^* \TT_{\cC_{\fH}/\cC_{\fU^{\cpt}}}|_{\cC_{\UH^\cpt}} \cong \TT_{\fH/{\fU^{\cpt}}}|_{\cC_{\UH^\cpt}} \ar[d]\\
      (f_{\UH^\cpt,-})^* \TT_{\cP^{e,\circ}_{\fU^{\cpt},\reg}/\cC_{\fU^{\cpt}}} \ar[rr]^{(\diff \tcW_{\fU^{\cpt},-})|_{\cC_{\UH^\cpt}}}  && (\bs_\fH)^* \TT_{\tilde{\omega}_{\fU^{\cpt}}/\cC_{\fU^{\cpt}}}|_{\cC_{\UH^\cpt}}
      }
\]
Diagram  \eqref{diag:rel-obs-commute} follows from first applying $\pi_*$ to the above diagram and then using adjunction.
\end{proof}

\subsubsection{The reduced perfect obstruction theory}\label{sss:reduced-theory}
Applying Theorem \ref{thm:reduction} to the situation above, we obtain the \emph{reduced perfect obstruction theory}
\begin{equation}\label{equ:red-OPT}
\varphi^{\red}_{\UH^\cpt/\fU^{\cpt}} \colon \TT_{\UH^\cpt/\fU^{\cpt}} \to \EE^{\red}_{\UH^\cpt/\fU^{\cpt}}
\end{equation}
and the \emph{reduced cosection}
\[
\sigma^{\red}_{\fU^{\cpt}} \colon H^1(\EE^{\red}_{\UH^\cpt/\fU^{\cpt}}) \to \cO_{\UH^\cpt}
\]
with the following properties
\begin{enumerate}
 \item The morphism $\varphi_{\UH^\cpt/\fU^{\cpt}}$ factors through $\varphi^{\red}_{\UH^\cpt/\fU^{\cpt}}$ such that
 \[
 \varphi_{\UH^\cpt/\fU^{\cpt}}|_{\UH^\cpt\setminus\Delta_{\UH^\cpt}} = \varphi^{\red}_{\UH^\cpt/\fU^{\cpt}}|_{\UH^\cpt\setminus\Delta_{\UH^\cpt}}
 \]

 \item $\sigma^{\red}_{\fU^{\cpt}}$ is surjective along $\Delta_{\UH^\cpt}$, and satisfies
\[
\sigma^{\red}_{\fU^{\cpt}}|_{\UH^\cpt\setminus\Delta_{\UH^\cpt}} = \sigma_{\fU^{\cpt}}|_{\UH^\cpt\setminus\Delta_{\UH^\cpt}}.
\]
\end{enumerate}

The virtual cycle $[\UH^\cpt]^{\red}$ associated to $\varphi^{\red}_{\UH^\cpt/\fU^{\cpt}}$ is called the {\em reduced virtual cycle} of $\UH^\cpt$. We emphasize that the reduced theory depends on the superpotential $W$.

\subsubsection{The cosection localized virtual cycle of $\IR^\cpt$}\label{sss:cosection-localized-class}
Recall from Proposition \ref{prop:cosection-degeneracy-loci} that the
degeneracy loci of $\sigma_{\UH^\cpt/\fU^{\cpt}}$ are supported along
the proper substack $\IR_W \subset \IR^\cpt$.
We have canonical embeddings
\[
\iiota\colon \IR_W \hookrightarrow \IR^\cpt \ \ \ \mbox{and} \ \ \ \iota\colon \IR_W \hookrightarrow \UH^\cpt.
\]

Since $\IU^{\cpt}$ is smooth, we are in the situation of Section \ref{ss:general-absolut-theory}. Applying Theorem \ref{thm:generali-localized-cycle}, we obtain the \emph{cosection localized virtual cycle}
\begin{equation}\label{equ:localized-cycle}
[\IR^\cpt]_{\sigma} \in A_*(\IR_W)
\end{equation}
with the property that $\iiota_*[\IR^\cpt]_{\sigma} = [\IR^\cpt]^{\vir}$. Since the canonical theory, the reduced theory, and their cosections all agree over $\IR^\cpt$, the existence of the cycle $[\IR^\cpt]_{\sigma}$  does \emph{not} require the compactification $\UH^\cpt$ of $\IR^\cpt$.

\subsection{The first comparison theorem}\label{ss:comparison-1}
We now show that the reduced virtual cycle and the cosection localized virtual cycle agree.
\begin{theorem}\label{thm:reduced=local}
$\iota_*[\IR^\cpt]_{\sigma}  = [\UH^\cpt]^{\red}$
\end{theorem}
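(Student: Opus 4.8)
The plan is to compare the two virtual cycles by exhibiting both as arising, via the general machinery of Section~\ref{sec:POT-reduction}, from a single cosection on a common (possibly reducible) ambient space, and then invoke the abstract comparison between the cosection-localized cycle and the reduced cycle proven in that section. Concretely, recall that $[\IR^\cpt]_\sigma$ is defined using the \emph{canonical} perfect obstruction theory $\varphi_{\UH^\cpt/\fU^\cpt}$ together with the cosection $\sigma_{\UH^\cpt/\fU^\cpt}$ of \eqref{equ:canonical-cosection}, restricted to the open substack $\IR^\cpt = \UH^\cpt\setminus\Delta_{\UH^\cpt}$, where by Proposition~\ref{prop:cosection-degeneracy-loci} its degeneracy locus is the proper substack $\IR_W$. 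Meanwhile $[\UH^\cpt]^\red$ is the virtual cycle of the \emph{reduced} obstruction theory $\varphi^\red_{\UH^\cpt/\fU^\cpt}$ of \eqref{equ:red-OPT}, which by construction (Theorem~\ref{thm:reduction}) agrees with the canonical one away from $\Delta_{\UH^\cpt}$, and whose reduced cosection $\sigma^\red_{\fU^\cpt}$ is \emph{surjective} along $\Delta_{\UH^\cpt}$. Since $\sigma^\red_{\fU^\cpt}$ is nowhere degenerate along the boundary, the degeneracy locus of $\sigma^\red_{\fU^\cpt}$ on all of $\UH^\cpt$ is exactly $\IR_W$, the same proper substack supporting $[\IR^\cpt]_\sigma$.

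First I would form the cosection-localized virtual cycle $[\UH^\cpt]^\red_{\sigma^\red}\in A_*(\IR_W)$ attached to the \emph{reduced} obstruction theory and its cosection $\sigma^\red_{\fU^\cpt}$, using the Kiem--Li construction (valid here because $\fU^\cpt$ is log smooth and, on the interior $\IU^\cpt$, smooth, putting us in the setting of Section~\ref{ss:general-absolut-theory}). By the defining property of cosection localization, $\iota_*[\UH^\cpt]^\red_{\sigma^\red} = [\UH^\cpt]^\red$ in $A_*(\UH^\cpt)$; this handles the right-hand side. For the left-hand side, I would show $[\UH^\cpt]^\red_{\sigma^\red} = [\IR^\cpt]_\sigma$ as classes in $A_*(\IR_W)$. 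This is where the "agreement away from the boundary" is used: the two obstruction theories $\varphi_{\UH^\cpt/\fU^\cpt}$ and $\varphi^\red_{\UH^\cpt/\fU^\cpt}$ restrict to the \emph{same} obstruction theory over the open $\IR^\cpt$, and the cosections $\sigma_{\fU^\cpt}$ and $\sigma^\red_{\fU^\cpt}$ also coincide there; since $\IR_W\subset\IR^\cpt$ is already closed and proper, the cosection-localized class depends only on an étale (indeed open) neighborhood of $\IR_W$, so $[\IR^\cpt]_\sigma$ computed on $\IR^\cpt$ equals $[\UH^\cpt]^\red_{\sigma^\red}$ computed on $\UH^\cpt$. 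Here I would cite the locality and excess-intersection properties of the cosection-localized Gysin map established in Section~\ref{sec:POT-reduction} (the analogues of \cite{KiLi13}).

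Combining the two identifications gives $\iota_*[\IR^\cpt]_\sigma = \iota_*[\UH^\cpt]^\red_{\sigma^\red} = [\UH^\cpt]^\red$, which is the claim. The main obstacle I anticipate is the second step: making precise that the cosection-localized virtual cycle of the reduced theory agrees with that of the canonical theory restricted to $\IR^\cpt$, i.e.\ that passing from $\varphi_{\UH^\cpt/\fU^\cpt}$ to $\varphi^\red_{\UH^\cpt/\fU^\cpt}$ (which changes the obstruction sheaf globally, by twisting down along $\Delta$) does not change the localized class, because the change is supported on the boundary $\Delta_{\UH^\cpt}$ which is disjoint from the support $\IR_W$ of the localized class. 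This requires carefully invoking the functoriality of cosection localization under the modification of obstruction theories carried out in Theorem~\ref{thm:reduction} — essentially the statement that the reduced and canonical theories, together with their cosections, form a commutative diagram over $\UH^\cpt$ that is an isomorphism over $\UH^\cpt\setminus\Delta_{\UH^\cpt}\supset\IR_W$, so the two cones, two cosection kernels, and two localized Gysin pullbacks all agree in a neighborhood of $\IR_W$. Once this compatibility is in hand, the equality of cycles is formal.
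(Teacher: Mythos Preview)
Your strategy is the right one, but there is a genuine gap in the first step. To form the cosection-localized class $[\UH^\cpt]^{\red}_{\sigma^{\red}}$ for the reduced theory on \emph{all} of $\UH^\cpt$ via Section~\ref{ss:general-absolut-theory} and Theorem~\ref{thm:generali-localized-cycle}, you need the base $\fU^\cpt$ to be \emph{smooth}: the passage from the relative to the absolute perfect obstruction theory in \eqref{diag:relative-to-absolute}, and hence the construction of the absolute cosection in \eqref{diag:cosections}, uses that $\TT_{\fM}$ is a two-term complex concentrated in degree~$0$. But $\fU^\cpt$ is only \emph{log} smooth; it has toroidal singularities along $\Delta_{\fU^\cpt}$. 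Only the interior $\IU^\cpt$ is smooth, which is exactly why $[\IR^\cpt]_\sigma$ is defined over $\IR^\cpt \to \IU^\cpt$ in Section~\ref{sss:cosection-localized-class}. Your parenthetical ``valid here because $\fU^\cpt$ is log smooth and, on the interior $\IU^\cpt$, smooth'' is precisely where the argument breaks.

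The paper repairs this by first passing to a birational projective resolution $\fr\colon \tU \to \fU$ that is the identity on the interior (such a resolution exists by \cite[Lemma~5.25]{CJRS18P}), and forming the pullbacks $\tU^\cpt$ and $\tUH^\cpt$. Now $\tU^\cpt$ is genuinely smooth, so Theorem~\ref{thm:generali-localized-cycle} applies to the pulled-back reduced theory and cosection, yielding $\iota_*[\tUH^\cpt]_{\sigma_{\tU^\cpt}} = [\tUH^\cpt]^{\red}$. Since $\fr$ is an isomorphism over the interior and $\IR_W$ lies there, your locality intuition is now rigorous: $[\tUH^\cpt]_{\sigma_{\tU^\cpt}} = [\IR^\cpt]_{\sigma}$. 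Finally, Costello--Manolache virtual push-forward along the proper birational $\fr$ gives $\fr_*[\tUH^\cpt]^{\red} = [\UH^\cpt]^{\red}$, and combining the three equalities finishes the proof. In short, your Step~3 is correct in spirit, but it must be executed after resolving the base rather than by directly invoking Kiem--Li over a singular $\fU^\cpt$.
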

\begin{proof}
Since $\UH^\cpt$ is of finite type, replacing $\fU$ by an open set containing the image of $\UH^\cpt$, we may assume that $\fU$ hence $\fU^{\cpt}$ is also of finite type. By \cite[Lemma 5.25]{CJRS18P}, there is a birational projective resolution $\fr\colon \tU \to \fU$ which restricts to the identity on $\IU = \fU\setminus(\Delta_{\max}|_{\fU})$. Let
\begin{equation}\label{equ:resolution}
\tU^{\cpt} =\fU^{\cpt}\times_{\fU}\tU \to \fU^{\cpt} \ \ \ \mbox{and} \ \ \ \tUH^\cpt = \UH\times_{\fU}\tU \to \UH.
\end{equation}
By abuse of notations, both morphisms are denoted by $\fr$ when there
is no danger of confusion.
Then the two morphisms restrict to the identity on $\IU^{\cpt}$ and
$\IR^{\cpt}$ respectively.
Furthermore, $\tU^{\cpt} \to \fU^{\cpt}$ is a birational projective
resolution by Proposition \ref{prop:compact-evaluation}.

Let $(\varphi^{\red}_{\tUH^\cpt/\tU^{\cpt}}, \sigma_{\tU^{\cpt}})$ be the pull-back of $(\varphi^{\red}_{\UH^\cpt/\fU^{\cpt}}, \sigma_{{\fU}^{\cpt}})$ along $\fr$. Then $\varphi^{\red}_{\tUH^\cpt/\tU^{\cpt}}$ defines a perfect obstruction theory of $\tUH^\cpt \to \tU^{\cpt}$ hence a virtual cycle $[\tUH^\cpt]^{\red}$. By the virtual push-forward of \cite{Co06, Ma12}, we have
\begin{equation}\label{equ:vc-push-forward-along-resolution}
\fr_*[\tUH^\cpt]^{\red} = [\UH^\cpt]^{\red}.
\end{equation}

On the other hand, since $(\varphi^{\red}_{\tUH^\cpt/\tU^{\cpt}}, \sigma_{\tU^{\cpt}})$ is the pull-back of $(\varphi^{\red}_{\UH^\cpt/\fU^{\cpt}}, \sigma_{{\fU}^{\cpt}})$, the same properties listed in Section \ref{sss:reduced-theory} also pull back to $(\varphi^{\red}_{\tUH^\cpt/\tU^{\cpt}}, \sigma_{\tU^{\cpt}})$. Since $\tU^{\cpt}$ is smooth, Theorem \ref{thm:generali-localized-cycle} implies
\begin{equation}\label{equ:reduced=local-resolution}
\iota_*[\tUH^\cpt]_{\sigma_{\tU^{\cpt}}} = [\UH^\cpt]^{\red}.
\end{equation}
Since $\fr$ does not modify the interior $\IR^\cpt$ and $\IU^{\cpt}$, we have
\begin{equation}\label{equ:local=local}
[\tUH^\cpt]_{\sigma_{\tU^{\cpt}}} = [\IR^\cpt]_{\sigma_{{\fU}^{\cpt}}}.
\end{equation}
Finally, \eqref{equ:vc-push-forward-along-resolution},
\eqref{equ:reduced=local-resolution}, and \eqref{equ:local=local}
together imply the statement.
\end{proof}

\subsection{The second comparison theorem}\label{ss:comparison-2}
By Section \ref{sss:verify-reduction-assumption} and Theorem \ref{thm:boundary-cycle} (1), we obtain a factorization of perfect obstruction theories of $\Delta_{\UH^\cpt} \to \Delta_{\fU^{\cpt}}$
\[
 \xymatrix{
 \TT_{\Delta_{\UH^\cpt}/\Delta_{\fU^{\cpt}}} \ar[rr]^{\varphi_{\Delta_{\UH^\cpt}/\Delta_{\fU^{\cpt}}}} \ar[rd]_{\varphi^{\red}_{\Delta_{\UH^\cpt}/\Delta_{\fU^{\cpt}}}} && \EE_{\Delta_{\UH^\cpt}/\Delta_{\fU^{\cpt}}} \\
 &\EE^{\red}_{\Delta_{\UH^\cpt}/\Delta_{\fU^{\cpt}}} \ar[ru]&
 }
\]
where the top is the pull-back of \eqref{equ:red-OPT}.
Let $[\Delta_{\UH^\cpt}]^{\red}$ be the \emph{reduced boundary virtual
  cycle} associated to
$\varphi^{\red}_{\Delta_{\UH^\cpt}/\Delta_{\fU^{\cpt}}}$. We then
have:

\begin{theorem}\label{thm:comparison-2}
$[\UH^\cpt]^{\vir} = [\UH^\cpt]^{\red} + \ttwist [\Delta_{\UH^\cpt}]^{\red}$.
\end{theorem}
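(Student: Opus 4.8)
The plan is to obtain the identity as an instance of the abstract virtual-cycle comparison developed in Section~\ref{sec:POT-reduction}, applied to the tower $\UH^\cpt \to \fH \to \fU^\cpt$ with the perfect obstruction theories $\varphi_{\UH^\cpt/\fU^\cpt}$ of \eqref{equ:obs-compact-evaluation} and $\varphi_{\fH/\fU^\cpt}$ of \eqref{equ:Hodge-perfect-obs}, the Cartier divisor $\Delta = \ttwist\,\Delta_{\fU^\cpt}$, and the two-term complex $\FF = [\cO \stackrel{\epsilon}{\to} \bL_{\max}^{-\ttwist}]$ in degrees $[0,1]$. All the hypotheses needed are already in place from Section~\ref{sss:verify-reduction-assumption}: commutativity of the relevant square comes from Lemma~\ref{lem:obs-commute}, and surjectivity of the cosection along $\Delta_{\UH^\cpt}$ from Proposition~\ref{prop:cosection-degeneracy-loci}. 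Concretely I would invoke Theorem~\ref{thm:boundary-cycle}: beyond the factorization of obstruction theories on the boundary it supplies (part (1), already used to produce $\varphi^\red_{\Delta_{\UH^\cpt}/\Delta_{\fU^\cpt}}$ and hence $[\Delta_{\UH^\cpt}]^\red$), it delivers exactly the relation among $[\UH^\cpt]^\vir$, $[\UH^\cpt]^\red$ and $[\Delta_{\UH^\cpt}]^\red$, with coefficient the multiplicity of $\Delta_{\fU^\cpt}$ in $\Delta$, namely $\ttwist$.

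To see why the decomposition holds and where $\ttwist$ comes from, recall that over the interior $\IR^\cpt = \UH^\cpt\setminus\Delta_{\UH^\cpt}$ the section $\epsilon$ is invertible, so $\FF$ is acyclic and $\varphi^\red_{\UH^\cpt/\fU^\cpt}$ agrees with $\varphi_{\UH^\cpt/\fU^\cpt}$ there (property (1) of Section~\ref{sss:reduced-theory}); hence $[\UH^\cpt]^\vir - [\UH^\cpt]^\red$ is a well defined class supported on $\Delta_{\UH^\cpt}$. To identify it I would run the deformation to the normal cone of $\Delta_{\UH^\cpt}\hookrightarrow\UH^\cpt$, i.e.\ specialise $\epsilon$ to its leading term along $\Delta_{\fU^\cpt}$: the intrinsic normal cone $\fC_{\UH^\cpt/\fU^\cpt}$ degenerates into the canonical cone together with a residual rank one cone supported over $\Delta_{\UH^\cpt}$, and the twisted Hodge bundle structure of $\fH\to\fU^\cpt$ (whose only obstruction is the line bundle $\bL_{\max}^{-\ttwist}$, by \eqref{equ:fake-obs}) identifies this residual cone with the one computing $[\Delta_{\UH^\cpt}]^\red$ via $\varphi^\red_{\Delta_{\UH^\cpt}/\Delta_{\fU^\cpt}}$. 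Since $\Delta = \ttwist\,\Delta_{\fU^\cpt}$, that is $\epsilon$ vanishes to order $\ttwist$ along the reduced divisor, the residual cone enters the specialisation with multiplicity $\ttwist$; by Lemma~\ref{lem:pole-of-potential} this $\ttwist = \etwist\cdot r$ is precisely the pole order of $W$ along $\infty_\fP$, which is the only place where the specific geometry of the target, encoded in the weights $\bw$ of \eqref{equ:universal-proj}, enters the formula.

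The step I expect to be the actual work is not in this section but in Theorem~\ref{thm:boundary-cycle}: making the specialisation above precise and matching the residual cone with $\fC_{\Delta_{\UH^\cpt}/\Delta_{\fU^\cpt}}$ on the nose, including the compatibility of $\varphi^\red$ with its restriction to the boundary. This rests on $\Delta_{\fU^\cpt} = \Delta_{\max}\times_{\cA_{\max}}\fU^\cpt$ being a genuine effective Cartier divisor with reduced structure, so that the thickening ``$\ttwist\,\Delta_{\fU^\cpt}$'' is the honest $\ttwist$-th one, which follows from the log smoothness of $\cA_{\max}$ and the good behaviour of $\fU^\cpt\to\cA_{\max}$, together with the compatibility of the perfect obstruction theories of $\UH^\cpt/\fU^\cpt$ and $\fH/\fU^\cpt$ recorded in Lemma~\ref{lem:obs-commute}. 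Granting that abstract input, the proof of Theorem~\ref{thm:comparison-2} itself is then a direct application.
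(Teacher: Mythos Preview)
Your plan to invoke Theorem~\ref{thm:boundary-cycle} for the tower $\UH^\cpt \to \fH \to \fU^\cpt$ with $\Delta = \ttwist\,\Delta_{\fU^\cpt}$ is the right idea in spirit, but it has a genuine gap: part~(2) of Theorem~\ref{thm:boundary-cycle} requires the base $\fM$ to be \emph{smooth}, and $\fU^\cpt$ is in general only log smooth, not smooth. The passage from the relative to the absolute obstruction theory in \eqref{diag:relative-to-absolute}, and the cosection-localized argument underlying \eqref{equ:tred=bred}, both rely on $H^1(\TT_\fM)=0$; without smoothness of $\fU^\cpt$ you cannot conclude the virtual-cycle identity directly. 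Your remark that ``all the hypotheses needed are already in place from Section~\ref{sss:verify-reduction-assumption}'' overlooks precisely this hypothesis, and your discussion of $\Delta_{\fU^\cpt}$ being an honest Cartier divisor, while correct, does not address it.

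The paper's proof repairs this by first passing to the birational projective resolution $\fr\colon \tU^\cpt \to \fU^\cpt$ already introduced in \eqref{equ:resolution}, pulling back the perfect obstruction theory and cosection, and applying Theorem~\ref{thm:boundary-cycle} to $(\tUH^\cpt, \ttwist\Delta_{\tUH^\cpt}, \varphi_{\tUH^\cpt/\tU^\cpt}, \sigma_{\tU^\cpt})$ over the now smooth base $\tU^\cpt$. This yields $[\tUH^\cpt]^\vir = [\tUH^\cpt]^\red + \ttwist[\Delta_{\tUH^\cpt}]^\red$. Pushing forward along $\fr$ and using virtual push-forward plus \eqref{equ:vc-push-forward-along-resolution} handles $[\UH^\cpt]^\vir$ and $[\UH^\cpt]^\red$, but one must still check $\fr_*[\Delta_{\tUH^\cpt}]^\red = [\Delta_{\UH^\cpt}]^\red$. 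This is not automatic: it is verified by introducing the \emph{totally reduced} obstruction theory $\EE^{\tred}$ on the complement $V = \UH^\cpt \setminus \IR_W$ (and its pullback $\widetilde V$), using the identity $\ttwist[\Delta]^\red = i^![V]^{\tred}$ from \eqref{equ:tred=bred}, and then applying virtual push-forward plus the projection formula for $i^!$. That last step is where the actual content specific to this theorem lies; your outline does not anticipate it.
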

\begin{proof}
The pull-back $\varphi_{\tUH^\cpt/\tU^{\cpt}} := \varphi_{\UH^\cpt/{\fU}^{\cpt}}|_{\tUH^\cpt}$ defines a perfect obstruction theory of $\tUH^\cpt \to \tU^{\cpt}$ with the corresponding virtual cycle $[\tUH^\cpt]^{\vir}$. Applying the virtual push-forward \cite{Co06, Ma12}, we have
\begin{equation}\label{equ:con-vc-push-forward}
\fr_*[\tUH^\cpt] = [\UH^\cpt].
\end{equation}

Consider the resolution \eqref{equ:resolution}, and write
\[
\Delta_{\tU^{\cpt}} = \Delta_{\fU^{\cpt}}\times_{\fU^{\cpt}}\tU^{\cpt} \ \ \ \mbox{and} \ \ \ \Delta_{\tUH^\cpt} = \Delta_{\tU^{\cpt}}\times_{\tU^{\cpt}}\tUH^\cpt.
\]
Applying Theorem  \ref{thm:boundary-cycle} to the data $(\tUH^\cpt, \ttwist\Delta_{\tUH^\cpt}, \varphi_{\tUH^\cpt/\tU^{\cpt}}, \sigma_{\tU^{\cpt}})$, we obtain the reduced boundary cycle $[\ttwist\Delta_{\tUH^\cpt}]^{\red} = \ttwist[\Delta_{\tUH^\cpt}]^{\red}$ and the following relation
\begin{equation}
[\tUH^\cpt] = [\tUH^\cpt]^{\red} + \ttwist \cdot [\Delta_{\tUH^\cpt}]^{\red}.
\end{equation}
Applying $\fr_*$ and using \eqref{equ:vc-push-forward-along-resolution} and \eqref{equ:con-vc-push-forward}, we have
\[
[\UH^\cpt] = [\UH^\cpt]^{\red} + \ttwist \cdot \fr_*[\Delta_{\tUH^\cpt}]^{\red}.
\]
It remains to verify that $[\Delta_{\UH^\cpt}]^{\red} = \fr_*[\Delta_{\tUH^\cpt}]^{\red}$.

Recall the degeneracy loci $\IR_{W} \subset \IR^\cpt$ of
$\sigma_{\tU^{\cpt}}$.
Write $V = \UH^\cpt \setminus \IR_{W}$ and
$\widetilde{V} = \tUH^\cpt\setminus \IR_{W}$.
In the same way as in \eqref{equ:t-red-POT} we construct the totally
reduced perfect obstruction theory $\EE^{\tred}_{V/\fU^{\cpt}}$ for
$V \to \fU^{\cpt}$ which pulls back to the totally reduced perfect
obstruction theory $\EE^{\tred}_{\widetilde{V}/\tU^{\cpt}}$ for
$\widetilde{V} \to \tU^{\cpt}$.
Let $[V]^{\tred}$ and $[\widetilde{V}]^{\tred}$ be the corresponding
virtual cycles.
Then the virtual push-forward implies
$\fr_*[\widetilde{V}]^{\tred} = [V]^{\tred}$.
We calculate
\[
\ttwist\cdot \fr_*[\Delta_{\tUH^\cpt}]^{\red} = \fr_* i^![\widetilde{V}]^{\tred} = i^![V]^{\tred} = \ttwist \cdot [\Delta_{\UH^\cpt}]^{\red}
\]
where the first and the last equalities follow from \eqref{equ:tred=bred}, and the middle one follows from the projection formula. This completes the proof.
\end{proof}

\subsection{Independence of twists II: the case of the reduced theory}\label{ss:red-theory-ind-twist}

In this section, we complete the proof of the change of twists theorem.

Consider the two targets $\fP_1$ and $\fP_2$ as in Section \ref{ss:change-twists}. Since $\fP_1 \to \fP_2$ is isomorphic along $\zero_{\fP_1} \cong \zero_{\fP_2}$ and $\ddata$ is a collection compact type sectors, the morphism in Corollary \ref{cor:changing-twists} restricts to
\[
\nu_{\etwist_1/\etwist_2} \colon \UH^{\cpt}_1 := \UH^{\cpt}_{g,\ddata}(\fP_1,\beta) \to \UH^{\cpt}_2 :=\UH^{\cpt}_{g,\ddata}(\fP_2,\beta).
\]
We compare the virtual cycles:

\begin{theorem}\label{thm:red-ind-twists}
\begin{enumerate}
\item
  $\nu_{{\etwist_1/\etwist_2},*}[\UH^{\cpt}_1]^{\red} =
  [\UH^{\cpt}_2]^{\red}$
\item
  $\nu_{{\etwist_1/\etwist_2},*}[\UH^{\cpt}_1]^\vir =
  [\UH^{\cpt}_2]^\vir$.
\item
$
\nu_{{\etwist_1/\etwist_2},*}[\Delta_{\UH^{\cpt}_1}]^{\red} = \frac{\etwist_2}{\etwist_1} \cdot [\Delta_{\UH^{\cpt}_2}]^{\red}.
$
\end{enumerate}
\end{theorem}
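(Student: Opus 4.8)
The strategy is to reduce all three statements to the virtual push-forward machinery of Costello \cite{Co06} and Manolache \cite{Ma12}, applied to the morphism $\nu_{\etwist_1/\etwist_2}$ together with the compatibility of the various perfect obstruction theories under change of twist. The key geometric input is Proposition \ref{prop:change-twists}, which identifies $\nu_{\etwist_1/\etwist_2}$ as (essentially) the root-stack morphism $\fP_1 \to \fP_2$ applied fiberwise over the moduli space, restricted to the compact-type locus where $\fP_1$ and $\fP_2$ agree along $\zero_{\fP_i}$. The first thing I would establish is a commutative diagram relating the obstruction-theory data: both $\UH^\cpt_i$ sit over the stacks $\fU^\cpt_i := \fU^{\cpt}_{g,\ddata}(\cA_i)$ of log maps to $\cA_i$ with uniform maximal degeneracy, and by the construction of $\fM'$ in the proof of Proposition \ref{prop:can-ind-twists} there is an intermediate stack $\fM'$ (now with its uniform-maximal-degeneracy variant, call it $\fU'$) fitting into a cartesian square with $\UH^\cpt_1$, $\UH^\cpt_2$ and $\fU^\cpt_2$, such that $\fU' \to \fU^\cpt_1$ and $\fU' \to \fU^\cpt_2$ are birational (one of them even strict and \'etale). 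Since $\cX$ is smooth, $\fP_i \to \BC$ is log smooth, and the canonical perfect obstruction theory \eqref{equ:obs-compact-evaluation} of $\UH^\cpt_i \to \fU^\cpt_i$ is $\pi_*f^*(T_{\fP_i/\BC}(-\Sigma))^\vee$; because $\fP_1 \to \fP_2$ is log \'etale, $T_{\fP_1/\BC} \cong (\fP_1 \to \fP_2)^* T_{\fP_2/\BC}$, so these two obstruction theories are identified under pullback. This gives (2): $\nu_{{\etwist_1/\etwist_2},*}[\UH^\cpt_1]^\vir = [\UH^\cpt_2]^\vir$ by the virtual push-forward theorem, exactly as in the proof of Proposition \ref{prop:can-ind-twists} but now with the relative obstruction theory over $\fU'$.

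For (1), the plan is to check that the \emph{reduced} perfect obstruction theory is also compatible under $\nu_{\etwist_1/\etwist_2}$. Here I would use that the reduced theory of $\UH^\cpt_i$ is constructed in Section \ref{sss:reduced-theory} from the canonical theory together with the canonical cosection $\sigma_{\UH^\cpt_i/\fU^\cpt_i}$ and the boundary divisor $\ttwist_i \Delta_{\fU^\cpt_i}$. The twisted superpotential $\tW$ and hence the cosection are built from $W$ on $\fP^\circ$, which is the \emph{same} on the common open locus; more precisely, since $\fP_1 \to \fP_2$ is an $\frac{\etwist_1}{\etwist_2}$-th root stack along $\infty_{\fP_2}$ and is an isomorphism over $\fP_2^\circ$, the superpotential and its critical locus match, and one checks that the twisted Hodge bundle construction is compatible: the relevant line bundle is $\bL_{\max}^{-\ttwist_i}$, and $\ttwist_i = \etwist_i r$, so $\bL_{\max,1}^{-\ttwist_1}$ and $\bL_{\max,2}^{-\ttwist_2}$ differ precisely by the ramification of $\cA_1 \to \cA_2$ along $\Delta_{\max}$, which is accounted for by the factor $\frac{\etwist_1}{\etwist_2}$ in $\nu\colon \cA_1 \to \cA_2$. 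After pulling back everything to the common birational resolution $\tU$ (as in the proof of Theorem \ref{thm:reduced=local}, so that the base is smooth and Theorem \ref{thm:generali-localized-cycle} and the reduction machinery of Section \ref{sec:POT-reduction} apply verbatim), I would identify the two reduced obstruction theories over $\tU$, and then (1) follows again from virtual push-forward. Alternatively — and this may be cleaner — (1) follows from Theorem \ref{thm:reduced=local} together with (3): writing $[\UH^\cpt_i]^{\red} = [\UH^\cpt_i]^\vir - \ttwist_i[\Delta_{\UH^\cpt_i}]^{\red}$ via the Second Comparison Theorem \ref{thm:comparison-2}, pushing forward, and using (2) and (3) with $\ttwist_1/\ttwist_2 = \etwist_1/\etwist_2$, the multiplicative factors cancel and (1) drops out. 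So I would prove (2) and (3) and deduce (1) formally.

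The heart of the matter — and the step I expect to be the main obstacle — is (3): the statement that $\nu_{{\etwist_1/\etwist_2},*}[\Delta_{\UH^\cpt_1}]^{\red} = \frac{\etwist_2}{\etwist_1}[\Delta_{\UH^\cpt_2}]^{\red}$, where the \emph{non-trivial rational multiplicity} $\frac{\etwist_2}{\etwist_1}$ appears. The point is that $\nu_{\etwist_1/\etwist_2}$ restricted to the boundary $\Delta_{\UH^\cpt_1} \to \Delta_{\UH^\cpt_2}$ is no longer birational in general: along the maximal-degeneracy boundary, the curve $\cC_1 \to \cC_2$ involves a root-stack modification governed by the map $\cA_1 \to \cA_2$, i.e. by $\NN \to \NN,\ 1 \mapsto \frac{\etwist_1}{\etwist_2}$, and the reduced boundary obstruction theory $\EE^{\red}_{\Delta_{\UH^\cpt_i}/\Delta_{\fU^\cpt_i}}$ from Theorem \ref{thm:boundary-cycle} is built using the line bundle $\bL_{\max,i}^{-\ttwist_i}|_{\Delta}$ and the surjective reduced cosection. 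Concretely I would compute the degree of $\nu_{\etwist_1/\etwist_2}$ along the boundary, or equivalently track how the normalization/gluing data defining $\Delta_{\fU^\cpt_i} = \Delta_{\max}\times_{\cA_{\max}}\fU^\cpt_i$ changes: the divisor $\Delta_{\fU^\cpt_1}$ maps to $\Delta_{\fU^\cpt_2}$ with the pullback of $\Delta_{\max}$ scaled by $\frac{\etwist_1}{\etwist_2}$ (since $\nu^*\Delta_{\max,2} = \frac{\etwist_1}{\etwist_2}\Delta_{\max,1}$ in the sense of Cartier divisors on $\cA_1$), and Theorem \ref{thm:boundary-cycle} produces $[\ttwist_i\Delta_{\UH^\cpt_i}]^{\red} = \ttwist_i[\Delta_{\UH^\cpt_i}]^{\red}$ with a coefficient linear in the divisor. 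Carefully bookkeeping $\ttwist_1 = \frac{\etwist_1}{\etwist_2}\ttwist_2$ against this scaling, and using virtual push-forward for the totally reduced theory on $V_i = \UH^\cpt_i \setminus \IR_{W}$ exactly as in the proof of Theorem \ref{thm:comparison-2} (via $i^![V_i]^{\tred}$ and \eqref{equ:tred=bred}), the factor $\frac{\etwist_2}{\etwist_1}$ emerges. The delicate part is making sure the multiplicities from the root-stack structure on the \emph{source curves} along the boundary and from the scaling of $\bL_{\max}$ combine to give exactly $\frac{\etwist_2}{\etwist_1}$ and not its inverse or some other rational number — this requires a genuine local computation on $\cA_1 \to \cA_2$ near $\Delta_{\max}$, and I would isolate it as a lemma.
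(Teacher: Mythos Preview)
Your plan for (2) is correct and matches the paper's approach.

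The main issue is that you have the logical dependencies inverted. You identify (3) as ``the heart of the matter'' requiring a delicate local computation, and propose deducing (1) from (2) and (3). The paper does the opposite: it proves (1) directly and trivially, then deduces (3) from (1), (2), and Theorem~\ref{thm:comparison-2}.

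The point you are missing for (1) is that Theorem~\ref{thm:reduced=local} already does all the work. By that theorem, $[\UH^\cpt_i]^{\red} = \iota_{i,*}[\IR^\cpt]_\sigma$ for $i=1,2$, where $[\IR^\cpt]_\sigma$ is the cosection-localized virtual cycle supported on $\IR_W$. But $\IR_W \subset \IR^\cpt$ lies in the common open substack of $\UH^\cpt_1$ and $\UH^\cpt_2$ on which $\nu_{\etwist_1/\etwist_2}$ restricts to the identity (since $\fP_1 \to \fP_2$ is an isomorphism over $\fP_2^\circ$). The cosection-localized cycle $[\IR^\cpt]_\sigma$ depends only on the data over $\IR^\cpt$, which is literally the same for both $i$. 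Hence $\nu_{\etwist_1/\etwist_2,*}[\UH^\cpt_1]^{\red} = [\UH^\cpt_2]^{\red}$ with no further computation.

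Once (1) and (2) are in hand, (3) is a one-line formal consequence of the Second Comparison Theorem: push forward the identity $[\UH^\cpt_1]^\vir = [\UH^\cpt_1]^{\red} + \ttwist_1[\Delta_{\UH^\cpt_1}]^{\red}$, use (1) and (2), compare with $[\UH^\cpt_2]^\vir = [\UH^\cpt_2]^{\red} + \ttwist_2[\Delta_{\UH^\cpt_2}]^{\red}$, and solve for $\nu_{\etwist_1/\etwist_2,*}[\Delta_{\UH^\cpt_1}]^{\red}$; the factor $\ttwist_2/\ttwist_1 = \etwist_2/\etwist_1$ drops out. Your proposed direct attack on (3) via tracking root-stack multiplicities along the boundary is therefore unnecessary, and the ``delicate local computation'' you anticipate never has to be carried out.
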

\begin{proof}
By Theorem \ref{thm:reduced=local}, both $[\UH^{\cpt}_1]^{\red}$ and $[\UH^{\cpt}_2]^{\red}$ are represented by the same cosection localized virtual cycle contained in the common open set $\IR^\cpt$ of both $\UH^{\cpt}_1$ and $\UH^{\cpt}_2$, hence are independent of the choices of $\etwist_i$. This proves the part of (1).

We can prove (2) similarly as in Proposition \ref{prop:can-ind-twists}. The only modification needed is to work over the log evaluation stack in Section \ref{sss:log-ev-stack}.

Finally, (3) follows from (1), (2) and Theorem \ref{thm:comparison-2}.
\end{proof}

\section{Examples}
\label{sec:examples}

\subsection{Gromov--Witten theory of complete intersections}\label{ss:examples-GW}

One of the most direct application of log GLSM is to study the
Gromov--Witten theory of complete intersections, and more generally,
zero loci of non-degenerate sections of vector bundles.
Here, the most prominent examples are quintic threefolds in $\PP^4$.

The input of this log GLSM is given by a proper smooth
Deligne--Mumford stack $\cX$ with a projective coarse moduli, a vector
bundle $\bE = \bE_1$ over $\cX$, a section $s \in H^0(\bE)$ whose zero
locus $\cZ$ is smooth of codimension $\rk \bE$. In this case we may choose
$\bL = \cO_{\cX}$, $r = 1$, and may choose $\etwist = 1$ for
simplicity.
Then the universal targets are
$\fP = \PP(\bE^\vee \otimes \LR \oplus \cO)$ and
$\fP^\circ = \vb(\bE^\vee \otimes \LR)$.
We may also view them as the quotients of $\PP(\bE^\vee \oplus \cO)$
and $\vb(\bE^\vee)$ under the $\CC^*_\omega = \CC^*$-scalar
multiplication on $\bE^\vee$.
By Proposition \ref{prop:map-field-equiv}, the data of a stable R-map
$f\colon \cC\ \to \fP^{\circ}$ with compact type evaluation over $S$
is equivalent to a stable map $g\colon \cC \to \cX$ over $S$ together
with a section $\rho \in H^0(\omega_\cC \otimes g^*(\bE^\vee))$.
Thus $\SR^{\cpt}_{g, \ddata}(\fP^{\circ},\beta)$ is the same as the
\emph{moduli space of stable maps to $\cX$ with $p$-fields} studied in
\cite{ChLi12, KiOh18P, ChLi18P, CJW19P}.

In this situation, the superpotential
\begin{equation*}
  W\colon \vb(\bE^\vee \boxtimes \LR) \to \vb(\LR)
\end{equation*}
is defined as the pairing with $s$.
It has proper critical locus whenever $\cZ$ is smooth of expected
dimension \cite[Lemma 2.2.2]{CJW19P}, and then the degeneracy locus $\IR_W$ is supported on 
$\scrM_{g, \ddata}(\cZ, \beta)$ embedded in the subset
$\scrM_{g, \ddata}(\cX, \beta) \subset \SR^{\cpt}_{g,
  \ddata}(\fP^{\circ},\beta)$, which is defined by log $R$-maps
mapping into $\zero_\fP$.
Recall that $\ddata$ is a collection of connected components of the
inertia stack of $\cX$.
The moduli space $\scrM_{g, \ddata}(\cZ, \beta)$ parameterizes stable
maps $\cC \to \cZ$ such that the composition $\cC \to \cZ \to \cX$ has
curve class $\beta$, and sectors $\ddata$.
In particular, $\scrM_{g, \ddata}(\cZ, \beta)$ is a disjoint union
parameterized by curve classes $\beta'$ on $\cZ$ such that
$\iota_* \beta' = \beta$ under the inclusion
$\iota\colon \cZ \to \cX$.
Combining Theorem \ref{thm:reduced=local} with the results in \cite{ChLi12, KiOh18P, ChLi18P}, and more
generally in \cite{CJW19P, Pi20P}, we obtain:

\begin{proposition}
  \label{prop:glsm-gw}
  In the above setting, we have
  \begin{equation*}
    [ \UH_{g, \ddata}(\fP, \beta)]^{\red}
    = (-1)^{\rk(\bE)(1 - g) + \int_\beta c_1(\bE) - \sum_{j = 1}^n \age_j(\bE)} [\scrM_{g, \ddata}(\cZ, \beta)]^\vir,
  \end{equation*}
  where $\age_j(\bE)$ is the age of $\bE|_\cC$ at the $j$th
  marking (see \cite[Section~7]{AGV08}).
\end{proposition}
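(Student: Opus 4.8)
The plan is to deduce the formula from the first comparison theorem together with the ``quantum Lefschetz via $p$-fields'' results of \cite{ChLi12,KiOh18P,ChLi18P,CJW19P,Pi20P}. By Theorem~\ref{thm:reduced=local}, $[\UH^{\cpt}_{g,\ddata}(\fP,\beta)]^{\red}$ is represented by the cosection-localized virtual cycle $[\IR^\cpt]_\sigma \in A_*(\IR_W)$, and under the present hypotheses $\IR_W = \scrM_{g,\ddata}(\cZ,\beta)$. So it suffices to establish
\[
[\IR^\cpt]_\sigma = (-1)^{\chi(\cC,\, g^*\bE)}\, [\scrM_{g,\ddata}(\cZ,\beta)]^\vir
\]
in $A_*(\scrM_{g,\ddata}(\cZ,\beta))$, where $\chi(\cC, g^*\bE)$ is the Euler characteristic of $g^*\bE$ on a fibre curve, and then to evaluate this Euler characteristic by orbifold Riemann--Roch. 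The advantage of passing to $[\IR^\cpt]_\sigma$ is that, as noted in Section~\ref{sss:cosection-localized-class}, it is constructed entirely on the interior $\IR^\cpt = \SR^{\cpt}_{g,\ddata}(\fP^\circ,\beta)$, where neither the boundary of maximal degeneracy nor the target modification plays a role.

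By Proposition~\ref{prop:map-field-equiv}, a point of $\IR^\cpt$ is a stable map $g\colon \cC \to \cX$ of class $\beta$ equipped with a section $\rho$ of $g^*\bE^\vee \otimes \omega^{\log}_\cC$; the compact-type constraint $f(\Sigma)\subset \zero_\fP$, equivalently the $(-\Sigma)$-twist in \eqref{equ:obs-compact-evaluation}, forces $\rho$ to lie in $H^0(g^*\bE^\vee \otimes \omega_\cC)$. Thus $\IR^\cpt$ is precisely the moduli of stable maps to $\cX$ with $p$-fields of \cite{ChLi12,CJW19P,Pi20P}, and $W$ is the fibrewise pairing with $s$, which has proper critical locus because $\cZ$ is smooth of the expected codimension (\cite[Lemma~2.2.2]{CJW19P}); by Proposition~\ref{prop:cosection-degeneracy-loci} the degeneracy locus of the cosection is then supported on $\scrM_{g,\ddata}(\cZ,\beta)$, sitting inside $\scrM_{g,\ddata}(\cX,\beta)\subset \IR^\cpt$.

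The step requiring care is the comparison of perfect obstruction theories and cosections with those of the cited works. Since $\fP^\circ = \vb(\bE^\vee\otimes\LR)$ is a vector bundle over $\fX$, the bundle $f^*T_{\fP/\BC}(-\Sigma)$ is an extension of $g^*T_{\cX}(-\Sigma)$ by $g^*\bE^\vee\otimes\omega_\cC$; using the exact sequence \eqref{equ:modified-tangent}, which absorbs the orbifold-marking contribution $T_{\fX/\BC}|_\Sigma$ through the modified target $\cP_{\UH^\cpt,\reg}$, one identifies the canonical obstruction theory \eqref{equ:obs-compact-evaluation} with the $p$-field obstruction theory assembled from $\pi_*(g^*T_\cX)$ and $\pi_*(g^*\bE^\vee\otimes\omega_\cC)$. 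On $\IR^\cpt$ the twisted superpotential $\tcW_-$ restricts to $W$, so the canonical cosection \eqref{equ:canonical-cosection} restricts to $\pi_*$ of the pairing $g^*\bE^\vee\otimes\omega_\cC \to \omega_\cC$ with $g^*s$ (together with the $ds$-term along $T_\cX$), which is exactly the cosection of Chang--Li. Granting this identification, the main comparison theorem of \cite{ChLi12,KiOh18P,ChLi18P} for projective manifolds and of \cite{CJW19P,Pi20P} in general yields the displayed equality, the sign recording the $p$-field block of the obstruction theory. Finally, orbifold (To\"en--Kawasaki) Riemann--Roch for $g^*\bE$ on the twisted curve $\cC$ gives $\chi(\cC, g^*\bE) = \rk(\bE)(1-g) + \int_\beta c_1(\bE) - \sum_{j=1}^n\age_j(\bE)$, which is the exponent in the statement. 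Both sides are understood as sums over curve classes $\beta'$ on $\cZ$ with $\iota_*\beta' = \beta$.

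The main obstacle is precisely the compatibility check in the previous paragraph: the obstruction theory and cosection here are built through the modified target $\cP_{\UH^\cpt,\reg}$ and the twisted superpotential $\tW$, and one must verify that on the interior they restrict to the ad hoc constructions used in the $p$-field literature, with the correct bookkeeping of orbifold ages at the markings. Once that is in place, the remainder is a matter of invoking the cited theorems and orbifold Riemann--Roch.
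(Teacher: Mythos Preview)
Your proposal is correct and follows essentially the same route as the paper: reduce via the first comparison theorem to the cosection-localized cycle on $\IR^\cpt$, identify $\IR^\cpt$ with the moduli of stable maps with $p$-fields, verify that the perfect obstruction theory \eqref{equ:obs-compact-evaluation} and the cosection \eqref{equ:canonical-cosection} restrict on the interior to those of \cite{CJW19P} (the paper does this via the morphism of distinguished triangles arising from \eqref{equ:modified-tangent}, exactly as you outline), and then invoke \cite[Theorem~1.1.1]{CJW19P}. Your identification of the compatibility check as the crux, and your use of orbifold Riemann--Roch to unpack the sign exponent, are both on target.
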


Therefore Gromov--Witten invariants of $\cZ$ (involving only cohomology classes
from $\cX$) can be computed in terms of log GLSM
invariants.

\begin{proof}
  We will show that the perfect obstruction theory and cosection used
  in this paper are compatible with those in \cite{CJW19P}.
  Recall the notations
  $\IR^\cpt = \SR^\cpt_{g, \ddata}(\fP^\circ, \beta)$ and
  $\IU^{\cpt} = \fU^{\cpt}\setminus(\Delta_{\fU^{\cpt}})$ from
  \eqref{equ:interior-stack}.
  Note that $\IU^{\cpt} = \IU \times (\ocI\cX)^n$ where $ \ocI\cX$ is
  the rigidified cyclotomic inertia stack of $\cX$ as in
  \cite[3.4]{AGV08}, and $\IU$ is simply the moduli of twisted curves.
  Note that we have a morphism of distinguished triangles over
  $\IR^\cpt$
\[
\xymatrix{
 \TT_{\IR^\cpt/\IU^{cpt}} \ar[r] \ar[d] & \TT_{\IR^\cpt/\IU} \ar[r] \ar[d] & T_{(\ocI\cX)^n}|_{\IR^\cpt} \ar[d]^{\cong} \ar[r] &\\
 \pi_{\IR^\cpt,*}f^*_{\IR^\cpt}T_{\fP/\BC}(-\Sigma) \ar[r]  &  \pi_{\IR^\cpt,*}f^*_{\IR^\cpt,-}T_{\cP_{\IR^\cpt,\reg}/\cC_{\IR^\cpt}} \ar[r] &   \pi_{\IR^\cpt,*}T_{\fX/\BC}|_{\Sigma} \ar[r] & \\
 }
\]
where the left vertical arrow is the restriction of the perfect obstruction theory \eqref{equ:obs-compact-evaluation} to $\IR^\cpt$, the middle vertical arrow is precisely the perfect obstruction theory \cite[(18)]{CJW19P}, the vertical arrow on the right follows from \cite[Lemma 3.6.1]{AGV08}, and the bottom is obtained by applying the derived pushforward $\pi_{\IR^\cpt,*}$ to \eqref{equ:modified-tangent}. Thus, the perfect obstruction theory defined in this paper is compatible  with that of \cite{CJW19P}, hence they define the same absolute perfect obstruction theory of $\IR^\cpt$.

Now applying $R^1\pi_{\IR^\cpt,*}$ to the composition \eqref{eq:composing-cosection}, we have 
\[
\xymatrix{
R^1\pi_{\IR^\cpt,*}f^*_{\IR^\cpt}T_{\fP/\BC}(-\Sigma) \ar[r]^{\cong \ \ } \ar[rd] & R^1\pi_{\IR^\cpt,*}f^*_{\IR^\cpt,-}T_{\cP_{\IR^\cpt,\reg}/\cC_{\IR^\cpt}} \ar[d]\\
& \cO_{\IR^\cpt}
}
\]
where the horizontal isomorphism follows from the compatibility of perfect obstruction theories above, the vertical arrow on the right is the relative cosection \cite[(25)]{CJW19P}, and the skew arrow is the relative cosection \eqref{equ:canonical-cosection} restricted to the open substack $\IR^\cpt$. This means that the cosections in this paper over  $\IR^\cpt$ is identical to the cosections in \cite{CJW19P}. Therefore, the statement follows from \cite[Theorem 1.1.1]{CJW19P}.
\end{proof}

\subsection{FJRW theory}

We discuss in this section how our set-up includes all of FJRW theory,
which is traditionally \cite{FJR13} stated in terms of a
quasi-homogeneous polynomial $W$ defining an isolated singularity at
the origin, and a diagonal symmetry group $G$ of $W$.

We first recall a more modern perspective on the input data for the
FJRW moduli space following \cite[Section~2.2]{FJR18} and
\cite[Section~3]{PoVa16}.
Fix an integer $N$, a finite subgroup $G \subset \Gm^N$, and positive
integers $c_1, \dotsc, c_N$ such that $\gcd(c_1, \dotsc, c_N) = 1$. 
Let $\CC^*_R$ be the one dimensional sub-torus
$\{(\lambda^{c_1}, \dotsc, \lambda^{c_N})\} \subset \Gm^N$, and assume
that $G \cap \CC^*_R$ is a cyclic group of order $r$, which is usually
denoted by $\langle J\rangle$.
Consider the subgroup $\Gamma = G \cdot \CC^*_R \subset \Gm^N$.
There is a homomorphism
$\zeta\colon \Gamma \to \CC^*_\omega \cong \Gm$ defined by
$G \mapsto 1$ and
$(\lambda^{c_1}, \dotsc, \lambda^{c_N}) \mapsto \lambda^r$.
\begin{definition}
  A {\em $\Gamma$-structure} on a twisted stable curve $\cC$ is a commutative
  diagram
  \begin{equation*}
    \xymatrix{
      & \mathbf{B\Gamma} \ar[d] \\
      \cC \ar[r] \ar[ur] & \BC.
    }
  \end{equation*}
  A {\em $\Gamma$-structure with fields} \cite{CLL15} is a commutative diagram
  \begin{equation*}
    \xymatrix{
      & [\CC^N / \Gamma] \ar[d] \\
      \cC \ar[r] \ar[ur] & \BC.
    }
  \end{equation*}
\end{definition}
\begin{remark}
  A special case of FJRW theory is the $r$-spin theory, whose logarithmic
  GLSM was discussed in \cite{CJRS18P}.
  In this case, $N = 1$, $\CC^*_R = \Gamma$, and
  $G = \mu_r \subset \CC^*_R$ is the subgroup of $r$th roots of unity.
  \end{remark}

\begin{lemma}
  \label{lem:hybrid-vs-CLL}
  There is hybrid target data (as in Section~\ref{ss:target-data})
  such that there is a commutative diagram
  \begin{equation*}
    \xymatrix{
      [\CC^N/\Gamma] \ar[r]^{\sim} \ar[dr] & \fP^\circ \ar[d] \\
      & \BC.
      }
  \end{equation*}
\end{lemma}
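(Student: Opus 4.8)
The plan is to read off the hybrid target data $(\cX,\bE,\bL,r)$ of \S\ref{sss:input} from the FJRW input $(N,G\subset\Gm^N,c_1,\dotsc,c_N)$ and then to identify $[\CC^N/\Gamma]$ with the resulting $\fP^{\circ}$ over $\BC$. Take $r:=|\langle J\rangle|=|G\cap\CC^*_R|$; this is the order of the kernel of $\zeta|_{\CC^*_R}$, which is the $r$-th power map $\CC^*_R\to\CC^*_\omega$. Since $\Gamma$ is abelian, $\ker(\zeta\colon\Gamma\to\CC^*_\omega)=G\cdot(\ker\zeta\cap\CC^*_R)=G\cdot\langle J\rangle=G$, and the rule $g\cdot s\mapsto [g]$ (for $g\in G$, $s\in\CC^*_R$) defines a homomorphism $\psi\colon\Gamma\to\overline{G}:=G/\langle J\rangle$, well-defined because $G\cap\CC^*_R=\langle J\rangle$. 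Then $(\zeta,\psi)\colon\Gamma\to\CC^*_\omega\times\overline{G}$ is surjective with kernel $G\cap\ker\psi=\langle J\rangle\cong\mu_r$. Set $\cX:=\mathbf{B}\overline{G}$, a proper Deligne--Mumford stack with projective (one-point) coarse moduli; then $\mathbf{B}\Gamma\to\mathbf{B}(\CC^*_\omega\times\overline{G})=\BC\times\cX$ is a $\mu_r$-gerbe, our candidate for $\fX$ in \eqref{diag:spin-target}.

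Next I would exhibit this gerbe as the $r$-th root stack of a line bundle of the product form $\LR\boxtimes\bL^{\vee}$, thereby fixing $\bL$. Since $\operatorname{Ext}^1$ of diagonalizable groups is additive, the extension $1\to\mu_r\to\Gamma\to\CC^*_\omega\times\overline{G}\to1$ is determined by its restrictions over the two factors. Over $\CC^*_\omega$, the preimage of $\CC^*_\omega\times\{1\}$ is $\CC^*_R$, and $1\to\mu_r\to\CC^*_R\xrightarrow{(\,\cdot\,)^r}\CC^*_\omega\to1$ is exactly the $r$-th root of the weight-one character $\LR$. Over $\overline{G}$, the preimage of $\{1\}\times\overline{G}$ is $G$; the pushout of $1\to\mu_r\to G\to\overline{G}\to1$ along $\mu_r\hookrightarrow\Gm$ is a diagonalizable extension of the finite group $\overline{G}$ by $\Gm$, hence (dualizing, using that $\ZZ$ is free) splits as $\overline{G}\times\Gm$, and a splitting yields a character $\chi_0\in\overline{G}^{\vee}$ whose associated $\mu_r$-root-gerbe is $1\to\mu_r\to G\to\overline{G}\to1$. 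Put $\bL:=\chi_0^{-1}\in\mathrm{Pic}(\mathbf{B}\overline{G})$. Then $\fX:=\mathbf{B}\Gamma$ sits in \eqref{diag:spin-target} over $\LR\boxtimes\bL^{\vee}$, and the tautological $r$-th root $\cL_\fX$ is the character of $\Gamma$ with $\cL_\fX|_{\CC^*_R}$ of weight one and $\cL_\fX|_{\langle J\rangle}$ the standard character of $\mu_r$, the band of the gerbe.

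For the bundle $\bE$, group coordinates by their $\CC^*_R$-weight: let $S_i:=\{\,j:c_j=i\,\}$ and $V_i:=\bigoplus_{j\in S_i}\CC x_j$, a $G$-representation on which $\langle J\rangle$ acts by the single scalar $\zeta_r^{\,i}$ (since $c_j=i$ for $j\in S_i$), with $\zeta_r$ a primitive $r$-th root of unity. Hence $V_i\otimes(\cL_\fX^{\otimes -i}|_G)$ carries a trivial $\langle J\rangle$-action and descends to a $\overline{G}$-representation, which I take as $\bE_i^{\vee}$, placed in grading $i$; note $\gcd\{\,i:\bE_i\neq0\,\}=\gcd\{c_j\}=1$, so $\etwist=1$ is admissible. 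A character computation then shows that, as a $\Gamma$-representation, $\bigoplus_{i>0}(\bE^{\vee}_{i,\fX}\otimes\cL_\fX^{\otimes i})$ restricts over $\CC^*_R$ to $\CC^N$ with weights $(c_j)$ and over $G$ to $\CC^N$ with its given $G$-action; since $\Gamma=G\cdot\CC^*_R$ is abelian, these two restrictions determine the representation, so $\bigoplus_{i>0}(\bE^{\vee}_{i,\fX}\otimes\cL_\fX^{\otimes i})\cong\CC^N$ as $\Gamma$-representations. Passing to total spaces of the associated bundles over $\mathbf{B}\Gamma$ identifies $\fP^{\circ}=\vb\big(\bigoplus_{i>0}(\bE^{\vee}_{i,\fX}\otimes\cL_\fX^{\otimes i})\big)$ with $[\CC^N/\Gamma]$, compatibly with the projections to $\BC$, both being induced by $\zeta$.

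The step I expect to be the main obstacle is the middle one: realizing the $\mu_r$-gerbe $\mathbf{B}\Gamma\to\BC\times\cX$ as the $r$-th root of a line bundle of the special form $\LR\boxtimes\bL^{\vee}$ and pinning down $\bL$. This is an extension problem for diagonalizable groups; the crucial inputs are that $\Gamma$ is abelian (so that $\ker\zeta=G$ and $\psi$ exist), that extensions of a finite abelian group by $\Gm$ in the diagonalizable category split (so the Bockstein $\mathrm{Pic}(\mathbf{B}\overline{G})\to H^2(\mathbf{B}\overline{G},\mu_r)$ is surjective), and the additivity of $\operatorname{Ext}^1$. Everything else reduces to bookkeeping with characters, and the construction is the standard FJRW dictionary of \cite{FJR18,PoVa16,CLL15}.
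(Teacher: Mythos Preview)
Your proof is correct, but the paper takes a much shorter route. The paper simply invokes Lemma~\ref{lem:hybrid-match} (the general hybrid case) as a special case with $K=0$. The key shortcut there is to \emph{choose a splitting of the ambient torus} $\Gm^{N}\cong T\times\CC^*_R$: projecting $G$ to $T$ gives a subgroup $H\cong G/\langle J\rangle$, and one gets the product decomposition $\Gamma\cong H\times\CC^*_R$ immediately. With this in hand, one sets $\cX=\mathbf{B}H$, $\bL$ the line bundle associated to the character $\chi:=\zeta|_{H}$, and $\cL_\fX$ the weight-one character of the $\CC^*_R$-factor; the identity $\cL_\fX^{\otimes r}\cong(\LR\boxtimes\bL^\vee)|_\fX$ is then a one-line character computation, and the bundle $\bE$ is read off from the $H$-action on $\CC^N$ graded by $\CC^*_R$-weight.

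Your approach instead works intrinsically with the extension $1\to\mu_r\to\Gamma\to\CC^*_\omega\times\overline{G}\to1$, decomposing it via additivity of $\operatorname{Ext}^1$ and Cartier duality to produce $\bL$. This is correct and conceptually clean---it does not rely on the embedding $\Gamma\subset\Gm^N$---but it is longer, and the ``main obstacle'' you flag (realizing the gerbe as an $r$-th root of a product line bundle) simply evaporates once one uses the ambient splitting. In effect, the paper's splitting of $\Gm^N$ provides for free the splitting of the $\Gm$-pushout you construct by hand. Your $\overline{G}$ and the paper's $H$ are canonically isomorphic (both equal $G/\langle J\rangle$), so the resulting hybrid data agree.
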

\begin{proof}
  This is a special case of the following
  Lemma~\ref{lem:hybrid-match}.
\end{proof}

There are several constructions of the FJRW virtual cycle in full
generality \cite{CKL18, FJR08, KL18, PoVa16}.
The construction closest to ours, and which we will follow here, is
the approach \cite{CLL15} using cosection localized virtual classes
for the special case of narrow insertions at all markings.

In the FJRW situation, by Lemma~\ref{lem:hybrid-vs-CLL}, the moduli space $\SR^{\cpt}_{g, \ddata}(\fP^\circ, \beta)$ of stable $R$-maps is
the same as the moduli of $G$-spin curves with fields in \cite{CLL15}.
Indeed, $\cX$ is a point, and all compact-type
sectors are narrow.
In this case, Proposition~\ref{prop:compact-evaluation} (1) implies that 
$\SR^{\cpt}_{g, \ddata}(\fP^\circ, \beta) = \SR_{g, \ddata}(\fP^\circ, \beta)$.

The perfect obstruction theories in this paper are constructed in a
slightly different way to the ones in \cite{CLL15} or \cite{CJRS18P} in
that construct it relative to a moduli space of twisted curves instead
of a moduli space of $G$-spin curves.
These constructions are related via a base-change of obstruction
theories as in \cite[Lemma~A.2.2]{CJW19P}, and in particular give rise
to the same virtual class.
Given a superpotential $W$ with proper critical locus, the cosection
constructed in Section~\ref{sss:cosection-localized-class} is easily
seen to agree with the one in \cite{CLL15}.
Therefore, $[\IR^\cpt]^\vir$ is the FJRW virtual class, and log GLSM
recovers FJRW theory in the narrow case.

\subsection{Hybrid models}\label{ss:ex-hyb-model}

The hybrid GLSM considered in the literature \cite{CFGKS18P, Cl17, FJR18} 
fit neatly into our set-up, and they form a generalization
of the examples of the previous sections.
In this paper though, we restrict ourselves to the case of compact
type insertions, and to the $\infty$-stability in order to include non-GIT quotients.

The input data of a hybrid GLSM is the following: Let
$G \subset \Gm^{K + N}$ be a sub-torus, and $\theta\colon G \to \CC^*$
be a character such that the stable locus $\CC^{K, s}$ and the
semi-stable locus $\CC^{K, ss}$ for the $G$-action on
$\CC^K = \CC^K \times \{0\}$ agree, and that
$\CC^{K+N, ss} = \CC^{K,ss}\times \CC^{N}$.
Then $[\CC^{K, ss} \times \CC^N / G]$ is the total space of a vector
bundle $\bE^\vee$ on a Deligne--Mumford stack
$\cX = [\CC^{K, ss} / G]$.
Furthermore, assume that there is a one-dimensional subtorus
$\CC^*_R = \{(1, \dotsc, 1, \lambda^{c_1}, \dotsc, \lambda^{c_N})\}
\subset \Gm^{K + N}$ such that $c_i > 0$ for all $i$, and
$G \cap \CC^*_R \cong \ZZ/r\ZZ$.
Let $\Gamma = G\cdot \CC^*_R$, and define
$\zeta\colon \Gamma \to \CC^*_\omega$ via $G \mapsto 1$ and
$(\lambda^{c_1}, \dotsc, \lambda^{c_N}) \mapsto \lambda^r$.

Given this set-up, the moduli space of $\infty$-stable LG quasi-maps
\cite[Definition~1.3.1]{CFGKS18P} is the same as the moduli space of
$R$-maps to the target $[\CC^{K, ss} \times \CC^N/\Gamma] \to \BC$.
Analogously to the previous section, we have the following:
\begin{lemma}
  \label{lem:hybrid-match}
  There is hybrid target data (as in Section~\ref{ss:target-data})
  such that there is a commutative diagram
  \begin{equation*}
    \xymatrix{
      [\CC^{K, ss} \times \CC^N/\Gamma] \ar[r]^-{\sim} \ar[dr] & \fP^\circ \ar[d] \\
      & \BC.
      }
  \end{equation*}
\end{lemma}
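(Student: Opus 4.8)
The plan is to exhibit explicit hybrid target data and then identify the associated $\fP^{\circ}$ with $[\CC^{K,ss}\times\CC^N/\Gamma]$ by unwinding the constructions of Section~\ref{ss:target-data}.

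First, I would take $\cX := [\CC^{K,ss}/G]$ — the smooth Deligne--Mumford stack with projective coarse moduli appearing already in the set-up, on which $\bE^{\vee}$ is the vector bundle with total space $[\CC^{K,ss}\times\CC^N/G]$. Since $\Gamma\subset\Gm^{K+N}$ is abelian, each coordinate line $\CC e_a\subset\CC^N$ is a $\Gamma$-, hence a $G$-subrepresentation, so the positive $\CC^*_\omega$-weights $c_a$ of $\CC^*_R$ furnish a $\ZZ_{>0}$-grading $\bE^{\vee}=\bigoplus_{i>0}\bE^{\vee}_i$ with $\bE^{\vee}_i=\bigoplus_{c_a=i}\CC e_a$; dually $\bE=\bigoplus_i\bE_i$ with $d=\gcd_a c_a$. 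I would complete the data by taking $r$ to be the given integer, $\bL:=\cO_{\cX}$, and $\etwist:=1\in\tfrac1d\ZZ_{>0}$; the choice of $\etwist$ is immaterial here since $\fP^{\circ}$ does not depend on it.

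Next I would compute $\fX$ and $\fP^{\circ}$. Because $\bL=\cO_{\cX}$, the line bundle $\LR\boxtimes\bL^{\vee}$ is pulled back from $\BC$, so \eqref{diag:spin-target} gives $\fX\cong\fX_0\times\cX$, where $\fX_0$ is the $r$-th root stack of $\LR$ over $\BC$; concretely $\fX_0\cong\BG_m$ with structure morphism $\nu_r\colon\BG_m\to\BC$, and the tautological root is the line bundle $\cL_{\fX}$ coming from the weight-$1$ character of this $\BG_m$, with $\cL_{\fX}^{\otimes r}\cong\LR|_{\fX}$. Writing $\BG_m=[\spec\bk/\Gm]$, this reads $\fX\cong[\CC^{K,ss}/(\Gm\times G)]$ with $\Gm$ acting trivially on $\CC^{K,ss}$ and $\cL_{\fX}$ associated to the projection $\Gm\times G\to\Gm$. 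Hence, for each $a$ with $c_a=i$, the summand $\bE^{\vee}_{i,\fX}\otimes\cL_{\fX}^{\otimes i}$ is the line bundle on $\fX$ associated to the character $(u,g)\mapsto\chi_a(g)\,u^{c_a}$ of $\Gm\times G$, where $\chi_a$ is the $G$-character on $\CC e_a$; so by \eqref{equ:universal-proj}
\[
\fP^{\circ}=\vb\left(\bigoplus_{i>0}\bE^{\vee}_{i,\fX}\otimes\cL_{\fX}^{\otimes i}\right)\cong\bigl[(\CC^{K,ss}\times\CC^N)/(\Gm\times G)\bigr],
\]
with $\Gm$ acting on $\CC^N$ through the weights $(c_1,\dots,c_N)$ and $G$ by its given embedding.

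Finally I would descend the group. The homomorphism $\Gm\times G\to\Gm^{K+N}$ sending $u\mapsto(1,\dots,1,u^{c_1},\dots,u^{c_N})\in\CC^*_R$ and $g\mapsto g$ has image $\CC^*_R\cdot G=\Gamma$ and kernel the copy of $\mu_r=G\cap\CC^*_R$ embedded anti-diagonally; one checks directly that this $\mu_r$ acts trivially on $\CC^{K,ss}\times\CC^N$, so the action descends to $\Gamma$ and $\fP^{\circ}\cong[\CC^{K,ss}\times\CC^N/\Gamma]$. For compatibility with the projections to $\BC$, the composite $\fP^{\circ}\to\fX\to\BC$ is induced by $(u,g)\mapsto u^r$, which under $(\Gm\times G)/\mu_r\cong\Gamma$ is exactly $\zeta$ (as $\zeta|_{\CC^*_R}(\lambda)=\lambda^r$ and $\zeta|_G=1$), giving the asserted commutative triangle; the same argument proves the FJRW case (Lemma~\ref{lem:hybrid-vs-CLL}) with $K=0$ and $G$ finite. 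I expect the step requiring the most care to be the identification $\fX\cong[\CC^{K,ss}/(\Gm\times G)]$ together with the recognition of the anti-diagonal $\mu_r$ — that is, seeing that the $r$-th root structure is precisely what converts the $(\Gm\times G)$-quotient into the $\Gamma$-quotient, which is where the hypothesis that $G\cap\CC^*_R$ is cyclic of order exactly $r$ enters.
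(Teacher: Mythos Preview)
Your argument has a genuine gap at the descent step. You correctly arrive at $\fP^\circ\cong[(\CC^{K,ss}\times\CC^N)/(\Gm\times G)]$ and correctly identify the kernel of $\Gm\times G\twoheadrightarrow\Gamma$ as the anti-diagonal $\mu_r$ acting trivially. But the conclusion ``so the action descends to $\Gamma$ and $\fP^{\circ}\cong[\CC^{K,ss}\times\CC^N/\Gamma]$'' is false for stack quotients: when a normal subgroup $K\subset\tilde G$ acts trivially on $X$, the quotient $[X/\tilde G]$ is a $K$-gerbe over $[X/(\tilde G/K)]$, not isomorphic to it (already $[\spec\bk/\mu_r]\not\cong\spec\bk$). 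With your choice $(\cX=[\CC^{K,ss}/G],\ \bL=\cO_{\cX})$ the resulting $\fP^\circ$ is therefore a $\mu_r$-gerbe over the desired target, and the asserted isomorphism fails. The very step you flagged as delicate is where the argument breaks.

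The paper's proof avoids this by choosing $\cX$ so that no quotient by a trivially acting subgroup is needed: pick a splitting $\Gm^{K+N}\cong T\times\CC^*_R$, let $H$ be the image of $G$ in $T$, observe $\Gamma\cong H\times\CC^*_R$, and set $\cX=[\CC^{K,ss}/H]$ with $\bL$ the line bundle associated to the (generally nontrivial) character $\chi:=\zeta|_H$. Then $\fX\cong\mathbf{BC}^*_R\times\cX=[\CC^{K,ss}/\Gamma]$ directly, and
\[
\fP^\circ=[\vb(\oplus_i\bE_i^\vee)/\CC^*_R]=\bigl[(\CC^{K,ss}\times\CC^N)/(H\times\CC^*_R)\bigr]=\bigl[(\CC^{K,ss}\times\CC^N)/\Gamma\bigr]
\]
with no extraneous gerbe. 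The nontrivial $\bL$ is exactly what compensates for replacing $G$ by $H$; your choice $\bL=\cO_{\cX}$ forces the missing $\mu_r$ to reappear as a gerbe in $\fP^\circ$.
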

\begin{proof}
  Choose a splitting
  $\Gm^{K + N} \cong T \times \CC^*_R$ into
  tori.
  Let $H$ be the projection of $G$ to $T$.
  Then there is an isomorphism $\Gamma \cong H \times \CC^*_R$
  defined by the projections, and the homomorphism
  $\zeta\colon \Gamma \to \CC^*_\omega$ becomes of the form
  $(\lambda, h) \mapsto \lambda^r \chi(h)$ for the character
  $\chi := \zeta|_{H} \colon H \to \CC^*_\omega$.

  Set $\cX = [\CC^{K, ss}/H]$ and let $\bL$ be the line bundle induced
  by $\chi$.
  Then $[\CC^{K, ss} \times \CC^N / H]$ is a rank $N$ vector bundle
  over $\cX$ with the splitting $\bE = \oplus_j \bE^{\vee}_{c_j}$
  according to the weights $c_j$ of the $\CC^*_R$-action.

  Consider $\fX := [\CC^{K, ss}/ \Gamma] \cong \mathbf{BC}^*_R \times \cX \to \BC \times \cX$ induced by the line bundle
  $\cL_R^{\otimes r} \boxtimes \bL$ and the identity on the second
  factor.
  Here, $\cL_R$ is the universal line bundle on $\mathbf{BC}^*_R$.
  The universal spin structure $\Sp$ is the pull-back of $\cL_R$. We then have $\fP^{\circ} \cong [\vb(\oplus_i \bE^{\vee}_i)/\CC^*_R] \to \BC$ which is the same as $[\CC^{K, ss} \times \CC^N / \Gamma] \to \BC$.
\end{proof}

It is a straightforward verification that the hybrid GLSM virtual
cycles constructed in our paper agree with those constructed in
\cite{Cl17, FJR18}.
Indeed, the absolute perfect obstruction theory and cosection for
$\IR^{\cpt}$ constructed in this paper agree with the ones in the
literature (to see this, we again need the base-change lemma
\cite[Lemma~A.2.2]{CJW19P}).
We leave the comparison to \cite{CFGKS18P} for a future work.

\section{Properties of the stack of stable logarithmic $R$-maps}
\label{sec:properties}
In this section, we establish Theorem \ref{thm:representability}.

\subsection{The representability}

For convenience, we prove the algebraicity of the stack $\SH(\fP)$ of
all log R-maps with all possible discrete data since the discrete data
specifies open and closed components, and the stability is an open
condition. 

Consider the stack of underlying R-maps $\fS(\ul{\fP}/\BC)$ which associates to any scheme $\ul{S}$ the category of commutative diagrams
\[
\xymatrix{
\uC\ar[rd]_{\omega^{\log}_{\ucC/\ul{S}}} \ar[r]^{\ul{f}} & \ul{\fP} \ar[d] \\
& \BC
}
\]
where $\uC \to \ul{S}$ is a family of twisted curves. 
As proved in \cite{AbCh14, Ch14,GrSi13}, the tautological morphism
$\SH(\fP, \beta) \to \fS(\ul{\fP}/\BC)$ is represented by log
algebraic spaces, see also \cite[Theorem~2.11]{CJRS18P}.
To show that $\SH(\fP, \beta)$ is algebraic, it remains to prove the
algebraicity of $\fS(\ul{\fP}/\BC)$. 
Now consider the tautological morphism
\[
\fS(\ul{\fP}/\BC) \to \fMtw
\]
where $\fMtw$ is the stack of twisted pre-stable curves.
For any morphism $\ul{S} \to \fMtw$, the corresponding pre-stable
curve $\ul{\cC} \to \ul{S}$ defines a fiber product
$\ul{\fP}\times_{\BC}\ul{\cC}$.
For any $\ul{T} \to \ul{S}$, the fiber product
$$\fS_{\ul{S}}(\ul{T}) := \ul{T}\times_{\fM(\cX)}\fS(\ul{\fP}/\BC)(\ul{S})$$
parameterizes sections of the projection $\ul{\fP}\times_{\BC}\ul{\cC}_{\ul{T}} \to \ul{\cC}_{\ul{T}} := \ul{\cC}\times_{\ul{S}}\ul{T}$.
Note that the composition $\ul{\fP}\times_{\BC}\ul{\cC} \to \ul{\cC} \to \ul{S}$ is proper and of Deligne--Mumford type. 
Since being a section is an open condition, the stack $\fS_{\ul{S}}$ is an open substack of the stack parameterizing pre-stable maps to the family of targets $\ul{\fP}\times_{\BC}\ul{\cC} \to \ul{S}$, which is algebraic by the algebraicity of Hom-stacks in \cite[Theorem 1.2]{HaRy19}.
Hence, $\fS_{\ul{S}}$ is algebraic over $\ul{S}$.
This proves the algebraicity of $\fS(\ul{\fP}/\BC)$.  

\subsection{Finiteness of automorphisms}

We now verify that $\SH_{g, \ddata}(\fP, \beta)$ is of
Deligne--Mumford type.
Let $f\colon \cC \to \fP$ be a pre-stable $R$-map.
An automorphism of $f$ over $\ul{S}$ is an automorphism of the log
curve $\cC \to S$ over $\ul{S}$ which fixes $f$.
Denote by $\Aut(f/S)$ the sheaf of automorphism groups of $f$ over
$\ul{S}$.
Since the underlying stack $\ul{\SH_{g, \ddata}(\fP, \beta)}$
parameterizes minimal objects in Definition \ref{def:minimal}, it
suffices to consider the following:

\begin{proposition}\label{prop:finite-auto}
  Assume $f$ as above is minimal and stable, and that $\ul{S}$ is a
  geometric point.
  Then $\Aut(f/S)$ is a finite group.
\end{proposition}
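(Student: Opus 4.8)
The plan is to separate the finiteness statement into a ``geometric'' part and a ``logarithmic'' part. Forgetting log structures gives a homomorphism $\Aut(f/S)\to\Aut(\ul f/\ul S)$, where $\Aut(\ul f/\ul S)$ is the group of automorphisms of the underlying twisted curve $\ul\cC\to\ul S$ that fix the underlying $R$-map $\ul f$, and whose kernel $K$ consists of those automorphisms of $f$ inducing the identity on $\ul\cC$, on $\ul S$, and on $\ul f$. It therefore suffices to prove that $\Aut(\ul f/\ul S)$ and $K$ are both finite, since $\Aut(f/S)$ is then an extension of a finite group by a finite group.

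\textbf{Finiteness of $\Aut(\ul f/\ul S)$.} This is the analogue for $R$-maps of the classical fact that stable maps have finite automorphisms, the essential input being the stability condition \eqref{equ:hyb-stability}. Because $\ul\cC\to\ul S$ is proper and $\ul f$ is representable, $\Aut(\ul f/\ul S)$ is the group of $\bk$-points of an algebraic group of finite type, so it is enough to exclude a positive-dimensional connected subgroup $H$. Such an $H$ necessarily acts trivially on the finite combinatorial data (dual graph, markings, orbifold structures at nodes and markings), hence acts on each irreducible component $\cZ\subset\ul\cC$ preserving its special points; being positive-dimensional, it must act with a dense orbit on some such $\cZ$, which forces $\cZ$ to have coarse space $\PP^1$ with at most two special points, or to be a closed genus-one curve with no special points, and forces $\ul f|_\cZ$ to be constant. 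In either case $\deg\omega^{\log}_{\cC/S}|_\cZ = -2 + \#(\text{special points on }\cZ)\le 0$, while $(\ft\circ f)^*\cH|_\cZ$ and $f^*\cO(\ttwist\infty_\fP)|_\cZ$ have degree $0$ since $\ul f|_\cZ$ is constant; thus the line bundle in \eqref{equ:hyb-stability} restricted to $\cZ$ has non-positive degree for every admissible pair $(k,\delta)$, contradicting the strict positivity in the stability condition. (For a rational bridge one may instead quote Corollary \ref{cor:rat-bridge-stability} directly.) Hence $\Aut(\ul f/\ul S)$ is finite.

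\textbf{Finiteness of $K$, and the main obstacle.} An element of $K$ is an automorphism of the log structure $\cM_\cC$ over $\ul\cC$ that induces an automorphism of $\cM_S$ over $\ul S$ and is compatible with $f^{\flat}$ and with $\cC\to\BC$. By \cite{AbCh14, Ch14, GrSi13} (see also \cite[Theorem~2.11]{CJRS18P}), the tautological morphism from the stack of stable log $R$-maps to the stack of underlying $R$-maps $\fS(\ul\fP/\BC)$ is represented by log algebraic spaces, and the underlying stack of $\SH$ (resp.\ $\UH$) parameterizes the \emph{minimal} objects of Definition \ref{def:minimal} (resp.\ Definition \ref{def:umd-minimal}). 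Concretely, once $\ul f$ is fixed the log curve structure is rigid away from the nodes, and for a minimal object the base monoid $\ocM_S=\oM(G)$ together with the node-smoothing data is completely pinned down by the combinatorial type $G$; hence an automorphism in $K$ must act trivially on $\ocM_S^{\mathrm{gp}}$, which is generated by the degeneracies $e_v$ and the node-lengths $\ell_l$ recorded by $f^{\flat}$ and by the node structure, and on the smoothing parameters, so $K$ is finite (indeed trivial). This minimality step is the crux: without it, $\Aut(\cM_S/\ul S)$ would contain the positive-dimensional torus of characters of $\ocM_S^{\mathrm{gp}}$, and one could not hope for finiteness; it is precisely minimality that eliminates this torus. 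Combining the two finiteness statements completes the proof.
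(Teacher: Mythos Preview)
Your reduction to finiteness of $\Aut(\ul f/\ul S)$ and of the kernel $K$ is fine, and your treatment of $K$ via minimality is exactly what the paper invokes. The gap is in your argument for $\Aut(\ul f/\ul S)$: the assertion that a positive-dimensional group $H$ acting on $\cZ$ with dense orbit ``forces $\ul f|_\cZ$ to be constant'' is not valid here, because the target $\ul\fP$ is an \emph{Artin} stack over $\BC=B\GG_m$, not a Deligne--Mumford stack. Invariance under a positive-dimensional group does not force a map to an Artin stack to factor through a point (e.g.\ $\PP^1\to B\GG_m$ given by $\cO(1)$ is $\mathrm{PGL}_2$-equivariant but not constant). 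Concretely, the $R$-map is required to lift $\omega^{\log}_{\cC/S}\colon\cC\to\BC$; when $\cZ$ is a rational curve with at most one special point, $\deg\omega^{\log}_{\cC/S}|_\cZ<0$, so $\ul f|_\cZ$ \emph{cannot} factor through a point of $\ul\fP$, and there is no reason for $\deg f^*\cO(\ttwist\infty_\fP)|_\cZ$ to vanish.

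The paper's proof addresses precisely this difficulty by splitting into cases according to whether $\omega^{\log}|_\cZ$ is trivial. For a rational bridge or an unmarked genus-one curve, $\omega^{\log}|_\cZ\cong\cO$, so $f|_\cZ$ factors through the proper Deligne--Mumford fiber $\fP_\bk$, and your ``dense orbit $\Rightarrow$ constant'' reasoning becomes legitimate. For a rational curve with at most one marking, the paper instead uses two auxiliary morphisms to genuine DM targets: the projection $\fP\setminus\zero_\fP\to\infty_\fP\to\infty_\cX$ (which is independent of the R-charge, so Proposition~\ref{prop:curve-in-infinity} applies), and then, after locating a point $q$ with $f(q)\in\zero_\fP$, the restriction to the $\GG_m$-invariant open $U=\cC\setminus\{q\}$ on which $\omega^{\log}$ becomes $\GG_m$-equivariantly trivial, allowing passage to $\fP_\bk$ again. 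Your argument is essentially complete for the bridge and genus-one cases but needs this extra work for rational tails.
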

\begin{proof}
  By \cite{Ch14} and \cite{GrSi13}, it suffices to show that the
  automorphism group of the underlying objects are finite, see also
  \cite[Proposition~2.10 and 3.13]{CJRS18P}.
  By abuse of notation, we leave out the underlines, and assume all
  stacks and morphisms are equipped with the trivial logarithmic
  structures.

  Since the dual graph of $\cC$ has finitely many automorphisms, it
  suffices to consider the case that $\cC$ is irreducible.
  After possibly taking normalization, and marking the preimages of
  nodes, we may further assume that $\cC$ is smooth.
  Suppose $f$ has infinite automorphisms.
  Then we have either $\cC$ is smooth and rational, and the total
  number of markings is less than 3, or $\cC$ is an unmarked genus one
  curve.
  In both cases, the morphism $g := \ft \circ f\colon \cC \to \cX$
  contracts the curve to a point $x \in \cX$.

  We first consider the cases that $\cC$ is rational with two
  markings, or it is of genus one without any markings.
  In both cases, we have $\omega^{\log}_{\cC/S} \cong \cO_{\cC/S}$.
  Thus the morphism $\cC \to \BC$ induced by $\omega^{\log}_{\cC/S}$
  factors through the universal quotient $\spec \bk \to \BC$.
  We obtain a commutative diagram
  \begin{equation}\label{diag:finite-auto-trivial-omega}
    \xymatrix{
      \cC \ar@/_3ex/[rd] \ar[r]_{f_\bk} \ar@/^3ex/[rr]^{f} & \fP_\bk \ar[r] \ar[d] & \fP \ar[d] \\
      & \spec \bk \ar[r] & \BC
    }
  \end{equation}
  where the square is cartesian. Since the automorphism group of $f$
  is infinite, the automorphism group of $f_\bk$ is infinite as well.
  Thus $f_\bk$ contracts $\cC$ to a point of the Deligne--Mumford stack
  $\fP_\bk$.
  Then we have
  \[
    \deg\big(f_\bk^*\cO(\infty_{\fP_\bk})\big) = \deg\big( f^*\cO(\infty_{\fP})\big) = 0
  \]
  which contradicts the stability of $f$ as in \eqref{equ:hyb-stability}.

  Now assume that $\cC$ is rational with at most one marking.
  Suppose there is no point $q \in \cC$ such that $f(q) \in \zero_\fP$.
  Let $f_{\cX}\colon \cC \to \infty_{\cX}$ be the composition
  $\cC \to \fP \setminus \zero_\fP \to \infty_{\fP} \to \infty_{\cX}$
  where $\fP \setminus \zero_\fP \to \infty_{\fP}$ is the projection from
  $\zero_\fP$ to $\infty_\fP$, see
  Proposition~\ref{prop:curve-in-infinity}.
  Since automorphisms of $f$ fix $f_{\cX}$, the map $f_{\cX}$
  contracts $\cC$ to a point of $\infty_{\cX}$, hence
  $\deg \big(f_{\cX}^*\cO_{\infty_{\fP'}}(\frac{r}{d})\big) = 0$.
  Proposition \ref{prop:curve-in-infinity} immediately leads to a
  contradiction to the stability condition \eqref{equ:hyb-stability}.
  Thus there must be a point $q \in \cC$ such that $f(q) \in \zero_\fP$.

  On the other hand, since $\omega^{\log}_{\cC/S} < 0$ and
  $\deg g^*\cH = 0$, by the stability condition
  \eqref{equ:hyb-stability} we must have
  $\deg\big( f^* \cO(\infty_{\fP})\big) > 0$. Thus $\cC$
  intersects $\infty_{\fP}$ properly at its unique marking, denoted by
  $\sigma$, as the morphism $f$ comes from a log map.
  Clearly, $q \neq \sigma$.

  Consider the $\GG_m$-invariant open subset
  $U = \cC \setminus \{q\}$.
  Note that $\omega_U^{\log}$ is $\GG_m$-equivariantly trivial.
  We thus arrive at the same diagram
  \eqref{diag:finite-auto-trivial-omega} with $\cC$ replaced by $U$.
  The infinite automorphism group implies that $f_\bk|_{U}$ is constant.
  On the other hand, the image of $U$ must intersect $\infty_{\fP_\bk}$
  properly.
  This is not possible!
\end{proof}

\subsection{Boundedness}
We next show that the stack $\SH_{g, \ddata}(\fP, \beta)$ is of finite type. Consider the following composition
\begin{equation}\label{equ:take-curve}
\SH_{g, \ddata}(\fP, \beta) \to \fS(\ul{\fP}/\BC) \to \fM_{g,n}
\end{equation}
where $\fM_{g,n}$ is the stack of genus $g$, $n$-marked pre-stable curves, the first arrow is obtained by removing log structures, and the second arrow is obtained by taking coarse source curves. We divide the proof into two steps.

\subsubsection{The composition \eqref{equ:take-curve} is of finite type.}
Let $T \to \fM_{g,n}$ be a morphism from a finite type scheme $T$, and $C \to T$ be the universal curve. Since the question is local on $\fM_{g,n}$, it suffices to prove that
\[
\SH_T := \SH_{g, \ddata}(\fP, \beta)\times_{\fM_{g,n}} T \to \fS_{T} := \fS(\ul{\fP}/\BC)\times_{\fM_{g,n}} T \to T
\]
is of finite type.

For any object $(f\colon \cC_S \to \ul{\fP}) \in \fS_T(S)$, let $C_T$ be
the pull-back of $C \to T$ via $S \to T$.
Then $\cC_S \to C_T$ is the coarse morphism.
Note that $\omega^{\log}_{C_T/T}$ pulls back to
$\omega^{\log}_{\cC_S/S}$.
We thus obtain a commutative diagram of solid arrows with the unique
square cartesian:
\begin{equation}\label{diag:factor-through-coarse}
\xymatrix{
&& \ul{\fP}_{T} \ar[rr] \ar[d] && \ul{\fP} \ar[d] \\
\cC_S \ar[rr] \ar@{-->}[rru]^{\tilde{f}} \ar@/_1pc/[rrrr]_{\omega^{\log}_{\cC_S/S}} && C \ar[rr]^{\omega^{\log}_{C/T}} && \BC
}
\end{equation}
Then it follows that $f$ factors through a unique dashed arrow
$\tilde{f}$ making the above diagram commutative.

Note that $\ul{\fP}_T \to T$ is a family of proper Deligne--Mumford
stacks with projective coarse moduli spaces over $T$.
Let $\tilde{\beta}$ be the curve class of the fiber of
$\ul{\fP}_T \to T$ corresponding to objects in $\SH_T$.
Note that $\tilde{\beta}$ is uniquely determined by the curve class
$\beta$ in $\cX$ and the contact orders.
Thus, the morphism $\SH_T \to \fS_T$ factors through the open substack
$\fS_T(\tilde{\beta}) \subset \fS_T$ with the induced maps with curve
class $\tilde{\beta}$.

First, note that the morphism $\SH_T \to \fS_T(\tilde{\beta})$ is of
finite type. Indeed, using the same proof as in
\cite[Lemma~4.15]{CJRS18P}, one shows that the morphism
$\SH_T \to \fS_T(\tilde{\beta})$ is combinatorially finite
(\cite[Definition 2.14]{CJRS18P}), hence is of finite type by
\cite[Proposition 2.16]{CJRS18P}.

Now let $\scrM_{g,n}(\ul{\fP}_T/T,\tilde{\beta})$ be the stack of
genus $g$, $n$-marked stable maps to the family of targets
$\ul{\fP}_T/T$ with curve class $\tilde{\beta}$.
Then $\fS_T(\tilde{\beta})$ is identified with the locally closed
sub-stack of $\scrM_{g,n}(\ul{\fP}_T/T,\tilde{\beta})$ which for any
$T$-scheme $S$ associates the category of stable maps
$\tilde{f}\colon \cC_S \to \ul{\fP}_T$ over $S$ such that the induced
map $C_S \to C$ from the coarse curve $C_S$ of $\cC_S$ to $C \to T$ is
stable, and is compatible with the marked points.
Since $\scrM_{g,n}(\ul{\fP}_T/T,\tilde{\beta})$ is of finite type over
$T$ by \cite[Theorem 1.4.1]{AbVi02}, $\fS_T(\tilde{\beta})$ is of
finite type.

\subsubsection{The image of \eqref{equ:take-curve} is of finite type}
It remains to show that the image of \eqref{equ:take-curve} is contained in a finite type sub-stack of $\fM_{g,n}$. For this purpose, it suffices to bound the number of unstable components of the source curves in $\SH_{g, \ddata}(\fP, \beta)$.

Let $f\colon \cC \to \fP$ be an $R$-map corresponding to a geometric
log point $S \to \SH_{g, \ddata}(\fP, \beta)$.
Observe that the number
\begin{equation}\label{equ:boundedness-degree}
  d_{\beta}:= \deg \omega^{\log}_{\cC/S}\otimes {f}^*\cO(\ttwist \infty_{\fP})
\end{equation}
is a constant only depending on the choice of genus $g$, the orbifold structure at markings, and the contact orders. Let $\cZ \subset \cC$ be an irreducible component. Denote by
\[
d_{\beta,\cZ} := \deg \big( \omega^{\log}_{\cC/S}\otimes {f}^*\cO(\ttwist \infty_{\fP})\big)|_{\cZ}.
\]
Let $g := \ft \circ f$ be the pre-stable map underlying $f$.
An irreducible component $\cZ\subset \cC$ is called \emph{$g$-stable}
if $(\deg g^*\cH)|_{\cZ} > 0$ or $\cZ$ is a stable component of the
curve $\cC$.
Otherwise, $\cZ$ is called \emph{$g$-unstable}.

Suppose $\cZ$ is $g$-unstable.
Then by the stability condition \eqref{equ:hyb-stability} and
$(\deg g^*\cH)|_{\cZ} = 0$, we have
\[
  d_{\beta,\cZ} \ge \deg \big( (\omega^{\log}_{\cC/S})^{1 + \delta}\otimes {f}^*\cO(\ttwist \infty_{\fP})\big)|_{\cZ} > 0.
\]
Note that $\fP_\bk$ is a proper Deligne--Mumford stack, and the stack of
cyclotomic gerbes in $\fP_\bk$ is of finite type, see
\cite[Definition~3.3.6, Corollary~3.4.2]{AGV08}.
Thus there exists a positive integer $\lambda$ such that if $\fP_\bk$
has a cyclotomic gerbes banded by $\mu_k$, then $k | \lambda$.
Since $\ul{f}$ factors through a representable morphism $\tilde{f}$ in
\eqref{diag:factor-through-coarse}, we have
$d_{\beta,\cZ} \ge \frac{1}{\lambda}$.

Now we turn to considering $g$-stable components.
Since the genus is fixed, and the orbifold structure of $\cZ$ is
bounded, the number of $g$-stable components is bounded.
Let $\cZ$ be an $g$-stable component.
We have the following two possibilities.

Suppose $f(\cZ) \not\subset \infty_{\fP}$, hence
$\deg {f}^*\cO(\ttwist \infty_{\fP})\big)|_{\cZ} \geq 0$.
Then we have $d_{\beta,\cZ} \geq -1$ where the equality holds only if
$\cZ$ is a rational tail.

Now assume that $f(\cZ) \subset \infty_{\fP}$.
By Proposition~\ref{prop:curve-in-infinity}, we have
\[
  d_{\beta,\cZ} = \deg g^*\bL\otimes (\ul{f}_{\cX})^*\cO_{\infty_{\cX}}(\frac{r}{d}).
\]
Since $\deg (\ul{f}_{\cX})^*\cO_{\infty_{\cX}}(\frac{r}{d}) \geq 0$
and $\deg g^*\bL$ is bounded below by some number only depending on
$\bL$ and the curve class $\beta$, we conclude that $d_{\beta,\cZ}$ is
also bounded below by some rational number independent the choices of
$\cZ$.

Finally, note that
\[
  d_{\beta} = \sum_{\cZ\colon\text{ $g$-stable}} d_{\beta,\cZ} + \sum_{\cZ\colon\text{ $g$-unstable}} d_{\beta,\cZ}.
\]
The above discussion implies that the first summation can be bounded below by a number only depending on the discrete data $\beta$, and each term in the second summation is a positive number larger than $\frac{1}{\lambda}$. We thus conclude that the number of irreducible components of the source curve $\cC$ is bounded.

This finishes the proof of the boundedness.


\subsection{The set-up of the weak valuative criterion}\label{ss:valuative-set-up}
Let $R$ be a discrete valuation ring (DVR), $K$ be its quotient field, $\fm \subset R$ be the maximal ideal, and $k = R/\fm$ the residue field. Denote by $\ul{S} = \spec R$, $\ul{\eta} = \spec K$ and $\ul{s} = \spec k$.
Our next goal is to prove the weak valuative criterion of stable log R-maps.

\begin{theorem}\label{thm:weak-valuative}
  Let $f_{\eta}\colon \cC_{\eta} \to \fP$ be a minimal log $R$-map
  over a log $K$-point $\eta$.
  Possibly replacing $R$ by a finite extension of DVRs, there is a
  minimal log $R$-map $f_S \colon \cC \to \fP$ over $S$ extending
  $f_{\eta}$ over $\ul{\eta}$.
  Furthermore, the extension $f_S$ is unique up to a unique
  isomorphism.
\end{theorem}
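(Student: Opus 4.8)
The strategy is to reduce to a statement about underlying $R$-maps together with log structures, using properness of the stack of log maps over the stack of pre-stable maps, and to handle the underlying data by the usual stable-maps machinery combined with the explicit control on degrees provided by the stability condition \eqref{equ:hyb-stability} and Proposition~\ref{prop:curve-in-infinity}. Since the underlying stack of minimal objects is the stack of all minimal objects in Definition~\ref{def:minimal} and, by \cite{AbCh14, Ch14, GrSi13}, $\ul{\SH_{g,\ddata}(\fP,\beta)} \to \fS(\ul{\fP}/\BC)$ is proper (see also \cite[Theorem~2.11]{CJRS18P}), it suffices to prove the weak valuative criterion for the stack $\fS(\ul\fP/\BC)$ of underlying $R$-maps with the appropriate discrete data fixed, i.e.\ to extend the underlying stable $R$-map $\ul f_\eta$ over a possibly ramified base change, and then lift the resulting family of log structures uniquely. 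The properness statement for log maps handles the second half automatically, so the heart of the matter is the first half.

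\textbf{Step 1: Extend the map to $\cX$.} First I would consider $g_\eta := \ft\circ f_\eta\colon \cC_\eta \to \cX$, a pre-stable map of class $\beta$. After a base change of $R$ I can assume the source curve $\ul C_\eta$ extends to a family of pre-stable curves $\ul C \to \ul S$ (completing the special fiber however is convenient; we will later contract or stabilize). One then wants to extend $g_\eta$. This does not follow from properness of $\ocM_{g,n}(\cX,\beta)$ directly because there is no stability of $g$ alone; instead one extends after allowing the central fiber to sprout rational components, using the semistable reduction for maps to projective Deligne--Mumford stacks (Abramovich--Vistoli \cite{AbVi02}). Concretely, pick an ample $\cH$ on $\cX$; the $\cH$-degree of $g$ is bounded; run the standard argument to obtain, after finite base change, a family $\ul g\colon \ul{\cC}_0 \to \cX$ over $S$ with $\ul{\cC}_0$ a family of twisted curves extending $\ul{\cC}_\eta$.

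\textbf{Step 2: Extend the field / section to $\ul\fP$.} Given $\ul g$, we obtain the fiber product target $\ul\fP_{\ul{\cC}_0} = \ul\fP\times_{\BC}\ul{\cC}_0 \to \ul{\cC}_0$, a family of proper Deligne--Mumford stacks over $S$ (indeed with projective coarse space), and $\ul f_\eta$ is a section over $\ul{\cC}_{0,\eta}$, equivalently a stable map $\ul{\cC}_{0,\eta}\to \ul\fP_{\ul{\cC}_0}$ of the induced class $\tilde\beta$. Again invoking \cite[Theorem~1.4.1]{AbVi02} for stable maps to the family $\ul\fP_{\ul{\cC}_0}/S$, after a further finite base change and possibly further sprouting of rational components of the central fiber, there is a limiting stable map $\ul{\cC}\to \ul\fP_{\ul{\cC}_0}$; one checks it is still a section over the generic fiber and, since ``being a section'' is a closed condition on the central fiber when the target is separated over $\ul{\cC}$, it is a section, hence gives an underlying $R$-map $\ul f\colon \ul{\cC}\to \ul\fP$ extending $\ul f_\eta$. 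Here one must track orbifold structures at markings and nodes (so that $\ul{\cC}$ is a twisted curve compatible with the discrete data $\ddata$), using \cite[Theorem~3.3.6]{Ca07} / \cite{AbFa16} as in the change-of-twist arguments.

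\textbf{Step 3: Stabilize and contract.} The limit produced by Steps 1--2 is only pre-stable; I would now contract the components of the central fiber that violate \eqref{equ:hyb-stability}. By Corollary~\ref{cor:rat-bridge-stability} and the analysis in Proposition~\ref{prop:curve-in-infinity}, the unstable components are precisely rational bridges/tails on which both $\ul f^*\cO_\fP(\infty_\fP)$ and $(\ft\circ\ul f)^*\cH$ have degree $0$; such components map to a fixed point of $\fP_{\bk}$ and can be contracted without losing any data, exactly as in the boundedness argument above. This produces a stable underlying $R$-map; its discrete data ($g$, $\beta$, contact orders, sectors) agrees with the prescribed one because these are locally constant and agree generically. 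This yields the desired object of $\fS(\ul\fP/\BC)$.

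\textbf{Step 4: Lift log structures and conclude uniqueness.} Finally, apply properness of $\ul{\SH_{g,\ddata}(\fP,\beta)} \to \fS(\ul\fP/\BC)$: the log $R$-map $f_\eta$ defines a lift over $\ul\eta$ of the underlying family from Step~3, and by the valuative criterion for this proper morphism (this is exactly the content of \cite[Theorem~2.11]{CJRS18P}, resting on \cite{AbCh14,Ch14,GrSi13}) the lift extends, after possibly a further finite base change, to a unique minimal log $R$-map $f_S\colon \cC\to\fP$ over $S$. Uniqueness of the whole extension then follows by combining the separatedness half of the map-to-$\cX$ and section extensions (classical, from \cite{AbVi02}) with the uniqueness of the log-structure lift; any two extensions agree generically hence, by separatedness over a DVR, agree.

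\textbf{Main obstacle.} The subtle point—and the reason \eqref{equ:hyb-stability} was ``found after many failed attempts''—is Step~3 together with the compatibility of Step~3 with Step~4: one must ensure that after contracting $g$-unstable components the remaining family still carries a log structure making $f_S$ a \emph{minimal} log $R$-map with the correct degeneracies, and that the central fiber one lands on is the \emph{unique} stable limit. The danger is that contracting a chain of unstable rational components lying in $\infty_\fP$ could force a change in the combinatorial type (the degeneracies $e_{\cZ}$ and contact orders along the contracted chain), so one must check, using \eqref{equ:curve-in-infinity} and \eqref{equ:edge-relation}, that the contraction is compatible with the log structure—equivalently, that the unstable components are exactly the ones on which the log structure is ``trivial enough'' to be contracted. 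Establishing this compatibility, and the accompanying uniqueness, is where the real work lies; everything else is an assembly of known properness results for stable maps and stable log maps. I would expect the detailed version of this proof to occupy the remainder of Section~\ref{sec:properties}, with Steps~1--2 routine and Steps~3--4 requiring the careful case analysis sketched above.
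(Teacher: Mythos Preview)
Your overall architecture is right: reduce to the underlying level using properness of log maps over underlying maps, then handle existence and uniqueness for underlying stable $R$-maps. But Step~2 contains a genuine gap that is precisely the crux of the paper's argument.

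The problem is the claim that the stable-map limit $\ul{\cC}\to \ul\fP_{\ul{\cC}_0}$ ``is a section, hence gives an underlying $R$-map.'' Being a section is not a closed condition here: the stable-map limit may sprout rational tails in the central fiber, so the composite $\ul\cC\to \ul\fP_{\ul{\cC}_0}\to \ul{\cC}_0$ contracts components and is not an isomorphism. More importantly, even viewing the limit as a map $\ul\cC\to\ul\fP$, the composite $\ul\cC\to\ul\fP\to\BC$ is induced by $\omega^{\log}_{\ul{\cC}_0/S}$ pulled back to $\ul\cC$, and this equals $\omega^{\log}_{\ul\cC/S}$ only when $\ul\cC\to\ul{\cC}_0$ contracts nothing but rational bridges (the paper records this as a lemma). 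When rational tails appear, you simply do not have an $R$-map. The paper resolves this by taking the stable-map limit \emph{twice}: set $\cC_1=\cC_0''$ (the first limit), rebuild the spin structure for $\omega^{\log}_{\cC_1/S}$, construct a comparison morphism $\cL_0|_{\tilde\cC'_1}\to\cL_1|_{\tilde\cC'_1}$ of spin bundles, and take a second limit $f_1\colon\cC_2\to\fP$. A delicate comparison of the two targets via birational maps $c,c^{-1}$ between $\cP_0$ and $\cP_1$ then shows that the second limit has no vertical rational tails, so it is a genuine pre-stable $R$-map. This iteration is the missing idea.

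Two secondary gaps. First, your Step~3 asserts that unstable components ``map to a fixed point of $\fP_\bk$ and can be contracted without losing any data.'' Contracting an unstable rational tail changes $\omega^{\log}$, hence the target $\fP_{\cC'}$, so one must re-take the stable-map limit over the contracted curve and prove no new components appear; the paper does this by another hyperplane/degree argument using the local splitting of $\bE$ and the stability inequality. Second, your uniqueness argument assumes both extensions live over the same coarse curve, so that separatedness of stable maps to a fixed $\fP_C$ applies. But two extensions $f_1,f_2$ may have different coarse curves $C_1\neq C_2$, hence different targets $\fP_{C_1}\neq\fP_{C_2}$; the paper instead merges the two via a common partial resolution $C_3$, covers it by opens $U_1,U_2$ on which the respective $\omega^{\log}$ agree, and uses a degree-comparison inequality to force $C_1=C_2$. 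Your ``Main obstacle'' paragraph locates the difficulty in the log lift, but in fact the hard work is entirely at the underlying level, in controlling $\omega^{\log}$ under these modifications.
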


We will break the proof into several steps.
Since the stability condition in Section \ref{sss:stability} 
are constraints only on the level of underlying structures, by the relative properness of log maps over underlying stable maps \cite{Ch14, AbCh14, GrSi13}, see also \cite[Proposition 2.17]{CJRS18P}, it suffices to prove the existence
and uniqueness of an underlying stable $R$-map
$\ul{f}\colon \ul{\cC} \to \fP$ extending $f_{\eta}$ over $S$,
possibly after a finite extension of the base.

Since the focus is now the underlying structure, we will leave out the
underlines to simplify the notations, and assume all stacks are
equipped with the trivial logarithmic structure for the rest of this
section.

Normalizing along nodes of $\cC_{\eta}$, possibly taking further base
change, and marking the points from the nodes, we obtain a possibly
disjoint union of smooth curves $\cC_{\eta}^{n}$.
Observe that $\cC_{\eta}^n \to \BC$ induced by
$\omega^{\log}_{\cC_{\eta}^n}$ factors through the corresponding
$\cC_{\eta} \to \BC$ before taking normalization.
Thus, we may assume that $\cC_{\eta}$ is smooth and irreducible.
It is important to notice that every isolated intersection of
$\cC_{\eta}$ with $\infty_{\fP}$ via $f$ is marked.
This will be crucial in the proof below.

\subsection{The separatedness}
We first verify the uniqueness in Theorem \ref{thm:weak-valuative}.
The strategy is similar to \cite[Section~4.4.5]{CJRS18P} but in a more
complicated situation.

\subsubsection{Reduction to the comparison of coarse curves}

Let $f_i\colon \cC_i \to \fP$ be a stable underlying R-map over $S$
extending $f_{\eta}$ for $i=1,2$.
Let $\cC_i \to C_i$ and $\cC_{\eta} \to C_{\eta}$ be the corresponding
coarse moduli.
By \eqref{diag:factor-through-coarse}, the morphism $f_i$ factors
through a twisted stable map
$\cC_i \to \cP_i := \ul{\fP}\times_{\BC}C_i$, where $\fP_i$ is a
proper Deligne--Mumford stack over $S$.
By the properness of \cite[Theorem~1.4.1]{AbVi02}, to show that $f_1$
and $f_2$ are canonically isomorphic, it suffices to show that the two
coarse curves $C_1$ and $C_2$ extending $C_{\eta}$ are canonically
isomorphic.

\subsubsection{Merging two maps}
Let $C_3$ be a family of prestable curves over $\spec R$ extending
$C_{\eta}$ with dominant morphisms $C_3 \to C_i$ for $i=1,2$.
We may assume $C_3$ has no rational components with at most two
special points contracted in both $C_1$ and $C_2$ by further
contracting these components.

Let $\cC_3 \to \cC_1\times \cC_2\times C_3$ be the family of twisted
stable maps over $\spec R$ extending the obvious one
$\cC_\eta \to \cC_1\times \cC_2\times C_3$.
Observe that the composition
$\cC_3 \to \cC_1\times \cC_2\times C_3 \to C_3$ is the coarse moduli
morphism.
Indeed, if there is a component of $\cC_3$ contracted in $C_3$, then
it will be contracted in both $\cC_1$ and $\cC_2$ as well.

Set $U_i^{(0)} = C_3$ for $i=1,2$. Let $U_i^{(k+1)}$ be obtained by
removing from $U_i^{(k)}$ the rational components with precisely one
special point in $U_i^{(k)}$ and that are contracted in $C_i$.
Note that these removed rational components need not be proper, and
their closure may have more than one special points in $C_3$.
We observe that this process must stop after finitely many steps.
Denote by $U_i \subset C_3$ the resulting open subset.

\begin{lemma}\label{lem:cover}
$U_1 \cup U_2 = C_3$.
\end{lemma}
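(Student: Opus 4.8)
The claim is that $U_1 \cup U_2 = C_3$, where $C_3$ is a common refinement of the two coarse-curve extensions $C_1$ and $C_2$, and each $U_i$ is obtained from $C_3$ by an inductive pruning process removing chains of rational components that die under $C_3 \to C_i$. The natural strategy is to argue by contradiction: suppose some point $x \in C_3$ lies neither in $U_1$ nor in $U_2$. Then $x$ sits on a component $Z \subset C_3$ that was removed during the construction of $U_1$ and also (possibly a different component, or the same one) removed during the construction of $U_2$. I would first record the basic structural fact about the pruning: a component survives in $U_i$ iff it is not part of a ``tail-like'' subcurve entirely contracted by $C_3 \to C_i$; more precisely, the removed locus $C_3 \setminus U_i$ is a union of connected subcurves, each contracted by $C_3 \to C_i$, and each meeting its complement $U_i$ in exactly one node (this is the point of removing components with ``precisely one special point in $U_i^{(k)}$'' at each stage). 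So $C_3 \setminus U_i$ is a disjoint union of trees of rational curves, each attached to $U_i$ at a single point, and each contracted in $C_i$.

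The key step is then a connectivity/genus argument. Consider the connected component $T_1$ of $C_3 \setminus U_1$ containing $x$ and the connected component $T_2$ of $C_3 \setminus U_2$ containing $x$; both are contracted to a point in $C_1$ and $C_2$ respectively. If $T_1 \subseteq T_2$ or $T_2 \subseteq T_1$, say $T_1 \subseteq T_2$, then the whole of $T_1$ is contracted in both $C_1$ and $C_2$. But $C_3$ was chosen with \emph{no} rational components having at most two special points that are contracted in both $C_1$ and $C_2$ — so any component of $T_1$ must have at least three special points in $C_3$, or must not be contracted in one of them, giving a contradiction once one checks that a tree attached at a single node to the rest of the curve necessarily contains a leaf component with at most one special point \emph{other than} the attaching node, i.e.\ a component with at most two special points total, which is then contracted in both and should have been eliminated from $C_3$. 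If neither $T_1 \subseteq T_2$ nor $T_2 \subseteq T_1$, one uses that each $T_i$ is a tree meeting its complement at one node to deduce that $T_1 \cap T_2$ is again such a tree (or empty), and push the same leaf-component argument through: a leaf of $T_1 \cup T_2$ that is a leaf of $C_3$ off of both attaching nodes is contracted in both $C_1$ and $C_2$, contradicting the minimality of $C_3$. The upshot is that the offending component $x$ lives on would violate the ``no common contracted $\le 2$-pointed rational component'' hypothesis on $C_3$.

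I expect the main obstacle to be bookkeeping the inductive structure of the pruning carefully enough to justify the ``each removed connected piece is a tree attached at a single point'' claim — the subtlety flagged in the paragraph before the lemma is precisely that the removed rational components ``need not be proper, and their closure may have more than one special point in $C_3$,'' so one must work with the open subsets $U_i^{(k)}$ and track which special points remain at each stage rather than with closed subcurves. Once that structural description is in hand, the contradiction with the minimality choice of $C_3$ (no doubly-contracted rational bridges or tails) is short. An alternative, perhaps cleaner, packaging would be to argue directly that the pair $(U_1, U_2)$ forms an open cover by analyzing, for each component $Z$ of $C_3$, the three cases: $Z$ not contracted in $C_1$ (then $Z \subseteq U_1$), $Z$ not contracted in $C_2$ (then $Z \subseteq U_2$), or $Z$ contracted in both — and in the last case invoking the minimality of $C_3$ to force $Z$ to have $\ge 3$ special points, then running an induction on the distance from $Z$ to the ``stable core'' to show $Z$ survives in at least one $U_i$. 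I would try this second approach first as it avoids iterating the removal process explicitly.
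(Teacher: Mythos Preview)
Your overall strategy matches the paper's: argue by contradiction, locate a doubly-contracted subtree, and contradict the minimality of $C_3$. The paper's own proof is two lines and leaves all the structure implicit; your unpacking of the pruning (each connected piece of $C_3\setminus U_i$ is a rooted tree of unmarked rational components attached to $U_i$ by a single edge) is correct and is the real content here. The case $T_1\subseteq T_2$ is handled correctly.

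The gap is in your ``neither $T_1\subseteq T_2$ nor $T_2\subseteq T_1$'' case. The assertion that ``a leaf of $T_1\cup T_2$ that is a leaf of $C_3$ off of both attaching nodes is contracted in both $C_1$ and $C_2$'' is not justified: such a leaf may well lie in $T_1\setminus T_2$, hence be contracted only in $C_1$. Worse, under the standing minimality assumption on $C_3$, \emph{no} component of $T_1\cap T_2$ can be a leaf of $C_3$ (it would already have $\le 2$ special points and be doubly contracted), so the leaf you are looking for cannot exist. Your proposed alternative (induction on distance to a ``stable core'') is too vague as stated; it is not clear what the inductive hypothesis would be or why knowing one neighbor survives in some $U_i$ forces $Z$ to survive.

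The case analysis is unnecessary. Take $T'$ to be a connected component of $(C_3\setminus U_1)\cap(C_3\setminus U_2)$; this is a nonempty tree, all of whose components are contracted in both $C_i$ and (by your own argument) carry no markings. Since $T'$ lies inside a single tree of $C_3\setminus U_1$, it meets $U_1$ along at most one edge; likewise for $U_2$; and being a connected component of the intersection it has no other external neighbors. Hence $T'$ has at most $2$ external edges, so the sum of $C_3$-valences over $T'$ is at most $2(|T'|-1)+2=2|T'|$, and some component of $T'$ has at most $2$ nodes---a rational component with $\le 2$ special points contracted in both $C_i$, contradicting the choice of $C_3$. This one-step argument is what the paper's sentence ``there is a tree of rational curves in $C_3$ attached to $z$ and contracted in both $C_1$ and $C_2$'' is compressing.
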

\begin{proof}
Suppose $z \in C_3 \setminus (U_1\cup U_2) \neq \emptyset$. Then there is a tree of rational curves in $C_3$ attached to $z$ and contracted in both $C_1$ and $C_2$. This contradicts the assumption on $C_3$.
\end{proof}

We then construct an underlying $R$-map $f_3\colon \cC_3 \to \fP$ by
merging $f_1$ and $f_2$ as follows.
Denote by $\cU_i := \cC_3\times_{C_3}U_i$ for $i=1,2$.
Note that $U_{i} \to C_i$ hence $\cU_i \to \cC_i$ contracts only
rational components with precisely two special points in $U_i$.
In particular, we have
$\omega^{\log}_{\cC_3/S}|_{\cU_i} = \omega^{\log}_{\cC_i/S}|_{\cU_i}$.
This leads to a commutative diagram below
\begin{equation}\label{diag:merging-maps}
\xymatrix{
 &&&& \fP \ar[d] \\
\cU_i \ar@/^1.5pc/[urrrr]^{f_3|_{\cU_i}} \ar[rr] \ar@/_1pc/[rrrr]_{\omega^{\log}_{\cC_3/S}|_{\cU_i}} && \cC_i \ar@/^.5pc/[rru]^{f_i} \ar[rr]^{\omega^{\log}_{\cC_i/S}} && \BC.
}
\end{equation}
where $f_{3}|_{\cU_i}$ is given by the obvious composition. We then observe that the two morphisms $f_3|_{\cU_1}$ and $f_3|_{\cU_2}$ coincide along $\cU_1\cap \cU_2$. Indeed, both morphisms $f_3|_{\cU_i}$ restrict to $f_{\eta}$ over the open dense set $\cC_{\eta} \subset \cU_i$, and $\fP \to \BC$ is proper of Deligne--Mumford type. Thus, $f_3|_{\cU_1}$ and $f_3|_{\cU_2}$ can be glued to an underlying $R$-map $f_3\colon \cC_3 \to \fP$ over $S$.

\subsubsection{Comparing the underlying $R$-maps}
Denote by $\ol{\cU_{i,s}}$ the closure of the closed fiber $\cU_{i,s}$ in $\cC_3$.

\begin{lemma}\label{lem:degree-comparison}
Notations as above, we have
  \begin{equation}\label{eq:degree-comparison}
    \deg \left(\omega^{\log}_{\cC_3/S} \otimes f_{3}^*\cO(\ttwist\infty_{\fP})\right)\big|_{\overline{\cU_{i,s}}} 
    \geq \deg \left(\omega^{\log}_{\cC_i/S} \otimes f_{i}^*\cO(\ttwist\infty_{\fP})\right)\big|_{\cC_{i,s}}
   \end{equation}
\end{lemma}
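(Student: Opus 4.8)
The plan is to compare the two R-maps $f_3|_{\overline{\cU_{i,s}}}$ and $f_i|_{\cC_{i,s}}$ by analyzing the birational contraction $\overline{\cU_{i,s}} \to \cC_{i,s}$ induced by $\cC_3 \to \cC_i$. First I would observe that the natural morphism $\cU_i \to \cC_i$ is birational, contracts only rational components with exactly two special points, and restricts to an isomorphism over the generic fiber. Hence on closed fibers, the induced map $\pi_i \colon \overline{\cU_{i,s}} \to \cC_{i,s}$ is a composition of contractions of trees of rational curves, and $f_3|_{\overline{\cU_{i,s}}}$ factors as $f_i \circ \pi_i$ on the locus where $\pi_i$ is defined, extended over the contracted loci since $\fP \to \BC$ is proper of DM-type (exactly as in the construction of $f_3$ above).

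Next I would compare the two line bundles being restricted. Since $\pi_i$ is a birational morphism of curves contracting certain components, we have a natural comparison $\omega^{\log}_{\cC_3/S}|_{\overline{\cU_{i,s}}} = \pi_i^* \omega^{\log}_{\cC_i/S}|_{\cC_{i,s}} \otimes \cO(E_i)$ where $E_i$ is an effective divisor supported on the exceptional locus of $\pi_i$ (the extra special points created when components of $\overline{\cU_{i,s}}$ that were not in $\cU_i$ get attached, together with contributions from the $\log$ structure at markings lying on contracted components). Meanwhile, since $f_3|_{\overline{\cU_{i,s}}} = f_i \circ \pi_i$ on the non-contracted locus and the contracted components map to points of $\fP$, we get $f_3^* \cO(\ttwist \infty_{\fP})|_{\overline{\cU_{i,s}}} = \pi_i^* f_i^* \cO(\ttwist \infty_{\fP})|_{\cC_{i,s}} \otimes \cO(D_i)$ where $D_i$ is supported on the contracted components; the key point is that $D_i$ is effective, which follows from Corollary~\ref{cor:rat-bridge-stability} and Proposition~\ref{prop:curve-in-infinity}: a contracted rational component either has image meeting $\zero_{\fP}$ (so $\deg f^*\cO(\infty_{\fP}) \ge 0$) or lies in $\infty_{\fP}$ (so again $\deg \ge 0$ by \eqref{equ:curve-in-infinity}), and the arithmetic-genus/degree bookkeeping for the contraction makes the correction effective. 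Since the degree of a line bundle is preserved under $\pi_i^*$ (pullback along a proper birational morphism of curves preserves degree on the total space, and the contracted components contribute zero), and since $\deg$ of an effective divisor on $\overline{\cU_{i,s}}$ is $\geq 0$, adding $E_i$ and $D_i$ can only increase the degree, giving \eqref{eq:degree-comparison}.

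I expect the main obstacle to be the careful bookkeeping of the two correction divisors $E_i$ and $D_i$ — in particular verifying that $D_i$ is genuinely effective even when a contracted component is itself degenerate (mapped entirely into $\infty_{\fP}$), where one must invoke \eqref{equ:curve-in-infinity} to control $f^*\cO(\ttwist\infty_{\fP})$ in terms of $\omega^{\log}$, $\bL$, and $\cO_{\infty_{\cX}}(r/d)$, all of which have non-negative degree on such a component after accounting for the contraction of $\omega^{\log}$. A secondary subtlety is matching up orbifold/twisted structures on $\overline{\cU_{i,s}}$ versus $\cC_{i,s}$ so that the degree comparison is taken over the correct (possibly stacky) curves; here I would reduce to coarse curves where needed, since the inequality \eqref{eq:degree-comparison} is about degrees of line bundles and is insensitive to passing to coarse moduli up to the uniform denominator $\lambda$ already used in the boundedness argument.
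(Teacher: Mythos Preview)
Your overall plan—compare $\omega^{\log}$ and $f^*\cO(\ttwist\infty_{\fP})$ separately via effective correction divisors—is close to the paper's, but there is a genuine gap in your treatment of the second correction $D_i$.

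You assert that $D_i$ is supported on components contracted by $\pi_i$ and is effective because such components have $\deg f^*\cO(\infty_{\fP}) \ge 0$. Both points are off. On a contracted component $\cZ$, the map $f_3$ factors through the contraction $\cU_i \to \cC_i$ and hence sends $\cZ$ to a single point of $\fP$; thus $\deg f_3^*\cO(\infty_{\fP})|_{\cZ} = 0$ automatically, and there is nothing to justify. The place where $D_i$ can be nonzero is on a \emph{non-contracted} component $\cZ$ that meets the boundary $\overline{\cU_i}\setminus\cU_i$. There the two $R$-maps $f_3|_{\cZ}$ and $f_i|_{\cZ'}$ lie over \emph{different} morphisms $\cZ \to \BC$, because $\omega^{\log}_{\cC_3/S}|_{\cZ}$ and the pullback of $\omega^{\log}_{\cC_i/S}$ differ by the effective divisor $D'$ supported at those boundary nodes. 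Since $\fP \to \BC$ is proper but $\BC$ itself is not separated, you cannot simply invoke separatedness to conclude $f_3|_{\cZ} = f_i \circ (\cZ \to \cZ')$; the two maps genuinely differ at the boundary points, and $\deg f_3^*\cO(\infty_{\fP})|_{\cZ}$ can strictly exceed $\deg f_i^*\cO(\infty_{\fP})|_{\cZ'}$.

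The paper resolves this by working component-by-component and splitting into cases. When $f_3(\cZ)\subset\infty_{\fP}$, the formula \eqref{equ:curve-in-infinity} expresses $f^*\cO(\ttwist\infty_{\fP})$ as $(\omega^{\log})^{\vee}\otimes(\text{data from }\cX,\infty_{\cX})$, so in the product $\omega^{\log}\otimes f^*\cO(\ttwist\infty_{\fP})$ the $\omega^{\log}$ discrepancy cancels and one gets equality. When $f_3(\cZ)\not\subset\infty_{\fP}$, the paper passes to spin structures: from $\omega^{\log}_{\cC_3/S}|_{\cZ} = \omega^{\log}_{\cC_i/S}|_{\cZ}(D')$ one deduces $\cL_3|_{\cZ}\cong\cL_i|_{\cZ}(D)$ with $r\cdot D = D'$, and then the explicit weighted-projective description \eqref{equ:proj-bundle} of $\cP_{\cC}$ yields $\deg f_3^*\cO(\infty_{\fP})|_{\cZ} - \deg f_i^*\cO(\infty_{\fP})|_{\cZ'} \ge \frac{1}{\etwist}\deg D|_{\cZ}$. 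Your proposal lacks any analogue of this spin-structure comparison, which is the essential missing idea.
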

\begin{proof}
We prove \eqref{eq:degree-comparison} by checking the following
  \begin{equation}\label{eq:degree-comparison-local}
    \deg \left(\omega^{\log}_{\cC_3/S} \otimes f_3^*\cO(\ttwist\infty_{\fP})\right)\big|_{\cZ} \\
    \geq \deg \left(\omega^{\log}_{\cC_i/S} \otimes  f_i^*\cO(\ttwist\infty_{\fP})\right)\big|_{\cZ}
  \end{equation}
 for each irreducible component $\cZ \subset \ol{\cU_{i,s}}$.

 Since $\cU_{i,s} \to \cC_{i,s}$ is a dominant morphism contracting
 rational components with precisely two special points in $\cU_{i,s}$,
 there is an effective divisor $D'$ of $\cC_{3,s}$ supported on
 $\overline{\cU_{i}} \setminus \cU_{i}$ such that
\begin{equation}\label{equ:comparing-omega}
\omega^{\log}_{\cC_3/S}|_{\overline{\cU_{i,s}}} = \omega^{\log}_{\cC_i/S}|_{\overline{\cU_{i,s}}}(D').
\end{equation}
Restricting to $\cZ$, we obtain
\begin{equation}\label{equ:component-comparing-omega}
\deg \omega^{\log}_{\cC_3/S}|_{\cZ} = \deg \omega^{\log}_{\cC_i/S}|_{\cZ} + \deg D'|_{\cZ}.
\end{equation}
Suppose $\cZ$ is contracted in $\cC_i$. By \eqref{diag:merging-maps}, we obtain
\[
\deg f_3^*\cO(\infty_{\fP})|_{\cZ} = \deg f_i^*\cO(\infty_{\fP})|_{\cZ} = 0.
\]
Then \eqref{eq:degree-comparison-local} immediately follows from \eqref{equ:component-comparing-omega}.

Now assume $\cZ$ is mapped to an irreducible component $\cZ' \subset \cC_i$. Consider the case that $f_{3}(\cZ) \subset \infty_{\fP}$, hence $f_i(\cZ') \subset \infty_{\fP}$ by \eqref{diag:merging-maps}. By \eqref{equ:curve-in-infinity}, we obtain the equality in \eqref{eq:degree-comparison-local}.

It remains to consider the case $f_{3}(\cZ) \not\subset \infty_{\fP}$.
Let $\cL_3$ and $\cL_i$ be the corresponding spin structures over $\cC_3$
and $\cC_i$ respectively, see Proposition \ref{prop:map-field-equiv}.
Note that $\cL_3|_{\cU_i} \cong \cL_i|_{\cU_i}$ by
\eqref{diag:merging-maps}.
By \eqref{equ:comparing-omega} and Definition \ref{def:spin}, there is
an effective divisor $D$ supported on $\ol{\cU_i} \setminus \cU_i$
such that $r\cdot D = D'$ and
$\cL_{3}|_{\cZ} \cong \cL_i|_{\cZ}(D|_{\cZ})$. By
\eqref{equ:proj-bundle}, we have
\[
\deg f_3^*\cO(\infty_{\fP})|_{\cZ} - \deg f_i^*\cO(\infty_{\fP})|_{\cZ'} \geq \frac{1}{\etwist}\deg D|_{\cZ}.
\]
Combining this with \eqref{equ:component-comparing-omega}, we obtain \eqref{eq:degree-comparison-local}.
\end{proof}

Suppose $C_1 \neq C_2$.
Then we have $U_i \neq C_i$ for some $i$, say $i=1$.
By construction each connected component of $C_3 \setminus U_1$ is a
tree of proper rational curves in $U_2$ with no marked point, hence
$\cT := (\cC_3 \setminus \cU_1) \subset \cU_2$.

By construction, the composition $\cT \to \cC_3 \to \cC_2$ is a closed
immersion and $f_{3}|_{\cT} = f_{2}|_{\cT}$.
Since $\deg \omega^{\log}_{\cC_3/S}|_{\cT} < 0$ (unless
$\cT = \emptyset$), and $\cT$ is contracted to $\cC_1$ and hence maps
to a point in $\cX$, the stability of $f_2$ implies
\[
  \deg\left(\omega^{\log}_{\cC_3/S} \otimes f_3^*\cO(\ttwist\infty_{\fP})\right)\big|_{\cT}
  = \deg\left(\omega^{\log}_{\cC_2/S} \otimes f_2^*\cO(\ttwist\infty_{\fP})\right)\big|_{\cT} > 0.
\]
Using Lemma~\ref{lem:degree-comparison}, We calculate
\begin{multline*}
  \deg\left(\omega^{\log}_{\cC_3/S} \otimes f_{3}^*\cO(\ttwist\infty_{\fP})\right)\big|_{\cC_{3,s}} \\
  = \deg\left(\omega^{\log}_{\cC_3/S} \otimes f_3^*\cO(\ttwist\infty_{\fP})\right)\big|_{\overline{\cU_{1, s}}} + \deg\left(\omega^{\log}_{\cC_3/S} \otimes f_3^*\cO(\ttwist\infty_{\fP})\right) \big|_{\cT} \\
  \geq  \deg\left(\omega^{\log}_{\cC_1/S} \otimes f_1^*\cO(\ttwist\infty_{\fP})\right)\big|_{\cC_{1,s}} + \deg\left(\omega^{\log}_{\cC_3/S} \otimes f_3^*\cO(\ttwist\infty_{\fP})\right)\big|_\cT.
\end{multline*}
Since
$\deg f_{3,s}^*\cO(\ttwist\infty_{\fP}) = \deg
f_{1,s}^*\cO(\ttwist\infty_{\fP})$ is given by the sum of contact orders, we conclude that
$\cT = \cC_3 \setminus \cU_1 = \emptyset$.

Observe that $C_3 = U_1 \to C_1$ contracts proper rational components
with precisely two special points.
Let $Z \subset C_3$ be such a component, and let
$\cZ = Z \times_{C_3}\cC_3$.
Since $f_3|_{\cC_3 = \cU_1}$ factors through $f_1$, we have
\begin{equation}\label{equ:bridge-trivial-degree}
  \deg f_3^*\cO(\infty_{\fP})|_{\cZ} = 0.
\end{equation}

On the other hand, since $Z$ has two special points in $C_3$ and is
contracted in $C_1$, it is not contracted in $C_2$.
Denote by $\cZ' \subset \cC_2$ the component dominating
$Z \subset C_2$.
Then $\cZ'$ has precisely two special points.
Furthermore $f_2|_{\cZ'}$ and $f_3|_{\cZ}$ coincide away from
the two special points.
Using \eqref{equ:bridge-trivial-degree}, we observe that
$\deg f_2^*\cO(\infty_{\fP})|_{\cZ'} = 0$, which contradicts the
stability of $f_2$.
Thus $C_3 \to C_1$ is an isomorphism.

This finishes the proof of separatedness.

\subsection{Rigidifying (pre-)stable reductions}\label{sss:representability}
We start constructing the stable limit as in Theorem \ref{thm:weak-valuative}. Recall from Section \ref{ss:valuative-set-up} that it suffices to construct an extension of the underlying structures where $\cC_{\eta}$ is smooth and irreducible. Suppose we have   an underlying R-map extending $f_{\eta}$:
\begin{equation}\label{equ:underlying-triangle}
  f \colon \cC \to \fP
\end{equation}
where $\cC \to S$ is a pre-stable curve over $S$.
We modify $f$ to obtain a representable morphism to $\fP$ as follows.

Forming the relative coarse moduli \cite[Theorem~3.1]{AOV11}, we
obtain a diagram
\begin{equation*}
  \xymatrix{
    \cC \ar[d]_\pi \ar[r]^f & \fP \ar[d] \\
    \cC^r \ar[ur]^{f^r} \ar[r] & \BC,
  }
\end{equation*}
in which the upper triangle is commutative, $f^r$ is representable,
and $\pi$ is proper and quasi-finite.
Note that since
$\omega^{\log}_{C/S}|_{\cC^r} = \omega^{\log}_{\cC^r/S}$, the lower
triangle is also commutative.

\begin{proposition}\label{prop:valuative-representability}
  Notations as above, we have:
  \begin{enumerate}
  \item $f^r$ is a representable underlying $R$-map over $S$ extending $f_{\eta}$.
  \item If $f$ satisfies the positivity condition
    \eqref{equ:hyb-stability}, then so does $f^r$.
  \end{enumerate}
\end{proposition}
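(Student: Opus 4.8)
The plan is to deduce both claims from the universal property of the relative coarse moduli space $\pi\colon\cC\to\cC^r$ of \cite[Theorem~3.1]{AOV11}; the only real work will be to see that $\cC^r\to S$ is still a twisted curve and to keep track of degrees under $\pi$.

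For (1), I would first record that $\pi$ is proper, quasi-finite and surjective, and is an isomorphism over the dense open $\cC_{gen}\subset\cC$: there $\cC$ is a scheme, so $f|_{\cC_{gen}}$ is already representable and the relative coarse moduli construction changes nothing. In particular $\pi$ is birational on each irreducible component and sets up a bijection between the irreducible components of $\cC$ and those of $\cC^r$. Next I would check that $\cC^r\to S$ is again a twisted prestable curve: \'etale-locally $\pi$ realizes $\cC^r$ as a partial rigidification of $\cC$, so the automorphism group of $\cC^r$ at each geometric point is a quotient of that of $\cC$, and flatness, the nodal structure, balancedness of the nodes, and the cyclic gerbe structure at the markings all survive passing to such a quotient (here one invokes \cite{AOV11, AbVi02}); moreover $\cC^r$ has the same coarse curve $C$ as $\cC$. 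Since $\omega^{\log}_{\cC/S}$ and $\omega^{\log}_{\cC^r/S}$ are both pulled back from $\omega_{C/S}(\sum_i p_i)$ on $C$, the lower triangle commutes, as already noted in the text, so $(\cC^r\to S,\, f^r)$ is an underlying $R$-map and $f^r$ is representable by construction. Finally, since $f_\eta$ is representable the coarsening of $\cC_\eta$ is $\cC_\eta$ itself; as the formation of the relative coarse moduli space commutes with the flat base change $\ul\eta\hookrightarrow\ul S$, it follows that $\cC^r\times_{\ul S}\ul\eta=\cC_\eta$ and $f^r|_{\ul\eta}=f_\eta$, so $f^r$ extends $f_\eta$.

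For (2), the point is that the stability $\QQ$-line bundle of $f^r$ pulls back along $\pi$ to that of $f$. Writing $g=\ft\circ f$ and $g^r=\ft\circ f^r$, so that $g=g^r\circ\pi$ and $f=f^r\circ\pi$, and using $\omega^{\log}_{\cC/S}=\pi^*\omega^{\log}_{\cC^r/S}$, one has
\[
(\omega^{\log}_{\cC/S})^{1+\delta}\otimes g^*\cH^{\otimes k}\otimes f^*\cO(\ttwist\infty_{\fP})
=\pi^*\!\Big((\omega^{\log}_{\cC^r/S})^{1+\delta}\otimes(g^r)^*\cH^{\otimes k}\otimes(f^r)^*\cO(\ttwist\infty_{\fP})\Big).
\]
Given an irreducible component $\cZ^r\subset\cC^r$ and the unique component $\cZ\subset\cC$ mapping onto it, the induced map $\cZ\to\cZ^r$ is birational, so $\deg_{\cZ}\pi^*L=\deg_{\cZ^r}L$ for every line bundle $L$ on $\cC^r$. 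Applying this to the bundle above shows that its degree for $f^r$ on $\cZ^r$ equals its degree for $f$ on $\cZ$, which is positive by the hypothesis \eqref{equ:hyb-stability} for $f$. As $\cZ^r$ was arbitrary, $f^r$ satisfies \eqref{equ:hyb-stability}.

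I do not expect a genuine obstacle. The two places that need a little care are (a) confirming that the partial rigidification $\cC^r\to S$ is indeed a twisted curve, i.e.\ the preservation of flatness, the nodal structure and balancedness under $\pi$, for which I would lean on \cite{AOV11, AbVi02}; and (b) the base-change compatibility of relative coarse moduli used to identify $f^r|_{\ul\eta}$ with $f_\eta$. Both are standard for tame Deligne--Mumford stacks, and everything else — in particular the degree bookkeeping in (2) — is routine once the pullback formula for $\omega^{\log}$ is in hand.
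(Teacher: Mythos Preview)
Your proposal is correct and follows the same approach as the paper, which simply states that ``both parts follow easily from the above observations'' (namely the commutativity of the two triangles and the identification $\omega^{\log}_{C/S}|_{\cC^r} = \omega^{\log}_{\cC^r/S}$). You have carefully unpacked exactly those observations: that $\cC^r\to S$ remains a twisted curve with the same coarse moduli, that base change identifies $f^r|_{\ul\eta}$ with $f_\eta$, and that the stability $\QQ$-line bundle pulls back along the degree-one map $\pi$, so degrees are preserved componentwise.
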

\begin{proof}
  Both parts follow easily from the above observations.
\end{proof}

\subsection{Pre-stable reduction}

Next, we construct a (not necessarily stable) family
\eqref{equ:underlying-triangle} across the central fiber.
We will show that such a family can be constructed by taking the
stable map limit twice in a suitable way.
It is worth mentioning that the method here is very different than the
one in \cite{CJRS18P} in order to handle the general situation of this
paper.

In the following, $g_{\eta}$ and $\cL_{\eta}$ denote the pre-stable
map and the spin structure on $\cC_{\eta}$ associated to $f_\eta$.

\subsubsection{The first stable map limit}\label{sss:stable-map-limit-1}

Let $g_\eta$ be the prestable map underlying $f_\eta$.
Then, let $g_0\colon \cC_0 \to \cX$ be any pre-stable map extending
$g_{\eta}$ whose existence follows from \cite{AbVi02}.
Possibly after a further base change, we construct the following
commutative diagram:
\begin{equation}\label{diag:stable-map-limits-1}
\xymatrix{
\cC_0'' \ar[d] \ar[rr]^{f_0} && \fP \ar[d] \\
\cC_0' \ar[d] \ar[rr]^{\cL_0} && \fX \ar[d] \\
\cC_0 \ar[rr]^{(\omega^{\log}_{\cC_0/S},g_0) \ \ \ \ } && \BC\times\cX
}
\end{equation}

First, there is a unique stable map limit
$\cC_0' \to \cC_0\times_{\BC\times\cX}\fX$ extending the one given
by the spin $\cL_{\eta}$.
This yields the spin structure $\cL_0$ on $\cC_0'$.
Furthermore, the morphism $\cC'_0 \to \cC_0$ is quasi-finite.
We then take the unique stable map limit
$h\colon \cC_0'' \to \cP_{\cC_0'} := \fP\times_{\fX}\cC_0'$ extending the
one given by $f_{\eta}$.

To see the difference between the above stable map limits and a pre-stable reduction, we observe:
\begin{lemma}\label{lem:unfolding-bridges}
Suppose we are given a commutative diagram
\begin{equation}
\xymatrix{
\cC \ar[d] \ar[rr]^{f} && \fP \ar[d] \\
\cC' \ar[rr]^{\omega^{\log}_{\cC'/S}} && \BC
}
\end{equation}
where $\cC'$ and $\cC$ are two pre-stable curves over $S$ such that $\cC \to \cC'$ contracts only rational components with two special points. Then $f$ is a pre-stable reduction as in \eqref{equ:underlying-triangle}.
\end{lemma}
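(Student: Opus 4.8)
The plan is to reduce the lemma to a single, standard fact about dualizing sheaves. Write $\phi\colon\cC\to\cC'$ for the given morphism of pre-stable curves. By Definition~\ref{def:R-map}, saying that $f\colon\cC\to\fP$ is an underlying $R$-map with source $\cC\to S$ — i.e.\ a pre-stable reduction as in \eqref{equ:underlying-triangle} — amounts to saying that the composition $\cC\xrightarrow{f}\fP\to\BC$ is the morphism classifying $\omega^{\log}_{\cC/S}$. By the commutative square in the hypothesis, that composition equals $\cC\xrightarrow{\phi}\cC'\xrightarrow{\omega^{\log}_{\cC'/S}}\BC$, which classifies $\phi^*\omega^{\log}_{\cC'/S}$. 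So the lemma is equivalent to
\[
\omega^{\log}_{\cC/S}\;\cong\;\phi^*\omega^{\log}_{\cC'/S},
\]
and once this is established we are done; in the situation where the lemma is applied, the resulting $R$-map extends $f_\eta$ because $\phi$ is an isomorphism over $\eta$ and $f$ restricts there to $f_\eta$ by construction.

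For the displayed isomorphism I would argue as follows. By hypothesis $\phi$ is an isomorphism away from a disjoint union of chains of (possibly stacky) rational curves, each contracted to a node of $\cC'$ and each of whose components has exactly two special points. Over the complement of these chains the two line bundles agree tautologically. On each contracted component $Z$ one has $\omega^{\log}_{\cC/S}|_Z\cong\cO_Z$ — this is the familiar statement that rational bridges are ``crepant'' for the log dualizing sheaf (degree $-2+2=0$, with the orbifold contributions at the two special points cancelling by balancing) — while $\phi^*\omega^{\log}_{\cC'/S}|_Z$ is trivial since $\phi$ contracts $Z$. Hence $\omega^{\log}_{\cC/S}$ is trivial on every fibre of $\phi$; as the non-trivial fibres of $\phi$ are connected trees of rational curves one has $R\phi_*\cO_\cC\cong\cO_{\cC'}$, so $\omega^{\log}_{\cC/S}\cong\phi^*\phi_*\omega^{\log}_{\cC/S}$ with $\phi_*\omega^{\log}_{\cC/S}$ a line bundle on $\cC'$; comparing with the tautological identification on the dense locus where $\phi$ is an isomorphism shows $\phi_*\omega^{\log}_{\cC/S}\cong\omega^{\log}_{\cC'/S}$. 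I would either invoke this crepancy directly or carry out the one-component case and induct along the chains.

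The only step needing genuine attention is the orbifold bookkeeping: checking that a rational component with exactly two special points is a balanced twisted $\PP^1$, that it contracts to a balanced node of $\cC'$, and that $\omega^{\log}_{\cC/S}$ restricts to the trivial bundle on it — all of which are immediate from the definition of twisted curves and the balancing condition recalled in \S\ref{sss:log-curves}. I do not expect any genuine difficulty: there is no positivity or stability input, and the lemma is a pure compatibility between the two maps to $\BC$.
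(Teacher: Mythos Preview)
Your proposal is correct and takes essentially the same approach as the paper: both reduce the lemma to the single fact $\omega^{\log}_{\cC/S}\cong\phi^*\omega^{\log}_{\cC'/S}$, which the paper states in one line and you justify in more detail via the crepancy of rational bridges.
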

\begin{proof}
The lemma follows from that $\omega^{\log}_{\cC'/S}|_{\cC} = \omega^{\log}_{\cC/S}$.
\end{proof}

Observe that
$\omega^{\log}_{\cC_0/S}|_{\cC'_0} = \omega^{\log}_{\cC'_0/S}$.
If $\cC''_0 \to \cC'_0$ contracts no rational tails then it can only
contract rational bridges.
Thus we obtain a pre-stable reduction in this case by applying Lemma
\ref{lem:unfolding-bridges}.
Otherwise, we show that a pre-stable reduction can be achieved by
repeating stable map limits one more time as follows.

\subsubsection{The second stable map limit}
Set $\cC_1 = \cC''_0$. We will construct the following commutative diagram:
\begin{equation}\label{diag:stable-map-limits-2}
\xymatrix{
\cC_2 \ar[d] \ar[rrrr]^{f_1} &&&& \fP \ar[d] \\
\tilde{\cC}_1' \ar[rr] && \cC_1' \ar[d] \ar[rr]^{\cL_1} && \fX \ar[d] \\
&& \cC_1 \ar[rr]^{(\omega^{\log}_{\cC_1/S},g_1) \ \ \ \ } && \BC\times\cX
}
\end{equation}

First, $g_1$ is the composition of $\cC_1 \to \cC_0$ with $g_0$, and $\cL_1$ is the spin structure over $\cC'_1$ obtained by taking the stable map limit as in \eqref{diag:stable-map-limits-1}.

\smallskip

Second, we construct a quasi-finite morphism of pre-stable curves  $\tilde{\cC}_1' \to \cC_1'$ over $S$ such that over $\eta$ it is the identity $\cC_{\eta} \to \cC_{\eta}$, and the identity $\cL_{\eta} \to \cL_{\eta}$ extends to a morphism of line bundles
\begin{equation}\label{equ:spin-iterate}
  \cL_0|_{\tilde{\cC}'_1} \to \cL_1|_{\tilde{\cC}'_1}
\end{equation}
whose $r$-th power is the natural morphism
\begin{equation}\label{equ:log-cot-iterate}
  (\omega_{\cC_0/S}^{\log} \otimes g_0^* \bL^\vee)|_{\tilde{\cC}'_1} \to
  (\omega_{\cC_1/S}^{\log} \otimes g_1^* \bL^\vee)|_{\tilde{\cC}'_1}.
\end{equation}

Let $\sqrt[r]{\cC_1'}$ be the $r$th root stack of
$$(\omega_{\cC_0/S}^{\log} \otimes g_0^* \bL^\vee)^{\vee}|_{\tilde{\cC}'_1} \otimes
  (\omega_{\cC_1/S}^{\log} \otimes g_1^* \bL^\vee)|_{\tilde{\cC}'_1},$$
and $\sqrt[r]{(\cC_1',s)}$ be the $r$-th root
stack of the section $s$ of the above line bundle given by \eqref{equ:log-cot-iterate}. We form the fiber product
\[
  \hat{\cC}'_1 := \cC'_1\times_{\sqrt[r]{\cC_1'}}\sqrt[r]{(\cC_1',s)},
\]
where the morphism $\cC'_1 \to \sqrt[r]{\cC_1'}$ is defined via
$\cL_0^\vee|_{\tilde{\cC}'_1} \otimes \cL_1|_{\tilde{\cC}'_1}$.
The identities
$\cL_{\eta} = \cL|_{\cC_{\eta}} = \cL_{1}|_{\cL_{\eta}}$ induce a
stable map $\cC_{\eta} \to \hat{\cC}'_1$ which, possibly after a
finite base change of $S$, extends to a quasi-finite stable map
$\tilde{\cC}'_1 \to \hat{\cC}'_1$.
Since $\hat{\cC}'_{1} \to \cC'_1$ is quasi-finite, the composition
$\tilde{\cC}'_1 \to \hat{\cC}'_1 \to \cC'_1$ gives the desired
quasi-finite morphism.
Thus, $\cL_1$ pulls back to a spin structure on $\tilde{\cC}'_1$.
Furthermore, the universal $r$-th root of $\sqrt[r]{(\cC_1',s)}$ pulls
back to a section of $\cL_1 \otimes \cL^\vee|_{\tilde{\cC}'_1}$ as
needed.

\smallskip

Finally, we construct $f_1$ in the same way as the stable map limit in
\eqref{diag:stable-map-limits-1} but using the spin structure
$\cL_1|_{\tilde{\cC}'_1}$.
We will show:

\begin{proposition}\label{prop:pre-stable-reduction}
The morphism $\cC_2 \to \tilde{\cC}'_1$ contracts no rational tails.
\end{proposition}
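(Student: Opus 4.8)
The plan is to compare the positivity of $f_1$ against the stability of the previously constructed maps via a degree count, in the spirit of Lemma~\ref{lem:degree-comparison}, and derive a contradiction if a rational tail were contracted by $\cC_2 \to \tilde{\cC}'_1$. First I would recall the key geometric input: $\cC_2 \to \tilde{\cC}'_1$ is the stable map limit constructed exactly as in \eqref{diag:stable-map-limits-1}, using the spin structure $\cL_1|_{\tilde{\cC}'_1}$ over $\tilde{\cC}'_1$. A rational tail $\cZ \subset \cC_{2,s}$ of this morphism is, by definition, a proper rational component with exactly one special point that gets contracted to a point of $\tilde{\cC}'_1$; in particular $\omega^{\log}_{\cC_2/S}|_{\cZ}$ has degree $-1$, and $\cZ$ maps to a point under $\cC_2 \to \tilde{\cC}'_1 \to \cC_1 \to \cC_0$, hence $g$ contracts $\cZ$ and $\deg (\ft\circ f_1)^*\cH|_{\cZ} = 0$.

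Next I would examine where $\cZ$ goes in $\fP$. The point is that the spin structure $\cL_1|_{\tilde{\cC}'_1}$ is, by the construction of $\tilde{\cC}'_1$ in \S\ref{diag:stable-map-limits-2}, related to the ``genuine'' spin structure $\cL_0$ through the morphism \eqref{equ:spin-iterate}, whose $r$-th power is \eqref{equ:log-cot-iterate}. On a component $\cZ$ contracted all the way down to $\cC_0$, the line bundles $\omega^{\log}_{\cC_0/S}$ and $g_0^*\bL^\vee$ pull back to degree-zero (indeed trivial) bundles, so $\omega^{\log}_{\cC_2/S}\otimes g_1^*\bL^\vee|_{\cZ}$ has degree $-1$ while its would-be $r$-th root $\cL_1|_{\cZ}$ is forced to have non-positive degree; combined with the positivity constraint coming from the weighted projective bundle structure \eqref{equ:proj-bundle} and the fact that $f_1$ comes from a stable map limit, this should pin down $\deg f_1^*\cO(\ttwist\infty_{\fP})|_{\cZ}$. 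Concretely, if $f_1(\cZ)\not\subset\infty_{\fP}$ then $\deg f_1^*\cO(\infty_{\fP})|_{\cZ}\geq 0$ but by stability of the map $h$ (equivalently $f_0$) on the corresponding component upstairs, together with Lemma~\ref{lem:unfolding-bridges}-type bookkeeping, one gets $\deg f_1^*\cO(\ttwist\infty_{\fP})|_{\cZ}\leq 0$, and if $f_1(\cZ)\subset\infty_{\fP}$ then Proposition~\ref{prop:curve-in-infinity}, equation \eqref{equ:curve-in-infinity}, forces $\deg f_1^*\cO_{\fP}(\ttwist\infty_{\fP})|_{\cZ} = \deg(\omega^{\log}_{\cC_2})^\vee\otimes g_1^*\bL\otimes (f_\cX)^*\cO_{\infty_\cX}(\frac r d)|_{\cZ} = 1 + 0 + (\text{nonneg})$, which is strictly positive rather than zero — contradicting that $\cZ$ is contracted by a \emph{stable} map limit (a stable map limit never has a contracted rational tail on which $f_1^*\cO(\infty_{\fP})$ has positive degree, since such a tail would be stabilized).

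The cleanest route, which I would actually carry out, is this: the stable map limit $\cC_2 \to \cP_{\tilde{\cC}'_1}$ is \emph{stable} as a map, so a contracted rational tail $\cZ$ must have positive degree against the relative polarization $\cO_{\cP}(1)$ of the weighted projective bundle $\cP_{\tilde{\cC}'_1}$. Since $\cO_{\cP}(1)$ restricted to a fiber over a point of $\tilde{\cC}'_1$ is, up to the gerbe structure, $\cO(\infty_\cP)$ twisted by a power of $\cL_1$, and $\cL_1|_{\cZ}$ has degree $0$ (as $\cZ$ is contracted below $\cC_1$, on which $\cL_1$ restricts from $\cL_0$ up to the section-root correction which is also trivial on contracted components), this forces $\deg f_1^*\cO(\infty_\cP)|_{\cZ} > 0$. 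But then, tracing back through diagram \eqref{diag:stable-map-limits-2}, the preimage of $\cZ$ in $\cC_1 = \cC''_0$ — or rather, the fact that $\cZ$ did not already appear in $\cC''_0 \to \cC'_0$ — leads to a contradiction with how $h$ was taken as a stable map limit in \S\ref{sss:stable-map-limit-1}: a rational tail with positive $\cO(\infty)$-degree would already have been present and stable in $\cC''_0$, hence would not be newly contracted.

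The main obstacle I anticipate is the careful bookkeeping of the two root-stack modifications entering $\tilde{\cC}'_1$ (the $r$-th root stack $\sqrt[r]{\cC_1'}$ and the section root $\sqrt[r]{(\cC_1',s)}$) and checking that on a rational component contracted all the way down to $\cC_0$ the relevant comparison of spin structures \eqref{equ:spin-iterate} is an \emph{isomorphism} on that component, so that the degree of $\cL_1|_{\cZ}$ is genuinely $0$ and not merely bounded. Once that is established, the positivity/stability dichotomy above closes the argument; the degree estimates themselves are routine given Proposition~\ref{prop:curve-in-infinity} and the definition of the weighted projective bundle in \eqref{equ:proj-bundle}.
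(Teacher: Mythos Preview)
Your proposal has a genuine gap at the decisive step. You correctly observe that a vertical rational tail $\cZ$ of $\cC_2 \to \tilde{\cC}'_1$ is contracted in the base but, by stability of the map $\cC_2 \to \cP_{\tilde{\cC}'_1}$, must carry positive degree against $\cO_{\cP}(1) = \cO(\infty_{\cP})$ in the fiber. The problem is that this positivity by itself is not a contradiction: a stable map is perfectly allowed to have such fiber-stabilized rational tails. Your attempt to close the argument by saying ``such a tail would already have been present and stable in $\cC''_0$'' does not work: there is no component of $\cC''_0 = \cC_1$ corresponding to $\cZ$ (the map $\cC_2 \to \cC_1$ contracts $\cZ$ to a point), and the first stable-map limit $h$ was taken with respect to the \emph{different} spin structure $\cL_0$, so nothing prevents the second limit (with $\cL_1$) from sprouting new tails. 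Your case-2 analysis also misapplies Proposition~\ref{prop:curve-in-infinity}: the formula \eqref{equ:curve-in-infinity} uses the $\omega^{\log}$ that induces the map to $\BC$, which here is $\omega^{\log}_{\cC_1}|_{\cC_2}$ (trivial on $\cZ$), not $\omega^{\log}_{\cC_2}$; so you get $0+0+(\text{nonneg})$, not $1+0+(\text{nonneg})$, and again no contradiction.

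The paper's argument is of an entirely different nature. It builds the birational correspondence $c\colon \cP_0 \dashrightarrow \cP_1$ between the two targets coming from $\cL_0$ and $\cL_1$, and compares the two maps $f_0$ (which contracts $\cZ$, since it factors through $\cC_1$) and $f_1$ through $c$ and $c^{-1}$. The key steps are: (i) $f_0(\cZ) \subset \infty_{\cP_0}$ (since otherwise $c\circ f_0$ would already define $f_1$ and contract $\cZ$); (ii) using $c^{-1}$, the projection $p_1\colon \cP_1 \dashrightarrow \infty_{\cP_1}$ composed with $f_1$ contracts $\cZ$, so $f_1(\cZ)$ dominates a single fiber-line and in particular meets $\zero_{\cP_1}$ at some smooth unmarked $q\in\cZ$; (iii) a local hyperplane argument (splitting $\bE$ near the image of $\cZ$ in $\cX$) transports $f_1(q)\in \zero_{\cP_1}$ back to $f_0(q)\in \zero_{\cP_0}$, contradicting (i). The missing idea in your proposal is precisely this birational comparison of the two targets together with the projection-to-infinity and hyperplane analysis; a pure degree count does not see enough of the geometry to force the contradiction.
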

Together with Lemma \ref{lem:unfolding-bridges}, we obtain a pre-stable reduction \eqref{equ:underlying-triangle}.

\subsubsection{The targets of the two limits}

Consider $ \cP_i := \tilde{\cC}'_1\times_{\fX} \fP$ for $i=0,1$,
where the arrow $\tilde{\cC}'_1 \to \fX$ is induced by $\cL_i$.
The morphism \eqref{equ:spin-iterate} induces a birational map
$ c\colon \cP_0 \dashrightarrow \cP_1 $ whose indeterminacy locus is
precisely the infinity divisor $\infty_{\cP_0} \subset \cP_0$ over the
degeneracy locus of \eqref{equ:spin-iterate}.
Its inverse $c^{-1}\colon \cP_1 \dashrightarrow \cP_0$ is given by the
composition
\begin{multline*}
 \cP_1 =  \PP^\bw\left(\bigoplus_{j > 0} (g_1^*(\bE_j^\vee) \otimes \cL_1^{\otimes j})|_{\tilde{\cC}'_1} \oplus \cO\right) \\
  \dashrightarrow \PP^\bw\left(\bigoplus_{j > 0} (g_1^*(\bE_j^\vee) \otimes \cL_1^{\otimes j})|_{\tilde{\cC}'_1} \oplus (\cL_1 \otimes \cL_0^\vee)^{\otimes \etwist}|_{\tilde{\cC}'_1}\right) \\
  \cong \PP^\bw\left(\bigoplus_{j > 0} (g_0^*(\bE_j^\vee) \otimes \cL_0^{\otimes j})|_{\tilde{\cC}'_1} \oplus \cO\right) = \cP_0,
\end{multline*}
where the first map is multiplication of the last coordinate by the
$\etwist$th power of the section of
$(\cL_1 \otimes \cL_0^\vee)|_{\tilde{\cC}'_1}$ given by
\eqref{equ:spin-iterate}.
Therefore, the indeterminacy locus of $c^{-1}$ is the zero section
$\zero_{\cP_1} \subset \cP_1$ over the degeneracy locus of
\eqref{equ:spin-iterate}.

We have arrived at the following commutative diagram
\begin{equation}\label{diag:compare-target}
\xymatrix{
 && \cP_0 \ar@/_1pc/@{-->}[dd]_{c} \ar@/^1pc/@{<--}[dd]^{c^{-1}} \ar[rrd] && \\
\cC_2 \ar[rru]^{f_0} \ar[rrd]_{f_1} &&&& \tilde{\cC}'_1 \\
  && \cP_1  \ar[rru] &&
}
\end{equation}
where by abuse of notations $f_0$ and $f_1$ are given by the
corresponding arrows in \eqref{diag:stable-map-limits-1} and
\eqref{diag:stable-map-limits-2}.
Indeed, $f_0\colon \cC_2 \to \cP_0$ is given by the composition
$ \cC_2 \to \tilde{\cC}'_1 \to \cC'_1 \to \cC_1 \to \fP.
$

\subsubsection{Comparing the two limits along vertical rational tails} A rational tail of $\cC_2$ over the closed fiber is called \emph{vertical} if it is contracted in $\tilde{\cC}'_1$.

\begin{lemma}\label{lem:VRT-in-infty}
If $\cZ \subset \cC_2$ is a vertical rational tail, then $f_0(\cZ) \subset \infty_{\cP_0}$.
\end{lemma}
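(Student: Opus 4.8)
The plan is to play the two extensions $f_0$ and $f_1$ of $f_\eta$ off against each other through the birational map $c$ of \eqref{diag:compare-target}; the crux is that $f_0$ is forced to contract $\cZ$ while $f_1$ cannot. First I would note that a vertical rational tail $\cZ$ is abstractly unstable: it meets the rest of $\cC_2$ in a single node, and since the marked points of $\cC_2$ are disjoint sections while $\cZ$ is contracted by $\cC_2\to\tilde{\cC}'_1$, the tail $\cZ$ carries at most one of them, so it is a genus-zero twisted component with at most two special points. Because $f_1\colon\cC_2\to\cP_1$ is obtained as a stable map limit, the restriction $f_1|_{\cZ}$ must then be non-constant.

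Next I would show that $f_0|_{\cZ}$ is, by contrast, constant. Since $\infty_{\cP_0}$ is the preimage of $\infty_{\fP}$ under the projection $\cP_0=\fP\times_{\fX}\tilde{\cC}'_1\to\fP$, and $f_0$ is by construction the lift of $\cC_2\to\tilde{\cC}'_1\to\cC'_1\to\cC_1\xrightarrow{h}\fP$, one finds $f_0^*\cO(\infty_{\cP_0})|_{\cZ}\cong\big((\cC_2\to\cC_1)|_{\cZ}\big)^*h^*\cO(\infty_{\fP})$. As $\cZ$ is contracted in $\tilde{\cC}'_1$ it is also contracted in $\cC_1$, the maps $\tilde{\cC}'_1\to\cC'_1\to\cC_1$ being morphisms of curves over $S$; hence this line bundle has degree $0$ on $\cZ$. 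On the other hand, $\cZ$ being contracted in $\tilde{\cC}'_1$ means $f_0$ sends $\cZ$ into a single fibre of the weighted projective bundle $\cP_0\to\tilde{\cC}'_1$, on which $\cO(\infty_{\cP_0})$ is relatively ample; a map from the twisted rational curve $\cZ$ into such a fibre with zero $\cO(\infty_{\cP_0})$-degree is necessarily constant, say with image $p_0$.

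Finally I would combine these. Suppose $p_0\notin\infty_{\cP_0}$. The indeterminacy locus of $c\colon\cP_0\dashrightarrow\cP_1$ is contained in $\infty_{\cP_0}$, so $c$ is a morphism near $p_0$; moreover $f_1=c\circ f_0$ as rational maps, since both restrict to $f_\eta$ over $\eta$, where $c$ is the identity. On the open subset $f_0^{-1}(\cP_0\setminus\infty_{\cP_0})\supseteq\cZ$ the two morphisms $f_1$ and $c\circ f_0$ agree on the dense subset coming from $\cC_{2,\eta}$, and $\cP_1$ is separated, so they agree everywhere on it; hence $f_1|_{\cZ}\equiv c(p_0)$ is constant, contradicting the first step. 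Therefore $p_0\in\infty_{\cP_0}$, and since $f_0|_{\cZ}$ is constant with value $p_0$, this gives $f_0(\cZ)\subset\infty_{\cP_0}$.

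The step I expect to need the most care is the global identity $f_1=c\circ f_0$ as rational maps on $\cC_2$ (rather than merely fibrewise): it rests on the already-recorded commutativity of \eqref{diag:compare-target}, on separatedness of $\cP_1$, and on the fact that $c$ restricts to the identity over $\eta$, which follows from the defining property of the section $\cL_0\to\cL_1$ of \eqref{equ:spin-iterate}. The conceptual heart, by comparison, is the simple observation that $f_0$ factors through $\cC_1$ --- in which $\cZ$ disappears --- so that $f_0$ must collapse $\cZ$, whereas the stability of the limit $f_1$ forbids it from doing so.
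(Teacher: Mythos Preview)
Your proof is correct and follows the same strategy as the paper: $f_0$ contracts $\cZ$, so if the image avoids $\infty_{\cP_0}$ then $c\circ f_0$ is defined on $\cZ$ and agrees with $f_1$ there, forcing $f_1|_{\cZ}$ to be constant and contradicting the stability of the limit $f_1$. Your degree-and-ampleness argument for the constancy of $f_0|_{\cZ}$ is more than is needed---the factorization $\cC_2\to\tilde{\cC}'_1\to\cC'_1\to\cC_1\to\fP$ already collapses $\cZ$ in both factors of $\cP_0=\fP\times_{\fX}\tilde{\cC}'_1$---but this does not affect correctness.
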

\begin{proof}
Note that $f_0$ contracts any vertical rational tails. Suppose $f_0(\cZ) \not\subset \infty_{\cP_0}$. Then $c\circ f_0$ is well-defined along $\cZ$ hence $f_1|_\cZ = c\circ f_0|_{\cZ}$. This contradicts the stability of $f_1$ as a stable map limit.
\end{proof}

For $i=0,1$, denote by $p_i\colon \cP_i \dashrightarrow \infty_{\cP_i}$ the projection from the zero section $\zero_{\cP_i}$ to $\infty_{\cP_i}$. Thus $p_i$ is a rational map well-defined away from $\zero_{\cP_i}$. Furthermore, we observe that $\infty_{\cP_0} \cong \infty_{\cP_1}$. Using this isomorphism, we have $p_0 = p_1\circ c$ and $p_1 = p_0\circ c^{-1}$.

\begin{lemma}\label{lem:const-proj}
Let $\cZ \subset \cC_2$ be a vertical rational tail. Then $p_1\circ f_1$ contracts an open dense subset of $\cZ$.
\end{lemma}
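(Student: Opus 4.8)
The plan is to compare the two stable map limits $f_0$ and $f_1$ along a vertical rational tail $\cZ$, using the birational map $c$ and its inverse $c^{-1}$ together with the stability constraints that force degree constraints on both sides. First I would recall the setup: since $\cZ$ is vertical, it is contracted in $\tilde{\cC}'_1$, hence $\omega^{\log}_{\cC_2/S}|_{\cZ}$ has negative degree and $g_0, g_1$ contract $\cZ$ to a point of $\cX$. By Lemma~\ref{lem:VRT-in-infty}, $f_0(\cZ) \subset \infty_{\cP_0}$, so the composition $p_0 \circ f_0$ is defined on all of $\cZ$ (it is just $f_0$ restricted, viewed in $\infty_{\cP_0} \cong \infty_{\cP_1}$). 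Since $p_0 = p_1 \circ c$ and the isomorphism $\infty_{\cP_0} \cong \infty_{\cP_1}$ identifies these projections, we get $p_1 \circ f_1|_{\cZ} = p_0 \circ f_0|_{\cZ}$ wherever $f_1$ avoids $\zero_{\cP_1}$ along $\cZ$; on the locus where $f_1(\cZ) \subset \zero_{\cP_1}$ we argue separately.

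The key step is a degree computation. The map $p_0 \circ f_0|_{\cZ}\colon \cZ \to \infty_{\cP_1}$ lands in the fiber of $\infty_{\cP_1} \to \tilde{\cC}'_1$ over the point to which $\cZ$ is contracted, and also maps to $\infty_{\cX}$ via the projection $\infty_{\cP_1} \to \infty_{\fP} \to \infty_{\cX}$ of Proposition~\ref{prop:curve-in-infinity}; since $g_0$ contracts $\cZ$, the spin structure $\cL_0$ has degree zero on $\cZ$ and the relevant tautological bundles restrict trivially up to the twist controlled by \eqref{equ:curve-in-infinity}. I would then use Proposition~\ref{prop:curve-in-infinity}, specifically \eqref{equ:curve-in-infinity}, to express $\deg f_0^* \cO_{\cP_0}(\ttwist \infty_{\cP_0})|_{\cZ}$ in terms of $\deg (\omega^{\log}_{\cC_2/S})^{\vee}|_{\cZ}$ (which is positive, as $\cZ$ is a rational tail) plus a nonnegative contribution from $(f_{\cX})^*\cO_{\infty_{\cX}}$. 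The point is that if $p_1 \circ f_1$ did \emph{not} contract an open dense subset of $\cZ$, then $(f_1)_{\cX} = (f_0)_{\cX}$ would be non-constant on $\cZ$; but $\cZ$ is a rational tail with at most one special point, and a non-constant map to the projective $\infty_{\cX}$ from such a curve, compatible with contracting everything in $\tilde{\cC}'_1$, would then make $f_1|_{\cZ}$ non-contracted in a way incompatible with $f_1$ being a \emph{stable} map limit to $\cP_1$ (a stable map cannot contract $\cZ$ in the base $\tilde{\cC}'_1$ yet be non-constant there). Tracking this through \eqref{equ:curve-in-infinity} and the stability of $f_1$ forces $(f_0)_{\cX}|_{\cZ}$, hence $p_1 \circ f_1|_{\cZ}$, to be constant on a dense open subset.

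The main obstacle I anticipate is handling the interaction between $c^{-1}$ and the zero section: precisely where $f_1(\cZ)$ meets $\zero_{\cP_1}$, the projection $p_1$ is only a rational map, so one must show that $f_1(\cZ) \not\subset \zero_{\cP_1}$ (otherwise $p_1 \circ f_1$ is undefined on all of $\cZ$, not merely contracted on a dense open) — or rather, show that even in that case the statement "contracts an open dense subset" holds vacuously or by a separate direct argument. If $f_1(\cZ) \subset \zero_{\cP_1}$, then $\deg f_1^* \cO_{\cP_1}(\infty_{\cP_1})|_{\cZ} = 0$ while $\omega^{\log}_{\cC_2/S}|_{\cZ} < 0$, contradicting stability of $f_1$ unless $\cZ$ is not genuinely a tail; so this case is excluded. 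The residual subtlety is the bookkeeping of which exact twisting bundle $(\cL_1 \otimes \cL_0^\vee)^{\otimes \etwist}$ appears in the formula for $c^{-1}$ and confirming the degeneracy-locus description, but this is the kind of local coordinate calculation already invoked in the surrounding lemmas. I would therefore structure the proof as: (i) reduce to $f_1(\cZ) \not\subset \zero_{\cP_1}$ by stability; (ii) invoke Lemma~\ref{lem:VRT-in-infty} to get $f_0(\cZ) \subset \infty_{\cP_0}$; (iii) identify $p_1 \circ f_1|_{\cZ}$ with $(f_0)_{\cX}|_{\cZ}$ via $\infty_{\cP_0} \cong \infty_{\cP_1}$; (iv) apply \eqref{equ:curve-in-infinity} and the stability of the stable map limit $f_1$ to conclude $(f_0)_{\cX}|_{\cZ}$ is generically constant.
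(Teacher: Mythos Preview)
Your steps (i)--(iii) are essentially correct and match the paper: you rule out $f_1(\cZ)\subset\zero_{\cP_1}$ by stability of the stable map limit, and you identify $p_1\circ f_1|_U = p_0\circ f_0|_U$ on the dense open $U\subset\cZ$ where $f_1$ avoids $\zero_{\cP_1}$ (using $p_1 = p_0\circ c^{-1}$ and that $c^{-1}$ is defined away from $\zero_{\cP_1}$).

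The gap is in step (iv). You are missing the one--line observation that completes the proof: $f_0$ itself contracts $\cZ$. By construction (see diagram~\eqref{diag:compare-target}), $f_0\colon \cC_2 \to \cP_0$ is the composition $\cC_2 \to \tilde{\cC}'_1 \to \cC'_1 \to \cC_1 \to \fP$, and $\cZ$ is \emph{vertical}, i.e.\ contracted in $\tilde{\cC}'_1$. Hence $f_0(\cZ)$ is a single point, and $p_0\circ f_0|_U$ is constant. This is the entire content of the paper's final sentence.

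Your proposed replacement argument via degree computations and the stability of $f_1$ does not work as written. Stability of $f_1$ forces $f_1|_\cZ$ to be \emph{non}-constant (since $\cZ$ is contracted in the base $\tilde{\cC}'_1$), so your claim that a non-constant $(f_1)_{\cX}$ would be ``incompatible with $f_1$ being a stable map limit'' has it backwards. Moreover, even if you could show $(f_0)_{\cX}|_\cZ$ is constant, that only says the composition $\cZ \to \infty_{\cP_0}\to\infty_{\cX}$ is constant, which does not imply $p_0\circ f_0|_\cZ$ is constant (the fibers of $\infty_{\cP_0}\to\infty_{\cX}$ are positive-dimensional). The degree bookkeeping with \eqref{equ:curve-in-infinity} is therefore unnecessary here and does not close the argument; simply invoke the factorization of $f_0$ through $\tilde{\cC}'_1$.
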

\begin{proof}
Since $f_1$ is a stable map and $\cZ$ is a vertical rational tail, we have $f_1(\cZ) \not\subset \zero_{\cP_1}$. Thus $p_1\circ f_1$ is well-defined on an open dense $U \subset \cZ$ such that $f_1(U)$ avoids $\zero_{\cP_1}$. Observe that $c^{-1}$ is well-defined on $f_1(U)$. We then have
$
p_1 \circ f_1 |_U = p_0\circ c^{-1} \circ f_1 |_U = p_0 \circ f_0|_U.
$
Here $p_0$ is well-defined on $f_0|_U$ by Lemma \ref{lem:VRT-in-infty}. The statement follows from that $f_0$ contracts any vertical rational tail.
\end{proof}

\begin{corollary}\label{cor:no-VRT-in-infty}
  If $\cZ \subset \cC_2$ is a vertical rational tail, then the image
  $f_1(\cZ)$ dominates a line joining $\zero_{\cP_1}$ and a point on
  $\infty_{\cP_1}$.
\end{corollary}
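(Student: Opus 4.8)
The plan is to deduce the corollary directly from Lemma~\ref{lem:const-proj} together with the stability of the stable map $f_1$. First I would record the elementary geometry: since $\cZ$ is a \emph{vertical} rational tail, it is contracted by $\cC_2 \to \tilde{\cC}'_1$ to a single point $\bar{p} \in \tilde{\cC}'_1$, so $f_1(\cZ)$ is contained in the fibre $\cP_{1,\bar{p}}$ of $\cP_1 \to \tilde{\cC}'_1$ over $\bar{p}$, a weighted projective space. Inside this fibre I would make precise the phrase ``the line joining $\zero_{\cP_1}$ and a point $q \in \infty_{\cP_1}$'' as the closure $L_q := \overline{p_1^{-1}(q)}$ of the fibre over $q$ of the projection $p_1\colon \cP_{1,\bar{p}} \dashrightarrow \infty_{\cP_1}$ from $\zero_{\cP_1}$; a short coordinate check in a weighted projective space shows that $L_q$ is an irreducible rational curve which meets $\zero_{\cP_1}$ exactly in the point $\zero_{\bar{p}}$ and meets $\infty_{\cP_1}$ exactly in the point $q$.

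Next I would promote $p_1 \circ f_1\colon \cZ \dashrightarrow \infty_{\cP_1}$ to a morphism: $\cZ$ is a smooth proper (twisted) rational curve over the residue field and $\infty_{\cP_1}$ is proper, so the rational map extends to an honest morphism. By Lemma~\ref{lem:const-proj} this morphism is constant on a dense open subset of $\cZ$, hence, $\infty_{\cP_1}$ being separated, it is globally constant with some value $q \in \infty_{\cP_1}$ lying in the fibre over $\bar{p}$. Therefore $f_1(\cZ) \setminus \zero_{\cP_1} \subseteq p_1^{-1}(q)$, and passing to closures gives $f_1(\cZ) \subseteq L_q \cup \zero_{\cP_1}$; but $f_1(\cZ) \cap \zero_{\cP_1} \subseteq \cP_{1,\bar{p}} \cap \zero_{\cP_1} = \{\zero_{\bar{p}}\} \subseteq L_q$, so in fact $f_1(\cZ) \subseteq L_q$.

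Finally, since $f_1\colon \cC_2 \to \cP_1$ is a stable map and $\cZ$ is a rational tail of $\cC_2$, hence a genus-zero component carrying exactly one special point, stability forces $f_1|_{\cZ}$ to be non-constant; thus $f_1(\cZ)$ is an irreducible curve contained in the irreducible curve $L_q$, whence $f_1(\cZ) = L_q$, i.e.\ $f_1(\cZ)$ dominates the line joining $\zero_{\cP_1}$ and the point $q \in \infty_{\cP_1}$. I do not anticipate a genuine obstacle here: the real content is already in Lemma~\ref{lem:const-proj}, and the only thing requiring a little care is the bookkeeping that identifies $L_q$ with the closure of a $p_1$-fibre and rules out $f_1(\cZ)$ escaping $L_q$ into $\zero_{\cP_1}$ — which is automatic because $\cZ$ lies over a single point of $\tilde{\cC}'_1$.
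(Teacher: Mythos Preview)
Your argument is correct and matches the paper's approach: both confine $f_1(\cZ)$ to a single $p_1$-fibre via Lemma~\ref{lem:const-proj} and then invoke the stability of $f_1$ (constructed as a stable-map limit) to rule out $f_1|_{\cZ}$ being constant. One minor point: the step where you extend $p_1 \circ f_1$ to a global morphism is unnecessary, and the justification ``proper target'' is not quite right for maps from twisted curves to DM stacks --- it is cleaner to pass directly from constancy on the dense open $U$ of Lemma~\ref{lem:const-proj} to $f_1(\cZ) = f_1(\overline{U}) \subseteq \overline{p_1^{-1}(q)} = L_q$.
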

\begin{proof}
  By Lemma \ref{lem:const-proj}, $f_1(\cZ)$ has support on a fiber of
  $p_1$.
  Since $\cZ$ intersects $\infty_{\cP_1}$ at its unique node, it
  suffices to show that $f_1(\cZ) \not\subset \infty_{\cP_1}$ hence
  $f_1|_{\cZ}$ dominates a fiber of $p_1$.
  Otherwise, since $p_1|_{\infty_{\cP_1}}$ is the identity, $f_1$
  contracts $\cZ$ by Lemma \ref{lem:const-proj}.
  This contradicts the stability of $f_1$ constructed as a stable map
  limit.
\end{proof}

\begin{proof}[Proof of Proposition \ref{prop:pre-stable-reduction}]
  We show that Corollary~\ref{cor:no-VRT-in-infty} and
  Lemma~\ref{lem:VRT-in-infty} contradict each other, hence rule out
  the existence of vertical rational tails.

  Let $\cZ\subset \cC_2$ be a vertical rational tail.
  The pre-stable map $\cC_2 \to \cX$ factors through $\cC_2 \to \cC_1$
  along which $\cZ$ is contracted to a smooth unmarked point on
  $\cC_1$.
  Thus there is a Zariski neighborhood $U \subset \cC_2$ containing
  $q$ such that $\bE_j|_{U}$ splits.
  Denote by $\{H_{ijk}\}_{k=1}^{\rk \bE_j}$ the collection of
  hyperplanes in $\cP_i|_{U}$ corresponding to each splitting factor
  of $\bE_j|_{U}$.

  By Corollary~\ref{cor:no-VRT-in-infty} there is a smooth unmarked
  point $q \in \cZ$ such that $f_1(q) \in \zero_{\cP_1}$, hence
  $f_1(q) \in H_{1jk}$ for all $j$ and $k$.
  We will show that $f_0(q) \in H_{0jk}$ for all $j$ and $k$ as well,
  hence $f_0(q) \in \zero_{\cP_0}$, which contradicts
  Lemma~\ref{lem:VRT-in-infty}.

Suppose $\cZ$ intersects $H_{1jk}$ properly at $q$ via $f_1$. Let $D_{1jk} \subset U$ be an irreducible component of $f_1^*(H_{1jk})$ containing $q$. Then $D_{1jk}$ is a multi-section over $S$ with the general fiber $D_{1jk,\eta} \subset f_{1,\eta}^*(H_{1jk,\eta}) = f_{0,\eta}^*(H_{0jk,\eta})$. Taking closure, we observe that  $f_0(q) \in D_{1jk} \subset f_{0}^*(H_{0jk})$.

Suppose $f_1(\cZ) \subset H_{1jk}$. Note that $p_1\circ f_1 = p_0 \circ c^{-1} \circ f_1 = p_0 \circ f_0$ are well-defined along an open dense subset of $\cZ$. Then Lemma \ref{lem:VRT-in-infty} together with $p_1 \circ f_1(\cZ) \subset \infty_{\cP_1} \cap H_{1jk} \cong \infty_{\cP_0} \cap H_{0jk}$ implies that $f_0$ contracts $\cZ$ to a point of $\infty_{\cP_0} \cap H_{0jk}$.
\end{proof}

\subsection{Stabilization}

Let $f\colon \cC \to \fP$ be a pre-stable reduction extending
$f_{\eta}$ over $S$ as in \eqref{equ:underlying-triangle}.
We next show that by repeatedly contracting unstable rational bridges
and rational tails as in Section~\ref{sss:stabilize-bridge} and
\ref{sss:stabilize-tail}, we obtain a pre-stable reduction satisfying
\eqref{equ:hyb-stability}.
Together with Proposition~\ref{prop:valuative-representability}, this
will complete the proof of Theorem~\ref{thm:weak-valuative}.

\subsubsection{Stabilizing unstable rational bridges}\label{sss:stabilize-bridge}
Let $\cZ \subset \cC$ be an unstable rational bridge. We contract $\cZ$ as follows. Consider $\cC \to C \to C'$ where the first arrow takes the coarse curve, and the second arrow contracts the component corresponding to $\cZ$. Since $\omega^{log}_{C'/S}|_{\cC} = \omega^{log}_{\cC/S}$, we have a commutative diagram
\[
\xymatrix{
\cC \ar@/^1pc/[drr]^{f} \ar@/_1pc/[rdd] \ar@{-->}[dr]^{f_{C'}}&& \\
& \fP_{C'} \ar[r] \ar[d] & \fP \ar[d] \\
& C' \ar[r]^{\omega^{log}_{C'/S}} & \BC
}
\]
where the square is cartesian and the dashed arrow $f_{C'}$ is induced by the fiber product. By Corollary \ref{cor:rat-bridge-stability}, $\cZ$ is contracted along $f_{C'}$.

Note that $f_{\eta}\colon \cC_{\eta} \to \fP$ yields a stable map $\cC_{\eta} \to \fP_{C'}$ which, possibly after a finite base change, extends to a stable map $f'_{C'}\colon \cC' \to \fP_{C'}$. Let $q \in C'$ be the node to which $\cZ$ contracts.

\begin{lemma}\label{lem:remove-unstable-bridge}
The composition $\cC' \to \fP_{C'} \to C'$ is the coarse moduli morphism. Furthermore, let $\tilde{q} \in \cC'$ be the node above $q \in C'$. Then we have $f|_{\cC\setminus \cZ} = f'|_{\cC'\setminus \{q\}}$.
\end{lemma}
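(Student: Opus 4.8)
The plan is to identify the abstract stable limit $f'_{C'}\colon\cC'\to\fP_{C'}$ with the concrete family obtained from $\cC$ by contracting $\cZ$, and then read off both assertions from that identification. The first point is that $f$ contracts $\cZ$ to a point. Since $\cZ$ is an unstable rational bridge, Corollary~\ref{cor:rat-bridge-stability} gives $\deg f^*\cO(\infty_{\fP})|_{\cZ}=0$ and $\deg(\ft\circ f)^*\cH|_{\cZ}=0$. The second equality forces $\ft\circ f$ to contract $\cZ$ to a point $x\in\cX$ (as $\cH$ is pulled back from an ample bundle on $X$ and the fibres of $\cX\to X$ are finite), so $f(\cZ)$ lies in the fibre $\fP_x$ of $\ft$ over $x$, a weighted projective stack on which $\cO(\infty_{\fP})$ restricts to an ample line bundle; the first equality then forces $f|_{\cZ}$ to be constant. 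Hence the induced map $f_{C'}\colon\cC\to\fP_{C'}$ is constant on $\cZ$, whereas on every other component $W\subset\cC$ the composite $\cC\to\fP_{C'}\to C'$ agrees, away from $\cZ$, with the coarse moduli morphism of $\cC$, which contracts nothing; so $f_{C'}$ is non-constant on each such $W$. Thus $\cZ$ is the unique component of $\cC$ contracted by $f_{C'}$, and it is rational with exactly two special points.

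Next I would contract $\cZ$ in the family $f_{C'}$: let $\cC''\to\fP_{C'}$ be the resulting stabilized family, $\tilde q\in\cC''$ the new node (lying over $q\in C'$), and $f''\colon\cC''\to\fP_{C'}$ the induced map. By the previous paragraph $f''$ contracts no component, so $f''$ is a twisted stable map over $S$; checking this carefully — that the contraction is again a balanced twisted curve, that $f''$ is representable (invoking the rigidification of Proposition~\ref{prop:valuative-representability} if necessary), and that the orbifold structure at $\tilde q$ is the one dictated by the stable limit — is the fiddly part, forced by $\cZ$ being an unstable rational bridge and by $\cC\to\fP_{C'}$ already being stable away from $\cZ$. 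Granting this, $f''$ extends the generic map $\cC_{\eta}\to\fP_{C'}$, so the separatedness of the stack of twisted stable maps to $\fP_{C'}$ over $S$ \cite{AbVi02} yields a unique isomorphism $\cC'\cong\cC''$ compatible with the maps to $\fP_{C'}$ and with $\cC_{\eta}$ (possibly after the finite base change already built into the construction of $\cC'$).

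Finally the two assertions follow formally. The composite $\cC\to\fP_{C'}\to C'$ equals the coarse moduli morphism of $\cC$ followed by the contraction $C\to C'$ of $\cZ$, and this descends through $\cC\to\cC''$ to the coarse moduli morphism of $\cC''$; transporting along $\cC'\cong\cC''$ shows $\cC'\to\fP_{C'}\to C'$ is the coarse moduli morphism, which is the first statement. For the second, the isomorphism $\cC'\cong\cC''$ restricts to an isomorphism $\cC'\setminus\{\tilde q\}\cong\cC''\setminus\{\tilde q\}\cong\cC\setminus\cZ$ under which $f'_{C'}$ corresponds to $f''$, hence, composing with $\fP_{C'}\to\fP$, to $f|_{\cC\setminus\cZ}$; this is exactly $f|_{\cC\setminus\cZ}=f'|_{\cC'\setminus\{q\}}$. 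I expect the main obstacle to be the middle step — verifying that contracting $\cZ$ in the twisted family $f_{C'}$ yields a legitimate twisted stable map and that its source matches the stable limit $\cC'$ as a twisted curve — since the constancy of $f|_{\cZ}$ is immediate from Corollary~\ref{cor:rat-bridge-stability} and the identification of limits is a formal consequence of separatedness.
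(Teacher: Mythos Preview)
Your approach is correct in outline, but the paper takes a shorter route that avoids precisely the step you flag as the ``fiddly part.'' Rather than contracting $\cZ$ inside the twisted family and then matching the result with $\cC'$ via separatedness of twisted stable maps, the paper passes to coarse moduli from the start: it lets $\bar f'\colon\bar\cC'\to P_{C'}$ and $\bar f_{C'}\colon C\to P_{C'}$ be the coarse stable maps of $f'_{C'}$ and $f_{C'}$. These agree on the generic fibre, so by uniqueness of ordinary stable-map limits, $\bar f'$ is the stabilization of $\bar f_{C'}$. Since (as you also establish) the image of $\cZ$ in $C$ is the only component contracted by $\bar f_{C'}$, stabilization gives $\bar\cC'=C'$, which is exactly the first assertion; the second then follows because the modification is local along $\cZ$.

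The gain of the paper's approach is that the first claim is only about the coarse curve of $\cC'$, so comparing coarse curves suffices and one never has to check that contracting a rational bridge in a \emph{twisted} curve yields a balanced twisted curve with the right stacky structure at the new node---the issue you correctly identify but do not resolve. Your route would work too, but carrying it out honestly requires either constructing that contraction in the twisted category or (more in the spirit of the paper) showing that the relative coarse moduli of $f_{C'}$ over $\fP_{C'}$ already factors through a twisted curve with coarse space $C'$. One minor imprecision in your first paragraph: the fibre $\fP_x$ of $\ft$ over $x\in\cX$ is not literally a weighted projective stack (it still lives over $\BC$ via the spin-structure gerbe); the cleaner way to see constancy is to argue, as you essentially do, about $f_{C'}|_{\cZ}$ landing in a single fibre of $\fP_{C'}\to C'$, which \emph{is} a weighted projective stack once the point $q$ is fixed.
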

\begin{proof}
Let $\bar{f}'\colon \bar{\cC}' \to P_{C'}$ be the coarse stable map of $f'_{C'}$, and $\bar{f}_{C'}\colon C \to P_{C'}$ be the coarse stable map of $f_{C'}$. Thus $\bar{f}'$ is the stabilization of $\bar{f}_{C'}$ as a stable map. By construction,  the image of $\cZ$ in $C$ is the only unstable component of $\bar{f}_{C'}$, hence is the only component contracted along $C \to \bar{\cC}'$. Therefore $\cC' \to C'$ is the coarse moduli. Since the modification is local along $\cZ$, the second statement follows from the first one.
\end{proof}

Let $f'$ be the composition $\cC' \to \fP_{C'} \to \fP$. The above lemma implies that $\omega^{\log}_{\cC'/S} = \omega^{\log}_{C'/S}|_{\cC'}$. Thus $f'\colon \cC' \to \fP$ is a new pre-stable reduction extending $f_{\eta}$ with $\cZ$ removed.

\subsubsection{Stabilizing rational tails}\label{sss:stabilize-tail}
Let $\cZ \subset \cC$ be an unstable rational tail, $\cC \to \cC'$ be the contraction of $\cZ$, and $p \in \cC'$ be the image of $\cZ$. Possibly after a finite extension, we take the stable map limit
\[
f'\colon \cC'' \to \fP_{\cC'} := \cC'\times_{\BC}\fP
\]
extending the one induced by $f_{\eta}$.
We will also use $f'\colon \cC'' \to \fP$ for the corresponding
morphism.
Let $\cT \subset \cC''$ be the tree of rational components contracted
to $p$.
Since $f'$ is a modification of $f$ around $\cZ$, we observe that
$f'_{\cC'' \setminus \cT} = f|_{\cC\setminus \cZ}$.

\begin{proposition}\label{prop:stabilize-tails}
The composition $\cC'' \to \fP_{\cC'} \to \cC'$ is the identity.   Therefore, $f'\colon \cC' \to \fP$ is a pre-stable reduction extending $f_{\eta}$ with $\cZ$ contracted but everywhere else is identical to $f$.
\end{proposition}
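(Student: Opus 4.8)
The plan is to show that the stable map limit introduces no bubbling, i.e.\ that $\cC'' = \cC'$; the displayed consequence then follows at once, since a section of $\fP_{\cC'}\to\cC'$ composed with $\fP_{\cC'}\to\fP$ is an $R$-map over $\cC'$ and $\omega^{\log}_{\cC'/S}$ restricts correctly.

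First I would reduce to a statement about a single bubble tree over $p$. Since $\cZ$ is a rational tail, with contraction $\pi\colon\cC\to\cC'$, the line bundles $\omega^{\log}_{\cC/S}$ and $\pi^*\omega^{\log}_{\cC'/S}$ differ only by an effective divisor supported on $\cZ$; in particular they agree on $\cC\setminus\cZ\cong\cC'\setminus\{p\}$. Hence $f|_{\cC\setminus\cZ}$ induces a section of $\fP_{\cC'}\to\cC'$ over $\cC'\setminus\{p\}$. As $\cC'$ is regular of dimension two near $p$ and the stable map limit over $S$ is unique, $\cC''\to\cC'$ is an isomorphism over $\cC'\setminus\{p\}$; write $\cC'' = \cC'\cup_p\cT$ for a (possibly trivial) tree $\cT$ of rational bubbles attached at $p$, with $f'$ agreeing with $f$ away from $\cZ$, resp.\ away from $\cT$. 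I would also record that, by \eqref{equ:hyb-stability}, the unstable tail $\cZ$ must be contracted by $\ft\circ f$ (otherwise $(\ft f)^*\cH$ has positive degree on $\cZ$, and $\cZ$ is stable for $k\gg 0$), so $f|_\cZ$ lands in a single fibre of $\fP\to\fX$, a weighted projective stack; by unstability and \eqref{equ:curve-in-infinity}, $f|_\cZ$ is then either constant or a ``line'' joining $\zero_\fP$ and $\infty_\fP$ in that fibre.

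Next, suppose $\cT\neq\{p\}$ and I would derive a contradiction. Form a common pre-stable refinement $\cC^\dagger$ of $\cC$ and $\cC''$ over $\cC'$ (the stable limit of the diagonal $\cC_\eta\hookrightarrow\cC\times_S\cC''$). On $\cC^\dagger$ there are two morphisms to $\fP$: the map $a$ coming from $f$ (over $\omega^{\log}_{\cC/S}$), and the map $b$ coming from $f'$ composed with $\fP_{\cC'}\to\fP$ (over $\omega^{\log}_{\cC'/S}$, pulled back along $\cC^\dagger\to\cC'$). These lie over two morphisms $\cC^\dagger\to\BC$ that differ by a divisor supported over $p$, so they agree on the complement of the fibre over $p$; there $a=b$ by separatedness of $\fP\to\BC$ (Definition~\ref{def:R-map}). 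Now I would examine the fibre of $\cC^\dagger$ over $p$: it contains the proper transform of $\cZ$ together with the proper transforms of the bubbles of $\cT$. Because $f'$ is a stable map, the outermost bubbles of $\cT$ (which have at most two special points, once the markings migrating from $\cZ$ are accounted for) are not contracted by $f'$, so their images lie non-constantly in the fibre $\fP_{\cC',p}\cong\fP_{\bk}$. On the other hand these components are contracted by $\cC^\dagger\to\cC$, hence $a$ is constant on them. Comparing with $b$ via $a=b$ off the fibre over $p$, using the relations of the type \eqref{equ:curve-in-infinity} for curves in the weighted projective fibres and keeping track of the twist by $\cO_{\cC}(\cZ)$ in $\omega^{\log}_{\cC/S}\otimes\pi^*(\omega^{\log}_{\cC'/S})^{\vee}$, one finds that the section over $\cC'\setminus\{p\}$ in fact extends already over $p$ — equivalently, the tail $\cZ$ already absorbs the discrepancy, so no further bubbling is forced. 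This contradicts $\cT\neq\{p\}$, whence $\cC''=\cC'$, and $f'\colon\cC'\to\fP$ is a pre-stable reduction agreeing with $f$ off $\cZ$.

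The main obstacle is this last local comparison at $p$: one must pin down the limit of the section $\cC'\setminus\{p\}\to\fP_{\cC'}$ as one approaches $p$, show it is the single point of $\fP_{\cC',p}$ given by the image of the node $\cZ\cap\overline{\cC\setminus\cZ}$, and verify that the jump in $\omega^{\log}$ produced by the divisor $\cZ$ does not create indeterminacy there. This is exactly where the structure of $\cZ$ as a contracted tail (so that $f|_\cZ$ is constant or a line to $\infty_\fP$) is used decisively, in the spirit of the separatedness argument of \S\ref{ss:valuative-set-up} but localized at the tail; the rest of the argument is a standard manipulation of stable map limits and refinements.
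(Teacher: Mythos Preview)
Your outline correctly identifies that the task reduces to showing $\cT=\{p\}$, and the observation that $f'$ agrees with $f$ away from $\cZ$ (resp.\ away from $\cT$) is fine. But the argument breaks down precisely at the step you flag as ``the main obstacle,'' and the hand-waving there hides essentially all of the content.

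Two concrete problems. First, the claim that ``$f|_\cZ$ is either constant or a line joining $\zero_\fP$ and $\infty_\fP$'' is not justified and is not true in the weighted-projective fibre of rank $>1$: instability only gives a degree bound on $f^*\cO(\ttwist\infty_\fP)|_\cZ$, not a structural classification of $f|_\cZ$. The paper instead proves the much weaker but sufficient statement (Lemma~\ref{lem:unstable-tail}) that, after locally splitting $\bE$, one has $\deg(f^*H_j)|_\cZ\le 0$ for each of the resulting hyperplanes $H_j$, and hence that if $f(\cZ)\not\subset\zero_\fP$ then $f(\cZ)\cap\zero_\fP=\emptyset$. That dichotomy (image in $\zero_\fP$ versus image disjoint from $\zero_\fP$) is what drives the proof, not a ``line'' picture.

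Second, your comparison of the two maps $a$ and $b$ on a common refinement $\cC^\dagger$ does not actually yield the conclusion. Over the fibre at $p$ the two structure maps to $\BC$ genuinely differ, so $a$ and $b$ land in two different weighted projective bundles $\cP_0$ and $\cP_1$; you cannot just ``compare'' them. The paper makes this precise by passing through the spin structures: it builds a quasi-finite cover $\widetilde{\cC}\to\cC$ on which there is an honest morphism of line bundles $\cL'|_{\widetilde{\cC}}\to\cL|_{\widetilde{\cC}}$ (an $r$th root of $\omega^{\log}_{\cC'/S}\to\omega^{\log}_{\cC/S}$), and this produces explicit birational maps $c\colon\cP_0\dashrightarrow\cP_1$ and $c^{-1}$ with indeterminacy loci exactly along $\infty_{\cP_0}$ and $\zero_{\cP_1}$ over the degeneracy locus. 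Using that $f(\cZ)$ avoids $\zero_\fP$ (the second case above), $c^{-1}$ is defined on $\widetilde f(\cZ)$ and sends it into $\infty_{\cP_1}$; one then shows via a multi-section/hyperplane argument that the corresponding component $\widetilde\cZ$ is contracted by the map to $\fP_{\cC'}$, and hence no new components survive in $\cC''$. The case $f(\cZ)\subset\zero_\fP$ is handled separately by a short degree count on $f^*\cO(\infty_\fP)$. None of this machinery --- the case split at $\zero_\fP$, the local hyperplanes $H_j$, the spin comparison producing $c,c^{-1}$, or the multi-section argument --- appears in your proposal, and I do not see how to complete your step without it.
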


The proof of the above proposition occupies the rest of this section. Since $p$ is a smooth unmarked point of $\cC'$, it suffices to show that $\cT$ contains no component. We first consider the following case.

\begin{lemma}
Notations and assumptions as above, suppose that $f(\cZ) \subset \zero_{\fP}$. Then Proposition \ref{prop:stabilize-tails} holds.
\end{lemma}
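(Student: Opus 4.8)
The goal is to show that when the contracted unstable rational tail $\cZ$ satisfies $f(\cZ) \subset \zero_{\fP}$, the stable-map-limit $f' \colon \cC'' \to \fP_{\cC'}$ does not introduce any contracted rational tree $\cT$ over $p$; equivalently, $\cC'' \to \cC'$ is an isomorphism. The strategy is to argue by contradiction: assuming $\cT$ contains a component, we derive a contradiction with the stability of $f'$ as a stable-map limit, exactly as in the proof of Proposition~\ref{prop:pre-stable-reduction} but in the simpler situation where the comparison map between the two targets is particularly well-behaved.

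First I would set up the local picture near $p$. Since $p \in \cC'$ is a smooth unmarked point and the modification $\cC \to \cC'$ is local around $\cZ$, I may restrict to a Zariski neighborhood $U$ of $p$ over which $\bE$, hence the weighted projective bundle $\fP_{\cC'}$, splits into standard coordinates; this lets me use hyperplanes $H_{jk}$ as in the proof of Proposition~\ref{prop:pre-stable-reduction}. The key input is the hypothesis $f(\cZ) \subset \zero_{\fP}$: on the family $\cC \to \cC'$, the original $R$-map $f$ already sends (the contraction of) $\cZ$ into the zero section, so the induced stable map $\cC \to \fP_{\cC'}$ lands $\cZ$ into $\zero_{\fP_{\cC'}}$, which is a \emph{section} over $\cC'$ (not merely a divisor). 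Therefore over $U$ the limit $f'$ restricted to $\cT$ must have image supported in the fiber of $\fP_{\cC'} \to \cC'$ over $p$, and moreover the generic behavior along $\cT$ is governed by how the central fiber of the original family meets $\zero_{\fP}$.

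Next I would show $f'$ contracts every component of $\cT$. Take any component $\cZ' \subset \cT$. Since $f'$ is the limit of $f_\eta$, on the generic fiber the image avoids the indeterminacy, and the constraint $f(\cZ)\subset\zero_\fP$ propagates: the composition of $f'|_{\cZ'}$ with the projection $\fP_{\cC'}\dashrightarrow \infty_{\cP}$ from the zero section is either undefined (because $f'(\cZ')\subset\zero_{\fP_{\cC'}}$) or, if defined on a dense open, agrees with the projection of the original limit, which is constant there because the original $f$ contracts $\cZ$. In the first case $f'(\cZ')\subset\zero_{\fP_{\cC'}}$; combined with the fact that $\zero_{\fP_{\cC'}}$ is a section over $\cC'$ and $\cZ'$ maps to the single point $p$, this forces $f'|_{\cZ'}$ to be constant. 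In the second case one concludes $f'$ is constant on a dense open of $\cZ'$, hence on $\cZ'$. Either way $f'$ contracts $\cZ'$, so $\cZ'$ is an unstable component of the stable-map limit $f'\colon\cC''\to\fP_{\cC'}$ — contradiction. Hence $\cT$ has no components, $\cC''\to\cC'$ is an isomorphism, and $f'\colon\cC'\to\fP$ is the desired pre-stable reduction with $\cZ$ contracted and unchanged elsewhere, the last assertion following because $f'|_{\cC''\setminus\cT}=f|_{\cC\setminus\cZ}$ and $\cC''\setminus\cT\cong\cC'\setminus\{p\}$.

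\textbf{Main obstacle.} The delicate point is handling the case where $\zero_{\fP}$ fails to be literally a section along the relevant locus or where $\cT$ has several components arranged in a tree: one must verify that the ``projection from the zero section'' argument applies componentwise along the whole tree and that no component can escape by mapping into a coordinate hyperplane $H_{jk}$ transversally rather than into $\zero_\fP$ itself. This is exactly the kind of componentwise bookkeeping carried out in the proof of Proposition~\ref{prop:pre-stable-reduction}, and I expect it to go through verbatim with $\infty_{\cP_1}$ replaced by a coordinate hyperplane and using that $f$ already contracts $\cZ$ entirely; the only genuinely new ingredient is the strengthened hypothesis $f(\cZ)\subset\zero_\fP$, which removes the need for the two-step limit comparison and makes the contradiction immediate.
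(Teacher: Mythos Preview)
Your proposal has a genuine gap. You write that ``the induced stable map $\cC \to \fP_{\cC'}$ lands $\cZ$ into $\zero_{\fP_{\cC'}}$,'' but no such induced map exists. By definition $\fP_{\cC'} = \fP \times_{\BC} \cC'$ where $\cC' \to \BC$ is given by $\omega^{\log}_{\cC'/S}$, whereas $f\colon \cC \to \fP$ lies over $\BC$ via $\omega^{\log}_{\cC/S}$. Since $\cC \to \cC'$ contracts a rational \emph{tail} (not a bridge), $\omega^{\log}_{\cC'/S}|_{\cC} \neq \omega^{\log}_{\cC/S}$, so there is no morphism $\cC \to \fP_{\cC'}$ compatible with $f$. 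This is precisely the obstruction that forces the paper to take a fresh stable-map limit $f'\colon \cC'' \to \fP_{\cC'}$ rather than simply factor $f$. Your subsequent argument, which compares $f'|_{\cZ'}$ to a projection of ``the original limit,'' therefore has no object to compare against; the relationship between $f|_{\cZ}$ and $f'|_{\cT}$ is not direct.

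The paper's proof bypasses all of this with a one-line numerical argument. Since $f(\cZ) \subset \zero_{\fP}$ is disjoint from $\infty_{\fP}$, one has $\deg f^*\cO(\infty_{\fP})|_{\cZ} = 0$, hence
\[
\deg f^*\cO(\infty_{\fP}) \le \deg (f')^*\cO(\infty_{\fP})\big|_{\overline{\cC''_s\setminus\cT}}
\]
(using $f'|_{\cC''\setminus\cT} = f|_{\cC\setminus\cZ}$). On the other hand each component of $\cT$ lies in a single fiber of $\fP_{\cC'} \to \cC'$, and since $f'$ is a stable map no such component is contracted; as $\cO(\infty_{\fP})$ restricts to the tautological $\cO(1)$ on the fiber, one gets $\deg (f')^*\cO(\infty_{\fP})|_{\cT} \ge 0$ with equality iff $\cT$ is a point. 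The total degree is fixed by the generic fiber, so these two inequalities force $\cT = \emptyset$. No target comparison, no projection from the zero section, and no hyperplane bookkeeping is needed.
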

\begin{proof}
  Since $f'_{\cC'' \setminus \cT} = f|_{\cC\setminus \cZ}$, the
  assumption implies
  $\deg f^*\cO(\infty_{\fP}) \leq \deg
  (f')^*\cO(\infty_{\fP})|_{\overline{\cC''_s\setminus\cT}}$.
  On the other hand, we have
  $\deg (f')^*\cO(\infty_{\fP})|_{\cT} \geq 0$, and ``$=$'' iff $\cT$
  is a single point.
  Thus, the lemma follows from
  $ \deg f^*\cO(\infty_{\fP}) = \deg
  (f')^*\cO(\infty_{\fP})|_{\overline{\cC''_s\setminus\cT}} + \deg
  (f')^*\cO(\infty_{\fP})|_{\cT} $.
\end{proof}

We now impose the condition $f(\cZ) \not\subset \zero_{\fP}$.
Observe that the pre-stable map $g\colon \cC \to \cX$ contracts $\cZ$,
hence factors through a pre-stable map $g'\colon \cC' \to \cX$.
Since $p$ is a smooth unmarked point, we may choose a Zariski
neighborhood $U' \subset \cC'$ of $p$ such that $(g')^*\bE_i|_{U'}$
splits for each $i$.
Denote by $U = U'\times_{\cC'} \cC$.
Then $g^*\bE_i|_{U}$ splits as well for each $i$.
The $j$-th splitting factors of $\oplus\bE_i|_{U}$ and
$\oplus\bE_i|_{U'}$ define families of hyperplanes
\begin{equation}\label{equ:local-hyperplane}
H_j \subset \fP_U, \ \ \ \mbox{and} \ \ \ H'_j \subset \fP_{U'}
\end{equation}
over $U$ and $U'$ respectively for $j = 1, 2, \cdots, n$.

\begin{lemma}\label{lem:unstable-tail}
Notations and assumptions as above, for each $j$ we have $\deg (f^*H_j)|_{\cZ} \leq 0$. In particular, $f(\cZ) \not\subset \zero_{\fP}$ implies that $f(\cZ) \cap \zero_{\fP} = \emptyset$.
\end{lemma}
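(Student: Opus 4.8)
The plan is to compare, on the tail $\cZ$, the line bundles $f^*\cO(H_j)$, $f^*\cO(\ttwist\infty_{\fP})$ and $\omega^{\log}_{\cC/S}$, and then to substitute the failure of \eqref{equ:hyb-stability} on $\cZ$. First I would observe that, since $\cZ$ is an \emph{unstable} rational tail, $g:=\ft\circ f$ must contract $\cZ$: otherwise $\deg(g^*\cH)|_{\cZ}>0$ because $\cH$ is the pull-back of an ample line bundle, and then $(\omega^{\log}_{\cC/S})^{1+\delta}\otimes g^*\cH^{\otimes k}\otimes f^*\cO(\ttwist\infty_{\fP})$ would have positive degree on $\cZ$ for $k\gg 0$, contradicting instability. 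Consequently $g^*\bE_i|_{\cZ}$ and $g^*\bL|_{\cZ}$ are trivial, so each splitting factor $L_j$ of $\bigoplus_i g^*\bE_i^\vee$ over $U$ has $\deg L_j|_{\cZ}=0$ and $\deg g^*\bL|_{\cZ}=0$; and, being a rational tail, $\cZ$ carries at most one marking, so $\deg\omega^{\log}_{\cC/S}|_{\cZ}\le 0$.

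Next I would read off divisor classes from the weighted projective bundle description \eqref{equ:proj-bundle}: over the curve $\fP$ pulls back to $\cP_{\cC}=\PP^{\bw}\big(\bigoplus_{i>0}(g^*\bE_i^\vee\otimes\cL^{\otimes i})\oplus\cO\big)$, in which $\infty_{\fP}$ is the vanishing of the weight-$1$ coordinate of the $\cO$-summand, while $H_j$ is the vanishing of the weight-$(\etwist i)$ coordinate of the splitting factor $L_j\otimes\cL^{\otimes i}$ of $g^*\bE_i^\vee\otimes\cL^{\otimes i}$, where $i$ is the grading with $L_j$ a factor of $g^*\bE_i^\vee$. A computation of tautological classes on $\PP^{\bw}(-)$ then gives $\cO(\infty_{\fP})\cong\cO_{\cP_{\cC}}(1)$ and $\cO(H_j)\cong\cO_{\cP_{\cC}}(\etwist i)\otimes\pi^*(L_j\otimes\cL^{\otimes i})$. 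Pulling back along the section $f|_{\cZ}$, using $\deg L_j|_{\cZ}=0$, the $r$-spin relation $\cL^{\otimes r}\cong\omega^{\log}_{\cC/S}\otimes g^*\bL^\vee$ of Definition~\ref{def:spin} (whence $\deg\cL|_{\cZ}=\tfrac1r\deg\omega^{\log}_{\cC/S}|_{\cZ}$), and $\ttwist=\etwist r$, I would obtain the key identity
\[
\deg\big(f^*\cO(H_j)\big)\big|_{\cZ}=\frac{i}{r}\,\deg\big(\omega^{\log}_{\cC/S}\otimes f^*\cO(\ttwist\infty_{\fP})\big)\big|_{\cZ}.
\]

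Finally, the failure of \eqref{equ:hyb-stability} on $\cZ$ together with $\deg(g^*\cH)|_{\cZ}=0$ reads $(1+\delta)\deg\omega^{\log}_{\cC/S}|_{\cZ}+\deg f^*\cO(\ttwist\infty_{\fP})|_{\cZ}\le 0$ for a small $\delta>0$; since $\delta\deg\omega^{\log}_{\cC/S}|_{\cZ}\le 0$ this gives $\deg(\omega^{\log}_{\cC/S}\otimes f^*\cO(\ttwist\infty_{\fP}))|_{\cZ}\le 0$, hence $\deg f^*\cO(H_j)|_{\cZ}\le 0$ as $i>0$. For the last assertion, note $\zero_{\fP}=\bigcap_j H_j$ (the common zero of all coordinates except that of the $\cO$-summand); if $f(\cZ)$ met $\zero_{\fP}$ without lying in it, then $f(\cZ)\not\subset H_j$ for some $j$ while $f(\cZ)\cap H_j\supseteq f(\cZ)\cap\zero_{\fP}\ne\emptyset$, so $f^*H_j$ would be a nonzero effective divisor on $\cZ$, of positive degree, contradicting the bound just proved; therefore $f(\cZ)\cap\zero_{\fP}=\emptyset$. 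The step I expect to be most delicate is the middle one: identifying $\cO(H_j)$ and $\cO(\infty_{\fP})$ as the precise twists of the tautological bundle $\cO_{\cP_{\cC}}(1)$ on the \emph{weighted} projective bundle, tracking the weights $\bw$ and the $\QQ$-degrees coming from the orbifold structure of $\cZ$ carefully enough that the coefficient $i/r$ and the sign come out exactly; the remaining manipulations of the stability inequality are routine.
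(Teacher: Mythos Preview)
Your argument follows the same route as the paper's: identify $f^*\cO(H_j)|_\cZ\cong\cL^{i}\otimes f^*\cO(\etwist i\,\infty_\fP)|_\cZ$ (using triviality of $g^*\bE$ and $g^*\bL$ on the contracted tail), convert via the spin relation to $\tfrac{i}{r}\deg\big(\omega^{\log}_{\cC/S}\otimes f^*\cO(\ttwist\infty_\fP)\big)\big|_\cZ$, and then appeal to the failure of the stability inequality on $\cZ$; the deduction of $f(\cZ)\cap\zero_\fP=\emptyset$ from $\bigcap_j H_j=\zero_\fP$ is also the same.

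One algebraic slip to fix in your last paragraph: from $(1+\delta)A+B\le 0$ and $\delta A\le 0$ (with $A=\deg\omega^{\log}_{\cC/S}|_\cZ$, $B=\deg f^*\cO(\ttwist\infty_\fP)|_\cZ$) you cannot conclude $A+B\le 0$ for a \emph{fixed} $\delta$, since $A+B=(1+\delta)A+B-\delta A\le -\delta A$, which is only nonnegative. The correct step is that instability of $\cZ$ (with $\deg g^*\cH|_\cZ=0$) forces $(1+\delta)A+B\le 0$ for \emph{arbitrarily small} $\delta>0$; letting $\delta\to 0^+$ then yields $A+B\le 0$, which is exactly what the paper records as ``since $\cZ$ is unstable, $\deg\big(\omega^{\log}_{\cC/S}\otimes f^*\cO(\ttwist\infty_\fP)\big)|_\cZ\le 0$.''
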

\begin{proof}
Observe that $\bigcap_j H_j$ is the zero section $\zero_{\fP_U}$. Thus, it suffices to show that $\deg (f^*H_j)|_{\cZ} \leq 0$ for each $j$.

Since $\cZ$ is contracted by $f$, $\bE_i$ and $\bL$ are both trivial along $\cZ$. Thus, we have $\fP_{\cZ} = \PP^\bw(\oplus_j \cL_{\cZ}^{\otimes i_j} \oplus \cO)$ where the direct sum is given by the splitting of $\bE_i$ for all $i$. The corresponding section $\cZ \to \fP_{\cZ}$ is defined by a collection of sections $(s_1, \cdots, s_n, s_{\infty})$ with no base point, where $s_j \in H^0(\cL^{i_j}\otimes f^*\cO(w_{i_j} \infty_{\fP})|_{\cZ})$ and $s_{\infty} \in H^0(f^*\cO(\infty_{\fP})|_{\cZ})$. In particular, we have $f^*\cO(H_j)|_{\cZ} = \cL^{i_j}\otimes f^*\cO(w_{i_j} \infty_{\fP})|_{\cZ}$. Note that $w_{i_j} = \etwist \cdot i$ by the choice of weights \eqref{equ:universal-proj}. We calculate
\[
  (\cL^{i_j}\otimes f^*\cO(w_{i_j} \infty_{\fP})|_{\cZ})^{r}
  = (\cL \otimes f^*\cO(\infty_{\fP})|_{\cZ})^{\ttwist i}
  = \big(\omega^{\log}_{\cC/S}\otimes f^*\cO(\ttwist \infty_{\fP})|_{\cZ}\big)^{ i}.
\]
Since $\cZ$ is unstable, we have
$\deg \omega^{\log}_{\cC/S}\otimes f^*\cO(\ttwist \infty_{\fP})|_{\cZ}
\leq 0$, which implies $\deg (f^*H_j)|_{\cZ} \leq 0$.
\end{proof}

To further proceed, consider the spin structure $\cL'$ over $\cC'$ and
observe that $\cL'|_{\cC'\setminus \{p\}} = \cL|_{\cC\setminus \cZ}$.
Using the same construction as for \eqref{equ:spin-iterate}, we obtain
a quasi-finite morphism $\widetilde{\cC} \to \cC$ between two pre-stable
curves over $S$ which is isomorphic away from $\cZ$ and its pre-image
in $\widetilde{\cC}$, and a canonical morphism of line bundles
$\cL'|_{\widetilde{\cC}} \to \cL|_{\widetilde{\cC}}$ extending the identity
$\cL'|_{\cC'\setminus \{p\}} = \cL|_{\cC\setminus \cZ}$, whose $r$-th
power is the canonical morphism
$\omega^{\log}_{\cC'/S}|_{\widetilde{\cC}} \to
\omega^{\log}_{\cC/S}|_{\widetilde{\cC}}$.
Define:
\[
\fP_{\widetilde{\cC}} := \fP\times_{\BC}\widetilde{\cC} \ \ \ \mbox{and} \ \ \ \fP'_{\widetilde{\cC}} := \fP_{\cC'}\times_{\cC'}\widetilde{\cC}
\]
We have arrived at the following commutative diagram
\[
\xymatrix{
\widetilde{\cC} \ar[rr]^{\widetilde{f}}  && \fP_{\widetilde{\cC}} \ar[rrd] \ar@/^1pc/@{-->}[dd]^{c^{-1}}&&&& \\
&&&& \widetilde{\cC} \ar[rrd] && \\
\widetilde{\cC}''' \ar[rr]^{\widetilde{f}'} \ar[uu] \ar@/_1pc/[rrrrd]^{f''}&& \fP'_{\widetilde{\cC}} \ar@/^1pc/@{-->}[uu]^{c} \ar[rru] \ar[rrd] &&&& \cC' \\
&& &&\fP_{\cC'} \ar[rru] &&
}
\]
where $\widetilde{f}$ is the section obtained by pulling back $f$, $c$
and $c^{-1}$ are the two birational maps defined using
$\cL'|_{\widetilde{\cC}} \to \cL|_{\widetilde{\cC}}$ similarly to
\eqref{diag:compare-target}, and $\widetilde{f}'$ is the stable map
limit extending the one given by $f_{\eta}$.

Denote by $\cZ$ the corresponding rational tail of $\widetilde{\cC}$, and
by $\widetilde{\cZ} \subset \widetilde{\cC}'''$ the component dominating $\cZ$.
By Lemma \ref{lem:unstable-tail}, the image $\widetilde{f}(\cZ)$ avoids
$\zero_{\fP_{\widetilde{\cC}}}|_{\cZ}$ which is the indeterminacy locus of
$c^{-1}$.
This implies that
$\widetilde{f}'(\widetilde{\cZ}) \subset c^{-1}\circ \widetilde{f}(\cZ) \subset
\infty_{\fP'_{\widetilde{\cC}}}$.
Thus by the commutativity of the above diagram, any rational tail of
$\widetilde{\cC}'''$ contracted to a point on $\cZ$, is also contracted by
$\widetilde{f}'$.
Now the stability of $\widetilde{f}'$ as a stable map implies:

\begin{lemma}
$\widetilde{\cC}''' \to \widetilde{\cC}$ contracts no component.
\end{lemma}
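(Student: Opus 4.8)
The plan is to show that the morphism $\varpi\colon \widetilde{\cC}''' \to \widetilde{\cC}$ obtained from $\widetilde{f}'$ by composing with the projection $\fP'_{\widetilde{\cC}} \to \widetilde{\cC}$ is an isomorphism. First I would note that $\varpi$ is a morphism of pre-stable curves over $S$ restricting to the identity over $\ul{\eta}$: over $\ul{\eta}$ the map $\widetilde{f}'$ is the tautological section, so composing it with $\fP'_{\widetilde{\cC}} \to \widetilde{\cC}$ gives the identity of $\cC_{\eta}$. Hence $\varpi$ is a contraction — outside a disjoint union of trees of rational curves, each collapsed by $\varpi$ to a point, it is an isomorphism — and, just as for the morphism $\cC'' \to \cC'$ treated above, the stable map limit producing $\widetilde{\cC}'''$ is a modification supported over the image $p$ of $\cZ$, a smooth \emph{unmarked} point of $\cC'$. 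Consequently every tree contracted by $\varpi$ lies over a point of $\cZ$, carries no marked point, and the component $\widetilde{\cZ}$, being the unique component of $\varpi^{-1}(\cZ)$ that dominates $\cZ$, is not contracted.

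Next I would argue by contradiction. If $\varpi$ contracts some component, then $\varpi^{-1}(\cZ)$ is a tree of rational curves containing $\widetilde{\cZ}$ together with at least one contracted component; extracting a leaf $\cW$ of this tree distinct from $\widetilde{\cZ}$ yields a rational component of $\widetilde{\cC}'''$ with a single node and no marking, i.e.\ a rational tail of $\widetilde{\cC}'''$ which $\varpi$ collapses to a point lying on $\cZ$. By the implication recorded immediately before the lemma, $\widetilde{f}'$ then also contracts $\cW$, so $\widetilde{f}'$ would be a stable map carrying a contracted genus-zero component with only one special point — impossible. Therefore $\varpi$ contracts no component, which is the assertion of the lemma; combined with the foregoing and with Lemma~\ref{lem:unstable-tail} this yields Proposition~\ref{prop:stabilize-tails}, and hence, together with Proposition~\ref{prop:valuative-representability}, Theorem~\ref{thm:weak-valuative}.

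The step I expect to be the main obstacle is precisely the extraction of a contracted rational \emph{tail} from the contracted locus: a priori $\varpi^{-1}(\cZ)$ could be a chain of rational bridges joining $\widetilde{\cZ}$ to the rest of $\widetilde{\cC}'''$ across the node of $\cZ$, each component of which has two special points, so that the displayed implication — which concerns only rational tails — does not directly apply. Ruling this out, or equivalently showing directly that every component contracted by $\varpi$ is collapsed by $\widetilde{f}'$, uses the membership $\widetilde{f}'(\widetilde{\cZ}) \subset \infty_{\fP'_{\widetilde{\cC}}}$ together with the explicit local form of the birational maps $c$, $c^{-1}$ and the projection from the zero section, in the spirit of Lemmas~\ref{lem:VRT-in-infty} and \ref{lem:const-proj}: a bridge contracted by $\varpi$ would be mapped by $\widetilde{f}'$ to a non-constant curve inside a single fibre of $\fP'_{\widetilde{\cC}} \to \widetilde{\cC}$ meeting $\infty_{\fP'_{\widetilde{\cC}}}$, and the projection argument excludes this, forcing $\varpi^{-1}(\cZ)$ to be a union of trees attached at single points and hence to contain a rational tail.
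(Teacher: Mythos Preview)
Your overall strategy matches the paper's: use the implication recorded just before the lemma (that $\varpi$-contracted components over $\cZ$ are $\widetilde{f}'$-contracted) and then invoke stability of $\widetilde{f}'$. The paper's proof is literally the sentence ``Now the stability of $\widetilde{f}'$ as a stable map implies [the lemma]'', so in spirit you are doing the same thing.

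Your ``main obstacle'' is not one, however, and your proposed workaround via projection arguments is unnecessary. The implication before the lemma is stated for rational tails, but its justification --- the commutativity of the big diagram --- applies uniformly to \emph{every} $\varpi$-contracted component, bridge or tail. Concretely: by Lemma~\ref{lem:unstable-tail} the section $\widetilde{f}(\cZ)$ avoids $\zero_{\fP_{\widetilde{\cC}}}$, which is the full indeterminacy locus of $c^{-1}$ over $\cZ$; hence $c^{-1}\circ\widetilde{f}$ is a genuine morphism on all of $\cZ$ (and it is already an isomorphism away from $\cZ$). Since both $\widetilde{f}'$ and $(c^{-1}\circ\widetilde{f})\circ\varpi$ are maps $\widetilde{\cC}'''\to\fP'_{\widetilde{\cC}}$ extending the same generic-fiber section into a separated Deligne--Mumford target, they agree. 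Thus if $\cW$ is any component with $\varpi(\cW)=\{x\}\subset\cZ$, then $\widetilde{f}'(\cW)=(c^{-1}\circ\widetilde{f})(x)$ is a single point. A contracted rational bridge is just as unstable as a contracted rational tail, so stability of $\widetilde{f}'$ rules out both cases at once. Your last paragraph's scenario --- a bridge mapped non-constantly into a fiber of $\fP'_{\widetilde{\cC}}\to\widetilde{\cC}$ --- cannot occur, and no argument in the style of Lemmas~\ref{lem:VRT-in-infty}--\ref{lem:const-proj} is needed.
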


Furthermore:
\begin{lemma}
  The rational tail $\widetilde{\cZ}$ is contracted by $f''$.
\end{lemma}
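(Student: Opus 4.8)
The plan is to reduce the statement to the single assertion that the $\infty_\cX$-projection of the original $R$-map along $\cZ$ is constant, and then to prove that by a degree count.

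First I would set up the reduction. By construction $f''$ is $\widetilde f'$ followed by the projection $\fP'_{\widetilde\cC}\to\fP_{\cC'}$, and $\widetilde\cZ$ maps to the smooth unmarked point $p\in\cC'$, so $f''(\widetilde\cZ)$ lies in the fibre $\fP_{\cC'}|_p$. By the previous part of the proof $\widetilde f'(\widetilde\cZ)\subseteq c^{-1}(\widetilde f(\cZ))$, where $\widetilde f$ is the pull-back of $f$; since $\widetilde f(\cZ)$ avoids $\zero_{\fP_{\widetilde\cC}}$ by Lemma~\ref{lem:unstable-tail}, the computation with $c^{-1}$ as around \eqref{diag:compare-target} shows that $c^{-1}(\widetilde f(\cZ))$ lies in $\infty_{\fP'_{\widetilde\cC}}$ and that its image in $\infty_\cX$ (via $\infty_{\fP'_{\widetilde\cC}}\to\infty_{\fP'}\to\infty_\cX$) equals the image of the composite
\[
\cZ\longrightarrow\fP\setminus\zero_\fP\longrightarrow\infty_\fP\longrightarrow\infty_{\fP'}\longrightarrow\infty_\cX,
\]
where the first arrow is $f|_\cZ$ and the second the projection from $\zero_\fP$; call this composite $\ul{f}_{\cX}|_{\cZ}$ (it is the $\ul f_\cX$ of Proposition~\ref{prop:curve-in-infinity} when $f(\cZ)\subset\infty_\fP$, and is everywhere defined since $f(\cZ)\cap\zero_\fP=\emptyset$). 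Hence if $\ul{f}_{\cX}|_{\cZ}$ is constant, then $f''(\widetilde\cZ)$ lies in a single fibre of $\infty_{\fP_{\cC'}}|_p\to\infty_\cX$, which is a gerbe over a point; as $\widetilde\cZ$ is a (twisted) rational curve and $\mathrm{Pic}$ of $\PP^1$ is torsion-free, this forces $f''$ to contract $\widetilde\cZ$. So it suffices to show $\ul{f}_{\cX}|_{\cZ}$ is constant.

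Next I would run the degree count. Since $g=\ft\circ f$ contracts $\cZ$, a line-bundle computation as in the proof of Proposition~\ref{prop:curve-in-infinity} gives
\[
\deg (\ul{f}_{\cX}|_{\cZ})^*\cO_{\infty_\cX}(1)=\tfrac dr\,\deg\big(\omega^{\log}_{\cC/S}\otimes\ul{f}^*\cO(\ttwist\infty_\fP)\big)\big|_{\cZ}.
\]
The right-hand side is $\le 0$: $\cZ$ is an unstable rational tail and $(\ft\circ f)^*\cH|_{\cZ}$ is trivial, so stability \eqref{equ:hyb-stability} fails along $\cZ$, which (exactly as in the proof of Lemma~\ref{lem:unstable-tail}) forces $\deg(\omega^{\log}_{\cC/S}\otimes\ul{f}^*\cO(\ttwist\infty_\fP))|_{\cZ}\le 0$. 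On the other hand $\ul{f}_{\cX}|_{\cZ}$ is a morphism from a complete curve to the projective stack $\infty_\cX|_{g(\cZ)}$, so the pull-back of the ($\QQ$-ample) bundle $\cO_{\infty_\cX}(1)$ has non-negative degree. Therefore both sides vanish, $\ul{f}_{\cX}|_{\cZ}$ is constant, and $f''$ contracts $\widetilde\cZ$; together with the preceding lemma this completes the proof of Proposition~\ref{prop:stabilize-tails}.

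The step I expect to be the main obstacle is establishing the first displayed identification: tracking the weighted projective bundle identifications precisely enough to see that $c^{-1}$ sends $\widetilde f(\cZ)$ into $\infty_{\fP'_{\widetilde\cC}}$ in a way compatible with the projection $\fP\setminus\zero_\fP\to\infty_\fP$, and then fixing the proportionality constant $\tfrac dr$ in the second display consistently with the normalizations of $\bw$ and of $\cO_{\infty_\cX}(1)$; both are local coordinate computations in the style used repeatedly in Section~\ref{sec:properties}.
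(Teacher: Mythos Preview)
Your approach is correct and genuinely different from the paper's. The paper argues by contradiction using the local hyperplanes $H_j$, $H'_j$ from \eqref{equ:local-hyperplane}: if $f''(\widetilde\cZ)$ were one-dimensional it would properly meet some $H'_j$, and a multi-section argument over $\eta$ together with Lemma~\ref{lem:unstable-tail} forces $\widetilde f(\cZ)\subset H_j$, hence $\widetilde f'(\widetilde\cZ)\subset H'_j$, a contradiction. You instead use the global projection to $\infty_\cX$ and a degree count. Your displayed degree identity is correct (it is exactly the computation $(\ul f_\cX|_\cZ)^*\cO_{\infty_\cX}(1)\cong\cL^d\otimes f^*\cO(ad\cdot\infty_\fP)$ on $\cZ$, with $\bL$ trivial there), and the two inequalities pin the degree at zero, so $\ul f_\cX|_\cZ$ is constant. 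Your identification of the two routes to $\infty_\cX$ is also right: over the degeneracy locus $c^{-1}$ kills the last coordinate, so $c^{-1}\circ\widetilde f|_\cZ$ agrees with $p_\zero\circ\widetilde f|_\cZ$ under the canonical isomorphism $\infty_{\fP_{\widetilde\cC}}\cong\infty_{\fP'_{\widetilde\cC}}$, and both maps to $\infty_\cX$ are compatible with this isomorphism since the untwisting in \eqref{diag:infinity-pullback} is functorial. The paper's route has the advantage of reusing Lemma~\ref{lem:unstable-tail} verbatim and avoiding the global line-bundle bookkeeping you flag as the main obstacle; your route is more intrinsic and explains conceptually why $\widetilde\cZ$ cannot move in the fibre direction.

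One small correction: your final step invoking ``$\mathrm{Pic}$ of $\PP^1$ is torsion-free'' is both unnecessary and not quite right. The curve $\widetilde\cZ$ may acquire extra stacky points from the root-stack construction of $\widetilde\cC\to\cC$, so its Picard group can have torsion. What actually finishes the argument is simply that the fibre of $\infty_{\fP_{\cC'}}|_p\to(\infty_\cX)_x$ over a point is a gerbe (a $\mu_\ell$-gerbe over a $\mu_r$-gerbe, by \eqref{diag:infinity-pullback} and the paragraph after), hence zero-dimensional; once $\ul f_\cX|_\cZ$ is constant, $f''(\widetilde\cZ)$ is automatically zero-dimensional, which is precisely what the paper means by ``contracted'' (its proof only rules out ``$f''(\widetilde\cZ)$ is one-dimensional''). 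So drop the Picard remark and conclude directly from the gerbe observation.
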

\begin{proof}
  Write $\widetilde{U} = \widetilde{\cC}\times_{\cC'}U$.
  By abuse of notations, denote by
  $H_{j} \subset \fP_{\widetilde{\cC}}$ and
  $H'_{j} \subset \fP'_{\widetilde{\cC}}$ the families of hyperplanes
  over $\widetilde{U}$ obtained by pulling back the corresponding
  hyperplanes in \eqref{equ:local-hyperplane}.
  From the construction of $c^{-1}$, we observe that
  $\widetilde{f}(\cZ) \subset H_j$ for some $H_j$ implies that
  $\widetilde{f}'(\widetilde{\cZ}) \subset H'_j$.

  Suppose $f''(\widetilde{\cZ})$ is one-dimensional.
  Then $\widetilde{\cZ}$ intersects some $H'_j$ properly and
  non-trivially.
  Since $H'_{j}$ is a family over $\widetilde U$,
  $(\widetilde{f}')^*(H'_j)$ contains a non-empty irreducible
  multi-section over $U$ which intersects $\widetilde{\cZ}$.
  Denote this multi-section by $D$.
  Consider the general fiber
  $D_{\eta} \subset f_{\eta}^*(H'_{j,\eta}) = f_{\eta}^*(H_{j,\eta})$.
  The closure $\overline{D_{\eta}} \subset \widetilde{f}^{*}H_j$
  intersects $\cZ$ non-trivially.
  By Lemma \ref{lem:unstable-tail}, we necessarily have
  $\widetilde{f}(\cZ) \subset H_j$ hence
  $\widetilde{f}'(\widetilde{\cZ}) \subset H'_{j}$ by the previous
  paragraph.
  This contradicts the assumption that $\widetilde{\cZ}$ and $H'_j$
  intersect properly.
\end{proof}

Finally, observe that the coarse pre-stable map of $f''$ factors
through the coarse stable map of $f'\colon \cC'' \to \fP_{\cC'}$.
The above two lemmas show that the unstable components of
$\widetilde{\cC}'''$ with respect to $f''$ are precisely those
contracted in $\cC'$.
Therefore, the arrow $\cC'' \to \cC'$ contracts no component.
This completes the proof of Proposition \ref{prop:stabilize-tails}.


\section{Reducing perfect obstruction theories along boundary}
\label{sec:POT-reduction}

For various applications in this and our subsequent papers
\cite{CJR20P1, CJR20P2}, we further develop a general machinery,
initiated in \cite{CJRS18P}, on reducing a perfect obstruction theory
along a Cartier divisor using cosections.
Furthermore, we prove a formula relating the two virtual cycles
defined using a perfect obstruction theory and its reduction under the
general setting in Section~\ref{ss:boundary-cycle}. Since log
structures are irrelevant in this section, we will assume all log
structures to be trivial for simplicity.

\subsection{Set-up of the reduction}\label{ss:reduction-set-up}
Throughout this section we will consider a sequence of morphisms of algebraic stacks
\begin{equation}\label{equ:stacks-reduction}
\scrM \to \fH \to \fM
\end{equation}
where $\scrM$ is a separated Deligne--Mumford stack, and the second morphism is smooth of Deligne--Mumford type.

Let $\Delta \subset \fM$ be an effective Cartier divisor, and let $\Delta_{\fH}$ and $\Delta_{\scrM}$ be its pull-backs in $\fH$ and $\scrM$ respectively. 
Let $\FF$ be the complex with amplitude $[0,1]$ over $\fM$
\[
\cO_{\fM}  \stackrel{\epsilon}{\longrightarrow} \cO_{\fM}(\Delta)
\]
where $\epsilon$ is the canonical section defining $\Delta$.

We further assume two relative perfect obstruction theories
\begin{equation}\label{equ:canonical-POT}
\varphi_{\scrM/\fM} \colon \TT_{\scrM/\fM} \to \EE_{\scrM/\fM} \ \ \ \mbox{and} \ \ \ \varphi_{\fH/\fM} \colon \TT_{\fH/\fM} \to \EE_{\fH/\fM}
\end{equation}
which fit in a commutative diagram
\begin{equation}\label{diag:compatible-POT}
\xymatrix{
\TT_{\scrM/\fM} \ar[rr] \ar[d]_{\varphi_{\scrM/\fM}} && \TT_{\fH/\fM} \ar[d]^{\varphi_{\fH/\fM}|_{\scrM}} \\
\EE_{\scrM/\fM} \ar[rr]^{\sigma^{\bullet}_{\fM}} && \EE_{\fH/\fM}
}
\end{equation}
such that $H^1(\EE_{\fH/\fM}) \cong \cO_{\fM}(\Delta)|_{\fH}$, and the following cosection
\begin{equation}\label{equ:general-cosection}
\sigma_{\fM} := H^1(\sigma^{\bullet}_{\fM}) \colon  H^1(\EE_{\scrM/\fM})  \to H^1(\EE_{\fH/\fM}|_{\scrM}) \cong \cO_{\fM}(\Delta)|_{\scrM}
\end{equation}
is surjective along $\Delta_{\scrM}$.

\subsection{The construction of the reduction}
Consider the composition
\[
\EE_{\fH/\fM} \to H^1(\EE_{\fH/\fM})[-1] \cong \cO_{\fM}(\Delta)|_{\scrM}  \twoheadrightarrow \cok(\epsilon)[-1].
\]
Since $\fH \to \fM$ is smooth, we have $\cok(\epsilon)[-1] \cong \FF|_{\fH}$. Hence the above composition defines a morphism
\begin{equation}\label{equ:reduction-map}
\EE_{\fH/\fM} \to \FF|_{\fH}
\end{equation}
over $\fH$. We form the distinguished triangles
\begin{equation}\label{equ:reduced-POT}
\EE^{\red}_{\fH/\fM} \to \EE_{\fH/\fM} \to \FF|_{\fH} \stackrel{[1]}{\to} \ \ \ \mbox{and} \ \ \  \EE^{\red}_{\scrM/\fM} \to \EE_{\scrM/\fM} \to \FF|_{\scrM} \stackrel{[1]}{\to},
\end{equation}
where the middle arrow in the second triangle is the composition of \eqref{equ:reduction-map} with $\sigma^{\bullet}_{\fM}$.

\begin{theorem}\label{thm:reduction}
Notations and assumptions as above, we have:
\begin{enumerate}
 \item There is a factorization of perfect obstruction theories
 \[
 \xymatrix{
 \TT_{*/\fM} \ar[rr]^{\varphi_{*/\fM}} \ar[rd]_{\varphi^{\red}_{*/\fM}} && \EE_{*/\fM} \\
 &\EE^{\red}_{*/\fM} \ar[ru]&
 }
 \]
such that $\varphi^{\red}_{*/\fM}|_{*\setminus\Delta_*} = \varphi_{*/\fM}|_{*\setminus\Delta_*}$  for $* = \scrM$ or $\fH$.

 \item There is a canonical commutative diagram
 \[
 \xymatrix{
 \EE^{\red}_{\scrM/\fM} \ar[rr] \ar[d]_{\sigma^{\bullet,\red}_{\fM}} && \EE_{\scrM/\fM} \ar[d]^{\sigma^{\bullet}_{\fM}} \\
 \EE^{\red}_{\fH/\fM}|_{\scrM} \ar[rr] && \EE_{\fH/\fM}|_{\scrM}
 }
 \]
 such that $H^1(\EE^{\red}_{\fH/\fM}) \cong \cO_{\fH}$. Furthermore, the {\em reduced cosection}
\[
\sigma^{\red}_{\fM}:= H^1(\sigma^{\red,\bullet}_{\fM}) \colon H^1(\EE^{\red}_{\scrM/\fM}) \to H^1(\EE^{\red}_{\fH/\fM}|_{\scrM}) \cong \cO_{\scrM}
\]
is surjective along $\Delta_{\scrM}$, and satisfies $\sigma^{\red}_{\fM}|_{\scrM\setminus\Delta_{\scrM}} = \sigma_{\fM}|_{\scrM\setminus\Delta_{\scrM}}$.
\end{enumerate}
\end{theorem}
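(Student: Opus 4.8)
The plan is to perform the construction \eqref{equ:reduced-POT} and then verify the assertions of Theorem~\ref{thm:reduction} by standard manipulations in the derived category, treating the case $\ast=\fH$ first and bootstrapping to $\ast=\scrM$ via the compatibility square \eqref{diag:compatible-POT}. Throughout I will use that $\epsilon$, and its pull-backs, are nonzerodivisors, so that $\FF$, $\FF|_{\fH}$ and $\FF|_{\scrM}$ all have $h^0=0$ and $h^1=\cO_{\Delta}(\Delta)$ (pulled back accordingly), and that a perfect obstruction theory $\varphi$ is characterised by $\Cone(\varphi)$ having cohomology in degrees $\ge 1$.

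\emph{The factorisations.} First I would check that the composite $\TT_{\fH/\fM}\to\EE_{\fH/\fM}\to\FF|_{\fH}$ is zero: by \eqref{equ:reduction-map} it factors through $\TT_{\fH/\fM}\to H^1(\EE_{\fH/\fM})[-1]$, and since $\fH\to\fM$ is smooth the source is a locally free sheaf in degree $0$ while the target sits in degree $1$, so this map lies in an $\operatorname{Ext}^{-1}$, which vanishes. The triangle axiom applied to the first triangle in \eqref{equ:reduced-POT} then yields a lift $\varphi^{\red}_{\fH/\fM}\colon\TT_{\fH/\fM}\to\EE^{\red}_{\fH/\fM}$ through which $\varphi_{\fH/\fM}$ factors, and it is the unique such lift because the obstruction to uniqueness lies in $\Hom(\TT_{\fH/\fM},\FF|_{\fH}[-1])=\operatorname{Ext}^{-2}=0$. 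For $\ast=\scrM$, the middle arrow of the second triangle in \eqref{equ:reduced-POT} is \eqref{equ:reduction-map} precomposed with $\sigma^{\bullet}_{\fM}$; using \eqref{diag:compatible-POT} the composite $\TT_{\scrM/\fM}\to\EE_{\scrM/\fM}\to\FF|_{\scrM}$ equals $\TT_{\scrM/\fM}\to\TT_{\fH/\fM}|_{\scrM}\to\EE_{\fH/\fM}|_{\scrM}\to\FF|_{\scrM}$, whose last two arrows already compose to zero by the $\fH$-case, so the triangle axiom again provides $\varphi^{\red}_{\scrM/\fM}$. This proves part (1); compatibility with $\varphi_{\ast/\fM}$ over $\ast\setminus\Delta_{\ast}$ is automatic because $\FF$ is acyclic there, so $\EE^{\red}_{\ast/\fM}\to\EE_{\ast/\fM}$ restricts to an isomorphism.

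\emph{Perfectness and part (2).} Next I would read off the cohomology long exact sequences of the two triangles. From $H^1(\EE_{\fH/\fM})\cong\cO_{\fM}(\Delta)|_{\fH}$ one identifies, after unwinding \eqref{equ:reduction-map}, the map $h^1(\EE_{\fH/\fM})\to h^1(\FF|_{\fH})$ with the canonical surjection $\cO_{\fM}(\Delta)|_{\fH}\twoheadrightarrow\cO_{\Delta}(\Delta)|_{\fH}$; hence $H^2(\EE^{\red}_{\fH/\fM})=0$ and $H^1(\EE^{\red}_{\fH/\fM})\cong\ker\big(\cO_{\fM}(\Delta)|_{\fH}\to\cO_{\Delta}(\Delta)|_{\fH}\big)\cong\cO_{\fH}$, which is the first claim of (2). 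For $\ast=\scrM$ the analogous map is $\sigma_{\fM}$ followed by that surjection; its target $h^1(\FF|_{\scrM})$ is supported on $\Delta_{\scrM}$, where $\sigma_{\fM}$ is surjective by hypothesis, so the map is surjective and $H^2(\EE^{\red}_{\scrM/\fM})=0$ as well. Thus both $\EE^{\red}_{\ast/\fM}$ are perfect of amplitude $[0,1]$. To see $\varphi^{\red}_{\ast/\fM}$ is a perfect obstruction theory I would apply the octahedral axiom to $\varphi_{\ast/\fM}=\big(\EE^{\red}_{\ast/\fM}\to\EE_{\ast/\fM}\big)\circ\varphi^{\red}_{\ast/\fM}$, producing a triangle $\Cone(\varphi^{\red}_{\ast/\fM})\to\Cone(\varphi_{\ast/\fM})\to\FF|_{\ast}\xrightarrow{[1]}$; since $\Cone(\varphi_{\ast/\fM})$ and $\FF|_{\ast}$ both have cohomology in degrees $\ge 1$, so does $\Cone(\varphi^{\red}_{\ast/\fM})$. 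Finally, for the reduced cosection I would take $\sigma^{\bullet,\red}_{\fM}$ to be the induced map between the fibres of the two defining triangles of \eqref{equ:reduced-POT}, which exists and makes the square of (2) commute precisely because the middle arrow of the $\scrM$-triangle is, by construction, \eqref{equ:reduction-map} composed with $\sigma^{\bullet}_{\fM}$. On $H^1$ this identifies $H^1(\EE^{\red}_{\scrM/\fM})$ with $\sigma_{\fM}^{-1}(\epsilon|_{\scrM}\cdot\cO_{\scrM})\subseteq H^1(\EE_{\scrM/\fM})$ and $\sigma^{\red}_{\fM}$ with $\epsilon|_{\scrM}^{-1}\circ\sigma_{\fM}$ on this subsheaf; near $\Delta_{\scrM}$, where $\sigma_{\fM}$ is surjective onto $\cO_{\fM}(\Delta)|_{\scrM}$, its restriction surjects onto $\epsilon|_{\scrM}\cdot\cO_{\scrM}\cong\cO_{\scrM}$, and away from $\Delta_{\scrM}$ it agrees with $\sigma_{\fM}$ because $\epsilon|_{\scrM}$ is then invertible.

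\emph{Expected difficulty.} The main obstacle is the non-functoriality of cones: the lift $\varphi^{\red}_{\ast/\fM}$ and the morphism of triangles defining $\sigma^{\bullet,\red}_{\fM}$ are not canonical in a triangulated category, so one must argue that they can be chosen compatibly and that the induced identifications of $H^1(\EE^{\red}_{\ast/\fM})$ (hence of the reduced cosection) are the intended ones; in \cite{CJRS18P} this is handled by working with explicit two-term representatives of the complexes involved, which is the route I would follow here. A secondary, more bookkeeping-type point is verifying that $h^1(\EE_{\fH/\fM})\to h^1(\FF|_{\fH})$ really is the canonical surjection, which requires tracing the definition \eqref{equ:reduction-map} through the isomorphism $H^1(\EE_{\fH/\fM})\cong\cO_{\fM}(\Delta)|_{\fH}$; once this is pinned down the remaining verifications are diagram chases.
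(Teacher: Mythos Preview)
Your approach is essentially the paper's own: construct the lift via vanishing of $\TT_{\ast/\fM}\to\FF|_\ast$, read off perfectness from the long exact sequence of \eqref{equ:reduced-POT}, and deduce the properties of $\sigma^{\red}_{\fM}$ from a comparison of the two triangles. The octahedral argument you give for the obstruction-theory condition is a clean variant of the paper's direct $H^0/H^1$ check, and your uniqueness remark for $\varphi^{\red}_{\fH/\fM}$ is a pleasant bonus.

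There is one inaccuracy to fix. You assert at the outset that $\epsilon$ pulls back to a nonzerodivisor on $\scrM$, so that $H^0(\FF|_{\scrM})=0$. Nothing in the set-up of \S\ref{ss:reduction-set-up} guarantees this: $\scrM\to\fM$ is not assumed flat, and in the intended application $\scrM=\UH^{\cpt}$ may well have components supported in $\Delta_{\UH^{\cpt}}$. The paper only claims $H^0(\FF|_{\fH})=0$, using smoothness of $\fH\to\fM$. Consequently your identification of $H^1(\EE^{\red}_{\scrM/\fM})$ with the subsheaf $\sigma_{\fM}^{-1}(\epsilon\cdot\cO_{\scrM})\subset H^1(\EE_{\scrM/\fM})$, and of $\sigma^{\red}_{\fM}$ with ``$\epsilon^{-1}\circ\sigma_{\fM}$'', is not valid in general: the long exact sequence gives instead
\[
0\to H^0(\FF|_{\scrM})\to H^1(\EE^{\red}_{\scrM/\fM})\to H^1(\EE_{\scrM/\fM})\to H^1(\FF|_{\scrM})\to 0,
\]
with a possibly nonzero left-hand term. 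The fix is exactly what the paper does: compare this sequence with the analogous one for $\EE_{\fH/\fM}|_{\scrM}$ (which has the same outer terms, with identity maps between them) and conclude surjectivity of $\sigma^{\red}_{\fM}$ along $\Delta_{\scrM}$ from that of $\sigma_{\fM}$ by a four-lemma chase. Once you drop the nonzerodivisor assumption and argue this way, the rest of your proof goes through unchanged.
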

This theorem will be proven below in Section~\ref{ss:proof-reduction}.

In case $\fM$ admits a fundamental class $[\fM]$, denote by $[\scrM]^{\vir}$ and $[\scrM]^{\red}$ the virtual cycles giving by the perfect obstruction theories $\varphi_{\scrM/\fM}$ and $\varphi^{\red}_{\scrM/\fM}$ respectively.

\begin{remark}
  In order to construct the cone $\EE^\red_{\scrM/\fM}$, instead of having the auxiliary stack $\fH$, it suffices to
  assume the existence of a cosection
  $\sigma_\fM\colon H^1(\EE_{\scrM/\fM}) \to \cO_{\fM}(\Delta)|_\scrM$.
  Furthermore, the proof of Theorem~\ref{thm:reduction} shows that if
  $\sigma_\fM$ is surjective along $\Delta_\scrM$, then
  $\EE^\red_{\scrM/\fM}$ is perfect of amplitude $[0, 1]$.
  On the other hand,  in practice the auxiliary stack $\fH$ provides a convenient criterion to
  ensure the factorization of Theorem~\ref{thm:reduction} (1).
\end{remark}

\subsection{Decending to the absolute reduced theory}\label{ss:general-absolut-theory}
We further assume $\fM$ is smooth. Consider the morphism of triangles:
\begin{equation}\label{diag:relative-to-absolute}
\xymatrix{
\TT_{{*}/\fM} \ar[r] \ar[d]_{\varphi_{*/\fM}^{\red}} & \TT_{{*}} \ar[r] \ar[d]_{\varphi^{\red}_{*}} & \TT_{\fM}|_{{*}} \ar[r]^{[1]} \ar[d]^{\cong} & \\
\EE^{\red}_{{*}/\fM} \ar[r]  & \EE^{\red}_{{*}} \ar[r]  & \TT_{\fM}|_{{*}} \ar[r]^{[1]}  &
}
\end{equation}
for $*=\fH$ or $\scrM$. By \cite[Proposition A.1.(1)]{BrLe00}, $\varphi^{\red}_\scrM$ is a perfect obstruction theory compatible with $\varphi^{\red}_{\scrM/\fM}$, hence induces the same virtual cycle $[\scrM]^{\red}$.

\begin{lemma}\label{lem:cosection-descent}
The induced morphism $H^1(\EE^{\red}_{{\fH}/{\fM}}) \to H^1(\EE^{\red}_{{\fH}})$ is an isomorphism of $\cO_{{\fH}}$.
\end{lemma}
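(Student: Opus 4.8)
The claim is that the map on $H^1$ induced by the triangle $\EE^{\red}_{\fH/\fM} \to \EE^{\red}_{\fH} \to \TT_{\fM}|_{\fH}$ (the bottom row of \eqref{diag:relative-to-absolute} with $*=\fH$) is an isomorphism. The strategy is to run the long exact cohomology sequence of this triangle and to exploit the fact that $\TT_{\fM}|_{\fH}$ is concentrated in degrees $\leq 0$, since $\fM$ is smooth (so $\LL_{\fM}$, hence $\TT_{\fM}$, lives in degrees $[-1,0]$ — in fact in degree $0$ as $\fM$ is smooth, giving $\TT_{\fM}$ amplitude $[0,1]$ in homological/tangent conventions, so that $H^1(\TT_{\fM}|_{\fH})$ may be nonzero but $H^2$ vanishes). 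I would be careful with the cohomological conventions used in the paper: $\TT$ denotes the (dual) tangent complex, so $\TT_{\fM} = \LL_{\fM}^\vee$ has cohomology in degrees $[0,1]$, and the relevant claim reduces to vanishing/surjectivity statements at the top degree.

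First I would write the long exact sequence
\[
H^0(\TT_{\fM}|_{\fH}) \to H^1(\EE^{\red}_{\fH/\fM}) \to H^1(\EE^{\red}_{\fH}) \to H^1(\TT_{\fM}|_{\fH}) \to H^2(\EE^{\red}_{\fH/\fM}).
\]
Since $\varphi^{\red}_{\fH/\fM}$ is a perfect obstruction theory (Theorem~\ref{thm:reduction}(1) applied with $*=\fH$), $\EE^{\red}_{\fH/\fM}$ has amplitude $[0,1]$, so $H^2(\EE^{\red}_{\fH/\fM}) = 0$; thus the right-hand map is surjective. For injectivity on the left I would show $H^0(\TT_{\fM}|_{\fH}) \to H^1(\EE^{\red}_{\fH/\fM})$ is the zero map, equivalently that $H^1(\EE^{\red}_{\fH/\fM}) \to H^1(\EE^{\red}_{\fH})$ is injective. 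The key input is that, by construction in \eqref{equ:reduced-POT}, $\EE^{\red}_{\fH/\fM}$ is the fiber of $\EE_{\fH/\fM} \to \FF|_{\fH}$, and $H^1(\EE_{\fH/\fM}) \cong \cO_{\fM}(\Delta)|_{\fH}$ with the map to $H^1(\FF|_{\fH}) = \cok(\epsilon)|_{\fH}$ being the canonical surjection; hence $H^1(\EE^{\red}_{\fH/\fM}) \cong \ker(\cO_{\fM}(\Delta)|_{\fH} \twoheadrightarrow \cok(\epsilon)|_{\fH}) \cong \cO_{\fH}$ via $\epsilon$, which is exactly the statement $H^1(\EE^{\red}_{\fH/\fM}) \cong \cO_{\fH}$ recorded in Theorem~\ref{thm:reduction}(2). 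So the content of the present lemma is just that this line bundle $\cO_{\fH}$ is carried isomorphically onto $H^1(\EE^{\red}_{\fH})$.

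The cleanest route is to observe that $\fH \to \fM$ is smooth, so the relative-to-absolute triangle for $\fH$ reads $\TT_{\fH/\fM} \to \TT_{\fH} \to \TT_{\fM}|_{\fH}$ with $\TT_{\fH/\fM}$ a vector bundle in degree $0$ (smoothness), hence $H^1(\TT_{\fH/\fM}) = 0$ and $H^1(\TT_{\fH}) \hookrightarrow H^1(\TT_{\fM}|_{\fH})$. Dualizing/comparing via the obstruction-theory maps: $H^1(\EE^{\red}_{\fH})$ receives $H^1(\EE^{\red}_{\fH/\fM})$ and maps to $H^1(\TT_{\fM}|_{\fH})$; I would compare the two triangles and use that $\EE^{\red}_{\fH/\fM} \to \EE_{\fH/\fM}$ restricts to an isomorphism away from $\Delta_{\fH}$ together with the explicit identification above to pin down $H^1(\EE^{\red}_{\fH}) \cong \cO_{\fH}$. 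Concretely, the composite $\cO_{\fH} \cong H^1(\EE^{\red}_{\fH/\fM}) \to H^1(\EE^{\red}_{\fH})$ is injective because its post-composition with $H^1(\EE^{\red}_{\fH}) \to H^1(\TT_{\fM}|_{\fH})$ — which by chasing \eqref{diag:relative-to-absolute} equals the connecting map $H^1(\EE^{\red}_{\fH/\fM}) \to H^1(\TT_{\fM}|_{\fH}[1]) = H^2(\TT_{\fM}|_{\fH}) = 0$... — here I need the precise degree bookkeeping, but the upshot is that the boundary map out of $H^1(\EE^{\red}_{\fH/\fM})$ lands in a group that vanishes by smoothness of $\fM$, forcing injectivity, and surjectivity follows from $H^2(\EE^{\red}_{\fH/\fM})=0$ as above.

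\textbf{The main obstacle.} The real subtlety is the degree/convention bookkeeping in the long exact sequence: whether $H^1(\TT_{\fM}|_{\fH})$ is the obstruction term that one must kill, and confirming it vanishes (or that the relevant connecting homomorphism is zero) purely from $\fM$ smooth. One has to be sure the triangle $\EE^{\red}_{\fH/\fM} \to \EE^{\red}_{\fH} \to \TT_{\fM}|_{\fH}$ is genuinely distinguished with the shift in the position claimed in \eqref{diag:relative-to-absolute}, and that $\EE^{\red}_{\fH}$ really is perfect of amplitude $[0,1]$ (which follows from $\TT_{\fH/\fM}$ being a bundle and $\EE^{\red}_{\fH/\fM}$ perfect of amplitude $[0,1]$). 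Once those are nailed down, the lemma is a two-line diagram chase; I expect the write-up to consist mainly of invoking Theorem~\ref{thm:reduction}(2), the smoothness of $\fM$ and of $\fH \to \fM$, and the long exact sequence, with the verification that the boundary maps vanish being the only place requiring care.
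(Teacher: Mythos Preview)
Your overall strategy---run the long exact sequence of the triangle $\EE^{\red}_{\fH/\fM} \to \EE^{\red}_{\fH} \to \TT_{\fM}|_{\fH}$ and feed in smoothness hypotheses---matches the paper. But both halves of your argument have gaps.

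\textbf{Surjectivity.} You write ``$H^2(\EE^{\red}_{\fH/\fM}) = 0$; thus the right-hand map is surjective.'' That deduction is backwards: vanishing of $H^2(\EE^{\red}_{\fH/\fM})$ only tells you $H^1(\EE^{\red}_{\fH}) \to H^1(\TT_{\fM}|_{\fH})$ is surjective, not that $H^1(\EE^{\red}_{\fH/\fM}) \to H^1(\EE^{\red}_{\fH})$ is. For surjectivity you need $H^1(\TT_{\fM}|_{\fH}) = 0$, which the paper obtains directly from ``$\fM$ smooth''. Your hedge that $H^1(\TT_{\fM}|_{\fH})$ ``may be nonzero'' is exactly where you lose the argument.

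\textbf{Injectivity.} Your proposed argument---prove injectivity of $H^1(\EE^{\red}_{\fH/\fM}) \to H^1(\EE^{\red}_{\fH})$ by post-composing with $H^1(\EE^{\red}_{\fH}) \to H^1(\TT_{\fM}|_{\fH})$---cannot work, since that composite is automatically zero by exactness. The paper instead compares the two rows of \eqref{diag:relative-to-absolute}: since $\varphi^{\red}_{\fH}$ is a perfect obstruction theory, $H^0(\TT_{\fH}) \to H^0(\EE^{\red}_{\fH})$ is an isomorphism, and since $\fH \to \fM$ is smooth, $H^0(\TT_{\fH}) \to H^0(\TT_{\fM}|_{\fH})$ is surjective. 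Together these give surjectivity of $H^0(\EE^{\red}_{\fH}) \to H^0(\TT_{\fM}|_{\fH})$, so the connecting map $H^0(\TT_{\fM}|_{\fH}) \to H^1(\EE^{\red}_{\fH/\fM})$ vanishes. You correctly set up the comparison with the tangent row but never use the $H^0$-isomorphism of the obstruction theory, which is the missing ingredient.
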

\begin{proof}
Since $\fM$ is smooth, we have $H^1(\TT_{\fM}) = 0$. Consider the induced morphism between long exact sequences
\[
\xymatrix{
H^{0}(\TT_{{\fH}}) \ar[r] \ar[d]^{\cong} & H^{0}(\TT_{\fM|_{{\fH}}}) \ar[r] \ar[d]^{\cong} & H^{1}(\TT_{{\fH}/\fM}) \ar[r] \ar[d] & H^{1}(\TT_{{\fH}}) \ar[r] \ar[d] & 0 \\
H^{0}(\EE^{\red}_{{\fH}}) \ar[r] & H^{0}(\TT_{\fM}|_{{\fH}}) \ar[r] & H^{1}(\EE^{\red}_{{\fH}/\fM}) \ar[r] & H^{1}(\EE^{\red}_{{\fH}}) \ar[r] & 0
}
\]
Since ${\fH} \to \fM$ is smooth, the two horizontal arrows on the left are both surjective. Thus $H^1(\EE^{\red}_{{\fH}/\fM}) \to H^1(\EE^{\red}_{{\fH}})$ is an isomorphism. By Theorem \ref{thm:reduction} (2), we have $H^1(\EE^{\red}_{{\fH}}) \cong \cO_{{\fH}}$.
\end{proof}

By Theorem \ref{thm:reduction}, we obtain a morphism of triangles
\[
\xymatrix{
\EE^{\red}_{\scrM/\fM} \ar[r] \ar[d]_{{\sigma^{\bullet,\red}_{\fM}}}  & \EE^{\red}_{\scrM} \ar[r] \ar[d]_{\sigma^{\bullet,\red}} & \TT_{\scrM} \ar[r]^{[1]}  \ar[d] & \\
\EE^{\red}_{{\fH}/{\fM}}|_{\scrM} \ar[r]  & \EE^{\red}_{\fH}|_{\scrM} \ar[r]  & \TT_{\fM}|_{\scrM} \ar[r]^{[1]}  &
}
\]
Taking $H^1$ and applying Lemma \ref{lem:cosection-descent}, we have a commutative diagram
\begin{equation}\label{diag:cosections}
\xymatrix{
H^1(\EE^{\red}_{\scrM/\fM}) \ar@{->>}[r] \ar[d]_{\sigma^{\red}_{\fM}} & H^1(\EE^{\red}_{\scrM}) \ar[d]^{\sigma^{\red}} \\
\cO_{\scrM} \ar[r]^{=} &  \cO_{\scrM}.
}
\end{equation}

Denote by $\scrM(\sigma^{\red}) \subset \scrM$ the closed substack along which the cosection $\sigma^{\red}$ degenerates, and write $\iota\colon \scrM(\sigma^{\red}) \hookrightarrow \scrM$ for the closed embedding. Let $[\scrM]_{\sigma^{\red}}$ be the cosection localized virtual cycle as in \cite{KiLi13}. We conclude that

\begin{theorem}\label{thm:generali-localized-cycle}
With the assumptions in Section \ref{ss:reduction-set-up} and further assuming that $\fM$ is smooth, we have
\begin{enumerate}
 \item The cosection $\sigma^{\red}$ is surjective along $\Delta_{\scrM}$.
 \item $\iota_*[\scrM]_{\sigma^{\red}} = [\scrM]^{\red}$.
\end{enumerate}
\end{theorem}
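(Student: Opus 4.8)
The plan is to deduce Theorem~\ref{thm:generali-localized-cycle} from Theorem~\ref{thm:reduction} together with the functoriality of the Kiem--Li cosection localization \cite{KiLi13}. The two statements to prove are: (1) the \emph{absolute} reduced cosection $\sigma^{\red}\colon H^1(\EE^{\red}_{\scrM}) \to \cO_\scrM$ is surjective along $\Delta_\scrM$; and (2) applying cosection localization to the reduced perfect obstruction theory $\varphi^{\red}_\scrM$ recovers the reduced virtual cycle, i.e.\ $\iota_*[\scrM]_{\sigma^{\red}} = [\scrM]^{\red}$.

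First I would prove (1). By Theorem~\ref{thm:reduction}~(2), the relative reduced cosection $\sigma^{\red}_{\fM}\colon H^1(\EE^{\red}_{\scrM/\fM}) \to \cO_\scrM$ is surjective along $\Delta_\scrM$. The commutative square \eqref{diag:cosections} shows that $\sigma^{\red}$ is compatible with $\sigma^{\red}_\fM$ via the surjection $H^1(\EE^{\red}_{\scrM/\fM}) \twoheadrightarrow H^1(\EE^{\red}_{\scrM})$ coming from the long exact sequence associated to the left column of \eqref{diag:relative-to-absolute} (using $H^1(\TT_\fM) = 0$ since $\fM$ is smooth). Surjectivity of $\sigma^{\red}_\fM$ along $\Delta_\scrM$ therefore forces surjectivity of $\sigma^{\red}$ along $\Delta_\scrM$: at any geometric point of $\Delta_\scrM$ the map $\sigma^{\red}_\fM$ hits a generator of $\cO_\scrM$, and it factors through $\sigma^{\red}$, so $\sigma^{\red}$ is surjective there too. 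This also re-confirms that the degeneracy locus $\scrM(\sigma^{\red})$ is disjoint from $\Delta_\scrM$, hence is proper (it is contained in the proper substack parameterizing $R$-maps into $\crit(W)$ in the application), which is what is needed for the cosection-localized class $[\scrM]_{\sigma^{\red}} \in A_*(\scrM(\sigma^{\red}))$ to be defined.

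Next I would prove (2). Since $\fM$ is smooth it carries a fundamental class $[\fM]$, and by \cite[Proposition~A.1]{BrLe00} the absolute theory $\varphi^{\red}_\scrM$ is compatible with the relative theory $\varphi^{\red}_{\scrM/\fM}$, so both define the \emph{same} virtual cycle $[\scrM]^{\red}$ (obtained by virtual pullback of $[\fM]$). Now I would invoke the basic compatibility of Kiem--Li cosection localization with the virtual cycle: for a perfect obstruction theory equipped with a cosection $\sigma$ whose degeneracy locus is proper, the localized class $[\scrM]_\sigma \in A_*(\scrM(\sigma))$ satisfies $\iota_*[\scrM]_\sigma = [\scrM]^{\vir}$ \cite[Theorem~1.1]{KiLi13}. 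Applying this to $(\varphi^{\red}_\scrM, \sigma^{\red})$ gives exactly $\iota_*[\scrM]_{\sigma^{\red}} = [\scrM]^{\red}$. The one point that needs care is that \cite{KiLi13} is stated for absolute obstruction theories over a point; here the target is the smooth stack $\fM$, so I would either appeal to the relative version of cosection localization (e.g.\ via virtual pullback as in \cite{Ma12}, compatible with cosections) or simply note that smoothness of $\fM$ lets one work with the absolute theory $\EE^{\red}_\scrM$ directly, which is what diagram \eqref{diag:relative-to-absolute} and Lemma~\ref{lem:cosection-descent} were set up to provide.

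The main obstacle I anticipate is purely bookkeeping rather than conceptual: one must check that the absolute reduced cosection $\sigma^{\red}$ produced from the triangle $\EE^{\red}_{\scrM/\fM} \to \EE^{\red}_\scrM \to \TT_\fM|_\scrM \xrightarrow{[1]}$ genuinely agrees, away from $\Delta_\scrM$, with the original absolute cosection induced by $\sigma_\fM$, so that $[\scrM]_{\sigma^{\red}}$ restricts to the expected cosection-localized class on the interior (this is the content of the last sentence of Section~\ref{sss:cosection-localized-class} and is used in Theorem~\ref{thm:reduced=local}). This follows from the ``equality away from $\Delta$'' clauses in Theorem~\ref{thm:reduction}~(1) and (2) combined with the evident naturality of passing from relative to absolute theories, but writing it out requires tracking several octahedra. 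Everything else is a direct citation of Theorem~\ref{thm:reduction}, \cite{BrLe00}, and \cite{KiLi13}.
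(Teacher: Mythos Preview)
Your proposal is correct and follows essentially the same approach as the paper: part (1) is deduced from the surjectivity of $\sigma^{\red}_{\fM}$ along $\Delta_{\scrM}$ (Theorem~\ref{thm:reduction}(2)) together with the commutative square~\eqref{diag:cosections}, and part (2) is a direct application of \cite[Theorem~1.1]{KiLi13} to the absolute reduced theory $\varphi^{\red}_{\scrM}$, whose compatibility with the relative theory was already recorded via \cite[Proposition~A.1]{BrLe00} just before the theorem. The paper's proof is simply the two-line version of what you wrote; your additional remarks on properness of the degeneracy locus and on agreement away from $\Delta_{\scrM}$ are not needed for this statement itself (they matter later for Theorem~\ref{thm:reduced=local}).
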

\begin{proof}
(1) follows from the surjectivity of $\sigma^{\red}_{\fM}$ along $\Delta_{\scrM}$ and \eqref{diag:cosections}. (2) follows from \cite[Theorem 1.1]{KiLi13}.
\end{proof}

\subsection{Proof of Theorem \ref{thm:reduction}}
\label{ss:proof-reduction}
By \eqref{diag:compatible-POT}, we obtain a commutative diagram of solid arrows
\begin{equation}\label{diag:compare-POTs}
\xymatrix{
\TT_{\scrM/\fM} \ar@/^1pc/[rrd] \ar@{-->}[rd]_{\varphi^{\red}_{\scrM/\fM}}  \ar[dd] &&&& \\
 & \EE^{\red}_{\scrM/\fM} \ar[dd] \ar[r] & \EE_{\scrM/\fM} \ar[dd] \ar[r] & \FF|_{\scrM} \ar[r]^{[1]} \ar@{=}[dd] & \\
\TT_{\fH/\fM}|_{\scrM} \ar@/^1pc/[rrd]|{\ \ \ \hole \ \   } \ar@{-->}[rd]_{\varphi^{\red}_{\fH/\fM}|_{\scrM}} &&&& \\
 & \EE^{\red}_{\fH/\fM}|_{\scrM} \ar[r] & \EE_{\fH/\fM}|_{\scrM} \ar[r] & \FF|_{\scrM} \ar[r]^{[1]} &
}
\end{equation}
where the two horizontal lines are given by \eqref{equ:reduced-POT}, and the two solid curved arrows are given by \eqref{equ:canonical-POT}.

Since $\fH \to \fM$ is smooth of Deligne--Mumford type, $\TT_{\fH/\fM}$ is the relative tangent bundle $T_{\fH/\fM}$. Thus the composition $\TT_{\fH/\fM} \to \EE_{\fH/\fM} \to \FF|_{\fH}$ is trivial, which leads to the desired arrow $\varphi^{\red}_{\fH/\fM}$.

Similarly, the composition $\TT_{\scrM/\fM} \to \EE_{\scrM/\fM} \to \FF|_{\scrM}$ factors through $\TT_{\fH/\fM}|_{\scrM} \to \FF|_{\scrM}$ hence is also trivial, which leads to $\varphi^{\red}_{\scrM/\fM}$. This proves the factorization part in (1), and the commutative diagram in (2).

For the perfect obstruction theories part, observe that $\EE^{\red}_{\fH/\fM}$ and $\EE^{\red}_{\scrM/\fM}$ are at least perfect in $[0,2]$ as $\FF$ is perfect in $[0,1]$. It remains to show that $H^2(\EE^{\red}_{\fH/\fM}) = 0$ and $H^{2}(\EE^{\red}_{\scrM/\fM})=0$. Taking the long exact sequence of the first triangle in \eqref{equ:reduced-POT}, we have
\[
H^1(\EE_{\fH/\fM}) \to H^1(\FF|_{\fH}) \to H^2(\EE^{\red}_{\fH/\fM}) \to 0.
\]
Since the left arrow is precisely $\cO_{\fM}(\Delta) \twoheadrightarrow \cok \epsilon$, we obtain $H^2(\EE^{\red}_{\fH/\fM}) = 0$.

Similarly, we have the long exact sequence
\[
H^1(\EE_{\scrM/\fM}) \to H^1(\FF|_{\scrM}) \to H^2(\EE^{\red}_{\scrM/\fM}) \to 0,
\]
where the left arrow is given by the composition
\[
H^1(\EE_{\scrM/\fM}) \stackrel{\sigma_{\fM}}{\to} H^1(\EE_{\fH/\fM}|_{\scrM}) \twoheadrightarrow H^1(\FF|_{\scrM}).
\]
Since $\FF|_{\scrM\setminus \Delta_{\scrM}} = 0$ and $\sigma_{\fM}$ is surjective along $\Delta_{\scrM}$, the above composition is surjective, hence $H^2(\EE^{\red}_{\scrM/\fM}) = 0$.

We next verify that $\varphi^{\red}_{\scrM/\fM}$ and $\varphi^{\red}_{\fH/\fM}$ are obstruction theories. Indeed, the factorization of  (1) implies a surjection $H^0(\TT_{\scrM/\fM}) \twoheadrightarrow H^0(\EE^{\red}_{\scrM/\fM})$ and an injection $H^1(\TT_{\scrM/\fM}) \hookrightarrow H^1(\EE^{\red}_{\scrM/\fM})$. Since $\FF|_{\scrM}$ is perfect in $[0,1]$, $H^0(\TT_{\scrM/\fM}) \twoheadrightarrow H^0(\EE^{\red}_{\scrM/\fM})$ is an injection, hence an isomorphism.  The case that $\varphi^{\red}_{\fH/\fM}$ is an obstruction theory can be proved similarly. This completes the proof of (1).

Observe that $H^0(\FF|_{\fH}) = 0$ since $\fH \to \fM$ is smooth. The first triangle in \eqref{equ:reduced-POT} implies an exact sequence
\[
0 \to H^1(\EE^{\red}_{\fH/\fM}) \to H^1(\EE_{\fH/\fM}) \to H^1(\FF|_{\fH}) \to 0.
\]
Using \eqref{equ:general-cosection} and the construction of \eqref{equ:reduction-map}, we obtain $H^1(\EE^{\red}_{\fH/\fM}) \cong \cO_{\fH}$.

Now \eqref{diag:compare-POTs} induces a morphism of long exact sequences
 \[
 \xymatrix{
 0 \ar[r] & H^0(\FF|_{\scrM}) \ar[r] \ar[d]^{\cong} & H^1(\EE^{\red}_{\scrM/\fM}) \ar[r] \ar[d]^{\sigma^{\red}_{\fM}} & H^1(\EE_{\scrM/\fM}) \ar[r] \ar[d]^{\sigma_{\fM}} & H^1(\FF|_{\scrM}) \ar[r]  \ar[d]^{\cong}& 0 \\
 0 \ar[r] & H^0(\FF|_{\scrM}) \ar[r]  & H^1(\EE^{\red}_{\fH/\fM}) \ar[r]  & H^1(\EE_{\fH/\fM}) \ar[r]  & H^1(\FF|_{\scrM}) \ar[r]  & 0
 }
 \]
The surjectivity of $\sigma^{\red}_{\fM}$ along $\Delta_{\scrM}$ follows from the surjectivity of $\sigma_{\fM}$  along $\Delta_{\scrM}$. This finishes the proof of (2).

\subsection{The reduced boundary cycle}\label{ss:boundary-cycle}
The pull-backs
\[
\EE_{\Delta_{\scrM}/\Delta} := \EE_{\scrM/\fM}|_{\Delta_{\scrM}} \ \ \ \mbox{and} \ \ \ \EE_{\Delta_{\fH}/\Delta} := \EE_{\fH/\fM}|_{\Delta_{\fH}}.
\]
define perfect obstruction theories of $\Delta_{\scrM} \to \Delta$ and $\Delta_{\fH} \to \Delta$ respectively. Consider the sequence of morphisms
\[
\EE_{\Delta_{\scrM}/\Delta} \to \EE_{\Delta_{\fH}/\Delta}|_{\Delta_{\scrM}} \to H^{1}(\EE_{\fH/\fM}|_{\Delta_{\scrM}})[-1] \to H^1(\FF)|_{\Delta_{\scrM}}[-1]
\]
where the last arrow is given by \eqref{equ:reduction-map}. Since
\begin{equation}\label{equ:F1}
H^1(\FF|_{\Delta}) = \cO_{\Delta}(\Delta),
\end{equation}
we obtain a triangle
\begin{equation}\label{equ:boundary-red-POT}
\EE^{\red}_{\Delta_{\scrM}/\Delta} \to \EE_{\Delta_{\scrM}/\Delta} \to \cO_{\Delta}(\Delta)|_{\Delta_{\scrM}}[-1] \stackrel{[1]}{\to}
\end{equation}

The two virtual cycles $[\scrM]^{\vir}$ and $[\scrM]^{\red}$ are related as follows.

\begin{theorem}\label{thm:boundary-cycle}
Notations and assumptions as above, we have
\begin{enumerate}
 \item There is a canonical factorization of perfect obstruction theories
  \[
 \xymatrix{
 \TT_{\Delta_{\scrM}/\Delta} \ar[rr]^{\varphi_{\Delta_{\scrM}/\Delta}} \ar[rd]_{\varphi^{\red}_{\Delta_{\scrM}/\Delta}} && \EE_{\Delta_{\scrM}/\Delta} \\
 &\EE^{\red}_{\Delta_{\scrM}/\Delta} \ar[ru]&
 }
 \]
 Denote by $[\Delta_{\scrM}]^{\red}$ the virtual cycle associated to $\varphi^{\red}_{\Delta_{\scrM}/\Delta}$, called the \emph{reduced boundary cycle}.

 \item Suppose $\fM$ is smooth. Then we have a relation of virtual cycles
 \[
 [\scrM]^{\vir} = [\scrM]^{\red} + i_*[\Delta_{\scrM}]^{\red}
 \]
 where $i \colon \Delta_{\scrM} \to \scrM$ is the natural embedding.
\end{enumerate}
\end{theorem}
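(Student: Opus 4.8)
The plan is to carry out the comparison at the level of intrinsic normal cones and bundle stacks. Since $\fM$ is smooth, I would first pass, as in \S\ref{ss:general-absolut-theory}, to the absolute perfect obstruction theories $\EE^{\red}_{\scrM}$ and $\EE_{\scrM}$, which compute the same cycles $[\scrM]^{\red}$ and $[\scrM]^{\vir}$; by Theorem~\ref{thm:reduction} and \eqref{equ:reduced-POT} they fit in a distinguished triangle $\EE^{\red}_{\scrM}\to\EE_{\scrM}\to\FF|_{\scrM}\xrightarrow{[1]}$, with $\FF=[\cO_{\fM}\xrightarrow{\epsilon}\cO_{\fM}(\Delta)]$ a two-term complex of line bundles of rank $0$. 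Applying $h^{1}/h^{0}$ gives a closed embedding of bundle stacks $\kappa\colon\fE^{\red}_{\scrM}\hookrightarrow\fE_{\scrM}$ over $\scrM$ which restricts to an isomorphism over $\scrM\setminus\Delta_{\scrM}$, since $\epsilon$ is invertible there; its ``normal bundle'' is the bundle stack $\mathfrak{F}_{\scrM}:=h^{1}/h^{0}(\FF|_{\scrM})$. The intrinsic normal cone $\fC_{\scrM}$ embeds as a closed subcone of $\fE^{\red}_{\scrM}$, hence of $\fE_{\scrM}$, compatibly, so that $[\scrM]^{\vir}=0^{!}_{\fE_{\scrM}}[\fC_{\scrM}]$ and $[\scrM]^{\red}=0^{!}_{\fE^{\red}_{\scrM}}[\fC_{\scrM}]$. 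In particular, since $\EE^{\red}_{\scrM}=\EE_{\scrM}$ over $\scrM\setminus\Delta_{\scrM}$, the two cycles agree there, so by excision $[\scrM]^{\vir}=[\scrM]^{\red}+i_{*}\gamma$ for some $\gamma\in A_{*}(\Delta_{\scrM})$; the real content is to compute $\gamma$ and identify it with $[\Delta_{\scrM}]^{\red}$.

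To compute $\gamma$, I would use $0_{\fE_{\scrM}}=\kappa\circ 0_{\fE^{\red}_{\scrM}}$, hence $0^{!}_{\fE_{\scrM}}=0^{!}_{\fE^{\red}_{\scrM}}\circ\kappa^{!}$, and evaluate $\kappa^{!}[\fC_{\scrM}]$ by deforming $\fE^{\red}_{\scrM}$ to its normal bundle stack $\mathfrak{F}_{\scrM}$ inside $\fE_{\scrM}$. The essential input is the behaviour of the operation $0^{!}_{\mathfrak{F}_{\scrM}}$: because $\FF=[\cO_{\fM}\xrightarrow{\epsilon}\cO_{\fM}(\Delta)]$ is a two-term complex of line bundles with $\epsilon$ the tautological section cutting out $\Delta$, this operation is the identity over $\scrM\setminus\Delta_{\scrM}$ and localizes its discrepancy onto $\Delta_{\scrM}$, where it is governed by the refined Gysin map of the section $\epsilon$. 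Pushing $[\fC_{\scrM}]$ through the specialization I expect a residual cycle $[\fC']$ on $\mathfrak{F}_{\scrM}|_{\Delta_{\scrM}}$ supported over $\fC_{\scrM}|_{\Delta_{\scrM}}$, with $\gamma=\big(0^{!}_{\fE^{\red}_{\scrM}}|_{\Delta_{\scrM}}\big)[\fC']$.

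The last step is to recognize $\gamma$ as $[\Delta_{\scrM}]^{\red}$. Here I would use that $\Delta_{\scrM}=\Delta\times_{\fM}\scrM$, so that $\EE_{\scrM/\fM}|_{\Delta_{\scrM}}=\EE_{\Delta_{\scrM}/\Delta}$ is a perfect obstruction theory for $\Delta_{\scrM}/\Delta$, and that $\epsilon|_{\Delta}=0$ produces the splitting $\FF|_{\Delta_{\scrM}}=\cO_{\Delta_{\scrM}}[0]\oplus\cO_{\Delta_{\scrM}}(\Delta)[-1]$ of \eqref{equ:F1}. Thus $\EE^{\red}_{\scrM}|_{\Delta_{\scrM}}$ is the reduction of $\EE_{\Delta_{\scrM}/\Delta}$ by the whole of $\FF|_{\Delta_{\scrM}}$, while the reduced boundary theory $\EE^{\red}_{\Delta_{\scrM}/\Delta}$ of \eqref{equ:boundary-red-POT} is its reduction by only the summand $\cO_{\Delta_{\scrM}}(\Delta)[-1]$; the extra degree-zero summand $\cO_{\Delta_{\scrM}}[0]$ is exactly the discrepancy produced by the refined Gysin map of $\epsilon$ in the previous step. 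Matching the cone-theoretic description of $\gamma$ with the definition of $[\Delta_{\scrM}]^{\red}$ via $\varphi^{\red}_{\Delta_{\scrM}/\Delta}$ then gives the identity.

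The hard part will be the last two steps. The bundle stack $\mathfrak{F}_{\scrM}$ is a genuine Artin stack — it acquires a $\GG_{a}$-stabilizer along $\Delta_{\scrM}$ — so the deformation to the normal cone and the ``localized Euler class of $[\cO\to\cO(\Delta)]$'' must be set up carefully. Since the assertion is local on $\scrM$ and all the data respect smooth base change, I would reduce to honest cones and line bundles by choosing local presentations of $\EE_{\scrM/\fM}$ by two-term complexes of vector bundles, perform the computation over the open locus where the cosection $\sigma$ is surjective (which contains $\Delta_{\scrM}$, so nothing is lost), and check independence of the presentation. The genuinely delicate point is to verify that the residual cone $[\fC']$ over $\Delta_{\scrM}$ is precisely the intrinsic normal cone of $\Delta_{\scrM}/\Delta$ computing $[\Delta_{\scrM}]^{\red}$, rather than some a priori larger cone; this is where the Cartesian square $\Delta_{\scrM}=\Delta\times_{\fM}\scrM$ and the compatibility \eqref{diag:compatible-POT} are essential.
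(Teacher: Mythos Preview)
Your approach is genuinely different from the paper's and, while plausible, takes a harder route. The paper sidesteps the entire deformation-to-normal-cone analysis of the bundle stack $h^1/h^0(\FF|_{\scrM})$ by introducing an auxiliary \emph{totally reduced} perfect obstruction theory
\[
\EE^{\tred}_{\scrM/\fM} \to \EE_{\scrM/\fM} \to \cO_{\fM}(\Delta)|_{\scrM}[-1] \stackrel{[1]}{\longrightarrow},
\]
reducing by only the degree-one piece of $\FF$ rather than all of it. On the open locus where $\sigma_{\fM}$ is surjective (to which the paper also restricts, and which contains $\Delta_{\scrM}$), this is a perfect obstruction theory whose restriction to $\Delta_{\scrM}$ coincides with $\EE^{\red}_{\Delta_{\scrM}/\Delta}$; hence $i^![\scrM]^{\tred}=[\Delta_{\scrM}]^{\red}$ by compatibility of virtual pull-backs. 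The rest is a direct appeal to Kiem--Li: the absolute cosection $\sigma\colon H^1(\EE_{\scrM})\to\cO_{\fM}(\Delta)|_{\scrM}$ is everywhere surjective with kernel bundle stack $\fE^{\tred}_{\scrM}$, and unwinding \cite[Definition~3.2]{KiLi13} yields $[\scrM]_{\sigma}=i^{!}\,0^{!}_{\fE^{\tred}_{\scrM}}[\fC_{\scrM}]=i^{!}[\scrM]^{\tred}=[\Delta_{\scrM}]^{\red}$, while $i_*[\scrM]_{\sigma}=[\scrM]^{\vir}$ and $[\scrM]^{\red}=0$ on this locus.

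Your observation about the splitting $\FF|_{\Delta_{\scrM}}\cong\cO_{\Delta_{\scrM}}\oplus\cO_{\Delta_{\scrM}}(\Delta)[-1]$ is essentially the reason $\EE^{\tred}_{\scrM/\fM}|_{\Delta_{\scrM}}=\EE^{\red}_{\Delta_{\scrM}/\Delta}$, so you are one move away from the key simplification: rather than treating the degree-zero summand as a residual ``discrepancy'' to be extracted via a refined Gysin map for the non-regular embedding $\kappa$, lift that decomposition to a global reduction $\EE^{\tred}$ on $\scrM$ and let Kiem--Li handle the Gysin comparison. The delicate points you flag (the $\GG_a$-stabilizers along $\Delta_{\scrM}$, the identification of the residual cone, the meaning of $\kappa^{!}$ when the normal ``bundle'' is an Artin stack) are real obstacles in your route and are exactly what the $\EE^{\tred}$ device avoids.
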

\begin{proof}
The proof of Theorem \ref{thm:boundary-cycle} (1) is similar to Theorem \ref{thm:reduction} (1), and will be omitted. We next consider (2).

Recall that $\scrM(\sigma^{\red}) \subset \scrM$ is the locus where
$\sigma^{\red}$ hence $\sigma_{\fM}$ degenerates.
Replacing $\scrM$ by $\scrM\setminus \scrM(\sigma^{\red})$ we may
assume that $\sigma_{\fM}$ is everywhere surjective.
Since the cosection localized virtual cycle $[\scrM]_{\sigma^{\red}}$ is
represented by a Chow cycle supported on $\scrM(\sigma^{\red})$, which
is empty by our assumption, we see that
$[\scrM]_{\sigma^{\red}} = 0$.
By Theorem~\ref{thm:generali-localized-cycle}, it remains to show that
\begin{equation}\label{equ:canonical=boundary}
[\scrM] = i_*[\Delta_{\scrM}]^{\red}.
\end{equation}

To proceed, we consider the triangle
\begin{equation}\label{equ:t-red-POT}
\EE^{\tred}_{\scrM/\fM} \to \EE_{\scrM/\fM} \to \cO_{\fM}(\Delta)|_{\scrM}[-1] \stackrel{[1]}{\to}
\end{equation}
where the middle arrow is given by \eqref{equ:reduction-map} and \eqref{equ:F1}. Similar to the case of (1), we obtain a factorization of perfect obstruction theories
\[
 \xymatrix{
 \TT_{\scrM/\fM} \ar[rr]^{\varphi_{\scrM/\fM}} \ar[rd]_{\varphi^{\tred}_{\scrM/\fM}} && \EE_{\scrM/\fM} \\
 &\EE^{\tred}_{\scrM/\fM} \ar[ru]&
 }
\]

Let $[\scrM]^{\tred}$ be the virtual cycle corresponding to the perfect obstruction theory $\varphi^{\tred}_{\scrM/\fM}$. We call $[\scrM]^{\tred}$ the \emph{totally reduced virtual cycle} to be distinguished from $[\scrM]^{\red}$. Comparing \eqref{equ:boundary-red-POT} and \eqref{equ:t-red-POT}, we have
$
\varphi^{tred}_{\scrM/\fM}|_{\Delta_{\scrM}} = \varphi^{\red}_{\Delta_{\scrM}/\Delta},
$
hence
\[
i^{!} [\scrM]^{tred}= [\Delta_{\scrM}]^{\red}.
\]

Since $\fM$ is smooth, as in \eqref{diag:relative-to-absolute}, we may
construct absolute perfect obstruction theories associated to
$\varphi_{\scrM/\fM}$ and $\varphi^{\tred}_{\scrM/\fM}$ respectively:
\[
\varphi_{\scrM}\colon \TT_{\scrM/\fM} \to \EE_{\scrM} \ \ \ \mbox{and} \ \ \ \varphi^{\tred}_{\scrM}\colon \TT_{\scrM/\fM} \to \EE^{\tred}_{\scrM}.
\]

By the same construction as in
Section~\ref{ss:general-absolut-theory}, the cosection $\sigma_{\fM}$
descends to an absolute cosection
$\sigma\colon H^1(\EE_{\scrM}) \to \cO_{\fM}(\Delta)|_{\scrM}$ which
is everywhere surjective.
Let $\fE_{\scrM}$ and $\fE^{tred}_{\scrM}$ be the vector bundle stacks
of $\EE_{\scrM}$ and $\EE^{\tred}_{\scrM}$ respectively.
Then $\fE^{tred}_{\scrM}$ is the kernel cone stack of
$\fE_{\scrM} \to \cO_{\fM}(\Delta)|_{\scrM}$ induced by $\sigma$.
Let $\fC_{\scrM}$ be the intrinsic normal cone of $\scrM$.
Unwinding the definition of cosection localized virtual cycle in
\cite[Definition 3.2]{KiLi13}, we have
\begin{equation}\label{equ:tred=bred}
[\scrM]_\sigma = i^{!} 0^!_{\fE^{\tred}_{\scrM}}[\fC_{\scrM}] = i^![\scrM]^{\tred} = [\Delta_{\scrM}]^{\red}.
\end{equation}
where $[\scrM]_\sigma$ is the cosection localized virtual cycle
corresponding to $\sigma$.
Finally, \eqref{equ:canonical=boundary} follows from
$i_*[\scrM]_\sigma = [\scrM]^{\vir}$, see \cite[Theorem~1.1]{KiLi13}.
\end{proof}


\end{document}